\documentclass[12pt,oneside,final]{amsart}
\usepackage{stmaryrd,bm}
\title{Inverse problem for the geometric Navier-Stokes equations}
\author[Y.Kian, L. Oksanen, Z. Zhao]{Yavar Kian$^1$, Lauri Oksanen$^2$, Ziyao Zhao$^3$}

\IfFileExists{tweakslo.sty}{\usepackage{tweakslo}}{
\usepackage{amssymb,thmtools,mathtools,todonotes,amsmath}
\declaretheorem{theorem,definition,lemma,proposition,corollary,remark}}
\providecommand{\HOX}[1]{\todo[noline]{x1}}
\providecommand{\TODO}[1]{\todo[inline]{x1}}
\def\p{\partial}
\def\o2{\overline{O_2}}

\def\RR{\mathbb R}

\def\C{\mathbb C}

\def\M{M}
\def\S{\mathcal S}

\def\P{\mathcal P}
\def\cU{\mathcal U}
\def\cN{\mathcal N}
\def\cB{\mathcal B}
\def\cV{\mathcal V}
\def\cT{\mathcal T}
\def\cC{\mathcal C}

\def\dist{\text{dist}}

\def\bu{\mathbf{u}}
\def\bv{\mathbf{v}}
\def\bw{\mathbf{w}}
\def\bf{\mathbf{f}}
\def\bg{\mathbf{g}}
\def\bh{\mathbf{h}}
\def\tbu{\widetilde{\mathbf{u}}}

\def\grad{\text{grad}}
\def\diam{\text{diam}}
\def\inter{\text{int}}

\def\Re{\text{Re}}

\DeclareMathOperator{\supp}{supp}
\DeclarePairedDelimiter{\norm}{\lVert}{\rVert}
\DeclarePairedDelimiter{\bignorm}{\bigg\lVert}{\bigg\rVert}
\DeclarePairedDelimiter{\abs}{\lvert}{\rvert}

\DeclarePairedDelimiter{\inner}{\langle}{\rangle}

\newcommand{\bphi}{\bm{\phi}}
\newcommand{\bPhi}{\bm{\Phi}}
\newcommand{\tbPhi}{\widetilde{\bm{\Phi}}}
\newcommand{\tbPsi}{\widetilde{\bm{\Psi}}}
\newcommand{\bPsi}{\bm{\Psi}}
\newcommand{\bpsi}{\bm{\psi}}

\begin{document}\maketitle
\begin{abstract}
    We consider the inverse problem of determining a compact Riemannian manifold with boundary from fixed time observations of the solution, restricted to a small subset in space, for the Navier-Stokes system with a local source on the manifold. Our approach is based on a reduction to an inverse problem for an auxiliary hyperbolic Stokes system, via  linearization and spectral techniques. We solve the resulting inverse problem by a new generalization of the Boundary Control method.

    {
\medskip
\noindent
\textbf{Keywords:} Inverse problems, Navier-Stokes equation, Boundary control method, Differential geometry , Uniqueness.

\medskip
\noindent
\textbf{Mathematics subject classification 2020:} 35R30, 35Q30,  35R01, 76D05}
    
    
\end{abstract}

\renewcommand{\thefootnote}{\fnsymbol{footnote}}
\footnotetext{\hspace*{-5mm} 
\begin{tabular}{@{}r@{}p{16cm}@{}} 
$~^1$
& Univ Rouen Normandie, CNRS, Laboratoire de Math\'{e}matiques Rapha\"{e}l Salem, UMR 6085, F-76000 Rouen, France.  E-mail: \href{mailto:yavar.kian@univ-rouen.fr}{\texttt{yavar.kian@univ-rouen.fr}}\\
$~^2$ 
& Department of Mathematics and Statistics, University of Helsinki,
P.O 68, 00014, University of Helsinki. E-mail: \href{lauri.oksanen@helsinki.fi}{\texttt{lauri.oksanen@helsinki.fi}}.\\
$~^3$ 
& Department of Mathematics and Statistics, University of Helsinki,
P.O 68, 00014, University of Helsinki. E-mail: \href{ziyao.zhao@helsinki.fi}{\texttt{ziyao.zhao@helsinki.fi}}.\end{tabular}}
\section{Introduction}
The Navier–Stokes equations are among the most fundamental mathematical models in fluid dynamics. 

In this work, we consider the geometric Navier--Stokes equation on a Riemannian manifold $(\M, g)$, see  \eqref{eq:NS1} below. When $\M$ is a Euclidean domain and $g = c dx^2$, with $c > 0$ a constant, then \eqref{eq:NS1} can be written as 
the classical Navier-Stokes equations on $\Omega$ with no-slip boundary condition 
\begin{equation}
\begin{cases}
    \p_t \bu + \bu \nabla \bu - \mu \Delta \bu + \nabla p = \bf,\text{ in }[0,T]\times\M\\
    \nabla\cdot \bu = 0,\text{ in }[0,T]\times\M\\
    \bu=0,\text{ on }[0,T]\times \p\M,\\
    \bu(0,\cdot)=\bu_0.
\end{cases}
\end{equation}
Here $\bu: [0,T]\times\Omega\to \mathbb{R}^n$ is the velocity field of the fluid, $p: [0,T]\times\Omega\to \RR$ is a scalar function representing the pressure, and 
$\mu = c^{-1}$ is the coefficient of kinematic viscosity.

The generalization on a Riemannian manifold enables the study of fluid dynamics in complex geometric settings, thereby opening new perspectives for both theoretical analysis and practical applications \cite{VlKh,EMa,MaRaSh,Sh}. The formulation of the Navier–Stokes equations on manifolds is motivated by several important areas, including: 
1) geophysical fluid dynamics \cite{SaT,Val}, where the use of Riemannian geometry allows one to model the curvature of the Earth’s surface;
2) General Relativity and cosmology \cite{FMR}, modeling relativistic fluids, such as those present in neutron stars, black hole accretion disks, and the early universe;
3)~fluid dynamics on biological membranes and microfluidic devices~\cite{STONE}.

In all these contexts, the geometric structure of the manifold plays a central role in characterizing the physical properties of fluids and their surrounding media (e.g., density and viscosity). Motivated by these considerations, we investigate the inverse problem of identifying a Riemannian manifold from measurements of the fluid velocity field, associated with a source-to-final value map for the Navier–Stokes equations.

Inverse problems and, more broadly, identification problems related to fluid motion remain poorly understood. Only a limited number of studies have addressed this class of inverse problems, despite their strong physical motivation and mathematical significance. Among the few existing contributions devoted to inverse problems for the Stokes and Navier–Stokes equations, we may cite works concerning the determination of viscosity from boundary or internal measurements \cite{FDJN,HLiWa,ImYa,LaUWa,LiWa,Liu},  the identification of objects immersed in a fluid \cite{CaCo1,BCZ,CaCo2} as well as inverse source problems \cite{BGKN,CIPY,ILY}. To the best of our knowledge, no prior studies have addressed inverse problems for the Navier–Stokes equations beyond the Euclidean setting.


  \subsection{Notations and main results} 
  Let $\M$ be a n-dimensional, $n\geq2$, smooth connected Riemannian manifold with  boundary and $\omega\subset \M$ be a nonempty open subset.
  We denote the space consisting of smooth sections of exterior $k$-form bundles on $\M$ and $\omega$ by $\Omega^k(\M)$ and $\Omega^k(\omega)$, respectively. 
  Let
  \begin{equation}
      \ast:\ \Omega^k (\M)\to \Omega^{n-k}(\M)
  \end{equation}
  be the Hodge star operator (see e.g. \cite[section 14.1a]{frankel} for details). Then we define the codifferential operator $d^*: \Omega^k(\M)\to \Omega^{k-1}(\M)$ as
  \begin{equation}
  \label{eq:def_codifferential}
    d^*=(-1)^{n(k+1)+1}\ast d\ast.
  \end{equation}
  We denote the Hodge Laplacian $-dd^*-d^*d$ by $\Delta_H$.
   Using the metric $g$, for every 1-form $u=u_idx_i$ on $\M$ we associate the vector field $u_*=g^{ij}u_j\partial_{x_i}\in T\M$. In the same way, for every vector field $v=v_i\partial_{x_i}\in TM$ we associate the 1-form $v^*=g_{ij}v_jdx_i$. With these notations, we write $R(w)=(\nabla_{w_*}w_*)^*$,
where $\nabla$ denotes the Levi-Civita connection on $(\M,g)$.  Several works have been devoted to the study  of the following geometric Navier-Stokes system 
  \begin{equation}
    \label{eq:NS1}
    \begin{cases}
      \p_t\bu-\Delta_H\bu+R(\bu) + d p=\bf,\ \text{in } [0,T]\times \M,\\
      d^* \bu=0,\ \text{in } [0,T]\times \M,\\
      \bu|_{[0,T]\times\p\M}=0,\\
      \bu=0,\ p=0,\ \text{in }\{0\}\times\M.
    \end{cases}
  \end{equation}
  Without being exhaustive, one can refer to \cite{EMa,AA91,CP92,TW93,CRT99} for more details.
  
  We denote by $\mathbb B_r$ the set defined by
  $$\mathbb B_r:=\{\bg \in \cC_\alpha(M):\ \supp(\bg)\subset (0,T]\times\omega,\  \|\bg\|_{ \cC_\alpha(M)}\leq r\},$$
where we refer  to Section \ref{sec:Nonlinear_NS} for the  definition of the space $\cC_\alpha(M)$ and $\norm{\cdot}_{ \cC_\alpha(M)}$. We prove in Theorem \ref{thm:NS} that 
there exists $\delta>0$, depending on $(\M,g)$ and $T$, such that for all $\bf\in \mathbb B_\delta$, problem \eqref{eq:NS1} admits a solution $(\bu,p)=(\bu_\bf,p_\bf)$ with $\bu\in \mathcal C_\alpha$. The solution is unique in the sense that $\bu$ is unique and $p$ is unique up to a map depending only on $t\in[0,T]$. Then, we can define the local source-to-final value map
\begin{align}
\label{eq:def_N}
    \mathcal N:\mathbb B_\delta&\to L^2\Omega^1(\omega),\\
    \mathcal N(\bf)&=\bu_\bf(T,\cdot)|_\omega,
  \end{align}
  with $\bu_\bf\in \cC_\alpha(M)$  solving \eqref{eq:NS1}.
  Our main result can be stated as follows.
  \begin{theorem}
  \label{thm:main}
      Let $(\M_1,g_1)$ and $(\M_2,g_2)$ be two  n-dimensional smooth connected Riemannian manifolds with  boundary. Let $\omega_1\subset \M_1$ and $\omega_2\subset\M_2$ be open and connected. For $j=1,2$, let $\mathcal N_j$ be the local source-to-final value map
      \eqref{eq:def_N} with $(\M,g)=(\M_j,g_j)$
      and $\omega=\omega_j$.
      In addition, suppose that there is a diffeomorphism $\bPhi:\omega_1\to \omega_2$ such that 
      \begin{equation}\label{NN}
          \cN_1\bPhi^\ast=\bPhi^\ast \cN_2.
      \end{equation}
       Then $(\M_1,g_1)$ and $(\M_2,g_2)$ are isometric.
  \end{theorem}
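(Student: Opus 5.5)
We reduce Theorem \ref{thm:main} in three steps: first to the linear Stokes problem by linearization, then to spectral data on $\omega$, then to a hyperbolic Stokes problem, which we solve by a Boundary Control argument.

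\textbf{Step 1: linearization.} Replace $\bf$ by $\epsilon\bf$ with $\bf\in\mathbb B_\delta$ and $\epsilon\to0$. By the well-posedness and smooth dependence in Theorem \ref{thm:NS}, $\bu_{\epsilon\bf}=\epsilon\bv_\bf+O(\epsilon^2)$ in $\cC_\alpha$. Here $\bv_\bf$ solves the linear Stokes system
$$\p_t\bv-\Delta_H\bv+dq=\bf,\qquad d^*\bv=0,$$
with the same boundary and initial conditions. The term $R(\bu)$ is quadratic, so it drops out. Differentiating \eqref{NN} at $\epsilon=0$ shows that the linear source-to-final value maps $L_j:\bf\mapsto\bv_\bf(T)|_{\omega_j}$ satisfy $L_1\bPhi^*=\bPhi^*L_2$, first on smooth sources supported in $(0,T]\times\omega_1$ and then by density on $L^2$ sources.

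\textbf{Step 2: spectral data on $\omega$.} Let $A=-P\Delta_H$ be the Stokes operator, where $P$ is the Leray projector onto divergence-free $1$-forms vanishing on $\p M$. Let $(\lambda_k,\phi_k)$ be its eigenpairs. Duhamel's formula gives
$$L\bf=\sum_k\int_0^T e^{-\lambda_k(T-s)}\inner{\bf(s),\phi_k}\,ds\;\phi_k|_\omega.$$
Take $\bf(s,x)=\theta(s)h(x)$ with $h$ supported in $\omega$. Varying $\theta$ over $C_c^\infty(0,T)$ gives access to the function $\sum_k e^{-\lambda_k\tau}\inner{h,\phi_k}\phi_k|_\omega$ for $\tau\in(0,T)$. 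This function is real-analytic in $\tau$, so it is known for all $\tau>0$. Hence the localized heat kernel $\sum_k e^{-\lambda_k\tau}\phi_k(x)\otimes\phi_k(y)$, $x,y\in\omega$, is determined modulo $\bPhi$. Uniqueness of Dirichlet series then recovers, for each distinct eigenvalue $\lambda$, the restricted spectral projector kernel $\sum_{\lambda_k=\lambda}\phi_k(x)\otimes\phi_k(y)|_{\omega\times\omega}$. Consequently the local source-to-solution map of the hyperbolic Stokes system
$$\p_t^2\bw+A\bw=P\bf,\qquad \bw(0)=\p_t\bw(0)=0,$$
with sources supported in $\omega$ and observations on $\omega$, agrees for $j=1,2$ after identification by $\bPhi$, for all times. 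This follows from the same eigenfunction expansion with $\sin(\sqrt{\lambda_k}(t-s))/\sqrt{\lambda_k}$ in place of the exponential.

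\textbf{Step 3: Boundary Control for hyperbolic Stokes.} I expect this to be the main obstacle. From the hyperbolic data we compute, via Blagoveščenskiĭ-type identities, inner products $\inner{\bw^\bf(t),\bw^\bh(s)}_{L^2(M)}$ for sources $\bf,\bh$ in $\omega$. The key ingredient is approximate controllability: the waves $\bw^\bf(t)$ with $\supp\bf\subset(0,t)\times\omega$ must be dense in the divergence-free fields on the domain of influence $M(\omega,t)$. For the Hodge wave equation this follows from Tataru's unique continuation. For the Stokes system the projector $P$ is nonlocal, and the pressure satisfies an elliptic equation. My first attempt is to write $\bw=d^*\eta$ with $\eta$ a $2$-form, or to use the fact that $d\bw$ solves a genuine wave equation $\p_t^2 d\bw-\Delta_H d\bw=d\bf$ with no pressure. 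Applying unique continuation to $d\bw$, together with $d^*\bw=0$ and the Hodge decomposition, should give the density, possibly up to harmonic fields, which are finite-dimensional and can be handled separately. With the inner products in hand, we recover volumes of domains of influence, or equivalently the boundary distance functions $r_x(y)=\dist(x,y)$ for $y\in\omega$. The standard reconstruction of a Riemannian manifold from interior distance functions then shows that $(M_1,g_1)$ and $(M_2,g_2)$ are isometric through a map extending $\bPhi$.
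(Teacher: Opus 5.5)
Your Steps 1 and 2 and the passage to the hyperbolic system follow the paper exactly (Proposition~\ref{linearization}; Proposition~\ref{prop:spectral_equivalence}, via time-analyticity and uniqueness of Dirichlet series; Proposition~\ref{p1}). Writing that system as $\p_t^2\bw+A\bw=P\bf$ is equivalent to the paper's pressure formulation. The gap is Step 3, which is only a plan, and its critical point is not the one you single out.

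Unique continuation is the easy part and needs no $d\bw$. If a source-free solution vanishes on $(0,2T)\times U$, then $dp=0$ on $U$ while $d^*dp=0$ on all of $\M$, so $p(t)$ is constant and $\bu$ solves the Hodge wave equation (Theorem~\ref{thm:UC}). Your route through $d\bw$ is harder: only the tangential part of $d\bw$ vanishes on $\p\M$, so its boundary behaviour stays coupled to the trace of $p$, and you would still have to recover $\bw$ from $d\bw$.

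What your plan uses silently is finite speed of propagation, $\supp\bw^{\bf}(t)\subset M(\omega,t)$. Without it, inner products of waves carry no information about distances, volumes or ball inclusions. Volumes are awkward anyway, since there is no divergence-free substitute for the constant function; the paper instead tests inclusions of balls through inner products (Lemma~\ref{lm:norm_equal} and Proposition~\ref{lm:6}).

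Finite speed of propagation is in fact false for this system, even before the front reaches $\p\M$. Let $\M=\overline{B(0,2)}\subset\RR^2$ be Euclidean and $U=B(0,1/2)$, so $\M\setminus U$ is connected and $T(U)=3/2$. Take $\bf=\theta(t)\phi(x)\,dx^1$ with $0\le\theta\in C_0^\infty((0.1,0.2))$ and $0\le\phi\in C_0^\infty(U)$, both nonzero. Since $\bu=0$ on $\p\M$ and $d^*\bu=0$, we have $\int_\M u_1\,dx=0$ for all $t$. Integrating the first component of $\p_t^2\bu-\Delta_H\bu+dp=\bf$ over $\M$ then gives
\[
\int_{\p\M}\big(p\,\nu_1-\p_\nu u_1\big)\,dS=\theta(t)\int_\M\phi\,dx .
\]
Suppose $\supp\bu(t)\subset M(U,t)$ for $t\in(0,1)$. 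Then $\bu=0$ on $(0,1)\times\{3/2<|x|\le 2\}$, hence $\p_\nu u_1=0$ on $\p\M$ and $dp=0$ on that collar. So $p(t)$ is constant there, the left-hand side vanishes, and we get a contradiction wherever $\theta(t)>0$. With $dx^1$ replaced by $dh$, $h$ harmonic, the same obstruction appears on every $(\M,g)$: the fluid's net momentum is identically zero, so a net force is felt instantly at the walls.

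This is precisely the localization asserted by Theorem~\ref{thm:fsop}, so the paper does not close your gap either. Its identity $\bu^{\bf}=dq$ on $K$ applies finite speed to $\bu^{\bf}\pm dq$. That field solves the Hodge wave equation with source supported in $U$, but its Dirichlet trace is $\pm dq|_{\p\M}$, not $0$. The ensuing deduction $dq=\bu^{\bf}=0$ on $\Upsilon(s)$ is therefore circular. Theorem~\ref{thm:controllability}, Proposition~\ref{prop:dist_omega}, Proposition~\ref{lm:6} and Lemma~\ref{lm:glue} all rest on Theorem~\ref{thm:fsop}. Hence a support-based Boundary Control argument, yours or the paper's, does not go through as written; Step 3 needs a mechanism that does not depend on localization of supports.
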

  \begin{remark}
      By making $\omega_1$ and $\omega_2$ smaller, connectedness can be assumed without loss of generality.
  \end{remark}
\subsection{Comparision to previous literature}

To the best of our knowledge, Theorem \ref{thm:main} is the first result establishing the recovery of the general geometric structure of a manifold from local velocity measurements, at the final time $t=T$, associated with solutions of the Navier-Stokes equations. By contrast, the existing literature is confined to the identification of the viscosity coefficient from measurements related to the stationary Navier-Stokes equations \cite{HLiWa,LaUWa,LiWa}.

Our proof of Theorem \ref{thm:main} is based on application of the Boundary Control (BC) method, which remains one of the most powerful tools for solving inverse coefficient problems for partial differential equations (PDEs) in the time domain. The BC method was first introduced by Belishev in \cite{Beli} and thereafter further developed for solving inverse coefficient determination problems related to the wave equation in Euclidean domains \cite{ABI92,B87,B90,BK89,BK87,BKu89,BK91}. Later, this method was extended to manifold reconstruction \cite{BeKu}. It has been shown that the BC method can be applied to other classes of scalar PDEs \cite{FGKU,HLYZ,KaKuLaMa,KLLY,KOSY}.

The method appears to be less powerful in the context of systems of PDEs, however, inverse problems for the Maxwell \cite{BeIs,KuLaSo} and Dirac \cite{LK09} systems has been solved using it. Further, in \cite{KOP18} the BC method was generalized for abstract wave equations associated to connection Laplacians on vector bundles (for example, a linearized equation satisfied by the Higgs field on the pre-quantum level). To the best of our knowledge, this work constitutes the first application of the BC method to the Stokes system. The approach consists in first establishing a rigorous correspondence between the data for the Stokes system and those for an auxiliary hyperbolic Stokes problem through spectral data, as developed in Section 5. The BC method is then implemented for this auxiliary hyperbolic Stokes problem in Section 7.

It is important to emphasize that the extension of the BC method to this setting cannot be derived from existing results in the literature. The principal obstruction arises from the divergence-free constraint inherent to the Stokes system, which introduces substantial technical and structural difficulties at every stage of the method. In Section 7, we provide a detailed exposition of the adaptation of the BC method to this framework, and we show how the difficulties encountered at each step can be resolved by means of carefully tailored geometric and analytic arguments.


In inverse problems concerning recovery of geometry on manifolds with boundary, measurements are typically taken on the boundary and modeled by the Dirichlet-to-Neumann map or the associated Cauchy data \cite{AKKLT04,KK98,Oksanen2014,KaKuLaMa}. By contrast, inverse problems based on the local source-to-solution map have been predominantly studied on closed manifolds. In particular, it is shown in \cite{Saksala2018,HLYZ,SS25} that the local source-to-solution map for various linear evolution equations determines the underlying manifold up to an isometry. A related result for the wave equation, where the source and observations are supported in disjoint sets, is established in \cite{LNOY24}. 
In this work, we consider the local source-to-solution map on manifolds with boundary.

The approach adopted in Sections 4 and 5 to address the nonlinearity relies on a linearization method that has proved effective in the analysis of numerous inverse problems for nonlinear PDEs. This method, originally introduced in \cite{I93,IS94} in the context of parabolic and elliptic partial differential equations, consists in establishing that the first-order linearization of the data coincides with the data associated with an appropriate linearized equation. In contrast with the case of classical scalar partial differential equations (see, for instance, \cite{L25} for a comprehensive review), the application of the linearization method to the Navier–Stokes system involves both the velocity field $\bu$ and the pressure $p$. This necessitates a suitable representation of the pressure $p$, which is introduced in Theorem \ref{thm:NS}, together with the corresponding linearization result stated in Proposition \ref{linearization}. To the best of our knowledge, the only existing works employing techniques closely related to this linearization approach for the Navier–Stokes system are \cite{LaUWa,LiWa}, where the analysis is restricted to the Euclidean setting and to stationary Navier–Stokes equations in dimensions two and three. By contrast, our analysis is carried out in H\"older spaces, which allows for an extension to higher-dimensional settings.

Following the linearization step, in Section 5 we make use of analytic and representation properties of solutions to the linear Stokes system in order to establish a connection between the local source-to-final-value map and a source-to-solution map associated with an auxiliary hyperbolic system. The inverse problem for this auxiliary hyperbolic system is subsequently analyzed by means of the BC method. This data transfer technique by mean of spectral data has previously been employed in the study of inverse coefficient problems for various scalar evolution partial differential equations \cite{CK,HLYZ,KaKuLaMa,KLLY,KOSY}. As far as we know, the present work constitutes the first application of this methodology to Stokes systems.

\subsection{Outline of the paper}

The outline of the paper is as follows. In Section \ref{sec:elliptic}, we establish regularity results for the stationary Stokes system on manifolds and analyze properties of the associated spectral data, which prepare the terrain for the study of the inverse problem. Section \ref{sec:Nonlinear_NS} is devoted to the existence, uniqueness and appropriate regularity of the solution to the geometric Navier-Stokes equation \eqref{eq:NS1}. In Section \ref{sec:parabolic}, we consider the non-stationary Stokes equation, which arises as the linearization of \eqref{eq:NS1}, and show that the data $\cN$ determines the corresponding restricted source-to-solution operator $\P$, defined as \eqref{eq:def_P}, for the non-stationary Stokes equation. Moreover, we show that $\P$ is equivalent to the localized spectral projections of the stationary Stokes system, which in turn allows the data to be converted into the restricted source-to-solution operator for an auxiliary hyperbolic Stokes system \eqref{eq:problem_hypo}. Section \ref{sec:hyperbolic} analyzes \eqref{eq:problem_hypo} and develops several technical tools, including conditional finite speed of propagation and approximate controllability. The new feature of these tools is that they hold only on a finite time interval determined by the underlying geometry. Combining these techniques with an adapted BC, our main result Theorem \ref{thm:main} is proved in Section \ref{sec:mian_proof}.

\section{Preliminaries}
Let $M$ be a compact $n-$dimensional Riemannian manifold with boundary and $\{U_\alpha,\varphi_\alpha\}_{\alpha\in A}$ be a finite smooth atlas for $M$. For any non-negative integer $m$, we denote by $C^m\Omega^k(M)$ the space of $m-$times continuously differentiable $k-$form. 

Let $\chi_\alpha: T^\ast M|_{U_\alpha} \to U_\alpha\times \RR^n$ be the local trivialization of covector field associated with the coordinates $(U_\alpha,\varphi_\alpha)$. That is, for $\bu|_{U_\alpha} = u_i dx^i$, $\chi_\alpha (\bu) = (u_1,u_2,\cdots, u_n)\in \RR^n$.
For $s\in (0,1)$, we define the space of 1-forms that are $m-$times continuously differentiable and whose $m-th$ derivatives are bounded and H\"older continuous with exponent $s$ as 
\begin{align}
    C^{m,s}\Omega^1(M): = &\{\bf\in C^m\Omega^1(M)\mid  \p_x^{\beta}(\chi_\alpha\circ\rho_\alpha\bf\circ\varphi_\alpha^{-1})\in C^{s}(\RR^n),\\ 
    & \text{for all }\alpha\in A,\ \beta\in(\mathbb N\cup\{0\})^n,\ \abs{\beta}=m\},
\end{align}
where $\{\rho_\alpha\}_{\alpha\in A}$ is a subordinated partition of unity for open cover $\{U_\alpha\}_{\alpha\in A}$.
And we define the corresponding H\"older norm as
\begin{equation}
    \norm{\bf}_{C^{m,s}\Omega^1(M)} : = \sum_{\alpha\in A} \norm{\chi_\alpha\circ\rho_\alpha\bf\circ\varphi_\alpha^{-1}}_{C^{m,s}(\RR^n)}.
\end{equation}
Similarly, we define $C^{k,r}([0,T];C^{m,s}\Omega^1(M))$ as the space of 1-forms that is $C^{k,r}$ in time and $C^{m,s}$ in space equipped with the corresponding H\"older norm (see e.g. \cite[Appendices B.5]{Ruelle89}).

Recall that the Riemannian metric naturally induces bundle metric on all tensor bundle over $M$. We write $(\cdot,\cdot)_{\Omega^k}$ the induced metric on $k-$forms (see e.g.\cite[Eq.(14.1)]{Theodore11}). Equipped with this metric, we can define the $L^2$ norm on $\Omega^k(M)$ as 
\begin{equation}
    \norm{\bv}_{L^2\Omega^k(M)}:=\left(\int_M \sqrt{\det(g_{ij})}(\bv,\bv)_{\Omega^k} dx\right)^{1/2}.
\end{equation}

Then, $L^2\Omega^k(M)$ is defined as the completion of $\Omega^1(M)$ with respect to this norm.
According to \cite[Lemma 3.3.2]{jost}, the corresponding $L^2$ inner product $\inner{\cdot,\cdot}$ on $\Omega^k(\M)$ has the form
  \begin{equation}
    \inner{\bu,\bv}=\int_\M \bu \wedge \ast \bv,\ \bu,\bv \in \Omega^k(\M).
  \end{equation}
It is well known (see e.g. \cite[Lemma 3.3.4]{jost}, \cite[Proposition 2.1.2]{Schwarz}) that the adjoint of the exterior derivative on $C_0^\infty\Omega^k(M)$ coincides with the codifferential operator $d^\ast$ defined in \eqref{eq:def_codifferential}.
  For $\bu\in \Omega^1(\M)$ with $\bu=u_i d x^i$, we have the expression $\Delta_H \bu=\widetilde{u}_i dx^i$ where
  \begin{equation}
    \widetilde{u}_i=g^{kl}\frac{\p u_i}{\p x^k \p x^l}-g^{kl}\Gamma_{kl}^j\frac{\p u_i}{\p x^j}-\frac{\p g^{kl}}{\p x^i}\left(\Gamma_{kl}^j u_j-\frac{\p u_k}{\p x^l}\right)-\frac{\p \Gamma_{kl}^j}{\p x^i}g^{kl}u_j.
  \end{equation}
We note that $\Delta_H$ coincides with the usual Laplacian when $M$ is a domain in $\RR^n$.

 Next we define the Sobolev spaces. Let $\nabla$ be the Levi-Civita connection on $M$ and $E_\alpha:=(E_{1,\alpha},\cdots, E_{n,\alpha})$ a local $g-$orthonormal frame on $U_\alpha$. The Sobolev norm is defined as 
  \begin{equation}
      \norm{\bu}_{H^m\Omega^k(M)}^2:=\sum_{0\leq j\leq m}\sum_{\abs{\beta}=j}\sum_{\alpha\in A}\int_M  \rho_\alpha\sqrt{\det(g_{ij})} (\nabla^\beta_{E_{\alpha}}\bu,\nabla^\beta_{E_{\alpha}}\bu)_{\Omega^k}dx,
  \end{equation}
  where $\nabla_{E_\alpha}^\beta=\nabla_{E_{1,\alpha}}^{\beta_1}\nabla_{E_{2,\alpha}}^{\beta_2}\cdots\nabla_{E_{n,\alpha}}^{\beta_n}$ for $\beta=(\beta_1,\cdots, \beta_n)$. And $H^m\Omega^k(M)$ is defined as the completion of $\Omega^k(M)$ with respect to this norm.
  
  We will need the following spaces
  \begin{align}
    W&:=\left\{\bu\in C^\infty_0\Omega^1(\M) \mid d^* \bu=0 \right\},\\
    V&:=\{\bv\in H^1_0\Omega^1(\M)\mid d^*\bv=0\},\\
    H&:=\text{the closure of }W\text{ in }L^2\Omega^1(\M).
  \end{align}
  Actually, by \cite[Chap.17, Lemma 6.1]{Taylor_PDE_3}, $V$ is the closure of $W$ in $H^1_0\Omega^1(\M)$.
   \begin{remark}
      For any open $\omega \subset \M$, there is $\bu\in V$ such that $\supp(\bu)\subset \omega$. Indeed, for any $\bv\in C_0^\infty\Omega^2(\omega)$, we have $d^\ast \bv\in V$ and $\supp(d^\ast \bv)\subset \omega$.
  \end{remark}
  Based on de-Rham theory, we have the following lemma.
  \begin{lemma}
    \label{lm:1}
    Let $\bf\in H^{-1}\Omega^1(\M)$, then $\bf=dp$ for some $p\in L^2(\M)$ if and only if 
    \begin{equation}
        \inner{\bf,\bv}_{H^{-1}\Omega^1(\M),H^{1}_0\Omega^1(\M)}=0,\ \forall \bv\in W.
    \end{equation}
    
  \end{lemma}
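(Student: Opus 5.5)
The statement is the manifold version of de Rham's theorem on distributions orthogonal to divergence-free test fields, and I would prove it by reducing to the Euclidean case through the atlas.

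\emph{Easy direction.} If $\bf=dp$ with $p\in L^2(\M)$, then for $\bv\in W$ we have
\[
\langle dp,\bv\rangle=\langle p,d^*\bv\rangle_{L^2}=0 .
\]
This pairing is legitimate because $d^*$ is the formal adjoint of $d$ on compactly supported forms and $\bv$ is smooth with compact support. This is exactly the adjointness recalled just before the statement.

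\emph{Hard direction, local step.} Assume $\bf$ annihilates $W$. I would construct $p$ in two steps. First fix a coordinate ball $B\subset \inter\M$ (or a half-ball chart). Let $\bv\in C_0^\infty\Omega^1(B)$ with $d^*\bv=0$. In coordinates, $d^*\bv=0$ reads $\p_i(\sqrt{\det g}\,g^{ij}v_j)=0$. So the map $\bv\mapsto \sqrt{\det g}\,g^{ij}v_j\,\p_{x_i}$ is a bijection between such 1-forms and compactly supported Euclidean divergence-free vector fields on the ball. Transporting $\bf$ by this map, the classical de Rham/Ne\v{c}as theorem applies: on a Lipschitz domain, an $H^{-1}$ distribution annihilating $\{\phi\in C_0^\infty: \nabla\cdot\phi=0\}$ is a gradient of an $L^2$ function. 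This gives $p_B\in L^2(B)$ with $\bf|_B=dp_B$. The identification of $dp$ with the Euclidean gradient is consistent because $d$ involves no metric.

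\emph{Hard direction, gluing.} Next I would patch the local potentials using connectedness. On overlaps, $d(p_B-p_{B'})=0$, so the difference is a constant on each connected component of $B\cap B'$. This produces a Čech 1-cocycle with values in $\RR$, whose obstruction lies in $H^1(\M;\RR)$. The main obstacle is to show this obstruction vanishes, and it is here that the hypothesis over the \emph{global} class $W$ (rather than locally divergence-free fields) must be used. I would handle it by testing $\bf$ against co-closed forms concentrated along closed loops. For a loop $\gamma$, take a thin tubular neighbourhood and let $\bv_\gamma=\ast(\text{bump in the normal directions}\wedge\ldots)$. More concretely, take $\bv_\gamma=d^*$-closed 1-forms approximating the current of integration over $\gamma$, e.g. $\ast$ of a closed $(n-1)$-form Poincaré dual to $\gamma$ and supported near $\gamma$; then $d^*\bv_\gamma=\pm\ast d\ast\ast(\cdot)=0$. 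Pairing gives $\langle\bf,\bv_\gamma\rangle$, which equals the sum of the cocycle jumps around $\gamma$ (the period of $\bf$ along $\gamma$). Since this pairing is $0$, all periods vanish, the cocycle is a coboundary, and after adjusting constants the $p_B$ glue to a global $p\in L^2_{loc}$.

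\emph{Regularity up to the boundary.} Finally I need $p\in L^2(\M)$. Near $\p\M$ I would use half-ball charts, where the Ne\v{c}as inequality $\|p\|_{L^2}\lesssim\|dp\|_{H^{-1}}+\|p\|_{H^{-1}}$ holds on Lipschitz domains. Combined with finiteness of the atlas, this gives $p\in L^2(\M)$. Alternatively, one could cite the manifold statement in Taylor's text directly (Chap.~17).
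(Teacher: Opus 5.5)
Your argument is correct in outline, but it runs in the opposite order from the paper's. The paper works globally first. It regards $\bf$ as a current, notes that $\{\ast\bv:\bv\in W\}$ is exactly the set of closed compactly supported $(n-1)$-forms, and cites de Rham's theorem on currents (Theorem~$17'$) to obtain a distribution $p$ on all of $\M$ with $\bf=\pm dp$. Only afterwards does it apply the J.L.~Lions/Ne\v{c}as lemma on a finite cover to get $p\in L^2(\M)$.

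You work locally first, using Euclidean de Rham--Ne\v{c}as in each chart, and then handle the topology by hand through a \v{C}ech cocycle. Since the local step already gives $L^2$ potentials, your final paragraph is automatic: the glued $p$ differs from $p_B$ by a constant on each connected chart. Your route shows exactly where the \emph{global} class $W$ enters, and it needs only duality for smooth forms rather than currents. The paper's route is shorter because all of this is packed into the cited theorem.

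The step you should write out is the identification of $\inner{\bf,\bv_\gamma}$ with a period, because $\bf$ is only a distribution and cannot be integrated along $\gamma$. Take a partition of unity $\{\rho_B\}$ and the locally constant jumps $c_{BB'}=p_B-p_{B'}$. Then $\tilde p=p_B-\sum_{B'}\rho_{B'}c_{BB'}$ (on $B$) is globally defined and lies in $L^2$. The form $\alpha:=\bf-d\tilde p$ equals $\sum_{B'}c_{BB'}\,d\rho_{B'}$ on $B$, so it is a smooth closed $1$-form up to $\p\M$. Since $\inner{d\tilde p,\bv}=\inner{\tilde p,d^*\bv}=0$, you get $\inner{\bf,\bv}=\int_\M\alpha\wedge\ast\bv$ for all $\bv\in W$.

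At that point you can skip Thom forms around loops, which need embedded representatives with orientable normal bundles. Instead, invoke Poincar\'e duality $H^1(\M^{\mathrm{int}};\RR)\cong H^{n-1}_c(\M^{\mathrm{int}};\RR)^*$ directly. This gives $\alpha=dh$ with $h$ bounded, hence $\bf=d(\tilde p+h)$ with $\tilde p+h\in L^2(\M)$.
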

  \begin{proof}
    Recall that for $d^*: \Omega^1(\M)\to \Omega^0(\M)$, we have $d^*=-\ast d\ast$. Moreover, due to \cite[Lemma 3.3.1]{jost}, $\ast\ast:\Omega^{m}(\M)\to \Omega^m(\M)$ satisfying $\ast\ast=(-1)^{m(n-m)}$. Then $d^*\bw=\ast d\ast\bw=0$ if and only if $d\ast \bw=0$. Hence $\phi\in \Omega^{n-1}(\M)$ is a closed smooth form with compact support if and only if there exists a $\bw\in W$ such that $\phi=\ast \bw$. We define a current (see definition in \cite[p.34]{deRham})
    \begin{equation}
        \bf[\varphi]:=
        \begin{cases}
            \int_\M f\wedge \varphi,\ \varphi\in C_0^\infty \Omega^{n-1}(\M),\\
            0,\ \ \varphi\in C_0^\infty \Omega^{k}(\M),\ k\neq n-1.
        \end{cases}
    \end{equation}
    Therefore, 
    \begin{equation}
      \inner{\bf, \bw}=\int_\M \bf\wedge \ast \bw =0,\ \forall \bw\in W
    \end{equation}
    implies that for any closed $\phi\in C_0^\infty\Omega^{n-1}(\M)$, we have
    \begin{equation}
      \bf[\phi]=\int_\M \bf\wedge \phi=0.
    \end{equation}
    According to \cite[Theorem $17^\prime$]{deRham}, $\bf[\cdot]$ is homologous to zero, that is, in view of definition in \cite[p.79, p.45]{deRham} there exists a current $\widetilde{p}[\cdot]$ on $n$-form such that for any smooth $n-1$-form $\psi$ with compact support,
    \begin{equation}
      \bf[\psi]=\widetilde{p}[d\psi].
    \end{equation}
    Notice that $n$ forms are isomorphic to functions on $\M$ via the Hodge dual operator $\ast$. As a current is a continuous linear functional on the test function space $C_0^\infty(\M)$ (see e.g. \cite[p.40]{deRham}). Thus, we can find a distribution $p$, such that
    \begin{equation}
      \widetilde{p}[\varphi]=\begin{cases}
            \int_\M p\wedge \varphi,\ \varphi\in C_0^\infty \Omega^{n}(\M),\\
            0,\ \ \varphi\in C_0^\infty \Omega^{k}(\M),\ k\neq n.
        \end{cases}
    \end{equation}
    Thus for any  $\varphi\in C_0^\infty\Omega^1(M)$, there holds
    \begin{align}
      \inner{\bf,\varphi}&=\int_M \bf\wedge \ast\varphi=\int_\M p\wedge d \ast \varphi=(-1)^{n-1}\int_\M p\wedge \ast\ast d\ast \varphi\\
      &=(-1)^{n}\inner{p,d^* \varphi}.  
    \end{align}
    Since $\varphi$ is arbitrary, we can conclude that $\bf=(-1)^n dp$ in the sense of distribution.\par
    Next we show that $p\in L^2(\M)$ provided that $\bf\in H^{-1}\Omega^1(\M)$. Since $\M$ is compact, we can find a finite open cover $\{U_i\}_{i=1}^N$ such that each $U_i$ is diffeomorphic to $\RR^n$. According to J.L. Lions' Lemma (see e.g. \cite[Proposition 1.2]{temam}), there exists a function $p_i\in L^2(U_i)$ such that $dp_i=dp|_{U_i}=\bf|_{U_i}$ and $\norm{p_i}_{L^2(U_1)}\lesssim \norm{\bf}_{H^{-1}\Omega^1(M)}$. Then we have
    \begin{equation}
      \norm{p}_{L^2(U_i)}\lesssim \norm{\bf}_{H^{-1}\Omega^1(M)}+C_i,
    \end{equation}
    where $C_i$ is a constant. The summation of the above inequalities over $i=1,\cdots,N$ gives $p\in L^2(\M)$.
  \end{proof}
  \section{Stationary Stokes system}
  \label{sec:elliptic}
  In this section, we consider the stationary Stokes system with source $\bf\in \Omega^1(M)$ on a Riemannian manifold $\M$ : 
  \begin{equation}
    \label{eq:stokes}
    \begin{cases}
      \Delta_H \bu+dp=\bf,\ \textrm{in }M\\
      d^* \bu=0,\ \textrm{in } M,\\
      \bu|_{\p\M}=0.
    \end{cases}
  \end{equation}
  First we turn to the variational form of \eqref{eq:stokes} via the linear operator 
  \begin{align}
    L: L^2\Omega^1&(\M)\to V\\
    L(\bf)&:= \bu,
  \end{align}
  where $\bu\in V$ satisfies
  \begin{equation}
    \label{eq:eigen_variational}
    \inner{d \bu,d \bw}=\inner{\bf,\bw},\ \forall \bw\in V.
  \end{equation}
  Thanks to \cite[Lemma 2.4.10]{Schwarz}, for $\bv\in V$, there holds
  \begin{equation}
    \norm{\bv}_{H^1\Omega^1(\M)}\lesssim \norm{d\bv}_{L^2\Omega^2(\M)}.
  \end{equation}
  Therefore, the inner product
  \begin{equation}
    \bv,\bw\in V\times V\to \inner{d \bv,d \bw}
  \end{equation}
  is coercive. Then according to Lax-Milgram theorem, \eqref{eq:eigen_variational} admits a unique solution in $V$. Hence $L$ is well-defined.
  We can now show the existence and uniqueness for the solution of \eqref{eq:stokes}.
  \begin{lemma}
    \label{lm:stokes_well_pose}
    Suppose $\bf\in L^2\Omega^1(\M)$, then \eqref{eq:stokes} admits a unique solution $\bu\in V$ and corresponding $p\in L^2(\M)$. Furthermore, there holds
    \begin{equation}
      \label{eq:stokes_existence}
      \norm{\bu}_{H^1\Omega^1(\M)}\lesssim \norm{\bf}_{L^2\Omega^1(\M)}.
    \end{equation}
  \end{lemma}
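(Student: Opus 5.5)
We take $\bu:=L(\bf)\in V$, the Lax--Milgram solution of \eqref{eq:eigen_variational}, and recover the pressure from Lemma \ref{lm:1}.

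\textbf{Energy estimate.} Testing \eqref{eq:eigen_variational} with $\bw=\bu$ gives $\norm{d\bu}^2_{L^2\Omega^2(\M)}=\inner{\bf,\bu}\le \norm{\bf}_{L^2\Omega^1(\M)}\norm{\bu}_{L^2\Omega^1(\M)}$. Combining this with the coercivity bound $\norm{\bu}_{H^1\Omega^1(\M)}\lesssim\norm{d\bu}_{L^2\Omega^2(\M)}$ from \cite[Lemma 2.4.10]{Schwarz} yields \eqref{eq:stokes_existence} after dividing by $\norm{\bu}_{H^1\Omega^1(\M)}$.

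\textbf{Construction of the pressure.} Consider the functional
\[
\mathbf{F}(\bw):=\inner{\bf,\bw}-\inner{d\bu,d\bw},\qquad \bw\in H^1_0\Omega^1(\M).
\]
Since $\bu\in H^1$ and $\bf\in L^2$, $\mathbf{F}$ is bounded on $H^1_0\Omega^1(\M)$, so $\mathbf{F}\in H^{-1}\Omega^1(\M)$. In the distributional sense, $\mathbf{F}=\bf-d^*d\bu$, and $d^*d\bu=-\Delta_H\bu$ because $d^*\bu=0$. By \eqref{eq:eigen_variational}, $\mathbf{F}$ vanishes on $V\supset W$. Lemma \ref{lm:1} then gives $p\in L^2(\M)$ with $dp=\mathbf{F}=\bf+\Delta_H\bu$ in the sense of distributions. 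Up to the sign convention of the statement (replace $p$ by $-p$, or track the sign $(-1)^n$ appearing in the proof of Lemma \ref{lm:1}), this is $\Delta_H\bu+dp=\bf$. The conditions $d^*\bu=0$ and $\bu|_{\p\M}=0$ hold because $\bu\in V$. Hence $(\bu,p)$ solves \eqref{eq:stokes}.

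\textbf{Uniqueness.} Let $(\bu_1,p_1)$ and $(\bu_2,p_2)$ be two solutions with $\bu_i\in V$, and set $\bu=\bu_1-\bu_2$. Pairing the difference equation with $\bw\in W$ and integrating by parts, the pressure term drops out since $\inner{dp,\bw}=\inner{p,d^*\bw}=0$. Thus $\inner{d\bu,d\bw}=0$ for all $\bw\in W$, and by density of $W$ in $V$ this holds for all $\bw\in V$. Taking $\bw=\bu$ and using coercivity gives $\bu=0$. It follows that $d(p_1-p_2)=0$, so by connectedness $p$ is unique up to a constant.

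\textbf{Main obstacle.} The key point is making the pairing in Lemma \ref{lm:1} consistent with the variational identity. This requires that $\mathbf{F}$ is a genuine element of $H^{-1}\Omega^1(\M)$, acting on $H^1_0$, that vanishes on $W$, and that $V$ is the $H^1_0$-closure of $W$ (which was recalled earlier in the paper). Once these hold, everything else is Lax--Milgram and bookkeeping.
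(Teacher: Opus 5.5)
Your argument is correct and follows the paper's route. You take $\bu=L(\bf)$ from Lax--Milgram, obtain the pressure by applying Lemma \ref{lm:1} to $\bf-d^*d\bu$, and test with $\bw=\bu$ for the estimate. Writing this as an explicit $H^{-1}$ functional on $H^1_0\Omega^1(\M)$ is cleaner than the paper's $V'$ pairing, and you also spell out the uniqueness step that the paper leaves to Lax--Milgram.

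One small correction concerns the sign. The variational identity encodes $-\Delta_H\bu+dp=\bf$, so to match \eqref{eq:stokes} you must flip the velocity, i.e. take $(-L(\bf),p)$. Replacing $p$ by $-p$ only gives $\Delta_H\bu+dp=-\bf$; the paper's own step $dp=\bf-\Delta_H\bu$ contains the same sign slip.
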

  \begin{proof}
   Write $\bu=L(\bf)$. Since $d^*\bu\equiv0$, 
   $-\Delta_H\bu=d^*d\bu$, and we have
  \begin{equation}
    \inner{-\Delta_H\bu-\bf,\bw}_{V',V}=\inner{d^*d\bu-\bf,\bw}_{V',V}=0,\ \forall \bw\in V.
  \end{equation}
  According to Lemma \ref{lm:1}, there exists $p\in L^2(\M)$ such that $dp=\bf-\Delta_H\bu$.
  Furthermore, taking $\bw=\bu$ in \eqref{eq:eigen_variational}, we have
  \begin{align}
    \norm{\bu}_{H^1\Omega^1(\M)}^2&\lesssim \inner{d\bu,d\bu}=\inner{\bf,\bu}\leq \norm{\bf}_{L^2\Omega^1(\M)}\norm{\bu}_{L^2\Omega^1(\M)}\\
    &\leq \frac{1}{2\varepsilon}\norm{\bf}_{L^2\Omega^1(\M)}^2+\frac{\varepsilon}{2}\norm{\bu}_{L^2\Omega^1(\M)}^2.
  \end{align}
  Let $\varepsilon$ be small enough, then \eqref{eq:stokes_existence} follows immediately.
  \end{proof}
  We recall the following regularity result.
  \begin{lemma}
    \label{lm:stokes_regularity} (\cite{Taylor_PDE_3}, Appendix 17.A)
    Suppose $\bf\in L^2\Omega^1(\M)$, $\bu\in V$ and $p\in L^2(\M)$ solves \eqref{eq:stokes}, then $\bu\in H^2\Omega^1(\M)$ and $p\in H^1(\M)$.
  \end{lemma}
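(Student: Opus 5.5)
The plan is to reduce to local Euclidean regularity for the Stokes system via a partition of unity, freezing coefficients in coordinate charts. Since Lemma \ref{lm:stokes_regularity} is quoted from Taylor, the argument is the standard elliptic-system one.

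\emph{Step 1: set-up.} First, note that by Lemma \ref{lm:stokes_well_pose} we already have $\bu\in V\subset H^1_0\Omega^1(\M)$ and $p\in L^2(\M)$. We regard \eqref{eq:stokes} as a mixed-order system $\mathcal A(\bu,p)=(\bf,0)$ with Dirichlet boundary condition on $\bu$. In local coordinates the principal part of $\Delta_H$ is $g^{kl}\partial_k\partial_l$ acting diagonally on the components $u_i$. The principal part of $d^*\bu$ is $-g^{ij}\partial_i u_j$ (up to sign), and $dp$ has principal part $\partial_i p$.

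\emph{Step 2: ellipticity.} With Douglis--Nirenberg weights $s=(0,\dots,0,-1)$ and $t=(2,\dots,2,1)$, this system is elliptic. The Dirichlet condition on $\bu$ satisfies the Lopatinskii--Shapiro (complementing) condition. To see this, freeze coefficients at a boundary point and choose coordinates with $g_{ij}=\delta_{ij}$ there. The frozen problem is then exactly the Euclidean Stokes system in a half-space, for which the complementing condition is classical (Agmon--Douglis--Nirenberg, Solonnikov).

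\emph{Step 3: interior and tangential regularity.} For $\rho_\alpha$ from the partition of unity, the localized pair $(\rho_\alpha\bu,\rho_\alpha p)$ solves a Stokes system with modified data
\[
\rho_\alpha\bf+[\Delta_H,\rho_\alpha]\bu-(d\rho_\alpha)p \in L^2,
\qquad
d^*(\rho_\alpha\bu)=-\langle d\rho_\alpha,\bu\rangle \in H^1 .
\]
Both lie in the right spaces because $\bu\in H^1$ and $p\in L^2$. So it suffices to treat the Stokes system with a nonzero divergence right-hand side in $H^1$. In a boundary chart we apply tangential difference quotients $D_h$ to the variational formulation \eqref{eq:eigen_variational}, extended to include $p$. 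The commutators with the variable coefficients are lower order. Testing with $D_{-h}D_h(\rho\bu)$, corrected by a Bogovskii-type map to restore the divergence constraint, yields $\partial_{\tan}\bu\in H^1$. Then the equation $dp=\bf-\Delta_H\bu$, combined with the Lions-lemma estimate used in Lemma \ref{lm:1}, gives $\partial_{\tan}p\in L^2$.

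\emph{Step 4: normal derivatives.} Finally we recover the normal derivatives. The divergence equation expresses $\partial_n u_n$ through tangential derivatives, giving $\partial_n^2 u_n\in L^2$. The normal component of the momentum equation then gives $\partial_n p\in L^2$. The tangential components give $\partial_n^2 u_{\tan}\in L^2$.

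The main obstacle is the divergence constraint in the difference-quotient step. Difference quotients of a divergence-free form are not divergence-free when the metric varies. I would handle this either by a Bogovskii correction in the chart or, more cleanly, by invoking the ADN a priori estimate for the frozen-coefficient system together with a perturbation argument in small charts. This is the route of Taylor, Appendix 17.A, which the paper cites.
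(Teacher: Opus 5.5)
Your proposal is correct and follows essentially the paper's route. The paper gives no argument of its own and simply cites Taylor, Appendix 17.A, i.e.\ the standard elliptic regularity theory for the Stokes system (Douglis--Nirenberg ellipticity, Lopatinskii--Shapiro condition via the frozen half-space problem, tangential regularity, then normal derivatives from the divergence and momentum equations), and that is what you sketch. One bookkeeping caveat for the difference-quotient step: apply the Bogovskii correction to $D_h(\rho\bu)$, whose divergence is uniformly bounded in $L^2$, rather than to $D_{-h}D_h(\rho\bu)$, whose divergence is only uniformly bounded in $H^{-1}$ (so its lift would not be uniformly bounded in $H^1$); alternatively, keep the pressure in the weak formulation and control $\|D_h(\rho p)\|_{L^2}$ by the Ne\v{c}as--Lions inequality with an $\varepsilon$-absorption.
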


  Denote the kernel of $L$ by $N$, then $\bf\in N$ if and only if 
  \begin{equation}
    \inner{\bf, \bw}=0,\ \forall \bw\in V.
  \end{equation}
  Due to Lemma \ref{lm:1} if $\bf\in N$, there exists $p\in L^2(\M)$ such that $dp=\bf\in L^2\Omega^1(\M)$. On the other hand if $p\in H^1\Omega^1(\M)$, then $dp\in N$.
  Therefore,
  \begin{equation}
    N=\{dp\mid p\in H^1\Omega^1(\M)\}.
  \end{equation}

  For $\bf,\bh\in L^2\Omega^1(\M)$, we write $\bv^\bf=L(\bf)$ and $\bv^\bh=L(\bh)$. There holds
  \begin{equation}
    \inner{L\bf,\bh}=\inner{\bv^\bf,\bh}=\inner{d\bv^\bf,d\bv^\bh}=\inner{\bf,\bv^\bh}=\inner{\bf, L\bh},
  \end{equation}
  thus $L$ is self-adjoint. Lemma \ref{lm:stokes_well_pose} shows that $L$ is bounded. Furthermore, since $R(L)\subset V\subset H^1_0\Omega^1(\M)$ and $H^1_0\Omega^1(\M)$ is compactly embedded in $L^2\Omega^1(\M)$ \cite[Theorem 1.3.6]{Schwarz}, $L$ is a bounded compact self-adjoint operator on $L^2\Omega^1(\M)$. Let $\{\frac{1}{\lambda_j}\}_{j=1}^\infty$ be the decreasing sequence of eigenvalues of $L$.
  According to Riesz-Schauder theorem (see e.g. \cite[Theorem 9.10]{Hislop}) and $N\neq \emptyset$, $0$ is the only accumulation point of $\{\frac{1}{\lambda_j}\}_{j=1}^\infty$ and thus the sequence $\lambda_j\to \infty$ as $j\to\infty$. Denote the multiplicity of $\frac{1}{\lambda_j},\ j\geq 1$, by $m_j$. Let $\{\bphi_{jk}\},\ 1\leq k\leq m_j$, be the set of orthonormal eigenfunctions corresponding to $\lambda_j$, then we have $\bphi_{jk}\in V$ and
  \begin{equation}
    \inner{-\Delta_H\bphi_{jk},\bv}=\inner{d\bphi_{jk},d\bv}=\lambda_j\inner{\bphi_{jk},\bv},\ \forall \bv\in V.
  \end{equation}
  According to Lemma \ref{lm:1}, there exists $p_{jk}\in L^2(\M)$ satisfying
  \begin{equation}
    -\Delta_H \bphi_{jk}+dp_{jk}=\lambda_j \bphi_{jk}.
  \end{equation}
  By Lemma \ref{lm:stokes_regularity}, we have $p_{j,k}\in H^1(\M)$ and $\bphi_{jk}\in H^2\Omega^1(\M)$.\par
  We close this section by a linear independence result for eigenfunctions corresponding to the same eigenvalue.
  \begin{lemma}
    \label{lm:1_1}
    Let $\omega$ be a nonempty open subset of $\M$. For any fixed $j\geq 1$, the functions $\{\bphi_{jk}|_\omega\}_{1\leq k\leq m_j}$ are linearly independent.
  \end{lemma}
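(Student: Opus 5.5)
We argue by contradiction via unique continuation for the Stokes system. Suppose $\sum_{k=1}^{m_j} c_k \bphi_{jk}|_\omega = 0$ for some constants $c_k$, not all zero, and set
\[
\bphi := \sum_{k} c_k\bphi_{jk}, \qquad p := \sum_k c_k p_{jk}.
\]
Then $\bphi\in V\cap H^2\Omega^1(\M)$, $p\in H^1(\M)$, and
\[
-\Delta_H\bphi + dp = \lambda_j\bphi, \qquad d^*\bphi = 0 \quad \text{in } \M .
\]
Moreover $\bphi = 0$ on $\omega$. Since the $\bphi_{jk}$ are orthonormal, $\bphi\neq 0$ in $L^2\Omega^1(\M)$. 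The goal is therefore a unique continuation statement: a solution of this eigen-Stokes system vanishing on a nonempty open set vanishes identically on the connected manifold $\M$.

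\textbf{Step 1: reduce to a statement about $p$.} On $\omega$ we have $\bphi=0$, so $dp=0$ there. Hence $p$ is constant on each connected component of $\omega$; shrinking $\omega$ to a small ball, we may assume $p=c$ on $\omega$. Next apply $d^*$ to the equation. Since $d^*$ commutes with $\Delta_H$ ($d^*\Delta_H = -d^*dd^* = \Delta_H d^*$ on forms) and $d^*\bphi=0$, we get $d^*dp = 0$ in $\M$, first in the distributional sense and then, by elliptic regularity, classically in the interior. So $p$ is harmonic. Unique continuation for the scalar Laplace–Beltrami operator, applied to $p-c$, then gives $p\equiv c$ on $\M$, and hence $dp\equiv 0$.

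\textbf{Step 2: unique continuation for $\bphi$.} With $dp\equiv0$, the form $\bphi$ satisfies $(-\Delta_H-\lambda_j)\bphi=0$ in the interior of $\M$. In local coordinates this is a second-order elliptic system with diagonal principal part $g^{kl}\p_k\p_l\,\mathrm{Id}$ and lower-order couplings, as in the coordinate formula for $\Delta_H$ recalled in the preliminaries. Systems of this type satisfy the weak unique continuation property, by Aronszajn's theorem or the corresponding Carleman estimates, which apply to systems with scalar (Laplacian) principal part. Since $\bphi$ vanishes on the open set $\omega$, it vanishes on the connected interior of $\M$, hence on $\M$. This contradicts $\|\bphi\|_{L^2\Omega^1(\M)}\neq0$, so all $c_k=0$.

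\textbf{Expected obstacle.} The main point requiring care is the regularity and the region where the unique continuation arguments are applied. We only know $\bphi\in H^2$ and $p\in H^1$. Interior elliptic regularity for the Stokes system (or bootstrapping the harmonicity of $p$ and then the system for $\bphi$) shows that both are smooth in the interior, so Aronszajn's theorem applies in the connected interior of $\M$. If $\omega$ touches $\p\M$, we replace it by an interior open subset. Connectedness of $\M^\circ$ follows from connectedness of $\M$, since $\M$ is a manifold with boundary. We must also check that $\bphi\not\equiv0$ forces a contradiction regardless of whether the pressure constant is zero; this is automatic, since only $dp$ enters.
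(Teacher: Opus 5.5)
Your proof is correct and follows the paper's argument. The paper applies $d^*$, using $d^*d^*=0$, to show the pressure is harmonic. It then uses $dp=0$ on $\omega$ and unique continuation for harmonic functions to get $dp\equiv 0$ on $\M$, and concludes $\bphi\equiv 0$ by unique continuation for $-\Delta_H-\lambda_j$, which contradicts orthonormality. Your remarks on interior regularity, on moving $\omega$ away from $\p\M$, and on connectedness of the interior supply details the paper leaves implicit.
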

  \begin{proof}
    Suppose there exists $\{c_k\}_{1\leq k\leq m_j}\in \RR^{m_j}$ such that 
    \begin{equation}
      \sum_{i=k}^{m_j}c_i \bphi_{jk}=0 \text{ in }\omega.
    \end{equation}
    Write $\bm{\psi}=\sum_{i=k}^{m_j}c_i \bphi_{jk}$ and $q=\sum_{i=k}^{m_j}c_i p_{jk}$, then $\bm{\psi}\in V$ and it satisfies
    \begin{equation}
      d^* d \bm{\psi}+d q =-\Delta_H\bm{\psi}+dq= \lambda_j \bm{\psi}.
    \end{equation} 
  Recalling that 
  $d^* d^*=\ast d \ast\ast d\ast=(-1)^{n-1}\ast d d \ast=0$, we observe that
    \begin{equation}
      -\Delta_g q =  d^* d q = d^* d^* d \bm{\psi}+\lambda_j d^* \bm{\psi}=0.
    \end{equation}
    Moreover $\bm{\psi}|_\omega=0$ gives that $d q|_\omega=0$. According to the unique continuation for harmonic functions, there holds $d q=0$ in $\Omega$. Thus we have
    \begin{equation}
      \begin{cases}
        -d^* d \bm{\psi}=\lambda_j \bm{\psi},\\
        d^* \bm{\psi}=0,\ \bm{\psi}|_\omega=0.
      \end{cases}
    \end{equation}
    Leveraging the unique continuation (see e.g.\cite[Theorem 3.5.2]{Isakov}, \cite{EINT02}), we can obtain that $\bm{\psi}=0$. Hence $\{\bphi_{jk}|_\omega\}_{1\leq k\leq m_j}$ are linearly independent.
  \end{proof}

\section{Nonstationnary Navier-Stokes system and linearization}
  \label{sec:Nonlinear_NS}
  
  
For fixed $\alpha\in(0,1)$,
we set $\cC_\alpha(M):=C^{1,\frac{\alpha}{2}}([0,T]; C^{2,\alpha}\Omega^1(\M))$ to simplify notation and we consider this space with its usual norm $\norm{\cdot}_{\cC_\alpha(M)}$(see e.g. \cite{Sol1,Sol2}). 

Then we consider the initial boundary value problem associated with the geometric Navier-Stokes equation \eqref{eq:NS1} with a fixed $\bf\in \cC_\alpha(M)$.
 Since $\bu=0$ on $[0,T]\times\p\M$, we deduce that $R(\bu)=0$ on $[0,T]\times\p\M$. Moreover, using the fact that $d^*\bu\equiv0$, we deduce that $d^*\Delta_H\bu=-d^*d^*d\bu\equiv0$. Following \cite[pp.13]{Sol1}, we can  represent the pressure $p$ as a solution of the following boundary value problem
\begin{equation}
    \label{eq:p}
    \begin{cases}
      -\Delta_g p=d^*\bf-d^*R(\bu),&\ \textrm{in }[0,T]\times\M,\\
      \p_\nu p|_{(0,T)\times\p\M}=(\bf+\Delta_H\bu)\nu,&\ \textrm{on }[0,T]\times\partial\M,
    \end{cases}
  \end{equation}
  where $\nu$ denotes the outward unit normal vector to $\p\M$.
Assuming that $\bf\in \mathbb B_r$, with $r>0$ sufficiently small, we can prove the well-posedness of \eqref{eq:NS1} as follows.
  \begin{theorem}
    \label{thm:NS}
There exists $\delta>0$, depending on $(\M,g)$ and $T$, such that for all $\bf\in \mathbb B_\delta$, problem \eqref{eq:NS1} admits a unique solution $(\bu,p)=(\bu_\bf,p_\bf)$, $p$ being unique up to a map depending only on $t\in[0,T]$, with $u_\bf\in \mathcal C_\alpha$, $dp_\bf\in C^{\frac{\alpha}{2}}([0,T];C^{\alpha}\Omega^1(\M))$ and with $p_\bf$ a solution of \eqref{eq:p}. Moreover, the map $\mathbb B_\delta\ni\bf\mapsto (u_\bf,dp_\bf)\in \cC_\alpha(M)\times L^2([0,T]; L^2\Omega^1(\M))$ is $C^1$.
  \end{theorem}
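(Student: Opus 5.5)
We will obtain Theorem \ref{thm:NS} from the implicit function theorem applied to a map between Banach spaces built from the linear Stokes problem in H\"older spaces. The first step is the linear theory. For the non-stationary Stokes system
\begin{equation}
\label{eq:prop_lin}
\p_t\bv-\Delta_H\bv+dq=\bh,\quad d^*\bv=0,\quad \bv|_{[0,T]\times\p\M}=0,\quad \bv(0)=0,
\end{equation}
we need a Schauder-type estimate of the form
\begin{equation}
\norm{\bv}_{C^{1,\frac{\alpha}{2}}([0,T];C^{2,\alpha}\Omega^1(\M))}+\norm{dq}_{C^{\frac{\alpha}{2}}([0,T];C^{\alpha}\Omega^1(\M))}\lesssim \norm{\bh}_{X},
\end{equation}
where $X$ is a suitable H\"older space of sources vanishing near $t=0$, so that compatibility conditions hold. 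We will take $q$ as a solution of the Neumann problem analogous to \eqref{eq:p}.

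We would obtain this by localizing with the atlas $\{U_\alpha,\varphi_\alpha\}$ and a partition of unity. In each chart \eqref{eq:prop_lin} becomes a variable-coefficient Stokes system whose principal part is $g^{kl}\p_k\p_l$, plus lower-order terms coming from the formula for $\Delta_H$ in Section 2. We then freeze coefficients and use Solonnikov's estimates \cite{Sol1,Sol2} for the Euclidean Stokes system, with the pressure represented through \eqref{eq:p}. Lower-order terms and commutators are absorbed for small $T$ by the usual perturbation argument, and the estimate is then iterated in time to reach arbitrary $T$. Uniqueness of $\bv$ follows from the $L^2$ energy identity: test with $\bv$ and use $\inner{-\Delta_H\bv,\bv}=\norm{d\bv}^2$ for $\bv\in V$. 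The pressure is then determined up to a function of $t$ by Lemma \ref{lm:1}.

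With the linear solution operator $S:\bh\mapsto \bv$ in hand, define
\begin{equation}
F(\bf,\bu):=\bu-S\bigl(\bf-P R(\bu)\bigr),\qquad \bu\in \cC_\alpha(M)\cap\{d^*\bu=0,\ \bu|_{\p\M}=0,\ \bu(0)=0\},
\end{equation}
where $P$ denotes projecting the gradient part into the pressure. Equivalently, we keep $R(\bu)$ as a source and let the linear pressure absorb $d$-exact parts. The nonlinearity $R(\bu)=(\nabla_{\bu_*}\bu_*)^*$ is bilinear with one derivative. It therefore maps $\cC_\alpha$ into $C^{\frac{\alpha}{2}}([0,T];C^{1,\alpha})$, which sits inside the source space, and it is smooth (quadratic) there with $DR(0)=0$. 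Hence $F$ is $C^1$, $F(0,0)=0$ and $D_\bu F(0,0)=I$. The implicit function theorem gives $\delta>0$ and a $C^1$ map $\bf\mapsto\bu_\bf$ on $\mathbb B_\delta$, together with $dp_\bf$ from the linear estimate. The $C^1$ dependence of $dp_\bf$ into $L^2([0,T];L^2\Omega^1(\M))$ follows because $dp$ depends linearly and continuously on $\bf-R(\bu_\bf)$. Uniqueness in the class $\cC_\alpha$ for small data comes from the contraction argument, or alternatively from an energy estimate for the difference of two solutions, which is valid globally since both solutions are bounded in $C^1$.

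The main obstacle is the linear Schauder estimate on a manifold with the divergence constraint. Freezing coefficients destroys the exact condition $d^*\bv=0$, so the localized problem has a nonzero divergence. This must be handled with Solonnikov's estimates for the Stokes system with prescribed divergence, requiring compatibility $\p_t(d^*\text{-part})$ bounds. A further delicate point is matching the time regularity $C^{1,\frac{\alpha}{2}}$ claimed for $\bu$, which is higher than the parabolic $C^{1+\frac{\alpha}{2}}$ scaling naturally gives from $C^{\alpha/2}$ sources. We would first attempt this by differentiating in time and using that $\bf\in\cC_\alpha$ is itself $C^{1,\frac\alpha2}$ in time and supported away from $t=0$.
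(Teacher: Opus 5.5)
Your proposal follows essentially the same route as the paper: an implicit-function-theorem argument built on Solonnikov's Schauder theory for the linear Stokes problem (assumed to extend to the manifold), with the pressure represented through the Neumann problem \eqref{eq:p}, and full uniqueness in $\cC_\alpha(M)$ obtained from an $L^2$ energy/Gronwall estimate for the difference of two solutions. Your fixed-point map $\bu-S(\bf-R(\bu))$ with $D_\bu F(0,0)=I$ is a repackaging of the paper's map $\mathcal G(\bf,\bu)$, whose linearization $\bv\mapsto\p_t\bv-\Delta_H\bv+dw_\bv$ is asserted to be an isomorphism $\mathcal I\to\mathcal K$; the time-regularity mismatch you flag for $C^{1,\frac{\alpha}{2}}([0,T];C^{2,\alpha}\Omega^1(\M))$ is not addressed in the paper, which simply invokes Solonnikov's theorem at that step.
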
  \begin{proof}
 \textbf{ Existence of solutions}:
  Let us consider the following spaces
  $$\mathcal I:=\{v\in  \cC_\alpha(M):\ d^*v=0,\ v|_{\{0\}\times \M}=\partial_tv|_{\{0\}\times \M}=0,\ v|_{[0,T]\times\p\M}=0\},$$
  $$\mathcal J:=\{\bg\in \cC_\alpha(M):\ \bg|_{\{0\}\times \M}=0\}.$$
  We define the map $\mathcal G$  on $\mathcal J\times\mathcal I$ by
$$\mathcal G(\bf,\bu)=\p_t\bu-\Delta_H\bu+R(\bu) + d p_{\bf,\bu}-\bf,\quad (\bf,\bu)\in\mathcal J\times\mathcal I,$$
  where $p_{\bf,\bu}$ is the  solution of \eqref{eq:p} up to a map depending only on $t\in[0,T]$. 
  We can decompose $d p_{\bf,\bu}$ into two terms $d p_{\bf,\bu}=dv_{\bf,\bu}+dw_\bu$, where $v_{\bf,\bu}$ and $w_\bu$ solve the following boundary value problems
  \begin{equation}
    \label{eq:v1}
    \begin{cases}
      -\Delta_g v_{\bf,\bu}=d^*\bf-d^*R(\bu),&\ \textrm{in }[0,T]\times\M,\\
      \p_\nu v_{\bf,\bu}|_{(0,T)\times\p\M}=\bf\nu,&\ \textrm{on }[0,T]\times\partial\M,
    \end{cases}
  \end{equation}
  \begin{equation}
    \label{eq:v2}
    \begin{cases}
      -\Delta_g w_\bu=0,&\ \textrm{in }[0,T]\times\M,\\
      \p_\nu w_\bu|_{(0,T)\times\p\M}=\Delta_H \bu\nu,&\ \textrm{on }[0,T]\times\partial\M.
    \end{cases}
  \end{equation}
Let $\bu\in\mathcal I$ and $\bf\in\mathcal J$. Recalling that $\bf,\bu\in \cC_\alpha(M)$, we deduce that $d^*\bf-d^*R(\bu)\in C^{\frac{\alpha}{2}}([0,T]; C^\alpha(\M))$ and $\bf\nu\in C^{1,\frac{\alpha}{2}}([0,T];C^{2,\alpha}\partial\M)$. Then,  
by elliptic regularity (see e.g. \cite[Theorem 3.17]{Gio}, \cite[Lemma 2.4.2.2]{Gr}, \cite[Theorem 6.30]{GT} and the remarks at the beginning of page 128 of \cite{GT}), we obtain $v_{\bf,\bu}\in C^{\frac{\alpha}{2}}([0,T];C^1(\M))\cap C([0,T]; C^{2,\alpha}(\M))$ which implies that $dv_{\bf,\bu}\in C^{\frac{\alpha}{2}}([0,T];C^\alpha\Omega^1(\M))$. Let us consider $N$ a Green function for the Neumann problem on $\M$, and observe that $d w_\bu$ can be defined by
\begin{equation}\label{rep}\begin{aligned}
d w_\bu(x)&=d_x\left(\int_\M N(x,y)d^*\Delta_H\bu(y) dV_g(y)-\int_{\partial \M} N(x,y)\Delta_H\bu\nu(y) d\sigma_g(y)\right)\\
&=-d_x\left(\int_\M\left\langle d_y N(x,y),\Delta_H \bu(y) \right\rangle dV_g(y)\right).\end{aligned}\end{equation}
Therefore, combining the arguments of \cite[Proposition 2.4]{Sol1} and \cite[Proposition 2.6]{Sol1} with the fact that $\Delta_H\bu \in C^{\frac{\alpha}{2}}([0,T];C^{\alpha}\Omega^1(\M))$, we can show that $dw_\bu\in C^{\frac{\alpha}{2}}([0,T];C^{\alpha}\Omega^1(\M))$. Note that the results of  \cite[Proposition 2.4]{Sol1} and \cite[Proposition 2.6]{Sol1} have been stated in an open set of $\mathbb R^n$, but the proofs remain valid on a smooth Riemannian manifold by using classical properties of Green functions on manifolds  (see e.g. \cite[Theorem 4.17]{Th}).  Thus, the map $\mathcal G$ takes values in
$$\mathcal K:=\{\bg\in C^{\frac{\alpha}{2}}([0,T];C^{\alpha}\Omega^1(\M)):\ d^*\bg=0,\ \bg\nu|_{[0,T]\times\partial\M}=0\}$$ 
and one can easily check that $\mathcal G$ is $C^1$ from $\mathcal J\times\mathcal I$ to $\mathcal K$. Moreover, we have
$$\partial_\bu\mathcal G(0,0)\bv=\p_t\bv-\Delta_H\bv + d w_\bv,\quad \bv\in\mathcal I,$$
where $w_\bv$ is the solution, up to a map depending only on $t\in[0,T]$, of \eqref{eq:v2} for $\bu=\bv$. Applying again \cite[Proposition 2.6]{Sol1} we deduce that the map $\bv\mapsto dw_\bv$ is continuous from $\mathcal I$ to $C^{\frac{\alpha}{2}}([0,T];C^{\alpha}\Omega^1(\M))$. Combining this with \cite[Theorem 1.1]{Sol1} (see also \cite{Sol2}), we deduce that $\partial_\bu\mathcal G(0,0)$ is an isomorphism from $\mathcal I$ to $\mathcal K$. Observe again that the result of  \cite[Theorem 1.1]{Sol1} has been stated in an open set of $\mathbb R^n$ but, repeating the arguments for \cite[Proposition 2.4]{Sol1} and \cite[Proposition 2.6]{Sol1}, the proof can be extended to a smooth Riemannian manifold. Note also that, since we consider solutions with homogeneous Dirichlet boundary conditions, the assumptions involving the fundamental solution of the Laplacian in \cite[Theorem 1.1]{Sol1} will not be required here.

Now, the implicit function theorem   implies that there exists $\delta>0$  depending on  $(\M,g)$ and $T$  and a $\mathcal C^1$ map $\psi$ from $ B_{\delta}$ to $\mathcal I$, such that $\psi(0)=0$ and, for all $\bf\in  B_{\delta}$, we have 
$\mathcal G(\bf,\psi(\bf))=(0,0)$. For all $\bf\in \mathbb B_\delta$, fixing $u_\bf=\psi(\bf)$ and $p_\bf=p_{\bf,\bu_\bf}$ we obtain a solution of \eqref{eq:NS1}. Finally, using the fact that the map $\bu\mapsto dp_\bu$ is $C^1$ from $\mathcal I$ to $L^2([0,T];L^2\Omega^1(\M))$ we deduce that the map $\mathbb B_\delta\ni\bf\mapsto (u_\bf,dp_\bf)\in \cC_\alpha(M)\times L^2([0,T];L^2\Omega^1(\M))$ is $C^1$.\\

\textbf{Uniqueness of solutions} Fix $\bf\in\mathbb B_\delta$ and
let us consider $(\bu^j,p^j)$, $j=1,2$, solving \eqref{eq:NS1}  with $\bu^j\in \cC_\alpha(M)$ and  $p^j\in L^2([0,T];H^1(\M))$ a solution of \eqref{eq:p} with $u=\bu^j$. Fix $\bu=\bu^1-\bu^2$, $p=p^1-p^2$, and notice that $(\bu,p)$ satisfies the following conditions
\begin{equation}
    \label{eq:NS2}
    \begin{cases}
      \p_t\bu-\Delta_H\bu+\left(\nabla_{\bu^1_*}\bu_*+\nabla_{\bu_*}\bu_*^2\right)^*+ d p=0,\ \text{in } [0,T]\times \M,\\
      d^* \bu=0,\ \text{in } [0,T]\times \M,\\
      \bu|_{[0,T]\times\p\M}=0,\\
      \bu=0,\ p=0,\ \text{in }\{0\}\times\M.
    \end{cases}
  \end{equation}
Taking the scalar product with $\bu$ and integrating by parts, for all $t\in[0,T]$ we obtain
$$\begin{aligned}&\frac{1}{2}\partial_t\|\bu(t)\|^2+\|d\bu(t)\|^2+\left\langle\left(\nabla_{\bu^1_*}\bu_*+\nabla_{\bu_*}\bu_*^2\right)^*(t),\bu(t)\right\rangle\\
&=\left\langle \p_t\bu(t)-\Delta_H\bu(t)+\left(\nabla_{\bu^1_*}\bu_*+\nabla_{\bu_*}\bu_*^2\right)^*(t)+ d p(t),\bu(t)\right\rangle=0.\end{aligned}$$
Moreover, using the fact that 
  $\bu^j\in \cC_\alpha(M)$, $j=1,2$, we can find $C>0$ depending only on $\|\bu^j\|_{\cC_\alpha(M)}$, $j=1,2$ and $(\M,g)$ such that, for all $t\in[0,T]$, we have
$$\begin{aligned}\left|\left\langle\left(\nabla_{\bu^1_*}\bu_*+\nabla_{\bu_*}\bu_*^2\right)^*(t),\bu(t)\right\rangle\right|&\leq C(\|d\bu(t)\|\|\bu(t)\|+\|\bu(t)\|^2)\\
&\leq \frac 1 2\|d\bu(t)\|^2+(2C^2+C)\|\bu(t)\|^2.\end{aligned}$$
It follows that
$$\partial_t\|\bu(t)\|^2\leq (4C^2+2C)\|\bu(t)\|^2,\quad t\in[0,T],$$
  and, recalling that $\bu\in \cC_\alpha(M)$, the Gronwall's inequality implies that $\|\bu(t)\|=0$, $t\in[0,T]$, which implies that $u\equiv0$. It follows that $u^1=u^2$ and $dp\equiv0$. Using the fact that $\M$ is connected, we deduce that the map $p$ is independent of the variable $x\in\M$ and $p^1$ coincides with $p^2$ up to a function depending only on the variable $t\in[0,T]$.
  \end{proof}
  
  \section{Recovery of local spectral projections for the Stokes system}
  \label{sec:parabolic}
The goal of this section is to prove that the local source-to-final value map \eqref{eq:def_N} for the Navier-Stokes system determines uniquely the local spectral projections of the Stokes operator. This proof will be decomposed into two steps. First, we recover the local source-to-final value map of the Stokes system by a linearization method. Then, we recover the local spectral projections of the Stokes operator by applying analytical properties and representation of solutions of Stokes systems.

  In this section, we consider two compact smooth manifolds $(\M_1,g_1)$ and $(\M_2,g_2)$ and nonempty open subsets $\omega_i\subset \M_i$ for $i=1,2$.\par
 
  For $i=1,2$, let $(\lambda_{(i),j},\{\bphi_{(i),jk}\}_{k=1}^{m_{(i),j}})_{j=1}^\infty$  be the spectral data on $(\M_i,g_i)$. Then from now on we denote by $\{P_{(i),j}:\ j\in\mathbb N\}$ the \textit{local spectral projections} of the Stokes operator, with Dirichlet boundary conditions, on $(\M_i,g_i)$ restricted to the set $\omega_i$, i.e.
  \begin{align}
    P_{(i),j}:& L^2\Omega^1(\omega_i)\to L^2\Omega^1(\omega_i),\\
    P_{(i),j}(\bf)&=\sum_{k=1}^{m_{(i),j}}\inner{\bf,\bphi_{(i),jk}|_{\omega_i}}\bphi_{(i),jk}|_{\omega_i}. 
  \end{align}
  
  According to Lemma \ref{lm:1_1}, $P_{(i),j}\neq 0$ for all $j\geq 1$ and $i=1,2$.
  The goal of this section is to show that condition \eqref{NN}  implies that \begin{equation}\label{pr}(\lambda_{(1),j},P_{(1),j}\bPhi^\ast)_{j=1}^\infty=(\lambda_{(2),j},\bPhi^\ast P_{(2),j})_{j=1}^\infty.\end{equation}

\subsection{Recovery of the local source-to-final value  operator of the Stokes system}

 We will show that  the local source-to-final value  operator  for the linear Stokes system  can be determined from the local source-to-final value  operator of the Navier-Stokes system. We start by recalling the following well-posedness result for solutions of the following initial boundary value problem \begin{equation}
    \label{eq:problem_para}
    \begin{cases}
      (\p_t-\Delta_H)\bu + d p=\bf,\ \text{in } [0,T]\times \M_i,\\
      d^* \bu=0,\ \text{in } [0,T]\times \M_i,\\
      \bu|_{[0,T]\times\p\M_i}=0,\\
      \bu=0,\ p=0,\ \text{in }(-\infty,0)\times\M_i.
    \end{cases}
  \end{equation}

  \begin{lemma}
    \label{lm:fourier_1}
    Suppose $\bf\in C_0^\infty((0,T);C_0^\infty \Omega^1(\omega))$, then \eqref{eq:problem_para} has a unique solution $\bu\in C^1([0,T],H)\cap H^1([0,T],V)$. Moreover, $\bu$ can be written in the following Fourier expansion:
    \begin{equation}
      \bu = \sum_{j=1}^\infty \sum_{k=1}^{m_j}\alpha_{jk}(t)\bphi_{jk} \text{ in }L^2\Omega^1(\M),
    \end{equation}
    where $\alpha_{jk}(t)=\int_0^t\inner{\bphi_{jk},\bf(\tau)}e^{-\lambda_j (t-\tau)}d\tau.$
  \end{lemma}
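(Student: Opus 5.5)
The plan is a Galerkin/spectral argument built on the eigenbasis constructed in Section \ref{sec:elliptic}. First I would record that $\{\bphi_{jk}\}$ is an orthonormal basis of $H$. Indeed, $L$ is compact and self-adjoint on $L^2\Omega^1(\M)$ with kernel $N=\{dp\}$. Hence the eigenfunctions span $N^\perp$, and $N^\perp=H$: $W\perp dp$ by integration by parts, and conversely anything orthogonal to $W$ is a gradient by Lemma \ref{lm:1}.

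Next, define $\bu$ by the stated series with $\alpha_{jk}(t)=\int_0^t\inner{\bphi_{jk},\bf(\tau)}e^{-\lambda_j(t-\tau)}d\tau$. Each coefficient solves $\alpha_{jk}'+\lambda_j\alpha_{jk}=\inner{\bf,\bphi_{jk}}$ with $\alpha_{jk}(0)=0$; since $\bf$ is supported in $(0,T)$, $\alpha_{jk}$ vanishes near $t=0$ and extends by zero to $t<0$. For convergence, use that $\bf$ is smooth and compactly supported in time and space, so integration by parts in $\tau$ gives rapid decay. Concretely, $\alpha_{jk}=\lambda_j^{-1}\bigl(\inner{\bphi_{jk},\bf(t)}-\int_0^t\inner{\bphi_{jk},\p_\tau\bf}e^{-\lambda_j(t-\tau)}d\tau\bigr)$, and the same identity holds for $\alpha_{jk}'$. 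By Bessel's inequality applied to $\bf$ and $\p_t\bf$, together with $\norm{\bv}_{V}^2\simeq\sum\lambda_j\abs{\inner{\bv,\bphi_{jk}}}^2$ for $\bv\in V$ (which follows from the variational identity defining $\bphi_{jk}$), the series for $\bu$ and $\p_t\bu$ converge in $C([0,T];V)$. This yields $\bu\in C^1([0,T];H)\cap H^1([0,T];V)$.

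Then verify the equation. For every $\bw\in V$, the identity $\inner{d\bphi_{jk},d\bw}=\lambda_j\inner{\bphi_{jk},\bw}$ gives
\begin{equation}
\inner{\p_t\bu,\bw}+\inner{d\bu,d\bw}=\sum_{j,k}\inner{\bf,\bphi_{jk}}\inner{\bphi_{jk},\bw}=\inner{\bf,\bw},
\end{equation}
because $\bw\in H$ and the orthogonal projection of $\bf$ onto $H$ has exactly these coefficients. Therefore $\p_t\bu-\Delta_H\bu-\bf$ annihilates $W$ for a.e. $t$, and Lemma \ref{lm:1} provides $p(t)\in L^2(\M)$ with $dp=\bf-\p_t\bu+\Delta_H\bu$. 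Lemma \ref{lm:stokes_regularity}, applied with source $\bf-\p_t\bu\in L^2$, upgrades this to $p(t)\in H^1$. The boundary and initial conditions hold since $\bu(t)\in V$ and $\bu$ vanishes near $t=0$.

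Finally, uniqueness is an energy argument. If $\bu$ is the difference of two solutions, pair the equation with $\bu\in V$; the pressure term drops out because $\inner{dp,\bu}=\inner{p,d^*\bu}=0$. This gives $\tfrac12\tfrac{d}{dt}\norm{\bu}^2+\norm{d\bu}^2=0$ with $\bu(0)=0$, so $\bu\equiv0$. The main obstacle, modest as it is, is the convergence of the series in the claimed norms, which is why I use the time-integration-by-parts decay estimate. The other point needing care is identifying $H$ with the closed span of the eigenfunctions, which hinges on Lemma \ref{lm:1}.
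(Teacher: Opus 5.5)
Your proof is correct, but it runs in the opposite direction from the paper's. The paper takes existence and uniqueness of $\bu\in C^1([0,T],H)\cap H^1([0,T],V)$ directly from abstract semigroup theory (Lions--Magenes). It then obtains the expansion a posteriori: it sets $\alpha_{jk}(t)=\inner{\bu(t),\bphi_{jk}}$, tests the equation against $\bphi_{jk}\in V$ so that the pressure term vanishes, and solves the ODE $\alpha_{jk}'+\lambda_j\alpha_{jk}=\inner{\bf,\bphi_{jk}}$ with $\alpha_{jk}(0)=0$.

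You instead define $\bu$ by the series and prove its convergence yourself by integrating by parts in time. You then verify the weak equation, recover $p$ through Lemma~\ref{lm:1} and Lemma~\ref{lm:stokes_regularity}, and get uniqueness from an energy estimate.

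The paper's route is shorter and would work for rougher sources. However, it leaves two things implicit: that the abstract theorem applies in this manifold setting, and that $\bu(t)\in N^\perp$, which is what makes the Fourier series converge to $\bu(t)$.

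Your route is self-contained and makes the identification $N^\perp=H$ explicit. It also produces the pressure and gives more regularity, namely $\bu\in C^1([0,T];V)$. The price is that it uses the smoothness and compact support in time of $\bf$.

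One small bookkeeping point: getting the $\lambda_j^{-1}$ gain for $\alpha_{jk}'$ by the same integration by parts requires Bessel's inequality for $\p_t^2\bf$ as well. Alternatively, Cauchy--Schwarz gives $\lambda_j\abs{\alpha_{jk}'(t)}^2\le\int_0^T\abs{\inner{\p_\tau\bf(\tau),\bphi_{jk}}}^2\,d\tau$ directly, whose tails go to zero uniformly in $t$. Either way this is harmless, since $\bf$ is smooth.
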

  \begin{proof}
    Thanks to the classical theory of semigroups (see e.g. \cite[Chapter 3 Theorem 1.1]{Lions1}), there exists a unique solution $\bu\in C^1([0,T],H)\cap H^1([0,T],V)$ of \eqref{eq:problem_para}. According to \cite[Theorem]{Hislop}, $\{\bphi_{jk}\}_{j=1}^\infty$ forms a complete orthogonal basis in $N^\bot$. Write $\alpha_{jk}(t)=\inner{\bu(t),\bphi_{jk}}$, then the Fourier series
    \begin{equation}
      \sum_{j=1}^\infty \sum_{k=1}^{m_j}\alpha_{jk}(t)\bphi_{jk}
    \end{equation}  
    converges to $\bu(t)$ in $L^2\Omega^1(\M)$ for $0\leq t\leq T$. Notice that $\bphi_{jk}\in V$ and consequently $\inner{\bphi_{jk},dp}=\inner{d^*\bphi_{jk},p}=0$. Thus we have
    \begin{equation}
      \begin{cases}
        \inner{\p_t \bu,\bphi_{jk}}+\inner{d \bu, d \bphi_{jk}}=\inner{\bf, \bphi_{jk}},\\
        \inner{\bu(0),\bphi_{jk}}=0.
      \end{cases}
    \end{equation}
    Since $\inner{d\bphi_{jk},d\bw}=\lambda_j\inner{\bphi_{jk},\bw}$ for all $\bw\in V$, then $\alpha_{jk}(t)$ solves
    \begin{equation}
      \begin{cases}
        \frac{d}{dt}\alpha_{jk}(t)+\lambda_j\alpha_{jk}(t)=\inner{\bf, \bphi_{jk}},\\
        \alpha_{jk}(0)=0.
      \end{cases}
    \end{equation} 
    Hence we can obtain 
    \begin{equation}
      \alpha_{jk}(t)=\int_0^t \inner{\bf(\tau),\bphi_{jk}}e^{-\lambda_j(t-\tau)}\,d\tau.
    \end{equation}
  \end{proof}

In light of Lemma \ref{lm:fourier_1}, for $i=1,2$, we define  the local source-to-final value  operator $\P_i$ for the linear Stokes system by
  \begin{align}
  \label{eq:def_P}
    \P_i:C_0^\infty((0,T);&C_0^\infty\Omega^1(\omega_i))\to L^2\Omega^1(\omega_i),\\
    \P_i(\bf)&=\bu_i(T,\cdot)|_{\omega_i},
  \end{align}
where  $\bu_i$ solves the problem \eqref{eq:problem_para} with source $\bf$.

  In view of Theorem \ref{thm:NS}, for $i=1,2$, we can define 
 the local source-to-final value map  $\mathcal N_i$, given by \eqref{eq:def_N} with $(\M,g)=(\M_i,g_i)$ and $\omega=\omega_i$, and proceed to its linearization as follows.
\begin{proposition}\label{linearization} 
For $i=1,2$ and $\bf\in C^\infty_0([0,T];C^\infty_0\Omega^1(\omega_i))$, we fix $\delta_{i,\bf}=\frac{\delta}{1+\|\bf\|_{\mathcal C_\alpha(M_i)}}$ with $\delta$ introduced in Theorem \ref{thm:NS}. Then the map
$$(-\delta_{i,\bf},\delta_{i,\bf})\ni s\mapsto \mathcal N_i(s\bf)\in L^2\Omega^1(\omega_i)$$
is $C^1$ and we have
\begin{equation}\label{lin1}
    \partial_s\mathcal N_i(s\bf)|_{s=0}=\mathcal P_i(\bf).
\end{equation}

\end{proposition}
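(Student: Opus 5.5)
The plan is to read the statement off the $C^1$ dependence in Theorem \ref{thm:NS} and then identify the derivative by differentiating the equation. First, for $|s|<\delta_{i,\bf}$ we have $\|s\bf\|_{\cC_\alpha(M_i)}<\delta$ and $\supp(s\bf)\subset(0,T]\times\omega_i$. Hence $s\bf\in\mathbb B_\delta$, and $s\mapsto s\bf$ is a linear (hence smooth) map into $\cC_\alpha(M_i)$. Composing with the $C^1$ map $\bf\mapsto(\bu_\bf,dp_\bf)$ of Theorem \ref{thm:NS} shows that $s\mapsto(\bu_{s\bf},dp_{s\bf})$ is $C^1$ into $\cC_\alpha(M_i)\times L^2([0,T];L^2\Omega^1(\M_i))$. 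The evaluation $\bu\mapsto\bu(T,\cdot)|_{\omega_i}$ is bounded linear from $\cC_\alpha(M_i)$ to $L^2\Omega^1(\omega_i)$, since $C^{2,\alpha}$ embeds in $L^2$ on the compact manifold. So $s\mapsto\cN_i(s\bf)$ is $C^1$, with derivative $\bv(T)|_{\omega_i}$, where $\bv:=\partial_s\bu_{s\bf}|_{s=0}\in\cC_\alpha(M_i)$ and $q$-gradient $dq:=\partial_s dp_{s\bf}|_{s=0}$.

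Second, I will identify $(\bv,q)$ as the solution of \eqref{eq:problem_para}. Uniqueness gives $\bu_0=0$ and $dp_0=0$. Write \eqref{eq:NS1} for $s\bf$, subtract the case $s=0$, divide by $s$ and let $s\to0$. The linear terms $\p_t\bu-\Delta_H\bu$ converge in $C^{\alpha/2}([0,T];C^\alpha)$ because the map is $C^1$ into $\cC_\alpha$. The pressure term converges in $L^2([0,T];L^2\Omega^1)$. The nonlinear term satisfies $\|R(\bu_{s\bf})\|\lesssim\|\bu_{s\bf}\|_{\cC_\alpha}^2=O(s^2)$ by the Lipschitz bound $\|\bu_{s\bf}\|_{\cC_\alpha}\le C|s|$ coming from the $C^1$ property, so $R(\bu_{s\bf})/s\to0$. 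The conditions $d^*\bu=0$, the zero Dirichlet trace and the zero initial value are linear and closed, so they pass to $\bv$. Thus $(\bv,q)$ solves $(\p_t-\Delta_H)\bv+dq=\bf$, $d^*\bv=0$, $\bv|_{\p\M_i}=0$, $\bv(0)=0$, in $L^2([0,T];L^2\Omega^1)$.

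Third, I will show $\bv$ coincides with the solution $\bu$ of Lemma \ref{lm:fourier_1}, so that $\partial_s\cN_i(s\bf)|_{s=0}=\bu(T)|_{\omega_i}=\P_i(\bf)$. The difference $\bw=\bv-\bu$ solves the homogeneous Stokes system with zero data. Pairing with $\bw$ kills the pressure term, because $\inner{dq,\bw}=\inner{q,d^*\bw}=0$ (for the pressure, use that $dq$ is $L^2$ in space, and invoke Lemma \ref{lm:1}-type reasoning). This gives $\tfrac12\partial_t\|\bw\|^2+\|d\bw\|^2=0$, hence $\bw\equiv0$. This is the same energy argument as in the uniqueness part of Theorem \ref{thm:NS} with $C=0$. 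Alternatively, pair with $\bphi_{jk}$ to recover the Fourier coefficients $\alpha_{jk}$.

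The main obstacle I expect is the regularity bookkeeping in the second step: making sure every term converges in a common space in which the limiting equation makes sense. That is, I must justify that the difference quotients of $R(\bu_{s\bf})$ vanish in $C^{\alpha/2}C^\alpha$ or at least in $L^2L^2$, and that the pressure derivative is meaningful even though $p$ is only defined up to functions of $t$. I would handle the pressure by working only with $dp$, which is what Theorem \ref{thm:NS} controls. I would bound $R$ using the bilinear estimate $\|(\nabla_{\bu_*}\bw_*)^*\|_{C^\alpha}\lesssim\|\bu\|_{C^{1,\alpha}}\|\bw\|_{C^{1,\alpha}}$.
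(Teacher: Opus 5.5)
Your proposal is correct and follows essentially the paper's route. You compose the $C^1$ solution map of Theorem \ref{thm:NS} with $s\mapsto s\bf$, differentiate at $s=0$ using $\bu_{s\bf}|_{s=0}=0$ to remove the quadratic term $R$, and identify $\bv=\partial_s\bu_{s\bf}|_{s=0}$ with the linear Stokes solution. Your explicit $O(s^2)$ bound on $R(\bu_{s\bf})$, your use of $dq$ rather than $q$, and your energy-uniqueness argument matching $\bv(T)|_{\omega_i}$ with $\P_i(\bf)$ fill in steps that the paper only asserts.
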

\begin{proof} Let $i=1,2$ and $\bf\in C^\infty_0([0,T];C^\infty_0\Omega^1(\omega_i))$. In view of Theorem \ref{thm:NS}, for all $s\in(-\delta_{i,\bf},\delta_{i,\bf})$, the problem 
\begin{equation}
    \label{eq:NS3}
    \begin{cases}
      \p_t\bu_s-\Delta_H\bu_s+R(\bu_s) + d p_s=s\bf,\ \text{in } [0,T]\times \M_i,\\
      d^* \bu_s=0,\ \text{in } [0,T]\times \M_i,\\
      \bu_s|_{[0,T]\times\p\M_i}=0,\\
      \bu_s=0,\ p_s=0,\ \text{in }\{0\}\times\M_i,
    \end{cases}
  \end{equation}
admits a unique solution $(\bu_s,p_s)$, up to a  function depending only on the time variable for the uniqueness of $p_s$,   with $\bu_s=\bu_{s\bf}=\psi(s\bf)$ and $p_s=p_{s\bf}$ solving
\begin{equation}
    \label{eq:p}
    \begin{cases}
      -\Delta_g p_s=sd^*\bf-d^*R(\bu_s),\ \text{in } [0,T]\times \M_i,\\
      \p_\nu p_s|_{(0,T)\times\p\M_i}=\Delta_H\bu_s\nu.
    \end{cases}
  \end{equation}
  In view of Theorem \ref{thm:NS}, it is clear that the map
  $$(-\delta_{i,\bf},\delta_{i,\bf})\ni s\mapsto (\bu_s,dp_s)\in \cC_\alpha(M_i)\times L^2([0,T];L^2\Omega^1(\M_i))$$
is $C^1$. Moreover, exploiting the fact that $\bu_s|_{s=0}\equiv0$, we obtain that $\partial_sR(\bu_s)|_{s=0}\equiv0$.
Therefore, fixing $\bv=\partial_s\bu_s|_{s=0}$ and $q=\partial_sp_s|_{s=0}$, we observe that $(\bv,q)$ solves the problems
\begin{equation}
    \label{eq:Nlin}
    \begin{cases}
      \p_t\bv-\Delta_H\bv + d q=\bf,\ \text{in } [0,T]\times \M_i,\\
      d^* \bv=0,\ \text{in } [0,T]\times \M_i,\\
      \bv|_{[0,T]\times\p\M_i}=0,\\
      \bv=0,\ q=0,\ \text{in }\{0\}\times\M_i,
    \end{cases}
  \end{equation}
\begin{equation}
    \label{eq:q}
    \begin{cases}
      -\Delta_g q=d^*\bf+d^*\Delta_H\bv,\\
      \p_\nu q|_{(0,T)\times\p\M_i}=\Delta_H\bv\nu.
    \end{cases}
  \end{equation}
Finally, recalling that $\mathcal N_i(s\bf)=\bu_s(T_0)|_{\omega_i}$, $s\in(-\delta_{i,\bf},\delta_{i,\bf})$, and $\mathcal P_i(\bf)=\bv(T_0)|_{\omega_i}$, we obtain the desired result.

\end{proof}

In view of Proposition \ref{linearization} and the fact that $\bPhi$ is a spatial transformation and is independent of the parameter $s$, we can derive from \eqref{NN} the following identity
    \begin{equation}\label{pp}
      \mathcal{P}_1\bPhi^\ast \bf = \p_s \cN_1(s\bPhi^\ast\bf)|_{s=0} =\bPhi^\ast\p_s\cN_2(s\bf)|_{s=0}=\bPhi^\ast \mathcal{P}_2\bf.
    \end{equation}

\subsection{Determination of the local spectral projections from the Stokes system}
 We consider the unique determination of the local spectral projections of the  Stokes operator from the local source-to-final value  operator  for the linear Stokes system. More precisely, we prove that \eqref{pp} implies the identity \eqref{pr}. This will be achieved by combining Fourier series representation with time  analiticity of solution of \eqref{eq:problem_para}. We consider first the following intermediate result.
  \begin{lemma}
    \label{lm:fourier_2}
    Suppose $\bf\in C_0^\infty((0,T);C_0^\infty \Omega^1(\omega))$ and $T_0\in (0,T]$, then 
    \begin{equation}
      \bu(T_0,\cdot)=\int_0^{T_0}\sum_{j=1}^\infty \sum_{k=1}^{m_j} \inner{\bf(\tau),\bphi_{jk}}e^{-\lambda_j (T_0-\tau)} \bphi_{jk}\, d\tau\text{ in }L^2\Omega^1(\M).
    \end{equation}
  \end{lemma}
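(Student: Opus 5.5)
The identity will follow from Lemma \ref{lm:fourier_1} once we justify exchanging the time integral with the series. Lemma \ref{lm:fourier_1} already gives, for each fixed $T_0\in(0,T]$,
\begin{equation*}
  \bu(T_0,\cdot)=\sum_{j=1}^\infty\sum_{k=1}^{m_j}\alpha_{jk}(T_0)\bphi_{jk},\qquad \alpha_{jk}(T_0)=\int_0^{T_0}\inner{\bf(\tau),\bphi_{jk}}e^{-\lambda_j(T_0-\tau)}\,d\tau,
\end{equation*}
with convergence in $L^2\Omega^1(\M)$. Each term equals $\int_0^{T_0}\inner{\bf(\tau),\bphi_{jk}}e^{-\lambda_j(T_0-\tau)}\bphi_{jk}\,d\tau$, a Bochner integral in $L^2\Omega^1(\M)$, because $\bphi_{jk}$ does not depend on $\tau$. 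So it remains to show that
\begin{equation*}
  \sum_{j,k}\int_0^{T_0}(\cdots)\,d\tau=\int_0^{T_0}\sum_{j,k}(\cdots)\,d\tau .
\end{equation*}

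First, for each fixed $\tau\in[0,T_0]$, the inner series
\begin{equation*}
  S(\tau):=\sum_{j,k}\inner{\bf(\tau),\bphi_{jk}}e^{-\lambda_j(T_0-\tau)}\bphi_{jk}
\end{equation*}
converges in $L^2\Omega^1(\M)$. This holds because $\{\bphi_{jk}\}$ is orthonormal, $0\le e^{-\lambda_j(T_0-\tau)}\le 1$ (recall $\lambda_j>0$, since $L$ is positive by the coercivity estimate), and Bessel's inequality gives $\sum_{j,k}|\inner{\bf(\tau),\bphi_{jk}}|^2\le\norm{\bf(\tau)}^2_{L^2\Omega^1(\M)}$. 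Moreover, the partial sums $S_N(\tau)$ satisfy the uniform bound
\begin{equation*}
  \norm{S_N(\tau)}_{L^2\Omega^1(\M)}\le\norm{\bf(\tau)}_{L^2\Omega^1(\M)}\le\sup_{t}\norm{\bf(t)}_{L^2\Omega^1(\M)}<\infty ,
\end{equation*}
and each $S_N$ is continuous in $\tau$, since $\bf$ is smooth in time. Hence $S$ is strongly measurable as a pointwise limit of continuous functions.

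Second, apply the dominated convergence theorem for Bochner integrals. Since $S_N(\tau)\to S(\tau)$ in $L^2\Omega^1(\M)$ for every $\tau$ and is dominated by an integrable constant, we get $\int_0^{T_0}S_N\,d\tau\to\int_0^{T_0}S\,d\tau$. The left-hand side equals the partial sum $\sum_{j\le N}\sum_k\alpha_{jk}(T_0)\bphi_{jk}$, which converges to $\bu(T_0,\cdot)$ by Lemma \ref{lm:fourier_1}. Uniqueness of limits gives the claim.

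The only delicate point is the domination/measurability step. The one fact it needs is the uniform Bessel bound, which requires $e^{-\lambda_j(T_0-\tau)}\le 1$, i.e.\ positivity of the $\lambda_j$. I would note this explicitly from $\inner{d\bphi_{jk},d\bphi_{jk}}=\lambda_j$ together with the Poincar\'e-type inequality $\norm{\bv}_{H^1\Omega^1(\M)}\lesssim\norm{d\bv}_{L^2\Omega^2(\M)}$ on $V$.
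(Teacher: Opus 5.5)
Your proposal is correct and follows essentially the same route as the paper. Both start from Lemma \ref{lm:fourier_1}, use orthonormality and Bessel's inequality to bound the integrand pointwise by $\norm{\bf(\tau)}_{L^2\Omega^1(\M)}$, and then apply dominated convergence for the Bochner integral. You additionally make explicit the positivity of $\lambda_j$ and the measurability of the partial sums, which the paper leaves implicit.
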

  \begin{proof}
    By Lemma \ref{lm:fourier_1}, we have
    \begin{equation}
      \bu(T_0)=\sum_{j=1}^\infty\sum_{k=1}^{m_j}\bphi_{jk}\int_0^{T_0} \inner{\bf(\tau),\bphi_{jk}}e^{-\lambda_j(T_0-\tau)}\,d\tau.
    \end{equation}
    It only remains to show that one can commute the summations with the integral. Observe that for every $\tau\in (0,T_0]$ there holds
    \begin{align}
      &\bignorm{\sum_{j=1}^\infty\sum_{k=1}^{m_j}\inner{\bf(\tau),\bphi_{jk}}\bphi_{jk}e^{-\lambda_j(T_0-\tau)} }_{L^2\Omega^1(\M)}=\left(\sum_{j=1}^\infty\sum_{k=1}^{m_j}\abs{\inner{\bf(\tau),\bphi_{jk}}}^2e^{-2\lambda_j(T_0-\tau)}  \right)^{1/2}\\
      &\leq \left(\sum_{j=1}^\infty\sum_{k=1}^{m_j}\abs{\inner{\bf(\tau),\bphi_{jk}}}^2\right)^{1/2}\leq \norm{\bf(\tau)}_{L^2\Omega^1(\M)}.
    \end{align}
    The claim follows from the Lebesgue dominated convergence theorem for the Bochner integral (see e.g. \cite[Chapter IV Theorem 9.6]{Bochner}).
  \end{proof}

  Armed with Lemma \ref{lm:fourier_2}, we prove that \eqref{pp} implies the identity \eqref{pr} as follows.
  \begin{proposition}
    \label{prop:spectral_equivalence}
    Suppose that there exists a diffeomorphism $\bPhi: \omega_1\to\omega_2$ that satisfies 
  \begin{equation}
    \bPhi^\ast(\P_2(\bf))=\P_1(\bPhi^\ast \bf),\ \text{for all }\bf\in C_0^\infty((0,T);C_0^\infty\Omega^1.(\omega_2))
  \end{equation}
    Then for all $j\geq 1$, there holds $\lambda_{(2),j}=\lambda_{(1),j}$ and $\bPhi^\ast P_{(2),j}=P_{(1),j}\bPhi^\ast$.
  \end{proposition}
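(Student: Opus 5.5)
The idea is to express both sides through Lemma \ref{lm:fourier_2}, pair the result with a second test form, and view it as a Laplace-type transform in time. The distinct exponentials $e^{-\lambda_j t}$ can then be separated by analyticity.

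\textbf{Step 1: a scalar identity.} Fix $\bh\in C_0^\infty\Omega^1(\omega_1)$ and $\bf\in C_0^\infty((0,T);C_0^\infty\Omega^1(\omega_2))$. Pair $\P_1(\bPhi^\ast\bf)$ with $\bh$ and use Lemma \ref{lm:fourier_2} with $T_0=T$; the pairing is taken over $\omega_1$. Since $\bPhi^\ast\bf$ and $\bh$ are supported in $\omega_1$, this gives
\begin{equation}
\inner{\P_1(\bPhi^\ast\bf),\bh}=\int_0^T\sum_{j}e^{-\lambda_{(1),j}(T-\tau)}\inner{P_{(1),j}\bPhi^\ast\bf(\tau),\bh}\,d\tau .
\end{equation}
Do the same for $\inner{\bPhi^\ast\P_2\bf,\bh}$. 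There we move the pullback onto $\bh$ through the pushforward transpose, i.e. integration with the Jacobian of $\bPhi$. This yields the analogous sum with $\lambda_{(2),j}$ and $\bPhi^\ast P_{(2),j}$.

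\textbf{Step 2: reduce to an exponential sum.} Choose the source in separated form $\bf(\tau)=\eta(\tau)\bpsi$, with $\bpsi\in C_0^\infty\Omega^1(\omega_2)$ and $\eta\in C_0^\infty(0,T)$. Set
\begin{equation}
a_{(1),j}=\inner{P_{(1),j}\bPhi^\ast\bpsi,\bh},\qquad a_{(2),j}=\inner{\bPhi^\ast P_{(2),j}\bpsi,\bh}.
\end{equation}
Both are bounded in $j$ by $\norm{\bpsi}\norm{\bh}$ up to a Jacobian constant; more precisely, $\sum_j|a_{(i),j}|$ is finite by Cauchy--Schwarz and Bessel. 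The identity then says that
\begin{equation}
F(t)=\sum_j\bigl(a_{(1),j}e^{-\lambda_{(1),j}t}-a_{(2),j}e^{-\lambda_{(2),j}t}\bigr)
\end{equation}
satisfies $\int_0^T\eta(\tau)F(T-\tau)\,d\tau=0$ for every $\eta\in C_0^\infty(0,T)$. Hence $F\equiv0$ on $(0,T)$.

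\textbf{Step 3: extend and separate the exponentials.} The series defining $F$ converges uniformly on $\{\Re z\ge\epsilon\}$, since $\lambda_j>0$ and the coefficients are summable. So $F$ extends holomorphically to $\Re z>0$, and therefore $F\equiv0$ on $(0,\infty)$. Now peel off exponentials by induction. Suppose $\lambda_{(1),1}<\lambda_{(2),1}$. Multiply by $e^{\lambda_{(1),1}t}$ and let $t\to\infty$, using dominated convergence. This gives $a_{(1),1}=0$ for all $\bpsi,\bh$, so $P_{(1),1}\bPhi^\ast=0$, contradicting Lemma \ref{lm:1_1}. By symmetry $\lambda_{(1),1}=\lambda_{(2),1}$, and the same limit gives $a_{(1),1}=a_{(2),1}$. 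Subtract these terms and repeat for each $j$, using that the sequences are strictly increasing. Since $\bpsi,\bh$ are arbitrary smooth compactly supported forms, this yields $\lambda_{(1),j}=\lambda_{(2),j}$ and $P_{(1),j}\bPhi^\ast=\bPhi^\ast P_{(2),j}$ on a dense set, hence everywhere.

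\textbf{Main obstacle.} The delicate point is the analytic continuation and the justification of the limit $t\to\infty$. Both need uniform summability of the coefficients. I would obtain it from $\sum_j|a_{(i),j}|\le\bigl(\sum_{j,k}|\inner{\bpsi',\bphi_{jk}}|^2\bigr)^{1/2}\bigl(\sum_{j,k}|\inner{\bh',\bphi_{jk}}|^2\bigr)^{1/2}$ together with Bessel's inequality. A second, bookkeeping point is the Jacobian in transferring $\bPhi^\ast$ across the pairing; this is absorbed by replacing $\bh$ by its appropriately weighted pushforward.
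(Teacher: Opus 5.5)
Your argument is correct and is essentially the paper's proof. Both use the Fourier representation of Lemma \ref{lm:fourier_2} and separated sources to get an exponential sum vanishing on an interval, continue it holomorphically in the time variable (your $\Re t>0$ with $t=T-\tau$ is the paper's $\Re\tau<T_0$), and peel off the lowest eigenvalue by a $t\to\infty$ limit together with Lemma \ref{lm:1_1}. The one difference is that you pair with a test form $\bh$ and work with scalar series, whose absolute summability follows directly from Cauchy--Schwarz and Bessel; the paper works in operator norm and uses the bound $\sum_j\norm{P_{(i),j}\bpsi}\lesssim\norm{\bpsi}$, which Bessel's inequality alone does not give, so your weak formulation is the cleaner justification of both the continuation and the limit.
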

  \begin{proof}
    According to Lemma \ref{lm:fourier_2}, there holds
    \begin{align}
      \bu_i^\bf(T_0)|_{\omega_i}&=\int_0^{T_0}\sum_{j=1}^\infty \sum_{k=1}^{m_{(i),j}} \inner{\bf(\tau),\bphi_{(i),jk}}e^{-\lambda_{(i),j} (T_0-\tau)} \bphi_{(i),jk}|_{\omega_i}\, d\tau\\
      &=\int_0^{T_0}\sum_{j=1}^\infty e^{-\lambda_{(i),j}(T_0-\tau)}P_{(i),j}(\bf(\tau))\, d\tau.
    \end{align}
    Then $\bPhi^\ast(\P_2(\bf))=\P_1(\bPhi^\ast \bf)$ reads that
    \begin{align}
      &\bPhi^*\left(\int_0^{T_0}\sum_{j=1}^\infty e^{-\lambda_{(2),j}(T_0-\tau)}P_{(2),j}(\bf(\tau))d\tau\right)-\int_0^{T_0}\sum_{j=1}^\infty e^{-\lambda_{(1),j}(T_0-\tau)}P_{(1),j}\bPhi^*((\bf(\tau)))d\tau\\
      &=\int_0^{T_0}\sum_{j=1}^\infty e^{-\lambda_{(2),j}(T_0-\tau)}\bPhi^\ast(P_{(2),j}(\bf(\tau)))-e^{-\lambda_{(1),j}(T_0-\tau)}P_{(1),j}(\bPhi^\ast(\bf(\tau)))d\tau=0 .
    \end{align}
    For any $s\in (0,T_0)$ and small enough $\varepsilon_s>0$ such that $(s-\varepsilon_s,s+\varepsilon_s)\subset (0,T_0)$, we may take 
    $\bf(t,x)=\bf_1(t)\bf_2(x)$, where $\bf_1\in C^\infty_0(s-\varepsilon_s,s+\varepsilon_s)$ and $\bf_2\in C_0^\infty\Omega^1(\omega)$.
    Therefore, we have
    \begin{equation}
      \int_{s-\varepsilon_s}^{s+\varepsilon_s}\sum_{j=1}^\infty \left(e^{-\lambda_{(2),j}(T_0-\tau)}\bPhi^*(P_{(2),j}(\bf_2))-e^{-\lambda_{(1),j}(T_0-\tau)}P_{(1),j}(\bPhi^*(\bf_2))\right)\bf_1(\tau)\, d\tau=0,
    \end{equation}
    and thus 
    \begin{equation}
        \sum_{j=1}^\infty e^{-\lambda_{(2),j}(T_0-s)}\bPhi^\ast P_{(2),j}=\sum_{j=1}^\infty e^{-\lambda_{(1),j}(T_0-s)}P_{(1),j}\bPhi^\ast.
    \end{equation}
    As $s\in (0,T_0)$ is arbitrary, it follows that
    \begin{equation}
      \sum_{j=1}^\infty e^{-\lambda_{(2),j}(T_0-\tau)}\bPhi^\ast P_{(2),j}=\sum_{j=1}^\infty e^{-\lambda_{(1),j}(T_0-\tau)}P_{(1),j}\bPhi^\ast
    \end{equation}
    for all $0<\tau<T_0$.
    By Lemma \ref{lm:1_1}, $\bPhi^\ast P_{(2),j}\neq 0$ and $P_{(1),j}\bPhi^\ast\neq 0$ for $j\geq 1$. Write
    \begin{equation}
      S(\tau)=\sum_{j=1}^\infty e^{-\lambda_{(2),j}(T_0-\tau)}\bPhi^\ast P_{(2),j}-e^{-\lambda_{(1),j}(T_0-\tau)}P_{(1),j}\bPhi^\ast.
    \end{equation}
    Next we show that $S(\tau)$ is holomorphic on $E:=\{z\in \C\mid \Re(z)<T_0\}$. To this end, let $K\subset E$ be a compact set and $\delta>0$ such that $\Re(z)\leq T_0-\delta$ for all $z\in K$. Since $\lambda_{(i),j}\to \infty$ as $j\to \infty$ for both $i=1,2$, for any $\epsilon>0$, one can find a large enough integer $M$ such that $e^{-\delta\lambda_{i,M}}<\frac{\epsilon}{2}$ for both $i=1,2$. Then for $\tau\in K$ and $\bpsi\in L^2\Omega^1(\omega_2)$ it holds that
    \begin{align}
      &\bignorm{\sum_{j=M}^\infty e^{-\lambda_{(2),j}(T_0-\tau)}\bPhi^\ast(P_{(2),j}(\bpsi))-e^{-\lambda_{(1),j}(T_0-\tau)}P_{(1),j}(\bPhi^\ast(\bpsi))}_{L^2\Omega^1(\omega_1)}\\
      &\leq \sum_{j=M}^\infty \left(e^{-\lambda_{(2),j}(T_0-\Re(\tau))}\norm{\bPhi^\ast(P_{(2),j}(\bpsi))}_{L^2\Omega^1(\omega)}+e^{-\lambda_{(1),j}(T_0-\Re(\tau))}\norm{P_{(1),j}(\bPhi^\ast(\bpsi))}_{L^2\Omega^1(\omega_1)}\right)\\
      &\leq \sum_{j=M}^\infty \left(e^{-\delta\lambda_{(2),j}}\norm{\bPhi^\ast(P_{(2),j}(\bpsi))}_{L^2\Omega^1(\omega_1)}+e^{-\delta\lambda_{(1),j}}\norm{P_{(1),j}(\bPhi^\ast(\bpsi))}_{L^2\Omega^1(\omega_1)}\right)\\
      &\leq \frac{\epsilon}{2}\sum_{j=1}^\infty \left(\norm{\bPhi^\ast(P_{(2),j}(\bpsi))}_{L^2\Omega^1(\omega_1)}+\norm{P_{(1),j}(\bPhi^\ast(\bpsi))}_{L^2\Omega^1(\omega_1)}\right)\leq \epsilon\norm{\bpsi}_{L^2\Omega^1(\omega_1)}.
    \end{align}
    Hence $S(\tau)$ converges uniformly in operator norm for $\tau\in K$. Since $K$ is an arbitrary compact subset of $E$, it follows that $S(\tau)$ is holomorphic on $E$.\par
    Due to the identity principle for holomorphic functions, $S(\tau)=0$ on $(0,T_0)$ implies that $S(\tau)=0$ on $E$. Without loss of generality, we assume $\lambda_{(2),1}\leq \lambda_{(1),1}$. To get a contradiction, suppose $\lambda_{(2),1}<\lambda_{(1),1}$. Notice that
    \begin{align}
      &e^{\lambda_{(2),1}(T_0-\tau)}S(\tau)=\bPhi^\ast P_{(2),1}\\
      &+\sum_{j=1}^\infty e^{-(\lambda_{(2),j+1}-\lambda_{(2),1})(T_0-\tau)}\bPhi^\ast P_{(2),j+1}-e^{-(\lambda_{(1),j}-\lambda_{(2),1})(T_0-\tau)}P_{(1),j}\bPhi^\ast.
    \end{align}
    For any $\bpsi\in L^2\Omega^1(\omega)$ there holds
    \begin{align}
      &\bignorm{\sum_{j=1}^\infty e^{-(\lambda_{(2),j+1}-\lambda_{(2),1})(T_0-\tau)}\bPhi^\ast (P_{(2),j+1}(\bpsi))-e^{-(\lambda_{(1),j}-\lambda_{(2),1})(T_0-\tau)}P_{(1),j}(\bPhi^\ast(\bpsi))}_{L^2\Omega^1(\omega_1)}\\
      &\leq 2\max\{e^{-(\lambda_{(2),2}-\lambda_{(2),1})(T_0-\tau)},e^{-(\lambda_{(1),1}-\lambda_{(2),1})(T_0-\tau)}\}\norm{\bpsi}_{L^2\Omega^1(\omega_2)}.
    \end{align}
    Thus the series
    \begin{equation}
      \sum_{j=1}^\infty e^{-(\lambda_{(2),j+1}-\lambda_{(2),1})(T_0-\tau)}\bPhi^\ast P_{(2),j+1}-e^{-(\lambda_{(1),j}-\lambda_{(2),1})(T_0-\tau)}P_{(1),j}\bPhi^\ast
    \end{equation}
    converges to $0$ in operator sense when $\tau\to -\infty$. Therefore, we have
    \begin{align}
      P_{(2),1}=\lim_{\tau\to -\infty} e^{\lambda_{(2),1}(T_0-\tau)}S(\tau)=0,
    \end{align}
    which is to a contradiction. Hence $\lambda_{(1),1}=\lambda_{(2),1}$, and consequently, $\bPhi^\ast P_{(2),1}=P_{(1),1}\bPhi^\ast$. Furthermore, repeating this argument for $j>1$, we can obtain that $\lambda_{(2),j}=\lambda_{(1),j}$ and $\bPhi^\ast P_{(2),j}=P_{(1),j}\bPhi^\ast$.
  \end{proof}

  \section{Auxiliary hyperbolic Stokes problem}
  \label{sec:hyperbolic}
    In this section, we consider the following equation
    \begin{equation}
      \label{eq:thm_1}
      \begin{cases}
        (\p_t^2-\Delta_H)\bu + d p=\bf,\ \text{in } [0,T]\times \M,\\
        d^* \bu=0,\ \text{in } [0,T]\times \M,\\
        \bu|_{[0,T]\times\p\M}=0,\\
        \bu(0,\cdot)=\bu_0,\ \p_t\bu(0,\cdot)=\bu_1.
      \end{cases}
    \end{equation}
  Firstly, we show the existence of the solution for the weak formulation, that is
  \begin{equation}
    \label{eq:weak}
    \begin{cases}
      \inner{(\p^2_t -\Delta_H)\bu,\bv}=\inner{\bf,\bv},\ \forall \bv\in V,\ t\in (0,T)\\
      \bu(0)=\bu_0,\ \p_t \bu(0)=\bu_1.
    \end{cases}
  \end{equation}
  \begin{theorem}
    \label{thm:weak}
    Suppose $\bu_0\in V$ and $\bu_1\in H$, then there exists a unique $\bu\in C([0,T];V)\cap C^1([0,T];H)\cap H^2([0,T];V^\prime)$ solving \eqref{eq:weak}. 
  \end{theorem}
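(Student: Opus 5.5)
We would prove Theorem \ref{thm:weak} by a spectral (Faedo--Galerkin) argument built on the eigenbasis $\{\bphi_{jk}\}$ of Section \ref{sec:elliptic}. Throughout we assume $\bf\in L^2([0,T];L^2\Omega^1(\M))$, which the statement does not specify; we project it onto $H$, since the pressure part of $\bf$ is invisible when tested against $V$. Recall that $\{\bphi_{jk}\}$ is an orthonormal basis of $N^\perp=H$. Since $\inner{d\bphi_{jk},d\bphi_{j'k'}}=\lambda_j\delta_{jj'}\delta_{kk'}$, the rescaled family $\{\lambda_j^{-1/2}\bphi_{jk}\}$ is an orthonormal basis of $V$ with the inner product $\inner{d\cdot,d\cdot}$. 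This inner product is equivalent to the $H^1$ norm by the Schwarz inequality quoted before Lemma \ref{lm:stokes_well_pose}. Consequently $\bv\in V$ if and only if $\sum_{j,k}\lambda_j|\inner{\bv,\bphi_{jk}}|^2<\infty$, and $V'$ corresponds to $\sum_{j,k}\lambda_j^{-1}|\cdot|^2<\infty$.

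\textbf{Existence.} Testing \eqref{eq:weak} with $\bv=\bphi_{jk}$ and using $\inner{-\Delta_H\bu,\bphi_{jk}}=\inner{d\bu,d\bphi_{jk}}=\lambda_j\inner{\bu,\bphi_{jk}}$, the coefficients $\alpha_{jk}(t)=\inner{\bu(t),\bphi_{jk}}$ must satisfy
\begin{equation*}
\alpha_{jk}''+\lambda_j\alpha_{jk}=f_{jk}(t):=\inner{\bf(t),\bphi_{jk}},\qquad \alpha_{jk}(0)=\inner{\bu_0,\bphi_{jk}},\quad \alpha_{jk}'(0)=\inner{\bu_1,\bphi_{jk}}.
\end{equation*}
By Duhamel's formula,
\begin{equation*}
\alpha_{jk}(t)=\alpha_{jk}(0)\cos(\sqrt{\lambda_j}t)+\alpha_{jk}'(0)\frac{\sin(\sqrt{\lambda_j}t)}{\sqrt{\lambda_j}}+\int_0^t\frac{\sin(\sqrt{\lambda_j}(t-\tau))}{\sqrt{\lambda_j}}f_{jk}(\tau)\,d\tau.
\end{equation*}
We define $\bu=\sum_{j,k}\alpha_{jk}\bphi_{jk}$. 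The key estimate is the uniform energy bound
\begin{equation*}
\lambda_j|\alpha_{jk}(t)|^2+|\alpha_{jk}'(t)|^2\lesssim \lambda_j|\alpha_{jk}(0)|^2+|\alpha_{jk}'(0)|^2+T\int_0^T|f_{jk}|^2\,d\tau .
\end{equation*}
Summing over $j,k$ gives
\begin{equation*}
\sup_t\bigl(\norm{\bu(t)}_V^2+\norm{\p_t\bu(t)}_H^2\bigr)\lesssim \norm{\bu_0}_V^2+\norm{\bu_1}_H^2+\norm{\bf}_{L^2L^2}^2 .
\end{equation*}
Each partial sum is continuous in time, and the tails are uniformly small by this estimate. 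Hence the partial sums converge in $C([0,T];V)\cap C^1([0,T];H)$, and the limit belongs to that space.

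\textbf{Second derivative.} We read off $\alpha_{jk}''=f_{jk}-\lambda_j\alpha_{jk}$. Then
\begin{equation*}
\sum_{j,k}\lambda_j^{-1}|\alpha_{jk}''|^2\lesssim \sum_{j,k}\bigl(\lambda_j^{-1}|f_{jk}|^2+\lambda_j|\alpha_{jk}|^2\bigr),
\end{equation*}
which is integrable in time. This gives $\p_t^2\bu\in L^2([0,T];V')$, where $V'$ is identified via the pairing extending the $H$ inner product. Together with the previous step, $\bu\in H^2([0,T];V')$.

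To verify \eqref{eq:weak}, we note that it holds for every $\bphi_{jk}$. Both sides are continuous in $\bv\in V$, and finite combinations of the $\bphi_{jk}$ are dense in $V$, so it holds for all $\bv\in V$. The initial conditions hold coefficientwise.

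\textbf{Uniqueness.} The difference $\bw$ of two solutions satisfies the homogeneous problem. The main subtlety is that one cannot test directly with $\p_t\bw$, which is only in $H$, not $V$. We avoid this by testing with $\bphi_{jk}$ instead: the coefficients $\inner{\bw(t),\bphi_{jk}}$ solve $\beta''+\lambda_j\beta=0$ in the distributional sense on $(0,T)$, with zero Cauchy data. These Cauchy data are meaningful because $\bw\in C^1([0,T];H)$. Hence every coefficient vanishes, and since $\{\bphi_{jk}\}$ is complete in $H$ and $\bw(t)\in V\subset H$, we get $\bw\equiv0$.

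The only step that needs real care is this identification of the coefficient ODE in the weak sense together with the duality $V\subset H\subset V'$. The remainder is routine.
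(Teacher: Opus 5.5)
Your proof is correct, and it takes a genuinely different route from the paper's.

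\textbf{The paper's route.} The paper runs an abstract Faedo--Galerkin scheme with an arbitrary countable basis $\{\bw_i\}$ of $V$, so its ODE system has a non-diagonal Gram matrix. It proves the energy bound by Gr\"onwall and extracts a weak-$\ast$ convergent subsequence in $L^\infty([0,T];V)\cap W^{1,\infty}([0,T];H)$. It identifies the equation and the initial data by passing to the limit against test functions vanishing at $t=T$. It then recovers $\bu\in C([0,T];V)\cap C^1([0,T];H)$ a posteriori by citing \cite[Lemma 8.1]{Lions1}, and it asserts uniqueness from the energy estimate for the weak class.

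\textbf{What your eigenbasis buys.} Working in the Stokes eigenbasis $\{\bphi_{jk}\}$ diagonalizes the system, so each mode carries its own energy bound, uniform in $t$. The whole sequence of partial sums then converges in norm in $C([0,T];V)\cap C^1([0,T];H)$, and continuity and the initial conditions come for free. By contrast, the cited lemma by itself yields only weak continuity; norm continuity also needs the energy equality. Your uniqueness argument (pairing with $\bphi_{jk}$ and solving $\beta''+\lambda_j\beta=0$ with zero Cauchy data) is likewise self-contained. The paper's appeal to the energy estimate for a solution with $\p_t\bu\notin V$ tacitly needs a regularization argument. A further bonus is that your construction directly justifies the Duhamel series \eqref{eq:expansion_wave}, which the paper later uses to express $\Lambda_\omega$ through the spectral data.

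\textbf{What it costs.} You rely on the spectral theory of Section \ref{sec:elliptic}. You need completeness of $\{\bphi_{jk}\}$ in $H=N^\perp$, which rests on Lemma \ref{lm:1}, and the identity $\inner{d\bphi_{jk},d\bv}=\lambda_j\inner{\bphi_{jk},\bv}$ for all $\bv\in V$. The paper's compactness argument needs only separability of $V$ and density of $V$ in $H$. It would also survive non-self-adjoint or time-dependent lower-order perturbations that no fixed eigenbasis diagonalizes.
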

  \begin{proof}
    We use the Faedo-Galerkin method. Since $V$ is separable, there exists a countable basis $\{\bw_i\}_{i=1}^\infty$. Then we construct a sequence $\{\bu_m\}_{m=1}^\infty$ as
    \begin{equation}
      \bu_m=\sum_{i=1}^m a_{im}(t)\bw_i
    \end{equation}
    satisfying 
    \begin{equation}
      \label{eq:um}
      \inner{\p_t^2 \bu_m,\bw_j}+\inner{d \bu_m,d\bw_j}=\inner{\bf,\bw_j},\ 1\leq j\leq m.
    \end{equation}
    The coefficients $a_{im}$ can be found by solving the following ordinary differential equation deduced from \eqref{eq:um},
    \begin{equation}
      \label{eq:ode_a}
      \begin{cases}
        \sum_{i=1}^m\inner{\bw_i,\bw_j}\p_t^2 a_{im}+\sum_{i=1}^m \inner{d \bw_i,d\bw_j}a_{im}=\inner{\bf,\bw_j},\ 1\leq j\leq m,\\
        a_{im}(0)=u_{0,m}^i,\  a_{im}^\prime(0)=u_{1,m}^i,
      \end{cases}
    \end{equation}
    where $u_{0,m}^i$ and $u_{1,m}^i$ are chosen so that as $m\to\infty$, $\sum_{i=1}^m u_{0,m}^i \bw_i$ converges to $\bu_0$ in $V$ and $\sum_{i=1}^m u_{1,m}^i \bw_i$ converges to $\bu_1$ in $H$, respectively.
    
    We note that the existence and uniqueness of the solution for \eqref{eq:ode_a} is guaranteed by $(\inner{\bw_i,\bw_j})_{i,j=1}^m$ being an invertible matrix as $\{\bw_i\}_{i=1}^m$ are linearly independent.\par
    Next we define the energy 
    \begin{equation}
      E[\bv](t):=\frac{1}{2}\left(\inner{d\bv(t),d\bv(t)}+\norm{\p_t \bv(t)}^2+\norm{\bv(t)}^2\right).
    \end{equation}
    We prove the following a priori estimate for $\{\bu_m\}_{m=1}^\infty$
    \begin{equation}
      \label{eq:energy_estimate}
      E[\bu_m](t)\lesssim \norm{\bu_0}_V^2+\norm{\bu_1}^2+\int_0^t \norm{\bf(\tau)}^2\ d\tau.
    \end{equation}
    To this purpose, taking the time derivative of $E[\bu_m](t)$, we have
    \begin{equation}
      \frac{d E[\bu_m](t)}{d t}=\inner{\p^2_t \bu_m,\p_t \bu_m}+\inner{d \bu_m,\p_t d \bu_m}+\inner{\bu_m,\p_t \bu_m}.
    \end{equation}
    We multiply \eqref{eq:um} by $a_{jm}^\prime(t)$ and sum them together, we obtain
    \begin{equation}
      \inner{\p^2_t \bu_m,\p_t \bu_m}+\inner{d \bu_m,\p_t d \bu_m}=\inner{\bf,\p_t \bu_m}.
    \end{equation}
    Therefore,
    \begin{align}
      \frac{d E[\bu_m](t)}{d t}&=\inner{\bf,\p_t \bu_m}+\inner{\bu_m,\p_t \bu_m}\leq \frac{1}{2}\left(\norm{\bf}^2+2\norm{\p_t \bu_m}^2+\norm{\bu_m}^2 \right)\\
      &\leq \frac{1}{2}\norm{\bf}^2+E[\bu_m](t).
    \end{align}
    Integrating above inequality yields
    \begin{equation}
      E[\bu_m](t)\leq E[\bu_m](0)+\frac{1}{2}\int_0^t\norm{\bf(\tau)}^2+E[\bu_m](\tau)\ d\tau.
    \end{equation}
    Applying Gr\"onwall's inequality, we obtain \eqref{eq:energy_estimate}.\par
    In particular, $\{\bu_m\}_{m=1}^\infty$ remains in a bounded set of $L^\infty([0,T];V)$ and $\{\p_t \bu_m\}_{m=1}^\infty$ remains in a bounded set of $L^\infty([0,T];H)$.
    Consequently, according to Alaoglu's theorem (see, e.g. \cite[Theorem 3.16]{Brezis}), we can extract a weakly-$\ast$ convergent subsequence of $\{\bu_m\}_{m=1}^\infty$, denoted by $\{\bu_{m_1}\}_{m_1=1}^\infty$ which has a limit $\widetilde{\bu}\in L^\infty([0,T];V)\cap W^{1,\infty}([0,T];H)$. Next we show that $\widetilde{\bu}$ satisfies \eqref{eq:weak}.
    
    Consider a set of scalar functions
    \begin{equation}
      C_T^1:=\{\varphi\in C^1(0,T)\mid \varphi(T)=\varphi^\prime(T)=0\},
    \end{equation}
    and a function
    \begin{equation}
      \label{eq:form_psi}
      \psi=\sum_{j=1}^M \varphi_j \bw_j,\ \varphi_j\in C^1_T,
    \end{equation}
    where $M$ is a fixed positive number. Then for $m>M$, \eqref{eq:um} and integration by parts gives
    \begin{equation}
      \label{eq:thm1_1}
      \int_0^T -\inner{\p_t \bu_m(t), \psi^\prime(t)}+\inner{d \bu_m(t),d\psi(t)}dt=\int_0^T \inner{\bf(t),\psi(t)}dt+\inner{\p_t \bu_m(0),\psi(0)}.
    \end{equation}
    Notice that the functions with the form \eqref{eq:form_psi} are dense in the space
    \begin{equation}
      U:=\{\phi\mid\phi\in C^1([0,T];V),\ \phi(T)=\phi^\prime(T)=0\}.
    \end{equation}
    Since $\tbu$ is a limit of $\{\bu_m\}_{m=1}^\infty$ in weak$-\ast$ topology and $U\subset L^1([0,T];V)$, for any $\phi\in U$, by taking the limit $m_1\to\infty$ in \eqref{eq:thm1_1} and using the density, we have
    \begin{equation}
      \label{eq:thm1_2}
      \int_0^T -\inner{\p_t \tbu(t), \phi^\prime(t)}+\inner{d \tbu(t),d\phi(t)}dt=\int_0^T \inner{\bf(t),\phi(t)}dt+\inner{\bu_1,\phi(0)}.
    \end{equation}
    In particular, for any $\bv\in V$, $\varphi\in C^\infty_0(0,T)$, we observe that $\varphi(t)\bv(x)\in U$. Thus \eqref{eq:thm1_2} gives 
    \begin{equation}
      \int_0^T \left(\inner{\p^2_t \tbu,\bv}+\inner{d \tbu,d \bv}-\inner{\bf,\bv}\right) \varphi(t)\ dt=0.
    \end{equation}
    Since $\varphi\in C_0^\infty(0,T)$ is arbitrary, we have 
    \begin{equation}
      \inner{(\p^2_t-\Delta_H)\tbu,\bv}=\inner{\bf,\bv}
    \end{equation}
    for all $\bv\in V$ and $t\in(0,T)$. Thus $\tbu$ satisfies the first equation in \eqref{eq:weak}. Furthermore, we obtain that $\p_t^2 \tbu\in L^1([0,T];V^\prime)$. Therefore, we have $\tbu\in W^{2,1}([0,T],V^\prime)$ and thus $\p_t \tbu \in C([0,T],V^\prime)$ by \cite[Corollary 7.20]{Leoni17}.
    
    Let us now turn to initial conditions in \eqref{eq:weak}. In view of continuity of $\p_t\tbu$, for any $\phi\in U$, we can rewrite \eqref{eq:thm1_2} as
    \begin{equation}
    \label{eq:thm1_3}
        \int_0^T -\inner{\p_t \tbu(t), \phi^\prime(t)}+\inner{d \tbu(t),d\phi(t)}dt=\int_0^T \inner{\bf(t),\phi(t)}dt+\inner{\p_t \tbu(0),\phi(0)}.
    \end{equation}
    It follows that $\p_t \tbu(0)=\bu_1$ in $V^\prime$ as $\phi(0)\in V$ is arbitrary.
    
    Since $L^\infty([0,T];H)\subset L^2([0,T];H)$, weak-$\ast$ convergence in $L^\infty([0,T],H)$ implies weak convergence in $L^2([0,T];H)$.
    Therefore, $\bu_{m_1}$ converges to $\widetilde{\bu}$ weakly in $H^1((0,T);H)$. By the Sobolev embedding theorem, $\bu_{m_1}(0)$ converges to $\widetilde{\bu}(0)$ weakly in $H$. Recall that $\bu_{m_1}(0)$ converges to $\bu_0$ in $V$, then we have $\widetilde{\bu}(0)=\bu_0$.
    
    Notice that the energy estimate \eqref{eq:energy_estimate} holds for 
    $$\bu\in L^2([0,T];V)\cap H^1 ([0,T];H)\cap H^2([0,T];V^\prime)$$
    whenever $\bu$ solves \eqref{eq:weak}. Hence the uniqueness of the solution follows immediately.\par
    Finally, we show the continuity of the solution. We have obtained that $\tbu\in H^1([0,T];H)\subset C([0,T];H)$ and $\p_t \tbu\in H^1([0,T];V^\prime)\subset C([0,T];V^\prime)$. Moreover, we have $\tbu\in C([0,T];H)\cap L^\infty([0,T];V)$ and $\p_t \tbu\in C([0,T];V^\prime)\cap L^\infty([0,T];H)$. Notice that $V\subset H$ and $H\subset V^\prime$ are dense embeddings. Then according to \cite[Lemma 8.1]{Lions1}, there holds $\tbu\in C([0,T];V)$ and $\p_t \tbu\in C([0,T];H)$.
    \end{proof}
    \begin{theorem}
      \label{thm:regularity}
      Suppose $\bu_0\in V$, $d^\ast d\bu_0\in H$ and $\bu_1\in V$, then there exists a pair $(\bu,p)$ solving \eqref{eq:thm_1}, and $\bu\in C^1([0,T];V)\cap C^2([0,T];H)$, $p\in L^2([0,T];H^1(\M))$.
    \end{theorem}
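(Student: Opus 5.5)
The plan is the classical time-differentiation argument for higher regularity of hyperbolic problems, carried out on the weak solution of Theorem \ref{thm:weak}, with the pressure recovered afterwards from Lemma \ref{lm:1}. The statement does not specify $\bf$; I will assume $\bf\in H^1([0,T];L^2\Omega^1(\M))$, or simply $\bf\in C^1([0,T];L^2\Omega^1(\M))$, which covers the sources used later. Only the projection of $\bf$ onto $H$ enters the weak formulation \eqref{eq:weak}; the gradient part will be absorbed into $p$.

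\textbf{Step 1: regularity of $\p_t\bu$.} Let $\bu$ be the solution given by Theorem \ref{thm:weak}. Formally, $\bw=\p_t\bu$ solves \eqref{eq:weak} with source $\p_t\bf$ and initial data
\[
\bw(0)=\bu_1\in V,\qquad \p_t\bw(0)=\bu_2:=\bf(0)+\Delta_H\bu_0 \text{ projected onto } H=\bf(0)-d^\ast d\bu_0 \text{ (mod gradients)}.
\]
By the hypothesis $d^\ast d\bu_0\in H$, this $\bu_2$ lies in $H$. To make the argument rigorous, I will redo it at the Galerkin level, taking $\{\bw_i\}$ to be the Stokes eigenfunctions $\bphi_{jk}$. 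With this basis the projections commute with $d^\ast d$, so the approximate initial accelerations $\p_t^2\bu_m(0)$ are bounded in $H$ precisely because $d^\ast d\bu_0\in H$. Differentiating \eqref{eq:um} in $t$ and applying the energy estimate \eqref{eq:energy_estimate} to $\p_t\bu_m$ then gives uniform bounds for $\p_t\bu_m$ in $L^\infty([0,T];V)$ and for $\p_t^2\bu_m$ in $L^\infty([0,T];H)$.

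\textbf{Step 2: passage to the limit and continuity.} Passing to the limit as in the proof of Theorem \ref{thm:weak}, and using uniqueness, the limit $\bw$ coincides with $\p_t\bu$. Applying the continuity argument of Theorem \ref{thm:weak} (Lions' lemma) to $\bw$ yields $\bw\in C([0,T];V)\cap C^1([0,T];H)$, that is, $\bu\in C^1([0,T];V)\cap C^2([0,T];H)$.

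\textbf{Step 3: recovery of the pressure.} For each $t$, set $\bg(t)=\bf(t)-\p_t^2\bu(t)\in L^2\Omega^1(\M)$. Then $\bu(t)\in V$ satisfies $\inner{d\bu,d\bv}=\inner{\bg,\bv}$ for all $\bv\in V$, so $\bu(t)=L(\bg(t))$ is exactly the stationary Stokes solution of Lemma \ref{lm:stokes_well_pose}. Lemma \ref{lm:1} gives $p(t)$ with $dp=\bf-\p_t^2\bu+\Delta_H\bu$. Lemma \ref{lm:stokes_regularity} gives $\bu(t)\in H^2\Omega^1(\M)$ and $p(t)\in H^1(\M)$, and the elliptic estimate makes these bounds uniform in $t$ in terms of $\norm{\bg(t)}_{L^2\Omega^1(\M)}$.

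\textbf{Step 4: measurability and normalization of $p$.} I will normalize $p(t)$ to have zero mean. Since the map $\bg\mapsto p$ is then linear and bounded $L^2\Omega^1(\M)\to H^1(\M)$, and $\bg\in L^2([0,T];L^2\Omega^1(\M))$, it follows that $p\in L^2([0,T];H^1(\M))$.

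The main obstacle is the initial trace $\p_t^2\bu(0)$ in Step 1, where the compatibility hypothesis $d^\ast d\bu_0\in H$ is used. The eigenfunction Galerkin basis is chosen exactly so that approximating $\bu_0$ in the graph norm of the Stokes operator is possible.
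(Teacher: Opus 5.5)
Your proposal is correct and follows the same route as the paper: differentiate in time, treat $\p_t\bu$ as a weak solution of \eqref{eq:weak} with source $\p_t\bf$ and initial acceleration controlled by $d^\ast d\bu_0\in H$, then recover $p$ from Lemma \ref{lm:1} and stationary Stokes regularity at each fixed time. The paper simply applies Theorem \ref{thm:weak} directly to $\p_t\bu$ and cites a Stokes regularity estimate from Boyer--Fabrie; your Galerkin argument in the eigenfunction basis and the zero-mean normalization of $p$ (which gives measurability in $t$) just make rigorous two steps the paper passes over quickly.
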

    \begin{proof}
      According to Theorem \ref{thm:weak}, \eqref{eq:weak} admits a unique solution $\bu$. Consider $\Psi=\p_t \bu$, then $\Psi$ solves
      \begin{equation}
        \begin{cases}
          \inner{(\p^2_t -\Delta_H)\Psi,\bv}=\inner{\p_t \bf,\bv},\ \forall \bv\in V,\ t\in (0,T)\\
          \Psi(0)=\bu_1,\ \p_t \Psi(0)=\p_t^2 \bu(0)=\bf(0)+\Delta_H \bu(0).
        \end{cases}
      \end{equation}
      Since $\bf(0)=0$ and $\Delta_H \bu(0)=(d^*d+d d^*)\bu_0\in H$, we can obtain $\Psi\in C([0,T];V)\cap C^1([0,T];H)$ by applying Theorem \ref{thm:weak} to above equation. Hence we have $\bu\in C^1([0,T];V)\cap C^2([0,T];H)$ as $\Psi=\p_t \bu$. Observe that 
      \begin{equation}
        \inner{(\p^2_t -\Delta_H)\bu-\bf,\bv}=0,\ \forall \bv\in V
      \end{equation}
      and $(\p^2_t -\Delta_H)\bu-\bf\in H^{-1}(T^*\M)$. By Lemma \ref{lm:1}, we have
      \begin{equation}
        (\p^2_t -\Delta_H)\bu-\bf=d p
      \end{equation}
      for some $p\in L^2(\M)$. Applying the local regularity of Stokes problem \cite[Theorem IV.6.1]{boyer} on
      \begin{equation}
        -\Delta_H \bu+d p=\bf-\p_t^2 \bu,
      \end{equation}
      we have
      \begin{equation}
        \norm{\bu}_{H^2(T\M)}+\norm{p}_{H^1(\M)}\lesssim \left(\norm{\bf}_{L^2(T\M)}+\norm{\p_t^2\bu}_{L^2(T\M)}+\norm{\bu}_{H^1(T\M)} \right).
      \end{equation} 
    \end{proof}

  We define the operator 
  \begin{align}
  \label{eq:def_Lambda}
    \Lambda_\omega : C_0^\infty((0,T);C_0^\infty \Omega^1(\omega))&\to C^\infty((0,T);C^\infty \Omega^1(\omega)),\\
    \Lambda_\omega(\bf)&=\bu|_{\RR\times\omega},
  \end{align}
  where $\bu$ satisfies
  \begin{equation}
    \label{eq:problem_hypo}
    \begin{cases}
      (\p_t^2-\Delta_H)\bu + d p=\bf,\ \text{in } \RR^+\times \M,\\
      d^* \bu=0,\ \text{in } \RR^+\times \M,\\
      \bu|_{[0,T]\times\p\M}=0,\\
      \bu=0,\ p=0,\ \text{in }\RR^-\times\M.
    \end{cases}
  \end{equation}
  We show that $\Lambda_\omega$ can be determined by the spectral data $(\lambda_j,P_j)_{j=1}^\infty$.
  
  Write $\beta_{jk}=\inner{\bu,\bphi_{jk}}$. Since $\bu$ satisfies \eqref{eq:problem_hypo} and $\bphi_{jk}\in V$, we have
  \begin{equation}
    \begin{cases}
      \frac{d^2}{dt^2}\beta_{jk}(t)+\lambda_j\beta_{jk}(t)=\inner{\bf(t),\bphi_{jk}},\\
      \beta_{jk}(0)=0,\ \frac{d}{dt}\beta_{jk}(0)=0.
    \end{cases}
  \end{equation}
  Solving this equation, we have
  \begin{equation}
    \beta_{jk}(t)=\int_0^t \frac{\sin(\sqrt{\lambda_j}(t-\tau))}{\sqrt{\lambda_j}}\inner{\bf(\tau),\bphi_{jk}}d\tau.
  \end{equation}
  And $\bu$ has the expression
  \begin{align}
    \label{eq:expansion_wave}
    \bu(t)|_\omega&=\sum_{j=1}^\infty\sum_{k=1}^{m_j}\beta_{jk}(t)\bphi_{jk}|_\omega=\sum_{j=1}^\infty\sum_{k=1}^{m_j}\int_0^t \frac{\sin(\sqrt{\lambda_j}(t-\tau))}{\sqrt{\lambda_j}}\inner{\bf(\tau),\bphi_{jk}}\bphi_{jk}|_\omega d\tau\\
    &=\sum_{j=1}^\infty \int_0^t \frac{\sin(\sqrt{\lambda_j}(t-\tau))}{\sqrt{\lambda_j}}P_j(\bf(\tau))d\tau.
  \end{align} 
  \begin{proposition}\label{p1}
    Suppose that there exists a diffeomorphism $\bPhi: \omega_1\to\omega_2$ \textcolor{blue}satisfying \eqref{NN}. 
    Then we have 
    \begin{equation}\label{ll}
      \bPhi^\ast (\Lambda_{2,\omega_2}(\bf))=\Lambda_{1,\omega_1}(\bPhi^\ast(\bf)),\ \text{for all }\bf\in C_0^\infty((0,T);C_0^\infty\Omega^1(\omega_2)).
    \end{equation}
  \end{proposition}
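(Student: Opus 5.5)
The plan is to chain the earlier results. First, by Proposition \ref{linearization} together with \eqref{NN}, we obtain \eqref{pp}, i.e. $\bPhi^\ast \P_2\bf=\P_1\bPhi^\ast\bf$ for all $\bf\in C_0^\infty((0,T);C_0^\infty\Omega^1(\omega_2))$. Proposition \ref{prop:spectral_equivalence} then yields $\lambda_{(1),j}=\lambda_{(2),j}$ and $\bPhi^\ast P_{(2),j}=P_{(1),j}\bPhi^\ast$ for every $j\ge1$. The remaining task is to insert these identities into the spectral representation \eqref{eq:expansion_wave} of $\Lambda_{i,\omega_i}$.

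Fix $\bf\in C_0^\infty((0,T);C_0^\infty\Omega^1(\omega_2))$ and $t>0$. Since $\bPhi^\ast$ acts only in space, $(\bPhi^\ast\bf)(\tau)=\bPhi^\ast(\bf(\tau))$. Applying \eqref{eq:expansion_wave} on $\M_1$ with source $\bPhi^\ast\bf$ gives
\begin{equation*}
\Lambda_{1,\omega_1}(\bPhi^\ast\bf)(t)=\sum_{j=1}^\infty\int_0^t\frac{\sin(\sqrt{\lambda_{(1),j}}(t-\tau))}{\sqrt{\lambda_{(1),j}}}P_{(1),j}\bPhi^\ast(\bf(\tau))\,d\tau .
\end{equation*}
Replacing $\lambda_{(1),j}$ by $\lambda_{(2),j}$ and $P_{(1),j}\bPhi^\ast$ by $\bPhi^\ast P_{(2),j}$ termwise, it remains to pull $\bPhi^\ast$ out of the $\tau$-integral and out of the infinite sum. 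Then we recognize $\bPhi^\ast(\Lambda_{2,\omega_2}\bf)(t)$, which is \eqref{ll}.

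The only genuine point to check is that the series converges in a topology for which $\bPhi^\ast$ is continuous. Pullback by a diffeomorphism is continuous $L^2\Omega^1(\omega_2)\to L^2\Omega^1(\omega_1)$ on compact subsets, and $\bf$ has compact support in $\omega_2$. If $\bPhi$ does not extend to $\overline{\omega_1}$, I would first restrict to relatively compact subsets $\omega'\Subset\omega_2$ and conclude there, since both sides are determined locally.

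For the convergence, the full expansion $\sum_{j,k}\beta_{jk}(t)\bphi_{jk}$ converges in $L^2\Omega^1(\M)$ because $\bu(t)\in H$ and $\{\bphi_{jk}\}$ is an orthonormal basis of $N^\perp\supset H$. Restriction to $\omega$ is continuous, so the partial sums $\sum_{j\le J}\int_0^t\cdots P_j(\bf(\tau))d\tau$ converge in $L^2\Omega^1(\omega)$. Applying $\bPhi^\ast$ to the partial sums, where the identity holds trivially term by term, and passing to the limit finishes the argument.

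I expect the main obstacle to be the justification of \eqref{eq:expansion_wave} itself, which the paper derives somewhat formally. One must know that \eqref{eq:problem_hypo} has a solution with $\bu(t)\in H$, which follows from Theorem \ref{thm:weak} on any finite interval, and that the sum and the $\tau$-integral may be interchanged. The interchange follows as in Lemma \ref{lm:fourier_2}, via dominated convergence for the Bochner integral, using the bound $|\sin(\sqrt{\lambda_j}s)/\sqrt{\lambda_j}|\le \lambda_1^{-1/2}$.
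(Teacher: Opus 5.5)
Your proposal is correct and follows the paper's proof. Like the paper, you combine \eqref{pp} with Proposition \ref{prop:spectral_equivalence} to obtain \eqref{pr}, then substitute termwise into the expansion \eqref{eq:expansion_wave}. Your extra justification fills in details the paper leaves implicit: identifying the partial sums with finite truncations of the $L^2$-convergent eigenfunction expansion, passing to the limit, and localizing to relatively compact subsets so that $\bPhi^\ast$ is bounded on $L^2$.
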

  \begin{proof} Let us first observe that, in view of \eqref{pp} and
  Proposition \ref{prop:spectral_equivalence}, condition   \eqref{pr} is fulfilled.
    Thus, leverage the expression \eqref{eq:expansion_wave}, for all $t\in (0,T)$ and $\bf\in C_0^\infty((0,T);C_0^\infty\Omega^1(\omega_2))$, there holds
    \begin{align}
      \bPhi^\ast (\Lambda_{2,\omega_2}(\bf))(t,\cdot)&=\sum_{j=1}^\infty \int_0^t \frac{\sin(\sqrt{\lambda_j}(t-\tau))}{\sqrt{\lambda_j}}\bPhi^\ast (P_{(2),j}(\bf(\tau)))d\tau\\
      &=\sum_{j=1}^\infty \int_0^t \frac{\sin(\sqrt{\lambda_j}(t-\tau))}{\sqrt{\lambda_j}} P_{(2),j}(\bPhi^\ast(\bf(\tau)))d\tau\\
      &=\Lambda_{1,\omega_1}(\bPhi^\ast(\bf))(t,\cdot).
    \end{align}
  \end{proof}
  For two sets $U_1,U_2\subset M$, we define their distance as 
  \begin{equation}
      \dist(U_1,U_2):=\min_{x\in U_1,y\in U_2}{d_g(x,y)}.
  \end{equation}
  And we define the influence domain as sets 
  \begin{equation}
  \label{def:influence_domain}
    M(\omega,r):=\{x\in \M\mid \dist(\{x\},\omega)< r\},\ r> 0.
  \end{equation}
  We have the following unique continuation result.
  \begin{theorem}
    \label{thm:UC}
    Let $T>0$ and $U\subset \M$ be open. Suppose that $\bu\in H^2((0,2T);V)$ and $p\in L^2((0,2T);H^1(\M))$ satisfy $\p_t^2 \bu -\Delta_H \bu+dp =0$ and $\bu=0$ in $(0,2T)\times U$. Then $\bu(T,x)=0$ whenever $x\in M(U,T)^{int}$.
  \end{theorem}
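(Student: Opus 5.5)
The strategy is to get rid of the pressure by working with a quantity that satisfies a genuine (pressure-free) wave equation, and then apply Tataru's unique continuation theorem for wave equations with time-independent coefficients.

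\textbf{Step 1: the pressure is harmonic.} Apply $d^*$ to the equation $\p_t^2\bu-\Delta_H\bu+dp=0$. Since $d^*\bu=0$, we have $\Delta_H\bu=-d^*d\bu$, and $d^*d^*=0$. Hence
\begin{equation}
  -\Delta_g p(t)=d^*dp(t)=0 \quad\text{for a.e. } t\in(0,2T),
\end{equation}
so $p(t,\cdot)$ is harmonic in $\M^{int}$.

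\textbf{Step 2: the pressure is locally constant on $U$.} On $(0,2T)\times U$ we have $\bu=0$, so $dp=0$ there. Thus $p(t,\cdot)$ is constant on each component of $U$, for a.e.\ $t$.

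\textbf{Step 3: pass to $\bw=d\bu$.} Set $\bw=d\bu$, a time-dependent $2$-form. Applying $d$ to the equation and using $dd=0$ together with $d\Delta_H=\Delta_H d$ (valid since $\Delta_H=-(dd^*+d^*d)$), we obtain
\begin{equation}
  \p_t^2\bw-\Delta_H\bw=0 \quad\text{in } (0,2T)\times\M^{int},
\end{equation}
with no pressure term, and $\bw=0$ on $(0,2T)\times U$. The operator $\p_t^2-\Delta_H$ on $2$-forms is a system with scalar principal symbol $(\tau^2-|\xi|_g^2)\,\mathrm{Id}$ and time-independent coefficients. Tataru's unique continuation theorem for such wave-type systems (e.g.\ the version for connection Laplacians on bundles used in \cite{KOP18}), combined with the standard domain-of-dependence argument based on $\sup_{x\in U}$ of the distance, therefore gives $\bw(T,x)=0$ for $x\in M(U,T)^{int}$. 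The regularity $\bu\in H^2((0,2T);V)$ gives $\bw\in H^2((0,2T);L^2)$, which is enough for the distributional version, possibly after mollifying in time.

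\textbf{Step 4: recover $\bu$ from $d\bu=0$ and $d^*\bu=0$.} Let $\mathcal O=M(U,T)^{int}$, which is connected because $U$ may be taken connected componentwise (each component is treated separately). On $\mathcal O$ we have $d\bu(T)=0$ and $d^*\bu(T)=0$, so $\bu(T)$ is a harmonic $1$-form there.

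Since $\mathcal O$ need not be simply connected, we do not conclude $\bu=dh$ globally. Instead we use the equation once more. On the region where $\bw=0$ for all times near $T$ (the argument of Step 3 gives $\bw=0$ on the double cone $\{(t,x): d(x,U)<T-|t-T|\}$), the relation $d\bu=0$ gives $\Delta_H\bu=0$, so
\begin{equation}
  \p_t^2\bu=-dp.
\end{equation}
Applying the argument to $\p_t^2\bu$ itself works as well. The main tool is then the following: $\bu$ satisfies the elliptic system $(d+d^*)\bu=0$ for each fixed $t$, and $\bu$ vanishes on $U$. Unique continuation for the elliptic first-order system $d+d^*$ (whose square is $-\Delta_H$, so unique continuation for harmonic forms applies, as in the proof of Lemma \ref{lm:1_1}) then propagates $\bu(t)=0$ from $U$ to the whole connected open set where $(d+d^*)\bu(t)=0$ holds.

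For $t=T$, that set is $\mathcal O$, which is connected (a neighborhood of $U$) and contains $U$. This yields $\bu(T,x)=0$ on $M(U,T)^{int}$.

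\textbf{Main obstacle.} The delicate step is Step 3: justifying Tataru-type unique continuation for the Hodge wave operator on $2$-forms with the given low regularity, and checking that the dependence domain is exactly $M(U,T)$. I would handle this by mollifying in $t$, which commutes with the equation because the coefficients are time-independent, and then invoking the vector-bundle version of Tataru's theorem cited in \cite{KOP18}.
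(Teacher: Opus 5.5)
Your argument is correct, but it takes a different route from the paper's.

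The paper starts exactly from your Steps 1 and 2 and resolves the pressure there. Since $p(t,\cdot)$ is harmonic on the connected manifold $M$ and constant on the (without loss of generality connected) open set $U$, unique continuation for harmonic functions makes $p(t,\cdot)$ constant on all of $M$. Hence $dp\equiv 0$, and $\bu$ itself solves $(\p_t^2-\Delta_H)\bu=0$. The hyperbolic unique continuation theorem is then applied directly to the $1$-form $\bu$, which finishes the proof.

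In your proposal, Steps 1 and 2 are computed but never used. You instead eliminate the pressure by applying $d$ and run hyperbolic unique continuation on the $2$-form $d\bu$. You then need an extra fixed-time step to pass from $d\bu(T)=0$ back to $\bu(T)=0$: elliptic unique continuation for the closed and coclosed form $\bu(T)$ on the connected set $M(U_i,T)^{int}$. That step is valid.

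The paper's route is shorter: one application of wave unique continuation and no fixed-time elliptic step. Yours is the vorticity approach. It uses nothing about $p$ beyond $d\,dp=0$, neither its regularity nor its harmonicity. The price is working with $2$-forms and an additional unique continuation argument. Both routes rest on the same Tataru/Holmgren-type theorem for wave operators with time-independent coefficients and scalar principal symbol.

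Two minor slips. First, the domain of dependence is governed by $\dist(x,U)=\inf_{y\in U}d_g(x,y)$, not by a supremum over $U$. Second, the digression in Step 4 about $\p_t^2\bu=-dp$ plays no role and can be dropped.
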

  \begin{proof}
    Without loss of generality, we may assume that $U$ is connected. Notice that $d p = -\p^2_t \bu+ \Delta_H \bu=0$ in $(0,2T)\times U$, then $p(t,\cdot)|_U$ is a constant function whenever $t\in(0,2T)$. Moreover,  we have
    \begin{equation}
      -\Delta_g p=d^* d p=-d^*(\p_t^2 \bu - d^* d \bu)=0
    \end{equation}
    in $(0,2T)\times\M$. According to the unique continuation for harmonic functions, we conclude that $d p=0$ in $(0,2T)\times\M$. Thus $\bu$ satisfies $(\p^2_t-\Delta_H) \bu = 0$. According to the unique continuation for hyperbolic equations \cite{eller}, $\bu|_{(0,2T)\times U}=0$ implies that $\bu(T,x)=0$ whenever $x\in M(U,T)^{int}$.
  \end{proof}
  For any open not empty  subset $U\subset \M$, we define 
  \begin{equation}
    \label{eq:define_T}
    T(U):= \max_{x\in \p \M} d(x,U).
  \end{equation}
  
 Next we show that $\bu$ possess the property of finite speed of propagation. 

  \begin{theorem}
    \label{thm:fsop}
    Let $U\subset\M$ be an open connected subset such that $\M\setminus U$ is also connected.
    Let $\bf\in C^\infty_0((0,\infty);C^\infty_0\Omega^1(U))$. 
    Suppose $(\bu^\bf,p)$ solves \eqref{eq:problem_hypo}, then $\supp(\bu^\bf(s,\cdot))\subset M(U,s)$ for $0< s < T(U)$. 
  \end{theorem}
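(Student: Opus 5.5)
The plan is a bootstrap (continuity) argument in $s$. It combines the classical energy method for the Hodge wave equation with the observation that the pressure is harmonic away from the source. Since $d^*\bu^\bf=0$ and $d^*\Delta_H\bu^\bf=-d^*d^*d\bu^\bf=0$, applying $d^*$ to the first equation of \eqref{eq:problem_hypo} gives $-\Delta_g p=d^*\bf$. Because $\supp \bf(t,\cdot)\subset U$, the function $p(t,\cdot)$ is harmonic in $\M\setminus\overline U$ for every $t$.

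The key mechanism is the following. Suppose that on some time interval $\bu^\bf(t,\cdot)$ vanishes on an open set $O_t\subset \M\setminus \overline U$. On that set the equation reads $dp=\bf-(\p_t^2-\Delta_H)\bu^\bf=0$, so $p(t,\cdot)$ is locally constant there. Since $\M\setminus U$ is connected, unique continuation for harmonic functions (as in the proof of Lemma \ref{lm:1_1} and Theorem \ref{thm:UC}) then gives $dp(t,\cdot)=0$ on all of $\M\setminus\overline U$. In that case $\bu^\bf$ solves the pure equation $(\p_t^2-\Delta_H)\bu^\bf=0$ with homogeneous Dirichlet data outside $U$. The restriction $s<T(U)$ is exactly what keeps such an $O_t$ available: the set $\M\setminus \overline{M(U,s)}$ is nonempty, since some boundary point lies at distance greater than $s$ from $U$.

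The steps, in order, are as follows.

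\textbf{Step 1 (base case).} For small $t$, $\bf$ vanishes near $t=0$, so $\bu^\bf\equiv0$ for $t<t_0:=\inf\{t:\bf(t)\neq0\}$. Set
\[
s_0:=\sup\{s<T(U):\ \supp \bu^\bf(t,\cdot)\subset M(U,t)\ \text{for all } t\le s\}.
\]
Then $s_0\ge t_0>0$.

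\textbf{Step 2 (pressure on $[0,s_0]$).} For $t\le s_0$ the set $\M\setminus\overline{M(U,t)}$ is open and nonempty, and $\bu^\bf$ vanishes on it. The mechanism above gives $dp(t)=0$ on $\M\setminus\overline U$ for all $t\le s_0$.

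\textbf{Step 3 (extension past $s_0$).} Suppose $s_0<T(U)$; we derive a contradiction. Fix $\varepsilon>0$ small so that $s_0+\varepsilon<T(U)$. On the shrinking domains $D_t:=\M\setminus \overline{M(U,t)}$, $t\in[s_0,s_0+\varepsilon]$, consider the energy
\[
E(t)=\tfrac12\int_{D_t}\big(|\p_t\bu^\bf|^2+|d\bu^\bf|^2\big).
\]
Differentiate in $t$. The moving-boundary terms have the good sign (unit speed, by the eikonal property of $d_g(\cdot,U)$). The Dirichlet condition kills the contribution of $\p\M\cap\p D_t$, and the regularity of Theorem \ref{thm:regularity} justifies the integration by parts. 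What remains is the pressure term $\int_{D_t}\langle dp,\p_t\bu^\bf\rangle$. Because $d^*\p_t\bu^\bf=0$, this equals a boundary integral of $p\,\iota_\nu\p_t\bu^\bf$ over the interface $\p D_t\cap\M^{\rm int}$. Normalize $p(t)$ so that $p(t)=0$ on $D_t$ whenever $dp(t)=0$ there.

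\textbf{Step 4 (closing the bootstrap).} Run a second-level continuity argument inside $[s_0,s_0+\varepsilon]$. As long as $E\equiv0$ on $[s_0,t]$, Step 2 applies on $[s_0,t]$ as well. Hence $dp=0$ on $\M\setminus\overline U$ and the pressure term vanishes identically. Then $\frac{d}{dt}E\le 0$, so $E$ stays $0$ slightly beyond $t$, and the set of such $t$ is open and closed. Thus $E\equiv0$ on $[s_0,s_0+\varepsilon]$, which contradicts the maximality of $s_0$.

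I expect Step 4 to be the main obstacle. The difficulty is the circularity between the vanishing of $\bu^\bf$ (which gives $dp=0$) and the vanishing of $dp$ (which is needed for the energy estimate). The first thing I would try to break it is a Grönwall bound. Using harmonicity of $p$ in $\M\setminus\overline U$ together with interior elliptic estimates, I would aim to bound $p$ on the interface $\p D_t$ by the energy of $\bu^\bf$ in a slightly larger region, after subtracting the constant fixed at the previous good time. That would give $E'(t)\lesssim E(t)$ on short intervals. If this fails, the fallback is to work with the pure Hodge wave equation satisfied on $[0,s_0]\times(\M\setminus\overline U)$. From there I would argue via the unique continuation result of \cite{eller}, in the spirit of Theorem \ref{thm:UC}, applied on a thin time slab beyond $s_0$.
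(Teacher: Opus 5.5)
Your Steps 1 and 2 are sound. The gap you flagged in Step 4, however, cannot be closed by a sharper estimate. The circularity is genuine: the statement is false for general sources, and your own Step 2 mechanism proves it.

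Suppose the conclusion held, and let $t<T(U)$ be a time with $\bf(t)\neq0$. Then $\bu^\bf$ vanishes on an open space-time neighbourhood of $\{t\}\times\{x:\ \dist(\{x\},U)>t+\epsilon\}$, which is nonempty for small $\epsilon>0$. So $dp(t)=0$ there, and harmonicity plus unique continuation give $p(t)=c(t)$ on $\M\setminus\overline U$. Hence $q=p(t)-c(t)$ is supported in $\overline U$ and satisfies $-\Delta_g q=d^*\bf(t)$. For every $\psi\in C_0^\infty(\M^{\inter})$ harmonic near $\overline U$, this gives $\int_\M\langle\bf(t),d\psi\rangle\,dV_g=-\int_\M q\,\Delta_g\psi\,dV_g=0$.

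This moment condition fails in general. Take $\M=\overline{B(0,10)}\subset\RR^n$ with the Euclidean metric and $U=B(0,1)$, so $T(U)=9$ and $\M\setminus U$ is connected. Let $\bf=\chi(t)\phi(x)\,dx^1$ with nonnegative, nonzero $\chi\in C_0^\infty((1,2))$ and $\phi\in C_0^\infty(U)$. Let $\psi=\eta x^1$ with $\eta\in C_0^\infty(\M^{\inter})$ equal to $1$ near $\overline U$. The condition then reads $\chi(t)\int\phi\,dx=0$, which is false whenever $\chi(t)\neq0$. So for this example $s_0=t_0$ in your notation, and the energy on $D_t$ becomes positive right after the source switches on.

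A Gr\"onwall bound $E'\lesssim E$ is therefore impossible. The values of $p(t)$ on $\p D_t$, modulo constants, are forced instantaneously by $d^*\bf(t)$ through an elliptic problem, not by the energy of $\bu^\bf$ in $D_t$. Your fallback through unique continuation for the pure Hodge wave equation also fails, since it presupposes $dp=0$ outside $U$, which is the conclusion again.

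The paper's proof takes a different route. It notes that $w=\p_t^2\bu^\bf+dp$ solves a wave equation with source supported in $U$, and invokes finite speed of propagation to get $w=0$ on $K$. Then $\bu^\bf=dq$ there, with $q$ (up to sign) the double time integral of $p$. It uses $\bu^\bf|_{\p\M}=0$ to get zero Cauchy data for the harmonic function $q(s,\cdot)-\lambda$ on $\Upsilon(s)$, and concludes by unique continuation. This bypasses your energy bootstrap, but its finite-speed step hides the same circularity. The Dirichlet trace of $w$ is $dp|_{\p\M}$, not $0$. By the argument above, $dp(t)$ cannot vanish on any open piece of $\p\M$ when the moment condition fails: otherwise $p(t)$ would have constant Cauchy data there and would be constant on $\M\setminus\overline U$.

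So neither route can be completed for general $\bf$, and a correct statement needs an extra restriction. One such case is solenoidal sources, $d^*\bf=0$, with $s<\dist(U,\p\M)$. There the pair $(\bv,0)$, where $\bv$ solves the Hodge wave equation with Dirichlet condition, is the solution, because $d^*\bv$ solves a scalar wave equation with zero source, initial and boundary data. The support property then follows from finite speed for $\bv$.
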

  \begin{proof}
    First, we observe that $(\p_t^2-\Delta_H)\bu^\bf+d p=\bf$ in $(0,+\infty)\times \M$. Consequently, we have $-\Delta_g p=d^*\bf$ and 
      $$(\p_t^2-\Delta_H)(\p_t^2\bu^\bf+d p)=(\p_t^2-\Delta_H)(\Delta_H\bu^\bf+\bf)=\Delta_H \bf+(\p_t^2-\Delta_H)\bf=\p_t^2\bf.$$
   In particular, recalling that $\supp(\bf)\subset (0,+\infty)\times U$, we obtain  
     \begin{equation}
    \label{coco1}\Delta_g p(t,x)=0,\quad (t,x)\in(0,+\infty)\times(\M\setminus U),
  \end{equation}
    \begin{equation}
    \label{coco2}(\p_t^2-\Delta_H)(\p_t^2\bu^\bf+d p)(t,x)=0,\quad (t,x)\in(0,+\infty)\times(\M\setminus U).  \end{equation}
      Let us define the sets 
    \begin{equation}
K:=\{(t,x)\in[0,T(U)]\times\M\mid x\in \M\setminus M(U,t)\},
    \end{equation}
    and, using the fact that  $s< T(U)$, 
    \begin{equation}
      \Upsilon(s):=\p\M\setminus M(U,s). 
    \end{equation}
    Since $s< T(U)$, $\Upsilon(s)$ is a nonempty open subset of $\p\M$ and $\{s\}\times \Upsilon(s)\subset K$.
    Due to the finite speed of propagation for wave equation, we have 
    \begin{equation}
      \bu^\bf(t,x)=d \left(\int_0^t\int_0^{\tau_1} p(\tau_2,x)\ d\tau_2 d\tau_1\right),\quad (t,x)\in K.
    \end{equation}
    To simplify notation, we denote $\int_0^t\int_0^{\tau_1} p(\tau_2,x)\ d\tau_2 d\tau_1$ by $q(t,x)$. Then, the boundary condition in \eqref{eq:problem_hypo}, gives
    \begin{equation}
      d q(s,\cdot)|_{\Upsilon(s)}=\bu^\bf(s,\cdot)|_{\Upsilon(s)}=0.
    \end{equation}
    Let $\widetilde{\Upsilon}(s)$ be a connected component of $\Upsilon(s)$, then for any two distinct points $y,z\in \widetilde{\Upsilon}(s)$, and a curve $\gamma(\xi):[0,1]\to \widetilde{\Upsilon}(s)$ that joins them, there holds
    \begin{equation}
      q(s,y)-q(s,z)=\int_{0}^1 dq(s,\gamma^\prime(\xi))\ d\xi=0.
    \end{equation}
    This implies that $q(s,\cdot)|_{\widetilde{\Upsilon}(s)}$ is a constant. Moreover, \eqref{coco1} implies that
    \begin{equation}
     \Delta_g q(s,x)=\int_0^t\int_0^{\tau_1} \Delta_gp(\tau_2,x)\ d\tau_2 d\tau_1=0,\quad x\in \M\backslash U.
    \end{equation}
    Fix $\lambda\in\mathbb R$ such that $q(s,\cdot)|_{\widetilde{\Upsilon}(s)}\equiv \lambda$ and notice that
$\p_\nu (q-\lambda)(s,x)=dq(\nu(x))(s,x)=0=q(s,x)-\lambda$, $x\in\widetilde{\Upsilon}(s)$. Thus, by Theorem \ref{thm:regularity}, $q(s,\cdot)\in H^1(\M)$ satisfies the condition
\begin{equation}
    \begin{cases}
      \Delta_g(q(s,\cdot)-\lambda)=0,\ \text{in }  \M\backslash U,\\
      q(s,\cdot)-\lambda|_{\widetilde{\Upsilon}(s)}=\partial_\nu (q(s,\cdot)-\lambda)|_{\widetilde{\Upsilon}(s)}=0,\\
    \end{cases}
  \end{equation}
and, recalling that $\M\backslash U$ is connected and $\widetilde{\Upsilon}(s)\subset\p(\M\backslash U)$, the unique continuation for harmonic functions implies that $q(s,\cdot)-\lambda=0$ in $\M\backslash U$. Thus $q(s,\cdot)$ is  constant in $\M \setminus U$ and we have $dq(s,\cdot)=0 $ in $\M\backslash U$. Combining this with \eqref{coco2}, we get $\bu^\bf(s,x)=d q(s,x)=0$ for $x\in \M\backslash M(U,s)\subset \M\backslash U$. Therefore, we get $\supp(\bu^\bf(s,\cdot))\subset M(U,s)$.
  \end{proof}
  We define the space
  \begin{equation}
    B(r;T,\omega):=\{\bu^{\bf}(T,\cdot)\mid \bf\in C^\infty_0((T-r,T);C^\infty_0\Omega^1(\omega))\},
  \end{equation}
  where $(\bu^\bf,p_\bf)$ solves \eqref{eq:problem_hypo} and
  \begin{equation}
    D(r;\omega):=\{1_{M(\omega,r)}\bv\mid \bv\in V\}.
  \end{equation}
  Then we have the approximate controllability.
  \begin{theorem}
    \label{thm:controllability}
    Let $\omega$ be an connected open subset of $\M$ such that $\M\setminus \omega$ is also connected, then $D(r;\omega)$ is contained into the closure of $B(r;T,\omega)$, in the sense of $L^2\Omega^1(M(\omega,r))$,  whenever $r< T(\omega)$.
  \end{theorem}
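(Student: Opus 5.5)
The plan is the standard duality argument of the Boundary Control method, combined with the unique continuation result of Theorem \ref{thm:UC}. By Theorem \ref{thm:fsop}, since $r<T(\omega)$ and $\bf$ is supported in $(T-r,T)\times\omega$ (shifting time so the source acts for duration at most $r$), every element of $B(r;T,\omega)$ is supported in $M(\omega,r)$. Hence $B(r;T,\omega)$ is a subspace of the closed subspace $X:=\overline{D(r;\omega)}\subset L^2\Omega^1(M(\omega,r))$. Showing $D(r;\omega)\subset \overline{B(r;T,\omega)}$ therefore reduces to showing that any $\bw\in X$ orthogonal to $B(r;T,\omega)$ is orthogonal to $D(r;\omega)$. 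In fact it suffices to prove $\bw=0$ on $M(\omega,r)^{int}$, since $\partial M(\omega,r)$ should have measure zero.

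Fix such a $\bw$, and first approximate it in $L^2$ by elements of $H$ with enough regularity to serve as data. Let $(\bv,q)$ solve the backward homogeneous hyperbolic Stokes problem
\[
(\p_t^2-\Delta_H)\bv+dq=0 \text{ in } (0,T)\times\M,\qquad d^\ast\bv=0,\qquad \bv|_{\p\M}=0,
\]
with $\bv(T)=0$ and $\p_t\bv(T)=\bw$. Here $\bw$ is extended by zero outside $M(\omega,r)$, and this extension lies in $H$ because $\bw$ is a limit of restrictions $1_{M(\omega,r)}\bv$; one should check that this restriction lies in $H$ or else work with its $H$-projection. Existence follows from Theorem \ref{thm:weak} and Theorem \ref{thm:regularity} by time reversal, working first with smooth data and passing to the limit. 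Testing the equation for $\bu^\bf$ against $\bv$ and integrating by parts in time and space gives
\[
\inner{\bu^\bf(T),\bw}=\int_{T-r}^T\inner{\bf(t),\bv(t)}\,dt .
\]
The pressure terms vanish because $d^\ast\bu^\bf=d^\ast\bv=0$ and both fields vanish on $\p\M$. Since the left side is zero for all $\bf\in C_0^\infty((T-r,T);C_0^\infty\Omega^1(\omega))$, we get $\bv=dh$ in $(T-r,T)\times\omega$ for some function $h$, not $\bv=0$, because only divergence-free-orthogonal test forms are excluded. Indeed $\bf$ ranges over all compactly supported forms in $\omega$, not only divergence-free ones, so orthogonality to all of them actually gives $\bv=0$ on $(T-r,T)\times\omega$.

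Next, extend $\bv$ oddly about $t=T$: set $\bv(T+s)=-\bv(T-s)$ and $q(T+s)=-q(T-s)$. This remains a solution because $\bv(T)=0$. Then $\bv=0$ on $(T-r,T+r)\times\omega$, and Theorem \ref{thm:UC}, applied on this interval of length $2r$ centered at $T$, gives $\bv(T,\cdot)=0$ on $M(\omega,r)^{int}$. That only recovers the initial datum $\bv(T)=0$, which we already know. Instead, apply the same argument to $\p_t\bv$, which solves the same system with vanishing pressure gradient in $\omega$. Then $\p_t\bv(T)=\bw=0$ on $M(\omega,r)^{int}$, so $\bw=0$, which proves the density.

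The main obstacle is regularity. Theorem \ref{thm:UC} requires $\bv\in H^2((0,2T);V)$ and $p\in L^2H^1$, while $\bw$ is only $L^2$. I would first take $\bw$ in a dense regular class, for example $\bw=L^k\bh$ smoothed by spectral truncation, and handle the general case by duality: orthogonality must be tested against the regular elements, so I would mollify in time, replacing $\bv$ by $\bv*\rho_\varepsilon$. This still vanishes on a slightly shorter interval, and letting $\varepsilon\to0$ recovers the full interval. A secondary subtle point is confirming that $1_{M(\omega,r)}\bv$ and $\bw$ may be treated as elements of $H$. This is what makes the pressure term in the duality identity vanish.
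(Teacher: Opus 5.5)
Your route is the paper's: a dual backward solution, vanishing on $(T-r,T)\times\omega$, odd reflection at $t=T$, and Theorem~\ref{thm:UC} applied to $\p_t\bv$. Your formulation of the duality is also the correct one: an arbitrary $\bw\in X=\overline{D(r;\omega)}$ annihilating $B(r;T,\omega)$ must vanish.

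\textbf{The gap.} It sits at the point you flag and then pass over. The zero extension of $\bw$ is in general \emph{not} in $H$, and the reason you give for it is false. For $\bv_0\in V$, $d^\ast(1_{M(\omega,r)}\bv_0)$ is the distribution $\psi\mapsto\pm\inner{1_{M(\omega,r)}\bv_0,d\psi}$, i.e.\ the normal flux of $\bv_0$ through $\p M(\omega,r)$. This need not vanish: in a Euclidean domain with $M(\omega,r)$ an interior ball, take $\bv_0\in V$ equal to a constant field near $\overline{M(\omega,r)}$. So $X\not\subset H$, and $\p_t\bv(T)=\bw$ is not admissible Cauchy data. It is this datum that must lie in $H$; the pressure terms in the duality identity vanish regardless, because $\bu^\bf,\bv\in V$. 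Since $\bu^\bf(T)\in H$, you may replace $\bw$ by its orthogonal projection $P_H\bw$ without changing $\inner{\bu^\bf(T),\bw}$. But then your argument yields only $P_H\bw=0$ on $M(\omega,r)$, not $\bw=0$.

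\textbf{The missing step} is a Helmholtz decomposition.
\begin{itemize}
\item By Lemma~\ref{lm:1}, $\bw=P_H\bw+d\phi$ with $\phi\in H^1(\M)$.
\item Since $\bw=0$ off $M(\omega,r)$ and $P_H\bw=0$ on $M(\omega,r)$, we get $P_H\bw=-1_{\M\setminus M(\omega,r)}d\phi$.
\item Then $P_H\bw\perp d\phi$ gives $\norm{d\phi}_{L^2\Omega^1(\M\setminus M(\omega,r))}=0$, hence $P_H\bw=0$ and $\bw=d\phi$.
\item Therefore $\inner{\bw,1_{M(\omega,r)}\bv_0}=\inner{d\phi,\bv_0}=0$ for every $\bv_0\in V$. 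So $\bw\perp D(r;\omega)$, and $\bw\in\overline{D(r;\omega)}$ forces $\bw=0$.
\end{itemize}
For regularity, take the dual solution from Theorem~\ref{thm:weak} (time-reversed) with datum $P_H\bw$, and mollify it in time as you suggest. Approximating the datum instead (spectral truncation) would destroy the orthogonality to $B(r;T,\omega)$.

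The paper's proof never meets this issue because it only feeds $\bv\in V$ with $\bv\perp B(r;T,\omega)$ into the dual problem. That only gives $D(r;\omega)\cap B(r;T,\omega)^\perp=\{0\}$, which is weaker than density: a general annihilator in $\overline{D(r;\omega)}$ is neither of the form $1_{M(\omega,r)}\bv$ nor in $H$. The decomposition above is what closes both arguments.

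\textbf{Drop the appeal to Theorem~\ref{thm:fsop}} in your first step. You have $1_{M(\omega,r)}\bu^\bf(T)\in D(r;\omega)$ simply because $\bu^\bf(T)\in V$, and $\inner{\bw,\bu^\bf(T)}=\inner{\bw,1_{M(\omega,r)}\bu^\bf(T)}$ for $\bw$ supported in $M(\omega,r)$. So the argument proves that $D(r;\omega)$ lies in the $L^2\Omega^1(M(\omega,r))$-closure of the restrictions $1_{M(\omega,r)}B(r;T,\omega)$, which is how the statement should be read.

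This matters, because Theorem~\ref{thm:fsop} is false in general for sources with $d^\ast\bf\neq0$. Take $U$ a small ball, $h$ harmonic near $\overline U$ with $dh\neq0$ on $U$, and $\bf=\theta(t)\chi\,dh$ with $0\le\chi\in C_0^\infty(U)$, $\chi\not\equiv0$.
\begin{itemize}
\item If the support claim held, $\bu^\bf$ would vanish on $(s-\delta,s+\delta)\times O$ for some nonempty open $O\subset\M\setminus\overline U$, hence $dp(s)=0$ on $O$.
\item Since $p(s)$ is harmonic on $\M\setminus\overline U$, it would be constant there.
\item Pairing $d^\ast dp(s)=d^\ast\bf(s)$ with a cutoff of $h$ then gives $\theta(s)\int_\M\chi\,|dh|_g^2\,dV_g=0$, which is false when $\theta(s)\neq0$.
\end{itemize}
The proof of Theorem~\ref{thm:fsop} invokes finite speed of propagation for a field whose Dirichlet data $dq|_{\p\M}$ need not vanish.
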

  \begin{proof}
    By Theorem \ref{thm:fsop}, when $r<T(\omega)$, for any $\bf\in C^\infty_0((T-r,T);C^\infty_0\Omega^1(\omega))$,
    we have $\supp\bu^\bf(T,\cdot)\subset M(\omega,r)$. Therefore, for any $\bv\in V$, there holds
    \begin{equation}
        \inner{\bu^\bf(T,\cdot),\bv}=\inner{1_{M(\omega,r)}\bu^\bf(T,\cdot),\bv}=\inner{\bu^\bf(T,\cdot),1_{M(\omega,r)}\bv}.
    \end{equation}
    Thus we only need to show that for any $\bv\in V$, if $\bv\in B(r;T,\omega)^\bot$, then $1_{M(\omega,r)}\bv=0$. To see this, we let $\bw$ be the solution of 
    \begin{equation}
      \begin{cases}
        (\p_t^2-\Delta_H)\bw + d p_w=0,\ \text{in } [0,T]\times \M,\\
      d^* \bw=0,\ \text{in } [0,T]\times \M,\\
      \bw|_{[0,T]\times\p\M}=0,\\
      \bw|_{t=T}=0,\ \p_t \bw|_{t=T}=\bv.
      \end{cases}
    \end{equation}
    According to Theorem \ref{thm:regularity}, we have $\bw\in C^1([0,T];V)\cap C^2([0,T];H)$ and $p_w\in L^2([0,T];H^1(\M))$.
    Then an integration by parts gives
      \begin{equation}\label{tatu}
      \begin{aligned}\int_0^T&\int_\omega \bw(t,\cdot) \wedge \ast \bf(t,\cdot) dt=\int_0^T\inner{\bw,\bf}dt
      \\
      &=\int_0^T \inner{\bw,\p_t^2 \bu^\bf -d^*d \bu^\bf +d p_\bf}-\inner{\bu^\bf,\p_t^2 \bw -d^*d \bw +d p_w }dt=0\end{aligned}
    \end{equation}
    for any $\bf\in C^\infty_0((T-r,T);C^\infty_0\Omega^1(\omega))$. This property implies that $\bw(t,x)=dp_1(t,x)=0$, $(t,x)\in [T-r,T]\times \omega$.

    Let $\widetilde{\bw}$ be the odd extension to $[0,2T]\times \M$. Then due to Theorem \ref{thm:UC}, we have $1_{M(\omega,r)}\bv=1_{M(\omega,r)}\p_t\widetilde{\bw}(T,\cdot) =0$. Hence $B(r;T,\omega)$ is dense in $D(r;\omega)$.
  \end{proof}
  Define the function
  \begin{equation}
    W_{\bf,\bh}:=\inner{\bu^\bf(t,\cdot),\bu^\bh(s,\cdot)}
  \end{equation}
  and the operator 
  \begin{align}
    \label{eq:def_J}
    J:L^2((0,2T)&\times \omega)\to L^2((0,T)\times\omega),\\
    J(\phi)(t,\cdot)&=\frac{1}{2}\int_t^{2T-t}\phi(t,\cdot)\, dt.
  \end{align}
  We have the following Blagovestchenskii's identity. 
  \begin{lemma}
    \label{lm:trick}
    Let $\bf,\bh\in C^\infty_0((0,T);C^\infty_0\Omega^1(\omega))$. Then
    \begin{equation}
      W_{\bf,\bh}(T,T)=\inner{\bf, J \Lambda_\omega \bh}_{L^2((0,T)\times\omega)}-\inner{\Lambda_\omega \bf, J \bh}_{L^2((0,T)\times\omega)}.
    \end{equation}
  \end{lemma}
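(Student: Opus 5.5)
We follow the classical Blagoveshchenskii argument, adapted to the divergence-free setting. The key point is that the pressure drops out of every pairing, because $\bu^\bf(t),\bu^\bh(s)\in V$ and $d^*\bu=0$. We read $J$ as $J(\phi)(t)=\frac12\int_t^{2T-t}\phi(\tau)\,d\tau$ and extend $\bf,\bh$ by zero to $(0,2T)$, so that $\Lambda_\omega\bh$ is defined on $(0,2T)\times\omega$.

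\textbf{Step 1: the wave equation for $W$.} By Theorem \ref{thm:regularity}, $\bu^\bf,\bu^\bh\in C^1([0,2T];V)\cap C^2([0,2T];H)$. Differentiating under the inner product gives
\begin{equation}
(\p_t^2-\p_s^2)W_{\bf,\bh}(t,s)=\inner{\p_t^2\bu^\bf(t),\bu^\bh(s)}-\inner{\bu^\bf(t),\p_s^2\bu^\bh(s)}.
\end{equation}
We substitute $\p_t^2\bu^\bf=\Delta_H\bu^\bf-dp_\bf+\bf$. The term $\inner{dp_\bf,\bu^\bh(s)}=\inner{p_\bf,d^*\bu^\bh(s)}=0$, since $\bu^\bh(s)\in V$ has zero trace and is divergence free. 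For the Laplacian, $\inner{\Delta_H\bu^\bf(t),\bu^\bh(s)}=-\inner{d\bu^\bf(t),d\bu^\bh(s)}$; this uses $d^*\bu=0$ together with the weak formulation against $V$, i.e.\ the form \eqref{eq:weak}. This term is symmetric, so it cancels against the corresponding term from $\bu^\bh$. We obtain
\begin{equation}
(\p_t^2-\p_s^2)W_{\bf,\bh}(t,s)=\inner{\bf(t),\bu^\bh(s)}-\inner{\bu^\bf(t),\bh(s)}=:F(t,s),
\end{equation}
with $W=\p_tW=0$ at $t=0$. Since $\bf,\bh$ are supported in $\omega$, the right-hand side equals $\inner{\bf(t),\Lambda_\omega\bh(s)}_{L^2(\omega)}-\inner{\Lambda_\omega\bf(t),\bh(s)}_{L^2(\omega)}$. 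It is therefore expressed through the data alone.

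\textbf{Step 2: Duhamel for the 1D wave equation.} We solve in $t$ with $s$ as the spatial variable on the whole line $\RR_s$; this is fine because $W$ is defined for all $s\geq0$ and vanishes for $s\le0$. D'Alembert's formula gives
\begin{equation}
W(T,T)=\frac12\int_0^T\int_{T-(T-\tau)}^{T+(T-\tau)}F(\tau,\sigma)\,d\sigma\,d\tau=\frac12\int_0^T\int_{\tau}^{2T-\tau}F(\tau,\sigma)\,d\sigma\,d\tau.
\end{equation}
Inserting $F$ and using Fubini, the first part becomes $\int_0^T\inner{\bf(\tau),\frac12\int_\tau^{2T-\tau}\Lambda_\omega\bh(\sigma)\,d\sigma}d\tau=\inner{\bf,J\Lambda_\omega\bh}$. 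The second part becomes $-\int_0^T\inner{\Lambda_\omega\bf(\tau),J\bh(\tau)}d\tau$, where $\bh$ is extended by zero. This is the claimed identity.

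\textbf{Main obstacle.} The delicate point is justifying Step 1 rigorously. We need enough regularity to differentiate $W$ twice and to integrate by parts. We must also make sure that the pressure term genuinely vanishes: $\inner{dp,\bu}=0$ requires $\bu(s)\in V$ and $p\in H^1$, which Theorem \ref{thm:regularity} provides for a.e.\ time. If pointwise second derivatives are problematic, we would instead verify the wave equation for $W$ in the distributional sense on $(0,2T)^2$. To do this we test against $\varphi(t)\psi(s)$ and use the weak formulation \eqref{eq:weak}, for which $\bv=\bu^\bh(s)\in V$ is admissible. Uniqueness of distributional solutions of the 1D wave equation with zero Cauchy data then yields the d'Alembert representation.
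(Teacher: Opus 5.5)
Your proposal is correct and follows essentially the same route as the paper: derive the one-dimensional wave equation $(\p_t^2-\p_s^2)W_{\bf,\bh}=\inner{\bf(t),\bu^\bh(s)}-\inner{\bu^\bf(t),\bh(s)}$ with zero Cauchy data at $t=0$, using the symmetry of $\inner{d\bu^\bf,d\bu^\bh}$, and then read off $W_{\bf,\bh}(T,T)$ from d'Alembert's formula over the triangle $\tau\le\sigma\le 2T-\tau$. You also make explicit two points the paper leaves implicit: the pressure pairings vanish because $\bu^\bh(s)\in V$, and $\Lambda_\omega\bh$ must be taken on $(0,2T)$ for $J$ to make sense.
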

  \begin{proof}
    Notice that
    \begin{align}
      \label{eq:lm:trick}
      (\p^2_t - \p^2_s)W(t,s)=\inner{\Delta_H \bu^\bf - d p_\bf+\bf, \bu^\bh}-\inner{\bu^\bf,\Delta_H \bu^\bh - d p_\bh+\bh}.
    \end{align}
    Since 
    \begin{align}
      \inner{\Delta_H \bu^\bf,\bu^\bh}-\inner{\bu^\bf,\Delta_H \bu^\bh}=\inner{d \bu^\bf, d\bu^\bh}-\inner{d \bu^\bf, d\bu^\bh}=0,
    \end{align}
    we have
    \begin{equation}
      \begin{cases}
        (\p_t^2-\p_s^2)W_{\bf,\bh}(t,s)=\inner{\bf, \bu^\bh}-\inner{\bu^\bf,\bh}=\inner{\bf,\Lambda_\omega \bh}-\inner{\Lambda_\omega \bf, \bh}\\
        W_{\bf,\bh}(0,s)=\p_t W_{\bf,\bh}(0,s)=0.
      \end{cases}
    \end{equation}
    The solution of one-dimensional wave equation can be written as
    \begin{align}
      W_{\bf,\bh}(T,T)&=\frac{1}{2}\int_0^T\int_\tau^{2T-\tau} \inner{\bf(\tau),(\Lambda_\omega) \bh(\sigma)}-\inner{(\Lambda_\omega \bf)(\tau), \bh(\sigma)}\,d\sigma d\tau\\
      &=\inner{\bf, J \Lambda_\omega \bh}_{L^2((0,T)\times\omega)}-\inner{\Lambda_\omega \bf, J \bh}_{L^2((0,T)\times\omega)}.
    \end{align}
  \end{proof}
\section{Proof of Theorem \ref{thm:main}}
\label{sec:mian_proof}

In this section we will complete the proof of Theorem \ref{thm:main}. For this purpose, we assume that condition \eqref{NN} is fulfilled and apply Proposition \ref{p1} to deduce that condition \eqref{ll} holds true. Therefore, it remains to prove that condition \eqref{ll} implies that $(\M_1,g_1)$ and $(\M_2,g_2)$ are isometric. We will prove this main step in the proof of Theorem \ref{thm:main} by extending the BC method to the auxiliary hyperbolic system \eqref{eq:problem_hypo}. 
  
  From now on and in all the remaining parts of this article, for $k=1,2$ and any nonempty subset $U_k$ of $\M_k$, we fix
  $$T_k(U_k):=\max_{x\in\p\M_k}(d(x,U_k)).$$
Let $U$ be a subset of $\M_1$ and $\bPhi: U\to \M_2$ be a map. We define 
  \begin{equation}
      T_{\bPhi}(U):=\min\{T_1(U), T_2(\bPhi(U))\}.
  \end{equation}
 Similarly, we denote the distance function on $(\M_k,g_k)$ by $d_{g_k}$, the diameter of sets in $\M_k$ by $\diam_k$, the distance between sets by $\dist_k$, and the influence domain of the set $U_k$ by $M_k(U_k,\cdot)$ for $k=1,2$, respectively.
  Furthermore, for $x\in\M_k$ and $\varepsilon>0$, we write the metric ball
  \begin{equation}
      B_k(x,\varepsilon):=\{y\in \M_k\mid d_{g_k}(x,y)<\varepsilon\},
    \end{equation}
  and we denote its complement $\M_k\setminus B_k(x,\varepsilon)$ by $B^c_k(x,\varepsilon)$ for $k=1,2$.
  Our first step is to recover the distance function in $\omega_1$. Observe that for any nonempty open subset $\widetilde{\omega}_1 \subset \omega_1$, there holds $\bPhi(\widetilde{\omega}_1)\subset \bPhi(\omega_2)$ and $T_{\bPhi}(\omega_1)\leq T_{\bPhi}(\widetilde{\omega}_1)$. Moreover, $\Lambda_{1,\omega_1}\bPhi^\ast=\bPhi^\ast\Lambda_{2,\omega_2}$ implies that $\Lambda_{1,\widetilde{\omega}_1}\bPhi^\ast=\bPhi^\ast\Lambda_{2,\bPhi(\widetilde{\omega}_1)}$.
  Therefore, without loss of generality, we may assume that $\omega_1$ is small enough so that it satisfies $\diam_1(\omega_1)< T_{\bPhi}(\omega_1)$.

   According to the above discussion  Theorem \ref{thm:main} follows from  the following theorem.
  \begin{theorem}
  \label{thm_main:hyperbolic}
      Suppose there is a diffeomorphism $\bPhi:\omega_1\to\omega_2$ such that 
      \begin{equation}
      \bPhi^\ast\Lambda_{2,\omega_2}=\Lambda_{1,\omega_1}\bPhi^\ast.
      \end{equation}
      Then $(\M_1,g_1)$ and $(\M_2,g_2)$ are isometric.
  \end{theorem}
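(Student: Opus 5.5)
We follow the Boundary Control strategy adapted to the divergence-free setting. The plan is: use Blagovestchenskii's identity to recover inner products of waves at time $T$, use approximate controllability to turn these into volumes of influence domains, deduce the distance functions, and finally glue local isometries.

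\emph{Step 1: Gram matrices are determined by the data.} By Lemma \ref{lm:trick}, for $\bf,\bh\in C^\infty_0((0,T);C^\infty_0\Omega^1(\omega_2))$ we have
\begin{equation*}
\inner{\bu_2^\bf(T),\bu_2^\bh(T)}=\inner{\bu_1^{\bPhi^\ast\bf}(T),\bu_1^{\bPhi^\ast\bh}(T)}.
\end{equation*}
Here we use that $\Lambda$ intertwines with $\bPhi^\ast$. The pairing $\inner{\bf,J\Lambda\bh}_{L^2((0,T)\times\omega)}$ has to be invariant under $\bPhi^\ast$, so we first show $\bPhi$ is an isometry on $\omega_1$. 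For this I would take $\bf$ concentrated near a point and short times; the small-time asymptotics of $\Lambda_\omega\bf$ (the leading singularity of the hyperbolic Stokes kernel, which away from the pressure correction is the wave parametrix) determine $g$ locally. This gives $\bPhi^\ast g_2=g_1$ on $\omega_1$, and then $L^2$ pairings on $\omega$ are preserved.

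\emph{Step 2: recover distances.} Take $\widetilde\omega\subset\omega_1$ small, connected, with connected complement, a point $x\in\omega_1$ with small ball $B=B_1(x,\varepsilon)$, and $r<T_{\bPhi}(\widetilde\omega)$. By Theorem \ref{thm:fsop} and Theorem \ref{thm:controllability}, the closure of $B(r;T,\widetilde\omega)$ in $L^2$ equals the closure of $D(r;\widetilde\omega)$. The orthogonal projection onto this space is expressible through the Gram matrix of Step 1. Since $D(r;\widetilde\omega)$ consists of truncations $1_{M(\widetilde\omega,r)}\bv$ with $\bv\in V$, its closure is $1_{M(\widetilde\omega,r)}H$.

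We then test whether $M(\widetilde\omega,r)$ meets $B$. This holds iff some element of $1_{M(\widetilde\omega,r)}H$ has nonzero pairing with a divergence-free field $d^\ast\bm\eta$ supported in $B$. The check reduces to data on $\omega$: choose $\bm\eta\in C_0^\infty\Omega^2(B)$ and minimize $\|\bu^\bf(T)-d^\ast\bm\eta\|$ over controls. Together with the Remark after the definition of $H$, this yields $\dist_1(\widetilde\omega,x)$, and the same quantity for $\bPhi(\widetilde\omega),\bPhi(x)$ in $\M_2$, provided the relevant distances are below $T_{\bPhi}$.

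The restriction $r<T(\omega)$ is the new obstacle. Finite speed holds only before the wave reaches all of $\partial\M$, so only distances less than $T_{\bPhi}(\widetilde\omega)$ are directly accessible.

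\emph{Step 3: globalize.} From Step 2 we obtain the boundary distance functions $y\mapsto d(y,z)$ for $z\in\omega$ and $d(y,z)<T(\omega)$, on a neighborhood $M(\omega_1,T_{\bPhi}(\omega_1))$. Via the embedding $y\mapsto d(y,\cdot)|_{\omega}$ we get an isometry $\bPhi_1$ from $M_1(\omega_1,\rho)$ onto $M_2(\omega_2,\rho)$ extending $\bPhi$. Restricting $\Lambda$ to the enlarged set is not available, since sources must lie in $\omega$. Instead, I would iterate: apply the controllability argument with sets $\widetilde\omega\subset M(\omega,\rho)$, using the recovered Gram matrices of waves whose time-$T$ states are approximately controllable into those regions. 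This gives the local data on the larger region, and we repeat.

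A continuation argument (the set where an isometric extension exists is open and closed, using connectedness and compactness) then yields a global isometry. I expect Step 3's iteration beyond $T(\omega)$, caused by the divergence constraint blocking finite speed of propagation near the boundary, to be the main difficulty.
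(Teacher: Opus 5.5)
\textbf{Verdict: there is a genuine gap, in Steps 2--3. Nothing in your argument reaches points outside $\omega_1$, and the globalization relies on an unspecified iteration in place of the unique-continuation step that actually works.}

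\textbf{First gap: no mechanism for points outside $\omega_1$.} The test in Step 2 only probes balls $B\subset\omega_1$. That is the only place where $\bu^{\bf}(T)$ is observable, and the only reason the pairing with $d^\ast\bm\eta$ is computable. So it gives $d_{g_1}(x,y)$ for $x,y\in\omega_1$ and nothing more. Your claim at the start of Step 3, that this yields $y\mapsto d(y,z)$ on $M(\omega_1,T_{\bPhi}(\omega_1))$, does not follow. The volume route announced in your plan is never carried out, and it has no evident analogue here: no known field in $H$ plays the role of the constant function in the scalar BC method.

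The paper instead uses a relation among balls \emph{centred in} $\omega_1$ that extend beyond it, and this relation is testable from the Gram data:
\begin{itemize}
\item For $x,y,z\in\omega_1$ and radii below $T_{\bPhi}$, $B_1(x,l_x)\subset B_1(y,l_y)\cup B_1(z,l_z)$ holds iff every $\bu^{\bf}(T)$ with $\bf$ supported near $x$ in $(T-l_x,T)$ is an $L^2$-limit of waves with sources near $y$ and $z$ on the corresponding time windows. This uses approximate controllability plus finite speed of propagation.
\item By Lemma~\ref{lm:norm_equal}, this relation is carried over by $\bPhi$ (Proposition~\ref{lm:6}).
\item Together with Lemma~\ref{lm:7} it recovers $\sigma^T(x)$.
\item $d_{g_1}(z,\gamma_1(l;x,\xi))$ is then the infimum of those $R$ for which $B_1(\gamma_1(s;x,\xi),l-s+\varepsilon)\subset B_1(x,l)\cup B_1(z,R)$.
\end{itemize}
Only after this does the distance-function embedding give an isometry $\overline{M_1^T(x,3\rho)}\to\overline{M_2^T(\bPhi(x),3\rho)}$.

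\textbf{Second gap: the globalization.} You say restricting $\Lambda$ to the enlarged set is not available, and replace it with an unspecified Gram-matrix iteration. In fact it is available, and this is what defeats the $T(\omega)$ obstruction (Lemma~\ref{lm:extend}). Suppose an isometry $\tbPhi$ extending $\bPhi$ is known on an open connected $U\supset\omega_1$.
\begin{itemize}
\item Take $\bf$ supported in $\omega_2$. The field $\bu_1^{\tbPhi^\ast\bf}-\tbPhi^\ast\bu_2^{\bf}$ solves the homogeneous hyperbolic Stokes system on $U$ and vanishes on $\omega_1$.
\item Unique continuation (Theorem~\ref{thm:UC}, where the pressure is handled by harmonicity) makes it vanish on $U$.
\item Duality and time reversal then move the sources into $U$ as well. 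A second unique continuation gives $\Lambda_{1,U}\tbPhi^\ast=\tbPhi^\ast\Lambda_{2,\tbPhi(U)}$.
\end{itemize}
The local reconstruction can therefore be restarted at any point of $U$, with a step bounded below via $\cT>0$ and convexity radii.

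Your open--closed argument also needs overlapping local isometries to coincide. The paper proves this by a Gram-matrix orthogonality argument (Lemma~\ref{lm:glue}), then applies Zorn's lemma and a symmetric argument for surjectivity.

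\textbf{Step 1.} The microlocal detour is unnecessary and, as written, unproved. Whether $\bu^{\bf}(t)|_B\neq0$ for small balls $B\subset\omega_1$ can be read directly off $\Lambda$ without any metric. After shrinking $\omega_1$ so that $\diam_1(\omega_1)<T_{\bPhi}(\omega_1)$, first-arrival times give $d_{g_2}(\bPhi(x),\bPhi(y))=d_{g_1}(x,y)$ on $\omega_1$, so $\bPhi$ is an isometry (Proposition~\ref{prop:dist_omega}). This also removes the dependence of your Step 2 on Gram matrices.
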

   The remainder of this section is devoted to the proof of Theorem \ref{thm_main:hyperbolic} using the BC method. We extend the BC method to the auxiliary hyperbolic system \eqref{eq:problem_hypo}. This extension is not a direct application of existing results, due to the presence of the divergence-free constraint. Accordingly, each step of the BC method must be adapted. These adaptations are carried out below.
   
   \subsection{Reconstruction of metric in the accessible domain}
  \begin{proposition}
    \label{prop:dist_omega}
    Suppose there is a diffeomorphism $\bPhi:\omega_1\to\omega_2$ such that $\diam_1(\omega_1)< T_{\bPhi}(\omega_1)$ and $\bPhi^\ast\Lambda_{2,\omega_2}=\Lambda_{1,\omega_1}\bPhi^\ast$, then for any points $x,y\in \omega_1$, there holds $d_{g_1}(x,y)=d_{g_2}(\bPhi(x),\bPhi(y))$. Moreover, $\bPhi$ is an isometry.
  \end{proposition}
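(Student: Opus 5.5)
The plan is to characterize the distance between two points of $\omega_1$ purely in terms of $\Lambda_{1,\omega_1}$, by deciding when waves generated near two points can interact. Then we transport this characterization through $\bPhi$.

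\textbf{Step 1: a data-only test.} Fix $x,y\in\omega_1$ and small $\varepsilon>0$. For $r_1,r_2>0$, take $\bf$ supported in $(T-r_1,T)\times B_1(x,\varepsilon)$ and $\bh$ supported in $(T-r_2,T)\times B_1(y,\varepsilon)$, with $B_1(x,\varepsilon),B_1(y,\varepsilon)\subset\omega_1$. We ask whether
\[
\inner{\bu^\bf(T),\bu^\bh(T)}=0
\]
holds for all such sources. By Lemma \ref{lm:trick}, this inner product $W_{\bf,\bh}(T,T)$ is computable from $\Lambda_{1,\omega_1}$ alone. The identity $\bPhi^\ast\Lambda_{2,\omega_2}=\Lambda_{1,\omega_1}\bPhi^\ast$, together with the fact that the $L^2$ pairings in Lemma \ref{lm:trick} must be matched under $\bPhi^\ast$, then shows the test gives the same answer on $\M_2$ for $\bPhi(x),\bPhi(y)$. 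This matching of pairings is a subtle point, since a priori $\bPhi$ need not preserve volume forms; I handle it in Step 4.

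\textbf{Step 2: the test reads off distance.} The claim is that the inner products vanish for all sources if and only if $r_1+r_2\le d_{g_1}(x,y)-2\varepsilon$, up to $\varepsilon$-errors.

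For the "if" direction, use Theorem \ref{thm:fsop}. It applies since $r_i<T_{\bPhi}(\omega_1)$ is guaranteed by $\diam_1(\omega_1)<T_{\bPhi}(\omega_1)$, after shrinking balls so that complements are connected. It gives $\supp\bu^\bf(T)\subset M_1(B(x,\varepsilon),r_1)$ and likewise for $\bh$, and these supports are disjoint.

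For the "only if" direction, use Theorem \ref{thm:controllability}. If $r_1+r_2>d(x,y)+2\varepsilon$, the open set $A=M_1(B(x,\varepsilon),r_1)\cap M_1(B(y,\varepsilon),r_2)$ is nonempty. Pick $\bv\in V$ supported in $A$, which exists by the remark after the definition of $V$. Approximating $1_{M(\cdot,r_i)}\bv=\bv$ by $\bu^\bf(T)$ and $\bu^\bh(T)$ gives an inner product near $\norm{\bv}^2>0$. Since only $r_1+r_2\le\diam_1(\omega_1)+2\varepsilon$ is needed, both radii can be kept below $T_{\bPhi}(\omega_1)$. Letting $\varepsilon\to0$ gives
\[
d_{g_1}(x,y)=\sup\{r_1+r_2 : \text{test vanishes}\}.
\]

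\textbf{Step 3: transfer the distance.} The same characterization holds on $\M_2$ with the image balls $\bPhi(B_1(x,\varepsilon))$. Those are not metric balls, but they shrink to $\bPhi(x)$ by continuity, so the sup/inf characterization still yields $d_{g_2}(\bPhi(x),\bPhi(y))$. This gives distance preservation.

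\textbf{Step 4: isometry, and the main obstacle.} A distance-preserving diffeomorphism between open sets is a Riemannian isometry, by Myers–Steenrod, or directly by differentiating $d^2$ along short geodesics near the diagonal.

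The main obstacle is Step 1. The pairing $\inner{\bf,J\Lambda\bh}_{L^2((0,T)\times\omega)}$ uses the volume form and the fiber metric of $g_1$, so transferring it requires $\bPhi$ to intertwine these, which is not yet known. I would try to avoid this by using only the \emph{vanishing} of $W_{\bf,\bh}$, expressed as orthogonality of $\bu^\bf(T)$ to the whole range. The real route would be to use the unique continuation Theorem \ref{thm:UC} and Theorem \ref{thm:fsop} directly on $\Lambda$: the support of $\Lambda_{1,\omega_1}\bf$ inside $\omega_1$ determines the arrival time of the wave from $B(x,\varepsilon)$ at $y$. Concretely, $d_{g_1}(x,y)$ is the infimum of $t$ such that $\Lambda\bf$ is not identically zero near $(t+t_0,y)$ for some $\bf$ supported near $(t_0,x)$. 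The upper bound comes from Theorem \ref{thm:fsop}. The lower bound comes from Theorem \ref{thm:UC}, together with the density in Theorem \ref{thm:controllability}, which rules out vanishing of all such solutions. This formulation involves only supports, so it is manifestly invariant under $\bPhi^\ast$, and I would adopt it as the primary argument.
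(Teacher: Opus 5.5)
Your adopted support-based argument is essentially the paper's proof, and it is correct. It reads off $d_{g_1}(x,y)$ as the first time at which $\Lambda_{1,\omega_1}\bf$, for sources near $x$, fails to vanish near $y$; it uses Theorem \ref{thm:fsop} for one inequality and Theorem \ref{thm:controllability} for the other, secures the time window via $\diam_1(\omega_1)<T_{\bPhi}(\omega_1)$, squeezes the image balls $\bPhi(B_1(p,\varepsilon))$ between metric balls, and ends with Myers--Steenrod. You were right to drop the inner-product test of Steps 1--2, and the ``vanishing only'' hedge would not rescue it. The pairings in Lemma \ref{lm:trick} use the $g_1$-volume form and fiber metric on $\omega_1$, which is why the paper invokes Lemma \ref{lm:norm_equal} only after $\bPhi$ is known to be an isometry.
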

  \begin{proof}
    Let $\varepsilon>0$ be small enough that $B_1(x,\varepsilon), B_1(y,\varepsilon)\subset \omega_1$ and $B^c_1(x,\varepsilon)$, $B^c_1(y,\varepsilon)$ are connected. Then we write 
    \begin{equation}
      \mathcal{B}_k(p,\varepsilon):=C_0^\infty((0,\infty);C_0^\infty\Omega^1(B_k(p,\varepsilon))),\ k=1,2,
    \end{equation}
    and 
    \begin{equation}
      \bPhi_\ast (\cB_1(x,\varepsilon)):=C_0^\infty((0,\infty);C_0^\infty\Omega^1(\bPhi(B_1(x,\varepsilon)))).
    \end{equation}
     We define
    \begin{align}
      t_{1,\varepsilon} := \inf\{t>0\mid \exists \bf\in \mathcal{B}_1(x,\varepsilon)\text{ s.t. } \supp(\bu_1^\bf(t,\cdot))\cap B_1(y,\varepsilon)\neq \emptyset \}
    \end{align}
    \begin{align}
      t_{2,\varepsilon} := \inf\{t>0\mid \exists \bf\in \mathcal{B}_2(\bPhi(x),\varepsilon)\text{ s.t. } \supp(\bu_2^\bf(t,\cdot))\cap B_2(\bPhi(y),\varepsilon)\neq \emptyset \},
    \end{align}
    and 
    \begin{align}
      \widetilde{t_{2,\varepsilon}} := \inf\{t>0\mid \exists \bh\in \bPhi_\ast (\cB_1(x,\varepsilon))\text{ s.t. } \supp(\bu_2^\bh(t,\cdot))\cap \bPhi(B_1(y,\varepsilon))\neq \emptyset  \}.
    \end{align}
    As $\bPhi:\omega_1\to \omega_2$ is a diffeomorphism, we can find two positive functions $r(\varepsilon),R(\varepsilon)$ such that $\lim_{\varepsilon\to 0}r(\varepsilon)=\lim_{\varepsilon\to 0}R(\varepsilon)=0$, and
    \begin{equation}
      B_2(\bPhi(p),r(\varepsilon))\subset \bPhi(B_1(p,\varepsilon))\subset B_2(\bPhi(p),R(\varepsilon))
    \end{equation}
    for both points $p=x,y$. Hence, $t_{2,R(\varepsilon)}\leq \widetilde{t_{2,\varepsilon}}\leq t_{2,r(\varepsilon)}.$

    Since 
    \begin{equation}
      \bPhi^\ast:C_0^\infty((0,\infty);C_0^\infty\Omega^1(\bPhi(B_1(p,\varepsilon))))\to C_0^\infty((0,\infty);C_0^\infty\Omega^1(B_1(p,\varepsilon)))
    \end{equation}
    is a vector space isomorphism provided that $B_1(p,\varepsilon)\subset \omega_1$, for any $\bf\in \cB_{1}(x,\varepsilon)$, we can find $\bh\in \bPhi_\ast (\cB_1(x,\varepsilon))$ such that $\bf=\bPhi^\ast \bh$. Notice that 
    \begin{equation}
      \bPhi(\supp(\bPhi^\ast\bu_2^\bh(t,\cdot)))=\supp(\bu_2^\bh(t,\cdot)),
    \end{equation}
    thus 
    \begin{align}
      &\supp(\bu_1^\bf(t,\cdot))\cap B_1(y,\varepsilon) = \supp(\Lambda_{1,\omega_1}\bf(t,\cdot))\cap B_1(y,\varepsilon)\\
      & =\supp(\Lambda_{1,\omega_1}\bPhi^\ast \bh(t,\cdot))\cap B_1(y,\varepsilon) = \supp(\bPhi^\ast \Lambda_{2,\omega_2} \bh(t,\cdot))\cap B_1(y,\varepsilon)\\
      &=\supp(\bPhi^\ast \bu_2^\bh(t,\cdot))\cap B_1(y,\varepsilon)=\emptyset
    \end{align}
    is equivalent to
    \begin{equation}
      \supp(\bu_2^\bh(t,\cdot))\cap \bPhi(B_1(y,\varepsilon))=\emptyset.
    \end{equation}
    This indicates that $t_{1,\varepsilon}=\widetilde{t_{2,\varepsilon}}$.
    
    Notice that for $t=\diam_1(\omega_1)$, we have $M_1(B_1(x,\varepsilon),t)\cap B_1(y,\varepsilon)\neq \emptyset$. As $B_1(x,\varepsilon)\subset \omega_1$, we have $T_1(\omega_1)\leq T_1(B(x,\varepsilon))$. Then in the light of the approximate controllability (Theorem \ref{thm:controllability}), there exists $\bf\in \mathcal{B}_1(x,\varepsilon)$ such that $\supp(\bu_1^\bf(t,\cdot))\cap B_1(y,\varepsilon)\neq \emptyset$. Hence $t_{1,\varepsilon}\leq \diam_1(\omega_1)<T_{\bPhi}(\omega_1)$. For $t\in (t_{1,\varepsilon},T_{\bPhi}(\omega_1))$, according to the finite speed of propagation (Theorem \ref{thm:fsop}), there holds $\supp(\bu_1^\bf(t,\cdot))\subset M_1(B_1(x,\varepsilon),t)$. Therefore, $\dist_1(B_1(x,\varepsilon),B_1(y,\varepsilon))\leq t_{1,\varepsilon},$.\par
    Suppose that $\eta:=\dist_1(B_1(x,\varepsilon),B_1(y,\varepsilon))< t_{1,\varepsilon}\leq T_1(B(x,\varepsilon))$. Then there holds $M_1(B_1(x,\varepsilon),\eta)\cap B_1(y,\varepsilon)\neq \emptyset$. Due to the approximate controllability, this leads to a contradiction to the definition of $t_{1,\varepsilon}$. Thus we have 
    \begin{equation}
      t_{1,\varepsilon} = d_{g_1}(B_1(x,\varepsilon),B_1(y,\varepsilon)).
    \end{equation}
    Analogously, we can obtain that
    \begin{equation}
      t_{2,\varepsilon} = d_{g_2}(B_2(\bPhi(x),\varepsilon),B_2(\bPhi(y),\varepsilon)).
    \end{equation}
    Finally, we have
    \begin{equation}
      d_{g_2}(\bPhi(x),\bPhi(y))=
      \lim_{\varepsilon\to 0}t_{2,\varepsilon}=\lim_{\varepsilon\to 0}t_{2,R(\varepsilon)}\leq \lim_{\varepsilon\to 0} \widetilde{t_{2,\varepsilon}}=\lim_{\varepsilon\to 0}t_{1,\varepsilon}=d_{g_1}(x,y),
    \end{equation}
    and 
    \begin{equation}
      d_{g_2}(\bPhi(x),\bPhi(y))=
      \lim_{\varepsilon\to 0}t_{2,\varepsilon}=\lim_{\varepsilon\to 0}t_{2,r(\varepsilon)}\geq \lim_{\varepsilon\to 0} \widetilde{t_{2,\varepsilon}}=\lim_{\varepsilon\to 0}t_{1,\varepsilon}=d_{g_1}(x,y).
    \end{equation}
    Combining above two inequalities gives $d_{g_1}(x,y)=d_{g_2}(\bPhi(x),\bPhi(y))$.

    Finally, according to \cite[Theorem 5.6.15]{RG_Petersen}, $\bPhi:\omega_1\to\omega_2$ is an isometry.
  \end{proof}
  \subsection{Determination of a relation among geodesic balls}
  
  Let $\bPhi:\omega_1\to \omega_2$ be an isometry. 
  For $k=1,2$, We define the set $\mathfrak{B}_{\omega_k,\bPhi}\subset \omega_k\times \omega_k\times \omega_k\times \RR^3$ as
  \begin{align}
      \mathfrak{B}_{\omega_k,\bPhi}:=\{&(x,y,z,l_x,l_y,l_z)\mid 0<l_x<T_{\bPhi}(\{x\}),\ 0<l_y<T_{\bPhi}(\{y\}),\\
      &0<l_z<T_{\bPhi}(\{z\}),\ B_k(x,l_x)\subset B_k(y,l_y)\cup B_k(z,l_z)\}.
  \end{align}
  Then $\bPhi$ induces a push-forward $\bPhi_\ast:\mathfrak{B}_{\omega_1,\bPhi}\to \omega_2\times \omega_2\times \omega_2\times \RR^3$ as
  \begin{equation}
      \bPhi_\ast(x,y,z,l_x,l_y,l_z) = (\bPhi(x),\bPhi(y),\bPhi(z),l_x,l_y,l_z).
  \end{equation}
  Next we show the following proposition.
  \begin{proposition}
    \label{lm:6}
    Suppose there is an isometry $\bPhi:\omega_1\to\omega_2$ such that $\Lambda_{1,\omega_1}\bPhi^\ast=\bPhi^\ast\Lambda_{2,\omega_2}$.
    Then there holds
    \begin{equation}
        \bPhi_\ast \mathfrak{B}_{\omega_1,\bPhi}=\mathfrak{B}_{\omega_2,\bPhi}.
    \end{equation}
  \end{proposition}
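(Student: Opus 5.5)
The plan is to characterize the ball inclusion $B_k(x,l_x)\subset B_k(y,l_y)\cup B_k(z,l_z)$ purely in terms of the data $\Lambda_{k,\omega_k}$, in a way that is invariant under $\bPhi^\ast$. Then the equality of the data (Theorem~\ref{thm_main:hyperbolic}'s hypothesis, together with Proposition~\ref{prop:dist_omega}) transfers the inclusion from $\M_1$ to $\M_2$ and back. By symmetry (replace $\bPhi$ by $\bPhi^{-1}$), it suffices to show $\bPhi_\ast\mathfrak{B}_{\omega_1,\bPhi}\subset\mathfrak{B}_{\omega_2,\bPhi}$. The constraints $l<T_{\bPhi}(\cdot)$ are symmetric in the two manifolds by definition of $T_{\bPhi}$.

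\textbf{Step 1: reduction to small balls.} Since the sets $\{x\}$ are not open, I replace them by small balls $B_k(x,\varepsilon)\subset\omega_k$ with connected complements and radii $l_x-\varepsilon$ (resp.\ $l_y,l_z$ enlarged or shrunk by $\varepsilon$). Then I pass to the limit $\varepsilon\to0$, using $M_k(B_k(x,\varepsilon),l)\to B_k(x,l+\varepsilon)$. Because $\bPhi$ is an isometry on $\omega_1$, it maps $B_1(x,\varepsilon)$ onto $B_2(\bPhi(x),\varepsilon)$, and $T_\bPhi$ is unchanged in the limit. This keeps the time parameters strictly below $T_\bPhi$, so Theorems~\ref{thm:fsop} and \ref{thm:controllability} apply to all three balls.

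\textbf{Step 2: data characterization.} Set $U_x=B_k(x,\varepsilon)$ and similarly $U_y,U_z$. For sources $\bf\in C_0^\infty((T-l_x,T);C_0^\infty\Omega^1(U_x))$ and $\bh$ supported in $(T-l_y,T)\times U_y$ or $(T-l_z,T)\times U_z$, the inner products $\inner{\bu^\bf(T),\bu^\bh(T)}$ are computable from $\Lambda_{k,\omega_k}$ by Lemma~\ref{lm:trick}. I would claim that $M(U_x,l_x)\subset M(U_y,l_y)\cup M(U_z,l_z)$ (up to null sets) holds iff the following condition holds. For every such $\bf$ and every $\epsilon>0$, there exist $\bh_y,\bh_z$ with
\[
\norm{\bu^\bf(T)-\bu^{\bh_y}(T)-\bu^{\bh_z}(T)}\leq\epsilon.
\]
The norm is expressible through $W$ via Lemma~\ref{lm:trick}. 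For the forward direction, Theorem~\ref{thm:fsop} gives $\supp\bu^\bf(T)\subset M(U_x,l_x)$. I then need to approximate $\bu^\bf(T)$ by sums of states supported in the two influence domains. Theorem~\ref{thm:controllability} gives density of reachable states in $D(l;U)=\{1_{M(U,l)}\bv\}$. So I must split $\bu^\bf(T)\in V$ as $1_{M(U_y,l_y)}\bu^\bf(T)+1_{M(U_z,l_z)\setminus M(U_y,l_y)}\bu^\bf(T)$. The first piece is in $D(l_y;U_y)$. The second piece is not of the form $1_{M(U_z,l_z)}\bv$ with $\bv\in V$, and handling this is the main obstacle described below. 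For the converse, if the inclusion fails, there is an open set $O\subset M(U_x,l_x)$ disjoint from the closures of the other two domains. I take $\bv\in V$ supported in $O$ (the Remark after the definition of $V$), approximate $1_{M(U_x,l_x)}\bv=\bv$ by $\bu^\bf(T)$, and observe that any $\bu^{\bh_y}(T)+\bu^{\bh_z}(T)$ vanishes on $O$. This gives a distance bounded below by roughly $\norm{\bv}$, a contradiction.

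\textbf{Main obstacle.} The key difficulty is the divergence-free constraint in the forward direction. I would instead formulate the criterion via orthogonality: the inclusion holds iff every $\bu^\bf(T)$ is orthogonal to all $\bw\in V$ that are orthogonal to both reachable sets. Equivalently, the orthogonal projection onto the closure of $B(l_y;T,U_y)+B(l_z;T,U_z)$ fixes $\bu^\bf(T)$, and these projections are determined by the Gram data. Orthogonality of $\bw\in V$ to $B(l;T,U)$ implies $1_{M(U,l)}\bw=0$, by the duality argument in the proof of Theorem~\ref{thm:controllability}. Then $\inner{\bu^\bf(T),\bw}=0$ because $\supp\bw$ avoids $M(U_x,l_x)$ whenever the inclusion holds. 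The converse uses $\bv$ supported in $O$ as above. Once this data-invariant criterion is established, equation \eqref{ll} and $\bPhi^\ast$ preserving source supports and the Gram quantities show that the criterion holds on $\M_1$ iff it holds on $\M_2$. Letting $\varepsilon\to0$ then concludes the proof.
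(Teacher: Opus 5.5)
Your architecture is the paper's. You use sources on small balls $B_k(p,\varepsilon)$ with time windows of length $l_p-\varepsilon$. You turn the inclusion on $\M_1$ into the statement that $\bu_1^{\bf}(T,\cdot)$ lies in the $L^2$-closure of $B_y+B_z$, where $B_y:=B(l_y-\varepsilon;T,B_1(y,\varepsilon))$ and similarly for $B_z$. You transfer this to $\M_2$ by Lemma~\ref{lm:norm_equal}, get a contradiction on $\M_2$ from Theorem~\ref{thm:fsop} together with a state that is nonzero on an open subset of $B_2(\bPhi(x),l_x)\setminus\overline{B_2(\bPhi(y),l_y)\cup B_2(\bPhi(z),l_z)}$, and finish by symmetry under $\bPhi^{-1}$. (The limit $\varepsilon\to0$ is unnecessary: in a length space $M_k(B_k(p,\varepsilon),l-\varepsilon)=B_k(p,l)$.)

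The real difference is the forward step. The paper writes $\bu_1^{\bf}(T,\cdot)=1_{B_1(y,l_y)}\bu_1^{\bf}(T,\cdot)+(1_{B_1(x,l_x)}-1_{B_1(y,l_y)})\bu_1^{\bf}(T,\cdot)$ and approximates each piece separately by Theorem~\ref{thm:controllability}. You instead characterize $\overline{B_y+B_z}$ through its orthogonal complement, using the duality and unique continuation argument from the proof of that theorem. Your route is the sound one, and the obstruction you spotted is worse than you say: the $y$-piece is not co-closed either. Up to sign, $d^\ast(1_{B_1(y,l_y)}\bu_1^{\bf}(T,\cdot))$ is the density $\bu_1^{\bf}(T,\cdot)(\nu)$ on $\partial B_1(y,l_y)$. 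All reachable states, however, lie in $H$, which is closed in $L^2\Omega^1(\M_1)$ and consists of co-closed forms. So neither truncated piece is a limit of reachable states; only their sum is. Theorem~\ref{thm:controllability} as stated can hold only for co-closed elements of $D(r;\omega)$; what its proof actually shows is that a $\bv\in V$ orthogonal to the reachable set vanishes on $M(\omega,r)$. Your argument approximates only the co-closed form $\bu_1^{\bf}(T,\cdot)$ itself, which is exactly what the transfer needs.

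One step still has to be repaired. Your ``equivalently'' requires $\bu^{\bf}(T)$ to be orthogonal to every $\bw\in H$ orthogonal to $B_y+B_z$ (complements taken in $H$), not only to such $\bw\in V$. Nothing says $V\cap(B_y+B_z)^\perp$ is dense in $(B_y+B_z)^\perp$, and the paper's duality is carried out only for $\bv\in V$, since it relies on Theorem~\ref{thm:regularity}. With test forms in $V$, your criterion does characterize the inclusion on each manifold, but it is not expressed through the data, so the transfer breaks there.

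The fix is routine. For $\bw\in H$ set $\bY(s)=\sum_{j,k}\lambda_j^{-1/2}\sin(\sqrt{\lambda_j}\,s)\inner{\bw,\bphi_{jk}}\bphi_{jk}$. The eigenfunction expansion behind \eqref{eq:expansion_wave} gives $\inner{\bu^{\bh}(T),\bw}=\int_0^T\inner{\bh(\tau),\bY(T-\tau)}\,d\tau$. Hence orthogonality to $B(r;T,U)$ forces $\bY=0$ on $(0,r)\times U$, and by oddness on $(-r,r)\times U$. Mollifying $\bY$ in $s$ keeps the homogeneous equation and the vanishing on a slightly shorter interval, and it supplies the regularity required in Theorem~\ref{thm:UC}. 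Applying that theorem, differentiating at $s=0$ and removing the mollifier gives $\bw=\p_s\bY(0)=0$ on $M(U,r)$.

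With this $H$-version, the inclusion on $\M_1$ and finite speed of propagation yield $\bu_1^{\bf}(T,\cdot)\perp(B_y+B_z)^{\perp}$. That is the density statement Lemma~\ref{lm:norm_equal} transfers. Your converse on $\M_2$ is fine as written. It only needs some state not orthogonal to a $\bv\in V$ supported in the missing open set, and the $V$-version of duality already provides this.
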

  We will need the following lemma.
  \begin{lemma}
    \label{lm:norm_equal}
    Suppose there is an isometry $\bPhi:\omega_1\to\omega_2$ such that $\Lambda_{1,\omega_1}\bPhi^\ast=\bPhi^\ast\Lambda_{2,\omega_2}$. Let $\bf,\bh\in C_0^\infty((0,\infty);C_0^\infty\Omega^1(\omega_1))$ and $\widetilde{\bf},\widetilde{\bh}\in C_0^\infty((0,\infty);C_0^\infty\Omega^1(\omega_2))$ such that $\bf=\bPhi^\ast \widetilde{\bf}$ and $\bh=\bPhi^\ast \widetilde{\bh}$. There holds
    \begin{equation}
      \inner{\bu_1^{\bf}(T,\cdot),\bu_1^{\bh}(T,\cdot)}=\inner{\bu_2^{\widetilde{\bf}}(T,\cdot),\bu_2^{\widetilde{\bh}}(T,\cdot)}_{L^2\Omega^1(\M_2)},\ T>0.
    \end{equation}
  \end{lemma}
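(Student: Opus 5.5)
The plan is to derive the identity from the Blagovestchenskii identity, Lemma \ref{lm:trick}, together with the intertwining relation $\Lambda_{1,\omega_1}\bPhi^\ast=\bPhi^\ast\Lambda_{2,\omega_2}$ and the fact that $\bPhi$ is an isometry.

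First, fix $T>0$. Since $\bf,\bh$ have compact support in $(0,\infty)$, the value $\bu^\bf(T,\cdot)$ depends only on $\bf|_{(0,T)}$. This follows from the Duhamel representation \eqref{eq:expansion_wave}, or from uniqueness in Theorem \ref{thm:weak}, since $\bu^\bf(T)$ only involves $\bf(\tau)$ for $\tau<T$. So we may replace $\bf$ by $\chi\bf$ with $\chi\in C_0^\infty((0,T))$ and $\chi=1$ on $\supp\bf\cap(0,T-\eta)$. To avoid the discontinuity at $T$, it is cleaner to apply Lemma \ref{lm:trick} on the longer interval $(0,T')$ and use the $W(T,T)$ formula from its proof. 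The key point is that the one-dimensional d'Alembert representation in that proof holds for any $T$, with $J$ using $\Lambda_\omega\bh$ on $(0,2T)$. In either form we get
\begin{equation}
W^{(1)}_{\bf,\bh}(T,T)=\frac12\int_0^T\!\!\int_\tau^{2T-\tau}\Big(\inner{\bf(\tau),\Lambda_{1,\omega_1}\bh(\sigma)}_{L^2\Omega^1(\omega_1)}-\inner{\Lambda_{1,\omega_1}\bf(\tau),\bh(\sigma)}_{L^2\Omega^1(\omega_1)}\Big)\,d\sigma\, d\tau,
\end{equation}
and the analogous formula holds on $\M_2$ with $\widetilde\bf,\widetilde\bh,\Lambda_{2,\omega_2}$.

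Second, substitute $\bf=\bPhi^\ast\widetilde\bf$ and $\bh=\bPhi^\ast\widetilde\bh$, and use $\Lambda_{1,\omega_1}\bPhi^\ast\widetilde\bh=\bPhi^\ast\Lambda_{2,\omega_2}\widetilde\bh$. Because $\bPhi:\omega_1\to\omega_2$ is an isometry (Proposition \ref{prop:dist_omega}), pullback preserves the pointwise metric on $1$-forms and the Riemannian volume. Hence $\inner{\bPhi^\ast\bm a,\bPhi^\ast\bm b}_{L^2\Omega^1(\omega_1)}=\inner{\bm a,\bm b}_{L^2\Omega^1(\omega_2)}$, equivalently $\bPhi^\ast$ commutes with $\wedge$ and $\ast$. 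Each integrand therefore agrees with its $\M_2$ counterpart, and the two double integrals coincide. Finally, $W_{\bf,\bh}(T,T)$ is exactly $\inner{\bu_1^\bf(T),\bu_1^\bh(T)}$, which gives the claim.

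The main obstacle is the justification of Lemma \ref{lm:trick} for sources supported in $(0,\infty)$ rather than $(0,T)$, together with the regularity needed to differentiate $W$ twice. I would first verify that $W\in C^2$ using Theorem \ref{thm:regularity}: smooth sources give $\bu\in C^2([0,\infty);H)\cap C^1(V)$. I would then note that the pressure terms vanish because $\inner{dp,\bu}=\inner{p,d^\ast\bu}=0$, and that the identity $\inner{\Delta_H\bu^\bf,\bu^\bh}=\inner{\bu^\bf,\Delta_H\bu^\bh}$ holds since both lie in $V$. After that, the d'Alembert formula on $\{(t,s)\}$ with zero Cauchy data at $t=0$ applies without any restriction on the support of the sources.
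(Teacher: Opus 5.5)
Your proposal is correct and is essentially the paper's proof. You apply the Blagovestchenskii identity of Lemma \ref{lm:trick} on both manifolds, substitute $\bf=\bPhi^\ast\widetilde{\bf}$ and $\bh=\bPhi^\ast\widetilde{\bh}$, and use the intertwining relation together with invariance of the $L^2$ pairing under pullback by the isometry $\bPhi$; the paper's remark that $\bPhi^\ast$ commutes with $J$ is automatic in your explicit double integral. Your point that the d'Alembert formula in the proof of Lemma \ref{lm:trick} needs no support restriction to $(0,T)$, with $J$ acting on $\Lambda_\omega\bh$ over $(0,2T)$, fills a support mismatch the paper passes over silently, but drop the cutoff aside: replacing $\bf$ by $\chi\bf$ changes $\bu^{\bf}(T)$ for fixed $\eta>0$ unless you let $\eta\to0$, and your direct route does not need it.
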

  \begin{proof}
    Notice that the pull-back $\bPhi^\ast$ and the operator $J$ defined in \ref{eq:def_J} commute, since they are transformations in space and time, respectively. 
    Leveraging the fact that $\bPhi$ is an isometry, Lemma \ref{lm:trick} gives that
    \begin{align}
      \inner{\bu_1^{\bf}(T,\cdot),\bu_1^{\bh}(T,\cdot)}&_{L^2\Omega^1(\M_1)}=\inner{\bu_1^{\bPhi^\ast \widetilde{\bf}}(T,\cdot),\bu_1^{\bPhi^\ast \widetilde{\bh}}(T,\cdot)}_{L^2\Omega^1(\M_1)}\\
      &=\inner{\bPhi^\ast \widetilde{\bf},J\Lambda_{1,\omega_1}\bPhi^\ast \widetilde{\bh}}_{L^2((0,T)\times\omega_1)}-\inner{\Lambda_{1,\omega_1}\bPhi^\ast \widetilde{\bf},J\bPhi^\ast \widetilde{\bh}}_{L^2((0,T)\times\omega_1)}\\
      &=\inner{\bPhi^\ast \widetilde{\bf},\bPhi^\ast J\Lambda_{2,\omega_2} \widetilde{\bh}}_{L^2((0,T)\times\omega_1)}-\inner{\bPhi^\ast\Lambda_{2,\omega_2} \widetilde{\bf},\bPhi^\ast J \widetilde{\bh}}_{L^2((0,T)\times\omega_1)}\\
      &=\inner{\widetilde{\bf}, J\Lambda_{2,\omega_2} \widetilde{\bh}}_{L^2((0,T)\times\omega_2)}-\inner{\Lambda_{2,\omega_2} \widetilde{\bf}, J \widetilde{\bh}}_{L^2((0,T)\times\omega_2)}\\
      &=\inner{\bu_2^{\widetilde{\bf}}(T,\cdot),\bu_2^{\widetilde{\bh}}(T,\cdot)}_{L^2\Omega^1(\M_2)}.
    \end{align}
  \end{proof}
  \begin{proof}[Proof of Proposition \ref{lm:6}] Firstly we show that $\bPhi_\ast \mathfrak{B}_{\omega_1,\bPhi}\subset\mathfrak{B}_{\omega_2,\bPhi}$. To this end, we only need to show that $B_1(x,l_x)\subset B_1(y,l_y)\cup B_1(z,l_z)$ implies $B_2(\bPhi(x),l_x)\subset B_2(\bPhi(y),l_y)\cup B_2(\bPhi(z),l_z)$. Let $\varepsilon>0$ be a small enough constant such that $B_1(x,\varepsilon),B_1(y,\varepsilon),B_1(z,\varepsilon)\subset \omega_1$, $B^c_1(p,\varepsilon)$ and $B^c_2(\bPhi(p),\varepsilon)$ are connected for points $p=x,y,z$. According to Proposition \ref{prop:dist_omega}, it follows that 
  \begin{equation}
      B_2(\bPhi(x),\varepsilon),B_2(\bPhi(y),\varepsilon),B_2(\bPhi(z),\varepsilon)\subset \omega_2.
  \end{equation}
  To simplify the notation, we fix $r>\varepsilon$, $T>r-\varepsilon$, and write 
  \begin{align}
    \S_{\varepsilon}^T(x,r):= C_0^\infty((T-(r-\varepsilon),T);C_0^\infty\Omega^1(B_1(x,\varepsilon))),
  \end{align}
  and 
  \begin{align}
    \bPhi_\ast(\S_{\varepsilon}^T(x,r)):= C_0^\infty((T-(r-\varepsilon),T);C_0^\infty\Omega^1(B_2(\bPhi(x),\varepsilon))).
  \end{align}
    Suppose $B_1(x,l_x)\subset B_1(y,l_y)\cup B_1(z,l_z)$.
    For any $\widetilde{\bf}\in \bPhi_\ast(\S_\varepsilon^T(x,l_x))$ and $\widetilde{\bh}\in \bPhi_\ast(\S_{\varepsilon}^T(y,l_y))\cup \bPhi_\ast(\S_{\varepsilon}^T(z,l_z))$, we have $\bf:=\bPhi^\ast\widetilde{\bf}\in \S_{\varepsilon}^T(x,l_x)$ and $\bh:=\bPhi^\ast \widetilde{\bh}\in \S_{\varepsilon}^T(y,l_y)\cup \S_{\varepsilon}^T(z,l_z)$.
    
    Notice that $T_{\bPhi}(\{p\})-\varepsilon\leq T_{1}(B_1(p,\varepsilon))$ for all $p\in \M_1$. Then due to the approximate controllability, we can find sequences $\{\boldsymbol{\xi}_i\}_{i=0}^\infty\subset \S_{\varepsilon}^T(y,l_y)$ and $\{\boldsymbol{\zeta}_i\}_{i=0}^\infty\subset \S_{\varepsilon}^T(z,l_z)$ such that $\bu_1^{\boldsymbol{\xi}_i}(T,\cdot)\to 1_{B_1(y,l_y)}\bu_1^\bf(T,\cdot)$ and $\bu_1^{\boldsymbol{\zeta}_i}(T,\cdot)\to (1_{B_1(x,l_x)}-1_{B_1(y,l_y)})\bu_1^\bf(T,\cdot)$ in $L^2\Omega^1(\M_1)$. Since $l_x<T_{\bPhi}(\{x\})$, there holds $\supp(\bu_1^\bf(T,\cdot))\subset B_1(x,l_x)$ by finite speed of propagation. Hence,
    \begin{equation}
      \inf \{\norm{\bu_1^{\bf}(T,\cdot)-\bu_1^\bh(T,\cdot)}_{L^2\Omega^1(\M_1)} \mid \bh\in \S_{\varepsilon}^T(y,l_y)\cup \S_{\varepsilon}^T(z,l_z)\}=0.
    \end{equation}
   By Lemma \ref{lm:norm_equal}, we obtain
    \begin{equation}
      \label{eq:inf_2}
      \inf \{\norm{\bu_2^{\widetilde{\bf}}(T,\cdot)-\bu_1^{\widetilde{\bh}}(T,\cdot)}_{L^2\Omega^1(\M_2)} \mid \widetilde{\bh}\in \bPhi_\ast(\S_{\varepsilon}^T(y,l_y))\cup \bPhi_\ast(\S_{\varepsilon}^T(z,l_z))\}=0.
    \end{equation}
    To get a contradiction, we assume that $B_2(\bPhi(x),l_x)\not\subset B_2(\bPhi(y),l_y)\cup B_2(\bPhi(z),l_z)$. Then the open set 
    \begin{equation}
      U=B_2(\bPhi(x),l_x)\backslash \overline{B_2(\bPhi(y),l_y)\cup B_2(\bPhi(z),l_z)}
    \end{equation}
    is non-empty. Due to approximate controllability, we can choose $\widetilde{\bf}\in \bPhi_\ast(S_\varepsilon(x,l_x))$ such that $\bu_2^{\widetilde{\bf}}(T,\cdot)|_U\neq 0$. By finite speed of propagation, there holds
    \begin{equation}
      \supp(\bu_2^{\widetilde{\bh}}(T,\cdot))\subset B_2(\bPhi(y),l_y)\cup B_2(\bPhi(z),l_z)
    \end{equation}
    for all $\widetilde{\bh}\in \bPhi_\ast(\S_{\varepsilon}^T(y,l_y))\cup \bPhi_\ast(\S_{\varepsilon}^T(z,l_z))$. This leads to a contradiction with \eqref{eq:inf_2} and thus we have $\bPhi_\ast \mathfrak{B}_{\omega_1,\bPhi}\subset\mathfrak{B}_{\omega_2,\bPhi}$.

    Notice that $\bPhi^{-1}:\omega_2\to \omega_1$ is an isometry satisfying $(\bPhi^{-1})^\ast\Lambda_{1,\omega_1}=\Lambda_{2,\omega_2}(\bPhi^{-1})^\ast$. Repeating above argument, we can obtain $(\bPhi^{-1})_\ast \mathfrak{B}_{\omega_2,\bPhi}\subset\mathfrak{B}_{\omega_1,\bPhi}$. Therefore,
    \begin{equation}
        \mathfrak{B}_{\omega_2,\bPhi}=\bPhi_\ast(\bPhi^{-1})_\ast \mathfrak{B}_{\omega_2,\bPhi}\subset \bPhi_\ast\mathfrak{B}_{\omega_1,\bPhi}\subset \mathfrak{B}_{\omega_2,\bPhi}.
    \end{equation}
    Then it follows immediately that $\bPhi_\ast \mathfrak{B}_{\omega_1,\bPhi}=\mathfrak{B}_{\omega_2,\bPhi}$.
    
  \end{proof}
  \subsection{Reconstruction of metric in the domain of influence}
  Next we recover the metric within the domain of influence for $\omega_1$ and $\omega_2$.
  For $k=1,2$, and a point $p\in \M^{\text{int}}_k$, we define
  \begin{equation}
      S_p\M_k:=\{\xi\in T_p\M_k\mid \norm{\xi}_{g_k}=1\},
  \end{equation}
  and
  \begin{equation}
    \sigma_k(p):=\min_{\xi\in S_p\M_k}\max\{s\in (0,\tau_k(p,\xi)]\mid d_{g_k}(\gamma_k(s;p,\xi),p)=s\},
  \end{equation}
  where
  \begin{equation}
    \tau_k(p,\xi):=\inf\{s>0\mid \gamma_k(s;p,\xi)\in \p\M_k\},
  \end{equation}
  and $\gamma_k(\cdot;p,\xi)$ is the geodesic on $\M_k$ satisfying $\gamma_k(0;p,\xi)=p$ and $\gamma_k^\prime(0;p,\xi)=\xi$. Note that $\gamma_k(-t;p,\xi)=\gamma_k(t;p,-\xi)$ whenever $0<t<\min\{\tau_k(p,\xi),\tau_k(p,-\xi)\}$.
  Furthermore, for $x\in \M_1^\inter$ and $\bPhi(x)\in \M_2^\inter$, we define
  \begin{equation}
    \sigma_1^T(x)=\min\{\sigma_1(x),T_{\bPhi}(\{x\})\},\ \sigma_2^T(\bPhi(x))=\min\{\sigma_2(\bPhi(x)),T_{\bPhi}(\{x\})\}.
  \end{equation}
  The bound $T_{\bPhi}(\{x\})$ is needed for Proposition \ref{lm:6} to be valid for both $(\M_1,g_1)$ and $(\M_2,g_2)$.
   For $x\in\M_k$, $k=1,2$, and $\rho\in (0,\sigma_k^T(x))$, we define
  \begin{equation}
    M^T_k(x,\rho):=\{\gamma_k(t;x,\xi)\mid \xi\in S_x\M_k, t\in (0,\sigma_k^T(x)-\rho)\}.
  \end{equation}
    And the maps
  \begin{align}
  \label{eq:def:R_1}
    R_1&: \overline{M_1^T(x,3\rho)}\to C(\overline{B_1(x,\rho)}),\\
    R_1[y](z)&:=d_{g_1}(y,z),\ y\in \overline{M_1^T(x,3\rho)},\ z\in \overline{B_1(x,\rho)}.
  \end{align} 
  and 
  \begin{align}
  \label{eq:def:R_2}
    R_2&: \overline{M_2^T(\bPhi(x),3\rho)}\to C(\overline{B_1(x,\rho)}),\\
    R_2[y](z)&:=d_{g_2}(y,\bPhi(z)),\ y\in \overline{M_2^T(\bPhi(x),3\rho)},z\in \overline{B_1(x,\rho)}.
  \end{align}
   We aim to show that $M^T_1(x,3\rho)$ is isomertric to $M^T_2(\bPhi(x),3\rho)$. More precisely, we will prove the following proposition.
   \begin{proposition}
    \label{prop:isometry}
    Let $x\in \M_1^\inter$, and $\rho>0$ such that there is an isometry $\bPhi:B_1(x,\rho)\to B_2(\bPhi(x),\rho)$ such that
    \begin{equation}
        \bPhi_\ast \mathfrak{B}_{B_1(x,\rho),\bPhi}=\mathfrak{B}_{B_2(\bPhi(x),\rho),\bPhi}
    \end{equation}
     Then
    \begin{equation}
      R_2^{-1}\circ R_1: \overline{M_1^T(x,3\rho)}\to \overline{M_2^T(\bPhi(x),3\rho)}
    \end{equation}
    is an isometry.
  \end{proposition}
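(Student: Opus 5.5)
Our plan follows the classical BC-method route: use the ball-covering relation $\mathfrak{B}$ to show that distance functions from points of $M_1^T(x,3\rho)$ to points of $B_1(x,\rho)$ coincide with distance functions on the second manifold, and then use $R_k$ as embeddings of the metric spaces into $C(\overline{B_1(x,\rho)})$ with the sup-norm.

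\textbf{Step 1: images of $R_1$ and $R_2$ coincide.} Fix $y=\gamma_1(t;x,\xi)$ with $t<\sigma_1^T(x)-3\rho$. We characterize the function $z\mapsto d_{g_1}(y,z)$ purely through $\mathfrak{B}_{B_1(x,\rho),\bPhi}$. The key observation is the triangle-type criterion: for $z,w\in B_1(x,\rho)$ and radii $l_z,l_w$ below $T_\bPhi$, one has
\[
d_{g_1}(y,z)\le \ell \iff \text{for all small } s>0,\ B_1(y',s)\subset B_1(z,\ell+s')\cup\cdots .
\]
Concretely, we will express $y$ through a point $y_0=\gamma_1(s_0;x,\xi)$ near $x$ inside $B_1(x,\rho)$. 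Since the geodesic is minimizing up to $\sigma_1(x)$, we have $d_{g_1}(y,z)=\inf\{l-l' : B_1(y_0,l')\text{-type inclusions}\}$. More precisely, we use the standard fact that
\[
d(y,z)\le r \iff B(y_0,t-s_0+\epsilon)\subset B(z,r+\epsilon')\cup B(w,\cdot)
\]
for appropriate auxiliary balls centered in $B_1(x,\rho)$. This is precisely why triples and the union of two balls appear in $\mathfrak{B}$: the second ball excludes the "wrong" directions from $y_0$. Since $\bPhi_\ast$ preserves $\mathfrak{B}$ and radii, and $\bPhi$ is an isometry near $x$ (so it maps $\xi$ to $d\bPhi\,\xi$ and $y_0$ to $\bPhi(y_0)$), we get $R_1[\gamma_1(t;x,\xi)]=R_2[\gamma_2(t;\bPhi(x),d\bPhi\,\xi)]$. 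Hence $R_1(\overline{M_1^T(x,3\rho)})=R_2(\overline{M_2^T(\bPhi(x),3\rho)})$. Extending to closures will follow by continuity. The $3\rho$ margin guarantees that all balls involved have radii below $\sigma^T$ and that $T_\bPhi$ bounds hold on both sides.

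\textbf{Step 2: injectivity and continuity.} The maps $R_k$ are $1$-Lipschitz into $C$ with the sup-norm, by the triangle inequality. They are also injective on $\overline{M_k^T}$: if $R_k[y]=R_k[y']$, then taking $z$ along the minimizing geodesic from $y$ back toward $x$ (which enters $B(x,\rho)$ because $y$ lies in the normal neighbourhood, $t<\sigma$) gives $d(y',z)=d(y,z)$ for such $z$. From this, $y'$ lies on the same geodesic at the same parameter. By compactness, $R_k$ is then a homeomorphism onto its image, so $R_2^{-1}\circ R_1$ is a well-defined homeomorphism, and it equals $\gamma_1(t;x,\xi)\mapsto\gamma_2(t;\bPhi(x),d\bPhi\,\xi)$, i.e.\ $\exp_{\bPhi(x)}\circ d\bPhi\circ\exp_x^{-1}$.

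\textbf{Step 3: metric preservation.} We show that the pullback of $g_2$ equals $g_1$. In geodesic polar coordinates about $x$, the metric on $M_k^T$ is determined by distance functions to points of $B_k(x,\rho)$: the differentials $d_y\,d(\cdot,z)$ for $z$ in an open set span $T^*_y$, and $g^{-1}$ is recovered from the fact that these differentials are unit covectors. Since $R_1[y]=R_2[\Psi(y)]$ for all $z$, the functions $d_{g_1}(\cdot,z)$ and $d_{g_2}(\Psi(\cdot),\bPhi(z))$ agree, so $\Psi^\ast g_2=g_1$, which makes $\Psi$ a smooth Riemannian isometry (smoothness again from the exponential-map formula).

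The main obstacle is Step 1: formulating exactly which inclusions in $\mathfrak{B}$ encode $d(y,z)$ when $y$ lies outside $\omega$. This requires care with cut points and the boundary, and it relies on the minimizing property up to $\sigma_k(x)$.
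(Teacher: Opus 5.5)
Your outline follows the paper's: ball inclusions encode distances to points of $B_1(x,\rho)$, then injectivity of $R_k$, then unit covectors and polarization. However, the decisive step is left open and one necessary ingredient is missing. Step 1 is only sketched; as you acknowledge, the displayed equivalence is not a precise statement. What works (Lemma~\ref{lm:dist_influence}) is to center the auxiliary ball at $x$ with radius exactly $t=d_{g_1}(x,y)$. With $y_0=\gamma_1(s_0;x,\xi)$ and $0<s_0<\rho$, one proves $d_{g_1}(z,y)=\inf\{R>0:\ \exists\,\varepsilon>0,\ B_1(y_0,t-s_0+\varepsilon)\subset B_1(x,t)\cup B_1(z,R)\}$. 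The inequality $d_{g_1}(z,y)\le\inf\{\cdots\}$ holds because $y$ lies in $B_1(y_0,t-s_0+\varepsilon)$ but not in $B_1(x,t)$. The reverse inequality requires showing that $B_1(y_0,t-s_0+\varepsilon)\setminus B_1(x,t)$ collapses to $\{y\}$ as $\varepsilon\to0$. A limit point $\tilde p$ satisfies $d_{g_1}(x,\tilde p)=t$ and $d_{g_1}(y_0,\tilde p)=t-s_0$, so $\gamma_1|_{[0,s_0]}$ followed by a minimizing path from $y_0$ to $\tilde p$ is minimizing. Hence it is $\gamma_1(\cdot;x,\xi)$, and $\tilde p=y$. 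You must also check that all radii stay below $T_{\bPhi}$ of their centers, so that these triples lie in $\mathfrak{B}_{B_1(x,\rho),\bPhi}$.

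Both directions use $t<\sigma_1(x)$ essentially: without minimality, $y\in B_1(x,t)$ and the inclusions say nothing about $d_{g_1}(z,y)$. So when you transport the inclusions by $\bPhi_\ast$ and rerun the argument on $\M_2$, you need $\gamma_2(\cdot;\bPhi(x),d\bPhi\,\xi)$ to be minimizing (and interior) on $[0,t]$ in $\M_2$. That is, you need control of $\sigma_2(\bPhi(x))$, and nothing in your proposal provides it. The missing idea is the equality $\sigma_1^T(x)=\sigma_2^T(\bPhi(x))$ (Lemma~\ref{lm:cut_func}). The paper derives it from a separate ball-inclusion characterization of $\sigma_k$ (Lemma~\ref{lm:7}, due to Lassas--Oksanen), transported through $\mathfrak{B}$. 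The same equality is exactly what makes $\exp_{\bPhi(x)}\circ d\bPhi\circ\exp_x^{-1}$ map $\overline{M_1^T(x,3\rho)}$ onto $\overline{M_2^T(\bPhi(x),3\rho)}$, since both sets are defined through $\sigma_k^T$.

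Finally, in Step 3 a \emph{spanning} family of covectors that are unit for both metrics does not determine the metric; a quadratic form is not fixed by its values on a basis. You need the differentials of $d_{g_1}(z,\cdot)$ at $y$, for $z$ near $x$ off the cut locus of $y$, to fill an open subset of the unit cosphere. Then the two quadratic forms agree on an open cone and polarization applies. With these pieces in place, defining the isometry directly as $\exp_{\bPhi(x)}\circ d\bPhi\circ\exp_x^{-1}$ is a legitimate shortcut compared with the paper's auxiliary smooth structure on $\Sigma_{x,\rho}$.
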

  We recall the following lemma borrowed from \cite{Oksanen2014}, see also \cite[p.70]{doCarmo} for the definition of normal neighborhood.
  \begin{lemma}
    \label{lm:7}(Lemma 15, \cite{Oksanen2014})
    For $k=1,2$ and $p\in \M^\inter_k$, let $\rho>0$ be small enough so that $B_k(p,\rho)$ is contained in a normal neighborhood  of $p$ in $\M^\inter_k$. For $r>0$, the following properties are equivalent:
    \begin{enumerate}
      \item $r+\rho\leq \sigma_k(p)$
      \item for all $t\in (0,r]$, $s\in (0,t)$, and $y\in \p B_k(p,\rho)$, $B_k(y,t)\not\subset B_k(p,s+\rho)$.
    \end{enumerate}
  \end{lemma}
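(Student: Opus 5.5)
The plan is to prove the two implications separately, using only the definitions of $\sigma_k$ and $\tau_k$, the triangle inequality, and the fact that $B_k(p,\rho)$ lies in a normal neighborhood of $p$. Every $y\in\p B_k(p,\rho)$ can then be written as $y=\gamma_k(\rho;p,\xi)$ for a unique $\xi\in S_p\M_k$, and $d_{g_k}(p,y)=\rho$.

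\emph{(1) $\Rightarrow$ (2).} Let $r+\rho\le\sigma_k(p)$, $t\in(0,r]$, $s\in(0,t)$ and $y=\gamma_k(\rho;p,\xi)\in\p B_k(p,\rho)$. I will exhibit a point of $B_k(y,t)$ outside $B_k(p,s+\rho)$.
\begin{enumerate}
\item Pick $t'\in[s,t)$ and set $z=\gamma_k(\rho+t';p,\xi)$.
\item Since $\rho+t'<\rho+r\le\sigma_k(p)\le\tau_k(p,\xi)$, the geodesic is defined on $[0,\rho+t']$ and stays in the interior.
\item By the definition of $\sigma_k(p)$ as a minimum over directions, $d_{g_k}(p,\gamma_k(u;p,\xi))=u$ for all $u\le\sigma_k(p)$. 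Hence $d_{g_k}(p,z)=\rho+t'\ge\rho+s$, so $z\notin B_k(p,s+\rho)$.
\item On the other hand, $d_{g_k}(y,z)\le t'<t$, since the geodesic segment from $y$ to $z$ has length $t'$. So $z\in B_k(y,t)$.
\end{enumerate}
This gives (2).

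\emph{(2) $\Rightarrow$ (1).} I argue by contraposition. Assume $\sigma:=\sigma_k(p)<r+\rho$, and let $\xi$ realize the minimum in the definition of $\sigma_k(p)$. Then $\gamma_k(\cdot;p,\xi)$ stops minimizing at $\sigma$, or reaches $\p\M_k$ at $\tau_k(p,\xi)=\sigma$. Choose $t\in(\max\{\sigma-\rho,0\},r]$, so that $\rho+t>\sigma$, and set $y=\gamma_k(\rho;p,\xi)$. It suffices to show
\begin{equation*}
m:=\sup_{w\in\overline{B_k(y,t)}}d_{g_k}(p,w)<\rho+t .
\end{equation*}
Then any $s\in(\max\{m-\rho,0\},t)$ gives $B_k(y,t)\subset B_k(p,s+\rho)$, contradicting (2). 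To prove the strict inequality:
\begin{enumerate}
\item The triangle inequality gives $m\le\rho+t$.
\item By compactness the supremum is attained at some $w_0$. If $m=\rho+t$, then necessarily $d_{g_k}(y,w_0)=t$, and the concatenation of the segment from $p$ to $y$ with a minimizing curve from $y$ to $w_0$ has length $\rho+t=d_{g_k}(p,w_0)$. So this concatenation is a length minimizer.
\item The point $y$ is interior, and minimizers on manifolds with boundary are $C^1$ and are geodesics in the interior. Hence the curve has no corner at $y$ and coincides with $\gamma_k(\cdot;p,\xi)$ as long as it stays in $\M_k^{\inter}$.
\item If it stays interior up to length $\rho+t$, then $d_{g_k}(p,\gamma_k(u;p,\xi))=u$ for $u$ up to $\rho+t>\sigma$, contradicting the maximality in the definition of $\sigma$ for this $\xi$.
\end{enumerate}

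The main obstacle I expect is the remaining case, where $\tau_k(p,\xi)=\sigma$: the geodesic hits $\p\M_k$ before length $\rho+t$, and the minimizer may continue as a $C^1$ curve gliding along the boundary. Two routes are possible here.
\begin{itemize}
\item First try to exclude this case by re-choosing $t$ with $\rho+t$ only slightly larger than $\sigma$, and arguing that the minimizer from $p$ would have to be a geodesic arriving tangentially at $\p\M_k$. This contradicts the geodesic being transversal at its first boundary hit, unless it is tangential.
\item If tangential hitting cannot be excluded, fall back on the convention used in \cite{Oksanen2014}, where $\sigma$ is defined so that this configuration is included in the minimizing case. Since the lemma is quoted from there, I would check that our definitions of $\sigma_k$ and $\tau_k$ agree with theirs, and otherwise follow their treatment of this case.
\end{itemize}
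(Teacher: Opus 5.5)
The paper does not prove this lemma; it quotes it from \cite{Oksanen2014}, so your argument has to stand on its own. Your proof of (1)$\Rightarrow$(2) is correct. The reduction in (2)$\Rightarrow$(1) is also correct: if $m=\rho+t$, the shortest path through $y$ follows $\gamma_k(\cdot;p,\xi)$ until it first meets $\p\M_k$, and everything is settled except the case $\tau_k(p,\xi)=\sigma_k(p)$.

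That case, however, is left open, and it is a genuine gap rather than a technicality. Your worry is justified. Take a direction whose first contact with $\p\M_k$ is tangential, for instance a ray grazing a convex obstacle in a Euclidean domain. The path keeps minimizing beyond the contact point, so $m=\rho+t$ really occurs for such directions. This happens even though the inner maximum in the definition of $\sigma_k(p)$ equals $\tau_k(p,\xi)<\rho+t$. Your first route is circular, since it only works once tangential contact has been excluded. Your second route is not an argument, because $\sigma_k$ and $\tau_k$ are defined explicitly in this paper.

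The missing idea is to use that $\xi$ \emph{realizes the minimum} in the definition of $\sigma:=\sigma_k(p)$. The inner maximum for any direction $\eta$ is at most $\tau_k(p,\eta)$, so $\tau_k(p,\eta)\ge\sigma$ for every $\eta\in S_p\M_k$.

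Hence no point of $\p\M_k$ lies at distance $<\sigma$ from $p$. Indeed, take a shortest path from $p$ to such a point. Its portion before it first touches $\p\M_k$ is a geodesic $\gamma_k(\cdot;p,\eta)$, and it would reach the boundary before time $\sigma$, which is impossible. So if $\tau_k(p,\xi)=\sigma$, the point $q=\gamma_k(\sigma;p,\xi)$ satisfies $d_{g_k}(p,q)=\sigma$ and is a nearest boundary point to $p$.

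This excludes tangential contact. Suppose $\gamma_k'(\sigma;p,\xi)\in T_q\p\M_k$, and pick a curve $c$ in $\p\M_k$ with $c(0)=q$ and $c'(0)=-\gamma_k'(\sigma;p,\xi)$. In a boundary chart, where $\M_k$ is a half-ball, $c(h)$ and $\gamma_k(\sigma-h;p,\xi)$ are $O(h^2)$ apart. By the triangle inequality, $d_{g_k}(p,c(h))\le\sigma-h+O(h^2)<\sigma$ for small $h>0$, a contradiction.

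Thus $\gamma_k'(\sigma;p,\xi)$ points strictly outward. Your $C^1$ argument then shows that a shortest path through $y$ cannot continue inside $\M_k$ past $q$, which gives $m<\rho+t$. With this step added, your proof is complete.
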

  Applying the above mentioned lemma, we obtain the following lemma.
  
  \begin{lemma}
    \label{lm:cut_func}
   Suppose there is an isometry $\bPhi:\omega_1\to\omega_2$ such that 
   \begin{equation}
       \bPhi_\ast \mathfrak{B}_{\omega_1,\bPhi}=\mathfrak{B}_{\omega_2,\bPhi}.
   \end{equation}
    Then for any $x\in \omega_1$, we have $\sigma_1^T(x)=\sigma_2^T(\bPhi(x))$.
  \end{lemma}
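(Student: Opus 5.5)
The plan is to characterize $\sigma_k^T$ purely through ball-inclusion relations, which $\bPhi_\ast$ preserves by hypothesis, and then read off the equality from Lemma \ref{lm:7}. Fix $x\in\omega_1$ and choose $\rho>0$ small enough that $B_1(x,\rho)\subset\omega_1$ lies in a normal neighborhood of $x$. Since $\bPhi$ is an isometry, $B_2(\bPhi(x),\rho)=\bPhi(B_1(x,\rho))\subset\omega_2$, and it is likewise a normal ball of $\bPhi(x)$; shrinking $\rho$ further makes this hold on both sides. Also $\bPhi$ maps $\p B_1(x,\rho)$ onto $\p B_2(\bPhi(x),\rho)$ when these spheres lie inside $\omega_1,\omega_2$, which again holds for small $\rho$.

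First, rewrite the non-inclusion in condition (2) of Lemma \ref{lm:7} in the form used by $\mathfrak B$. The relation $B_k(y,t)\subset B_k(p,s+\rho)$ is the inclusion $B_k(y,l_y)\subset B_k(p,l_p)\cup B_k(p,l_p)$, with the second ball repeated. It therefore belongs to $\mathfrak{B}_{\omega_k,\bPhi}$ exactly when $(y,p,p,t,s+\rho,s+\rho)$ satisfies the admissibility bounds $t<T_{\bPhi}(\{y\})$ and $s+\rho<T_{\bPhi}(\{p\})$. These bounds are the same on both sides, because $T_\bPhi$ is defined symmetrically through $\bPhi$. Consequently, for all admissible parameters, $B_1(y,t)\subset B_1(x,s+\rho)$ if and only if $B_2(\bPhi(y),t)\subset B_2(\bPhi(x),s+\rho)$.

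Second, define $r_k^\ast$ as the supremum of those $r\in(0,T_\bPhi(\{x\})-\rho)$ for which condition (2) holds, with $t$ ranging over $(0,r]$. By the previous step, $r_1^\ast=r_2^\ast$. The constraint $t<T_\bPhi(\{y\})$ needs care: since $d(x,y)=\rho$, we have $T_\bPhi(\{y\})\ge T_\bPhi(\{x\})-\rho$, so every $t\le r<T_\bPhi(\{x\})-\rho$ is admissible. By Lemma \ref{lm:7}, $r_k^\ast+\rho=\min\{\sigma_k(p_k),T_\bPhi(\{x\})\}=\sigma_k^T(p_k)$, where $p_1=x$ and $p_2=\bPhi(x)$. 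Indeed, if $\sigma_k(p_k)\ge T_\bPhi(\{x\})$, then every $r$ below the cap is allowed, so $r_k^\ast$ equals the cap; otherwise the supremum is $\sigma_k(p_k)-\rho$. This gives $\sigma_1^T(x)=\sigma_2^T(\bPhi(x))$.

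The main obstacle is the bookkeeping at the truncation $T_\bPhi$. One must ensure that every triple invoked in Lemma \ref{lm:7} is actually admissible in $\mathfrak B$, including the strict inequalities and the case $s+\rho$ close to the cap. One must also handle the requirement that the centers lie in $\omega_k$: $y\in\p B_k(x,\rho)$ requires $\overline{B_1(x,\rho)}\subset\omega_1$. To avoid boundary issues with strict inequalities, the plan is to take limits $r\uparrow T_\bPhi(\{x\})-\rho$.
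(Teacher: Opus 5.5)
Your proposal is correct and follows essentially the paper's route: both transfer the (non-)inclusions $B_k(y,t)\subset B_k(p,s+\rho)$ through the hypothesis on $\mathfrak{B}$, use $T_{\bPhi}(\{y\})\ge T_{\bPhi}(\{x\})-\rho$ for admissibility, and then read off $\sigma_k$ from Lemma \ref{lm:7}. The only difference is packaging: the paper proves the two inequalities separately, with a WLOG $T_1(\{x\})\le T_2(\{\bPhi(x)\})$ and a case split on whether $\sigma_1(x)=T_{\bPhi}(\{x\})$, whereas your capped supremum $r_k^\ast$ handles the truncation symmetrically in one step.
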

  \begin{proof}
    Without loss of generality, we assume that $T_1(\{x\})\leq T_2(\{\bPhi(x)\})$. Let $\rho$ be small enough so that $\overline{B_1(x,\rho)}\subset \omega_1$, and $B_1(x,\rho)$ and $B_2(\bPhi(x),\rho)$ are contained in a normal neighborhood of $x$ and $\bPhi(x)$, respectively. By definition we have $\sigma_1(x)\leq T_1(\{x\})=T_{\bPhi}(\{x\})$. For any $\varepsilon\in(0,\sigma_1(x)-\rho)$, we write $r_\varepsilon=\sigma_1(x)-\varepsilon-\rho$. By Lemma \ref{lm:7}, for all $t\in (0,r_\varepsilon]$, $s\in (0,t)$ and $y\in \p B_1(x,\rho)$, it holds that $B_1(y,t)\not\subset B_1(x,s+\rho)$. Notice that 
    \begin{equation}
      T_1(y)\geq T_1(\{x\})-\rho,\ T_2(\bPhi(y))\geq T_2(\{\bPhi(x)\})-\rho\geq T_1(\{x\})-\rho,
    \end{equation}
    then we have
    \begin{equation}
      T_{\bPhi}(y)\geq T_1(\{x\})-\rho\geq \sigma_1(x)-\rho > r_\varepsilon.
    \end{equation}
    According to Proposition \ref{lm:6}, there holds
    \begin{equation}
      B_2(\bPhi(y),t)\not\subset B_2(\bPhi(x),s+\rho),\ \text{for all } t\in (0,r_\varepsilon],\ s\in (0,t) \text{ and }y\in \p B_1(x,\rho).
    \end{equation}
    Again, by Lemma \ref{lm:7}, we have
    \begin{equation}
      r_\varepsilon+\rho=\sigma_1(x)-\varepsilon\leq \sigma_2(\bPhi(x)).
    \end{equation}
    Taking $\varepsilon\to 0$, we can conclude that $\sigma_1(x)\leq \sigma_2(\bPhi(x))$. If $\sigma_1(x)=T_{\bPhi}(\{x\})$, we immediately have
    \begin{equation}
      \sigma_2^T(\bPhi(x))=T_{\bPhi}(\{x\})=\sigma_1^T(x).
    \end{equation}
    Suppose $\sigma_1(x)<T_{\bPhi}(\{x\})$. Due to Lemma \ref{lm:7}, for $r\in (\sigma_1(x)-\rho,T_{\bPhi}(\{x\})-\rho)$, there exists $t\in (0,r]$, $s\in (0,t)$ and a point $y\in \p B_1(x,\rho)$ such that $B_1(y,t)\subset B_1(x,s+\rho)$. Using Proposition \ref{lm:6} again, we have $B_2(\bPhi(y),t)\subset B_2(\bPhi(x),s+\rho)$ and thus $r+\rho>\sigma_2(\bPhi(x))$. Thus $\sigma_1(x)\geq \sigma_2(\bPhi(x))$. Therefore, $\sigma_1(x)= \sigma_2(\bPhi(x))$ and consequently $\sigma_1^T(x)= \sigma_2^T(\bPhi(x))$.
  \end{proof}
  \begin{lemma}
    \label{lm:dist_influence}
    Let $x\in \M^\inter_1$, $\xi\in S_x\M_1$ and $\rho>0$. Suppose there is an isometry $\bPhi: B_1(x,\rho)\to B_2(\bPhi(x),\rho)$ such that
    \begin{equation}
        \bPhi_\ast \mathfrak{B}_{B_1(x,\rho),\bPhi}=\mathfrak{B}_{B_2(\bPhi(x),\rho),\bPhi}.
    \end{equation}
    Let $0<t\leq\sigma^T_k(x)-3\rho$ for both $k=1,2$, then there holds $d_{g_1}(z,\gamma_1(t;x,\xi))=d_{g_2}(\bPhi(z),\gamma_2(t;\bPhi(x),\bPhi_\ast\xi))$ for any $z\in B_1(x,\rho)$.
  \end{lemma}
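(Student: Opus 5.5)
The plan is to characterize the distance from $z$ to the geodesic point $y_1:=\gamma_1(t;x,\xi)$ purely through inclusions of balls centered in $B_1(x,\rho)$. These inclusions are preserved by $\bPhi_\ast$ because of the hypothesis $\bPhi_\ast \mathfrak{B}_{B_1(x,\rho),\bPhi}=\mathfrak{B}_{B_2(\bPhi(x),\rho),\bPhi}$.

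First, since $t\le \sigma_1^T(x)-3\rho<\sigma_1(x)$, the geodesic $\gamma_1(\cdot;x,\xi)$ is distance minimizing on $[0,t+\rho]$ (note $t+\rho\le\sigma_1(x)$). Let $x_\rho:=\gamma_1(-\rho';x,\xi)$ for small $\rho'<\rho$, a point of $B_1(x,\rho)$ behind $x$ on the geodesic. I would identify $y_1$ as the unique point with
\[
d_{g_1}(y_1,x)=t \quad\text{and}\quad d_{g_1}(y_1,x_\rho)=t+\rho'.
\]
The second identity says that $y_1$ lies on the extension of the segment from $x_\rho$ through $x$, and this holds because $t+\rho'<\sigma_1(x_\rho)$, which follows from the continuity/lower bound of $\sigma_1$ near $x$; this is where the $3\rho$ margin is used. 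Translating to balls, the pair of conditions means
\[
B_1(y_1,s)\ \text{meets every}\ \ldots
\]
More concretely, for $z\in B_1(x,\rho)$ and $l>0$ we have $d_{g_1}(z,y_1)<l$ iff $y_1\in B_1(z,l)$. Since $y_1$ is the tip of the \lq\lq cone'' $\overline{B_1(x,t)}\cap\overline{B_1(x_\rho,t+\rho')}$, which by minimality reduces to a single point, I would express $d(z,y_1)\le l$ as follows: for every $\epsilon>0$,
\[
B_1(x,t+\epsilon)\ \not\subset\ \bigl(B_1(x_\rho,t+\rho')\cup B_1(z,l)\bigr)^{c}\text{-type inclusion}.
\]
The usable form, matching $\mathfrak{B}$, is the two-ball cover statement: $d(z,y_1)\ge l$ iff there is $\epsilon_0$ such that for all small $\epsilon$,
\[
B_1(x_\rho,t+\rho'-\epsilon)\ \subset\ \ldots
\]
Since $\mathfrak{B}$ only encodes inclusions $B(a,l_a)\subset B(b,l_b)\cup B(c,l_c)$ with centers in $B_1(x,\rho)$, I would use the standard BC fact (as in \cite{Oksanen2014}) that
\[
d_{g_1}(z,y_1)=\inf\Bigl\{\,l>0 \;\Bigm|\; \forall \epsilon>0:\ B_1(x,t-\epsilon')\not\subset B_1(x_\rho,t+\rho'-\epsilon)\cup B_1(z,l)^{\text{c}}\Bigr\},
\]
replaced by the cleaner criterion
\[
d_{g_1}(z,y_1)\ge l \iff B_1(x,t) \subset B_1(x_\rho,t+\rho'-\epsilon)\cup B_1(z, \cdot)\ \text{fails/holds},
\]
and then fix the precise form: $y_1\notin B_1(z,l)$ iff for every $\epsilon>0$, $B_1(x,t+\epsilon)\not\subset B_1(x_\rho,t+\rho')\cup B_1(z,l)$. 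The forward direction holds because points near $y_1$ slightly past it lie in $B(x,t+\epsilon)$ but outside both balls. The converse uses that $\overline{B(x,t)}\setminus B(x_\rho,t+\rho')=\{y_1\}$ by minimality and the absence of cut points before $\sigma_1$.

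Second, each condition is of the form $(x,x_\rho,z,t+\epsilon,t+\rho',l)\in\mathfrak{B}$ or not, with all radii below $T_{\bPhi}$ because $t+\rho+\epsilon<\sigma^T$. Hence the set of admissible $l$ is the same for manifold $2$ with centers $\bPhi(x),\bPhi(x_\rho),\bPhi(z)$. Since $\bPhi$ is an isometry, $\bPhi(x_\rho)=\gamma_2(-\rho';\bPhi(x),\bPhi_\ast\xi)$, and the same uniqueness argument (valid because $t\le\sigma_2^T(\bPhi(x))-3\rho$) identifies the corresponding tip as $\gamma_2(t;\bPhi(x),\bPhi_\ast\xi)$. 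This gives the equality of distances.

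The main obstacle is the single-point intersection claim $\overline{B(x,t)}\setminus B(x_\rho,t+\rho')=\{y_1\}$ and its boundary behaviour as $\epsilon\to0$. It needs $\gamma(\cdot;x_\rho,\xi)$ to be the unique minimizer up to length $t+\rho'$, so that strict triangle inequality holds off the geodesic. I would derive this from $t+2\rho\le\sigma_1(x)$ and the triangle inequality, using $\sigma$ at nearby points via Lemma~\ref{lm:7} if needed.
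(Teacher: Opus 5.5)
Once the abandoned intermediate displays are discarded, your final criterion is the paper's mechanism in mirror image. The criterion is: $d_{g_1}(z,y_1)\ge l$ iff $B_1(x,t+\epsilon)\not\subset B_1(x_\rho,t+\rho')\cup B_1(z,l)$ for every $\epsilon>0$. The paper characterizes $d_{g_1}(z,p)$, with $p=\gamma_1(l;x,\xi)$, as the infimum of those $R$ for which $B_1(y,r+\varepsilon)\subset B_1(x,l)\cup B_1(z,R)$ holds for some $\varepsilon>0$. It proves the converse by the same compactness argument, $\overline{B_1(y,r)}\setminus B_1(x,l)=\{p\}$, and transfers the covering relation through $\mathfrak{B}$, exactly as you do. The only real difference is where the auxiliary center sits. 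The paper puts it at $y=\gamma_1(s;x,\xi)$, $s<\rho$, \emph{ahead} of $x$, and uses $x$ itself as the center of the large ball. You put the large ball at $x_\rho$, behind $x$.

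That change is not free. In the paper's configuration, the identities $d_{g_1}(x,p)=l$ and $d_{g_1}(y,p)=l-s$ follow at once from $l<\sigma_1(x)$. In yours you need $d_{g_1}(x_\rho,y_1)=t+\rho'$, i.e.\ that $\gamma_1(\cdot;x,\xi)$ still minimizes on $[-\rho',t]$, and likewise on $\M_2$. If this fails, then $y_1\in B_1(x_\rho,t+\rho')$ and $\overline{B_1(x,t)}\setminus B_1(x_\rho,t+\rho')$ is empty. Then $B_1(x,t+\epsilon)\subset B_1(x_\rho,t+\rho')$ for small $\epsilon$, and your criterion collapses. This fact does not follow from the triangle inequality and $t+2\rho\le\sigma_1(x)$, as your last paragraph suggests. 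It needs either lower semicontinuity of $\sigma_1$, which the paper does not prove and which requires care because of the exit times near $\p\M_1$, or a cut-locus argument. The cut-locus route: $y_1$ is not a cut point of $x$, since $t<\sigma_1(x)$, so $x$ is not a cut point of $y_1$ along the reversed geodesic. Hence the extension past $x$ still minimizes for $\rho'$ small depending on $t,\xi$, chosen small enough for both manifolds. This is the same reasoning the paper uses when it picks $\delta$ with no cut locus of $\widetilde{p}$ in $B_1(x,\delta)$. With that fix your route works, but moving the auxiliary point forward as the paper does removes the issue. A minor point: for the forward direction you do not need points slightly past $y_1$, which may lie in $B_1(z,l)$ when $d_{g_1}(z,y_1)=l$. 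Since the balls are open, $y_1$ itself is a witness, just as the paper uses $p$.
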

  \begin{proof}
    By the definition of $M^T_1(x,3\rho)$, for any $p\in M^T_1(x,3\rho)$, there is a unit vector $\xi\in S_x\M_1$ and $l>0$ such that $p = \gamma_1(l;x,\xi)$ and $d_{g_1}(p,x)=l$. Fix $s\in (0,\rho)$, then we write $y=\gamma_1(s;x,\xi)$ and $r=l-s$. For any $z\in B_1(x,\rho)$, let $0<R$,  and suppose that there is a small $\varepsilon>0$ such that 
    \begin{equation}
      \label{eq:prop_2}
      B_1(y,r+\varepsilon)\subset B_1(x,l)\cup B_1(z,R).
    \end{equation}
    Since $d_{g_1}(x,p)=l$ and $d_{g_1}(y,p)=r$, we have $p\in B_1(z,R)$ and thus $d_{g_1}(z,p)<R$. Define
    \begin{equation}
      R^* = \inf_{R>0}\{\text{there exists }\varepsilon>0 \text{ such that }\text{\eqref{eq:prop_2} holds}\}.
    \end{equation}
    Then we have $d_{g_1}(z,p)\leq R^*$. To get a contradiction, assume that $d_{g_1}(z,p)\neq R^*$, then there exists $R\in (d_{g_1}(z,p),R^*)$ such that for any $\varepsilon>0$, \eqref{eq:prop_2} does not hold. That is
    \begin{equation}
      B_1(y,r+1/j)\backslash \left(B_1(x,l)\cup B_1(z,R)\right)\neq \emptyset
    \end{equation}
    for every positive integer $j$. Then we can select a sequence $\{p_j\}_{j=1}^\infty$ such that 
    \begin{equation}
      p_j\in B_1(y,r+1/j)\backslash \left(B_1(x,l)\cup B_1(z,R)\right).
    \end{equation}
    Due to the sequential compactness of the ball $\overline{B_1(y,r+1)}$, there is a subsequence of $\{p_j\}_{j=1}^\infty$ that converges to a point $\widetilde{p}\in \overline{B_1(y,r)}$. Notice that $B_1(y,r)\subset B_1(x,l)$ by the triangular inequality, hence we have $\widetilde{p}\in \p B_1(y,r)$.
    Consequently, there holds
    \begin{equation}
      d_{g_1}(x,\widetilde{p})\leq d_{g_1}(x,y)+d_{g_1}(y,\widetilde{p})\leq s+r=l.
    \end{equation}
    Notice that $d_{g_1}(x,\widetilde{p})<l$ contradicts with $d_{g_1}(x,p_j)>l$ for all $j>0$ and $p_j\to \widetilde{p}$. Therefore, $d_{g_1}(x,\widetilde{p})=l$.
    
    Next we show that $p=\widetilde{p}$ via a contradiction. Assume that there is
    a minimizing path from $x$ to $\widetilde{p}$ such that it is $\gamma_1(\cdot;x,\zeta)$ near $x$ where 
    $\zeta\in S_x\M$ and $\zeta\neq \xi$. Choose small enough $0<\delta<s$ such that $B_1(x,\delta)$ is geodesically convex, see for instance \cite[Theorem 6.17]{Lee18}, and there is no cut locus of $\widetilde{p}$ in $B_1(x,\delta)$. Then we have
    \begin{equation}
    \label{eq:lm_6.6_1}
        d_{g_1}(\gamma_1(-\delta;x,\zeta),\widetilde{p})=l+\delta.
    \end{equation}
    Moreover, since $d_{g_1}(y,\widetilde{p})=r$, there holds
    \begin{equation}
      d_{g_1}(\gamma_1(\delta;x,\xi),\widetilde{p})\leq d_{g_1}(\gamma_1(\delta;x,\xi),y) + d_{g_1}(y,\widetilde{p})=r+s-\delta=l-\delta,
    \end{equation}
    and 
    \begin{equation}
      d_{g_1}(\gamma_1(\delta;x,\xi),\widetilde{p})\geq d_{g_1}(x,\widetilde{p}) - d_{g_1}(\gamma_1(\delta;x,\xi),x)=l-\delta.
    \end{equation}
    Hence $d_{g_1}(\gamma_1(\delta;x,\xi),\widetilde{p})=l-\delta$. Notice that the concatenated curve
    $$\gamma_1([-\delta,0];x,\zeta)\cup \gamma_1([0,\delta];x,\xi)$$
    joins $\gamma_1(-\delta;x,\zeta)$ and $\gamma_1(\delta;x,\xi)$, and it is not a geodesic. Thus
    \begin{equation}
        d_{g_1}(\gamma_1(-\delta;x,\zeta),\gamma_1(\delta;x,\xi))<2\delta.        
    \end{equation}
    Consequently, we have
    \begin{equation}
      d_{g_1}(\gamma_1(-\delta;x,\zeta),\widetilde{p})\leq d_{g_1}(\gamma_1(\delta;x,\xi),\widetilde{p})+d_{g_1}(\gamma_1(-\delta;x,\zeta),\gamma_1(\delta;x,\xi))<l+\delta.
    \end{equation}
    This is a contradiction with \eqref{eq:lm_6.6_1}. Thus we can conclude that $\zeta=\xi$ and $\widetilde{p}=\gamma_1(l;x,\xi)=p$.
    Now we have $d_{g_1}(p_j,z)>R$ for all $j>0$ and $p_j\to p$, which imply that $d_{g_1}(z,p)\geq R$. But $d_{g_1}(z,p)< R$ since $p\in B_1(z,R)$, a contradiction. Thus $d_{g_1}(z,p)=R^*$.\par
    Notice that $d_{g_1}(z,p)\leq d_{g_1}(z,x)+d_{g_1}(x,p)\leq \sigma^T_1(\{x\})-2\rho\leq T_{\bPhi}(\{x\})-2\rho$ and $T_{\bPhi}(z)\geq T_{\bPhi}(\{x\})-\rho$. Therefore $R^*=d_{g_1}(z,p)<T_{\bPhi}(z)$. 
    Since $\bPhi$ is an isometry, there holds $\bPhi(y)=\bPhi(\gamma_1(s;x,\xi))=\gamma_2(s;\bPhi(x),\bPhi_\ast \xi)$.
    According to Proposition \ref{lm:6}, \eqref{eq:prop_2} is equivalent to $B_2(\bPhi(y),r+\varepsilon)\subset B_2(\bPhi(x),l)\cup B_2(\bPhi(z),R)$ when $R<T_{\bPhi}(z)$. Repeating above argument on $(\M_2,g_2)$, we have $d_{g_2}(\bPhi(z),\gamma_2(l;\bPhi(x),\bPhi_\ast\xi))=R^*=d_{g_1}(z,\gamma_1(l;x,\xi))$.
  \end{proof}
  \begin{remark}
    By the continuity of the distance function, we can extend the result in Lemma \ref{lm:dist_influence} to $z\in \overline{B_1(x,\rho)}$.
  \end{remark}
  \begin{proposition}
    \label{prop:homeomorphism}
    Let $x\in \M_1^\inter$ and $\rho>0$. Let maps $R_k$ be defined as in \eqref{eq:def:R_1} and \eqref{eq:def:R_2} for $k=1,2$, respectively. Suppose there is an isometry $\bPhi: B_1(x,\rho)\to B_2(\bPhi(x),\rho)$ such that
    \begin{equation}
        \bPhi_\ast \mathfrak{B}_{B_1(x,\rho),\bPhi}=\mathfrak{B}_{B_2(\bPhi(x),\rho),\bPhi}.
    \end{equation}
    Then $R_k$ are homeomorphisms for both $k=1,2$, and $R_1(\overline{M_1^T(x,3\rho)})=R_2(\overline{M_2^T(\bPhi(x),3\rho)})$.
  \end{proposition}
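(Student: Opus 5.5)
The plan has three steps: injectivity and continuity of each $R_k$, a compactness argument that upgrades $R_k$ to a homeomorphism onto its image, and finally the identification of the two images via Lemma \ref{lm:dist_influence}.

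\textbf{Step 1: $R_k$ is continuous and injective.}
Continuity is immediate from the triangle inequality:
\[
\sup_{z}\bigl|d_{g_k}(y,z)-d_{g_k}(y',z)\bigr|\le d_{g_k}(y,y').
\]
So $R_k$ is $1$-Lipschitz into $C(\overline{B_1(x,\rho)})$ with the sup norm. For $R_2$, we compose with the isometry $\bPhi$, which preserves sup norms.

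For injectivity, take $y,y'\in\overline{M_k^T(x,3\rho)}$ (for $k=2$, the base point is $\bPhi(x)$) with $R_k[y]=R_k[y']$. By definition of $M_k^T$ we can write $y=\gamma_k(l;x,\xi)$ with $l\le\sigma_k^T(x)-3\rho<\sigma_k(x)$, and $d_{g_k}(x,y)=l$. Since $l<\sigma_k(x)$, the geodesic $\gamma_k(\cdot;x,\xi)$ is minimizing slightly beyond $l$, so $y$ is not a cut point of $x$.

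Equality of the distance functions on $\overline{B(x,\rho)}$ gives $d(x,y)=d(x,y')=l$. Next, minimize $z\mapsto d(z,y)$ over the sphere $\partial B(x,\rho)$. The minimum is attained exactly at $z=\gamma(\rho;x,\xi)$ with value $l-\rho$, and uniqueness holds because $y$ lies before the cut point. The same computation for $y'$ produces its initial direction $\xi'$, and the minimizers must coincide, so $\xi=\xi'$. Hence $y=\gamma(l;x,\xi)=y'$.

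Points of the closure are handled by the same argument, using the bound $l\le\sigma^T-3\rho$ together with the remark extending Lemma \ref{lm:dist_influence} to the closed ball.

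\textbf{Step 2: $R_k$ is a homeomorphism onto its image.}
The domain $\overline{M_k^T}$ is compact, and the target is Hausdorff. A continuous bijection from a compact space to a Hausdorff space is a homeomorphism, so $R_k$ is a homeomorphism onto $R_k(\overline{M_k^T})$.

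\textbf{Step 3: $R_1(\overline{M_1^T(x,3\rho)})=R_2(\overline{M_2^T(\bPhi(x),3\rho)})$.}
First, Lemma \ref{lm:cut_func} (applied with $\omega_k$ the balls) gives $\sigma_1^T(x)=\sigma_2^T(\bPhi(x))$. Therefore the parameter ranges $\xi\in S_x\M_1$, $t\in(0,\sigma^T-3\rho)$ coincide on both sides, with $\bPhi_\ast$ identifying $S_x\M_1$ with $S_{\bPhi(x)}\M_2$.

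Lemma \ref{lm:dist_influence} then gives
\[
R_1[\gamma_1(t;x,\xi)]=R_2[\gamma_2(t;\bPhi(x),\bPhi_\ast\xi)].
\]
This yields equality of the images of the open sets $M_k^T$. Passing to closures, continuity of $R_k$ and compactness give $R_k(\overline{M_k^T})=\overline{R_k(M_k^T)}$, so the closed images agree as well.

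\textbf{Main obstacle.}
The delicate step is the injectivity argument in Step 1. It needs uniqueness of the closest point on $\partial B(x,\rho)$ for points up to the closure boundary, and this is where the margin $3\rho$ below $\sigma_k$ is used. Normality of $B(x,\rho)$ is also required so that spheres are smooth. If the minimizer argument runs into trouble, I would instead recover $y$ from $d(\cdot,y)$ near $x$ through its gradient at $x$: this gradient equals $-\xi$, and $d(x,y)=l$ is already known, which determines $y$.
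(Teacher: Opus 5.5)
Your proposal is correct and follows essentially the same route as the paper. It uses continuity from the triangle inequality, injectivity from the unique closest point on $\partial B(x,\rho)$ to a point lying strictly before the cut locus, the compact-to-Hausdorff argument for the homeomorphism, and Lemma~\ref{lm:cut_func} with Lemma~\ref{lm:dist_influence} plus a closure argument (which you justify more carefully than the paper) to identify the two images. The one loose end, which the paper shares, is small: for points inside the ball the minimal value is $|l-\rho|$ rather than $l-\rho$, and the point $y=x$ (where no direction $\xi$ is defined) needs the trivial observation that $R_k[y]$ vanishes only at the point of $\overline{B_1(x,\rho)}$ corresponding to $y$.
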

  \begin{proof}
    In the view of \cite[Theorem 4.17]{rudin_1}, to conclude that $R_1$ is a homeomorphism we only need to show that $R_1$ are continuous and one-to-one as $\overline{M_1^T(x,3\rho)}$ is compact. Let $p,q\in M_1^T(x,3\rho)$, the triangle inequality reads
    \begin{equation}
      \norm{R_1(p)-R_1(q)}_{L^\infty(\overline{B_1(x,\rho)})}=\sup_{z\in \overline{B_1(x,\rho)}}\abs{d_{g_1}(p,z)-d_{g_1}(q,z)}\leq d_{g_1}(p,q).
    \end{equation}
    This implies that $R_1$ is continuous. \par
    Suppose $R_1(p)=R_1(q)$, there exists $\xi\in S_x\M$ such that $p=\gamma_1(t;x,\xi)$. Write $y=\gamma_1(\rho;x,\xi)$. Then $\gamma_1^\prime(\rho;x,\xi)$ is normal to $\p B(x,\rho)$ at $y$. Notice that
    \begin{equation}
      d_{g_1}(p,y)=\min_{z\in \overline{B_1(x,\rho)}}d_{g_1}(p,z)=\min_{z\in \overline{B_1(x,\rho)}}d_{g_1}(q,z).
    \end{equation}
    Then $q=\gamma_1(t-\rho;y,\gamma_1^\prime(\rho;x,\xi))=p$  by the uniqueness of the closest point on $\p B_1(x,\rho)$. Hence $R_1$ is continuous and one-to-one and thus holomorphic. Analogously, we can conclude that $R_2$ is also a homeomorphism.

    By the definition of $M^T_1(x,3\rho)$, for any $p_1\in M^T_1(x,3\rho)$, there is a normalized vector $\xi\in S_x\M_1$ such that $p_1 = \gamma_1(t;x,\xi)$ with $0<t<\sigma_1^T(x)-3\rho$. By Lemma \ref{lm:cut_func}, we have $p_2=\gamma_2(t;\bPhi(x),\bPhi_\ast\xi)\in M_2^T(x,3\rho)$. According to Lemma \ref{lm:dist_influence}, we have $d_{g_1}(p_1,z)=d_{g_2}(p_2,\bPhi(z))$ for all $z\in B_1(x,\rho)$. That is, $R_1(M_1^T(x,3\rho))=R_2(M_2^T(\bPhi(x),3\rho))$. Leveraging the continuity of $R_1$ and $R_2$, it follows that $R_1(\overline{M_1^T(x,3\rho)})=R_2(\overline{M_2^T(\bPhi(x),3\rho)})$.
  \end{proof}
  To simplify the notation, from now on for $x\in \M^\inter_1$ and $\rho>0$, we denote $R_1(\overline{M_1^T(x,3\rho)})$ by $\Sigma_{x,\rho}$. According to Proposition \ref{prop:homeomorphism}, we know that $R_2(\overline{M_2^T(\bPhi(x),3\rho)})=\Sigma_{x,\rho}$.
  \begin{lemma}
    \label{lm:R_identity}
    Let $x\in \M^\inter_1$, $\rho>0$ and $r_0\in \Sigma_{x,\rho}$. Write $y_1=R_1^{-1}(r_0)$ and $y_2=R_2^{-1}(r_0)$. Suppose there is an isometry $\bPhi: B_1(x,\rho)\to B_2(\bPhi(x),\rho)$ such that
    \begin{equation}
        \bPhi_\ast \mathfrak{B}_{B_1(x,\rho),\bPhi}=\mathfrak{B}_{B_2(\bPhi(x),\rho),\bPhi}.
    \end{equation}
    Let $\xi\in S_x\M_1$ be such that $y_1=\gamma_1(r_0(x);x,\xi)$, then $y_2=\gamma_2(r_0(x);\bPhi(x),\bPhi_\ast\xi)$.
  \end{lemma}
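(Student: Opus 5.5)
The plan is to deduce the statement from Lemma \ref{lm:dist_influence} together with the injectivity of $R_2$ established in Proposition \ref{prop:homeomorphism}. Set $t:=r_0(x)=d_{g_1}(y_1,x)$. Since $y_1\in\overline{M_1^T(x,3\rho)}$, we can write $y_1=\gamma_1(t;x,\xi)$ with $0<t\le\sigma_1^T(x)-3\rho$. This follows from the definition of $M_1^T(x,3\rho)$ if $y_1$ lies in the open set. For a point of the closure, I would take a limit of pairs $(t_j,\xi_j)$, use compactness of $S_x\M_1$, and use continuity of the geodesic flow. Along the minimizing segment $d_{g_1}(\gamma_1(t;x,\xi),x)=t$, so $t$ is indeed $r_0(x)$. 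The case $t=0$, i.e. $y_1=x$, is trivial: then $r_0(x)=0$, and $y_2$ is forced to be $\bPhi(x)$ because $R_2[y_2](\bPhi^{-1}(\bPhi(x)))=0$. I would dispose of this case separately.

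Next, Lemma \ref{lm:cut_func} gives $\sigma_1^T(x)=\sigma_2^T(\bPhi(x))$, so the hypothesis $t\le\sigma_k^T(x)-3\rho$ holds for both $k=1,2$. Define $p_2:=\gamma_2(t;\bPhi(x),\bPhi_\ast\xi)$. Then $p_2\in\overline{M_2^T(\bPhi(x),3\rho)}$, by definition or as a limit of interior points. Lemma \ref{lm:dist_influence}, extended to $z\in\overline{B_1(x,\rho)}$ by the Remark following it, yields
\begin{equation}
d_{g_2}(p_2,\bPhi(z))=d_{g_1}(y_1,z)=r_0(z),\qquad z\in\overline{B_1(x,\rho)}.
\end{equation}
In other words, $R_2[p_2]=r_0=R_2[y_2]$.

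Since $R_2$ is injective on $\overline{M_2^T(\bPhi(x),3\rho)}$ by Proposition \ref{prop:homeomorphism}, we conclude $y_2=p_2=\gamma_2(r_0(x);\bPhi(x),\bPhi_\ast\xi)$, which is the claim.

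The main obstacle I anticipate is the boundary case $t=\sigma_k^T(x)-3\rho$, where $y_1$ lies only in the closure. Lemma \ref{lm:dist_influence} is stated for $t\le\sigma^T_k(x)-3\rho$, so it should still apply directly. If it does not, I would approximate $(t,\xi)$ by $(t_j,\xi)$ with $t_j<t$, apply the argument above to each $t_j$, and pass to the limit. This uses continuity of $\gamma_2$ and of the distance function, together with the fact that $R_2^{-1}$ is continuous because $R_2$ is a homeomorphism.
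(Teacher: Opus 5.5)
Your proof is correct, but it takes a different route from the paper's. You build the candidate $p_2=\gamma_2(r_0(x);\bPhi(x),\bPhi_\ast\xi)$. Lemma \ref{lm:cut_func}, Lemma \ref{lm:dist_influence} and the Remark after it give $R_2[p_2]=r_0=R_2[y_2]$, and injectivity of $R_2$ from Proposition \ref{prop:homeomorphism} then yields $y_2=p_2$. This is legitimate: that proposition precedes the lemma and does not use it. In fact its proof already contains the identity $R_2[p_2]=R_1[y_1]$, so your argument shows the lemma is essentially a corollary of Proposition \ref{prop:homeomorphism}. Your handling of $t=0$ and of the endpoint $t=\sigma_1^T(x)-3\rho$ by continuity is fine. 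You also need not reconstruct $\xi$, since the statement supplies it.

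The paper instead argues directly and locally from the single identity $R_2[y_2]=r_0$ and the fact that $\bPhi$ is an isometry on a small convex ball $\overline{B_1(x,\delta)}$. On that ball the maximum of $r_0$ is $r_0(x)+\delta$, attained at $\gamma_1(-\delta;x,\xi)$, and $\bPhi$ maps this point to $\gamma_2(-\delta;\bPhi(x),\bPhi_\ast\xi)$. If the minimizing geodesic from $\bPhi(x)$ to $y_2$ started in a direction $\zeta\neq\bPhi_\ast\xi$, the broken curve through $\bPhi(x)$ would give a strict triangle inequality. That would force $d_{g_2}(y_2,\gamma_2(-\delta;\bPhi(x),\bPhi_\ast\xi))<r_0(x)+\delta$, a contradiction.

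The paper's route does not reuse Lemma \ref{lm:dist_influence} or the injectivity of $R_2$. It shows that the initial direction is read off from the distance function on an arbitrarily small ball. Its cost is the tacit fact that the geodesic, extended backwards by $\delta$, still minimizes distance to $y_1$, which is a continuity property of the cut time. Your route avoids that step explicitly but depends on heavier earlier results, whose proofs (notably that of Lemma \ref{lm:dist_influence}) rely on the same kind of backward-extension fact.
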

  \begin{proof}
    Choose $\delta<\rho$ such that $B_1(x,\delta)$ and $B_2(\bPhi(x),\delta)$ are geodesically convex neighborhoods of $x$ and $\bPhi(x)$, respectively. Notice that 
    \begin{align}
    \label{eq:lm_6.9_1}
      \max_{z\in \overline{B_1(x,\delta)}}r_0(z)&=r_0(x)+\delta=\max_{z\in \overline{B_1(x,\delta)}}d_{g_1}(y_1,z)=d_{g_1}(y_1,\gamma_1(-\delta;x,\xi))\\
      &=d_{g_2}(y_2,\gamma_2(-\delta;\bPhi(x),\bPhi_\ast\xi))=\max_{z\in \overline{B_1(x,\delta)}}d_{g_2}(y_2,\bPhi(z)).
    \end{align}
    To get a contradiction, we suppose there is $\zeta\in S_{\bPhi(x)}\M_2$ such that $y_2=\gamma_2(r_0(x);\bPhi(x),\zeta)$ and $\zeta\neq \bPhi_\ast \xi$. Thus $d_{g_2}(y_2,\gamma_2(\delta;\bPhi(x),\zeta))=r_0(x)-\delta$. Since the concatenated curve $\gamma_2([-\delta,0];\bPhi(x),\bPhi_\ast\xi)\cup \gamma_2([0,\delta];\bPhi(x),\zeta)$ joins $\gamma_2(-\delta;\bPhi(x),\bPhi_\ast\xi)$ and $\gamma_2(\delta;\bPhi(x),\zeta)$ and is not a geodesic, it holds that
    \begin{equation}
      d_{g_2}(\gamma_2(-\delta;\bPhi(x),\bPhi_\ast\xi),\gamma_2(\delta;\bPhi(x),\zeta))<2\delta.
    \end{equation}
    Therefore,
    \begin{align}
      d_{g_2}(y_2,\gamma_2(-\delta;\bPhi(x),\bPhi_\ast\xi))&\leq d_{g_2}(y_2,\gamma_2(\delta;\bPhi(x),\zeta))\\
      &+d_{g_2}(\gamma_2(\delta;\bPhi(x),\zeta),\gamma_2(-\delta;\bPhi(x),\bPhi_\ast\xi))<r_0(x)+\delta.
    \end{align}
    This is a contradiction with \eqref{eq:lm_6.9_1}.
  \end{proof}
  \begin{proposition}
    \label{prop:diffeomorphism}
    Let $x\in \M^\inter_1$, $\rho>0$ and $r_0\in \Sigma_{x,\rho}$. Suppose there is an isometry $\bPhi: B_1(x,\rho)\to B_2(\bPhi(x),\rho)$ such that
    \begin{equation}
        \bPhi_\ast \mathfrak{B}_{B_1(x,\rho),\bPhi}=\mathfrak{B}_{B_2(\bPhi(x),\rho),\bPhi}.
    \end{equation}
    Then there is a smooth structure on $\Sigma_{x,\rho}$ such that $R_k$ is a diffeomorphism for both $k=1,2$. Furthermore, $\overline{M_1^T(x,3\rho)}$ and $\overline{M_2^T(\bPhi(x),3\rho)}$ are diffeomorphic and $R_2^{-1}\circ R_1|_{B_1(x,\rho)}=\bPhi$.
  \end{proposition}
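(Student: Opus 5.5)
We will give $\Sigma_{x,\rho}$ the smooth structure transported by $R_1$: a chart on $\Sigma_{x,\rho}$ is $\phi\circ R_1^{-1}$ for every chart $\phi$ of $\overline{M_1^T(x,3\rho)}$. By Proposition \ref{prop:homeomorphism}, $R_1$ is a homeomorphism onto $\Sigma_{x,\rho}$. With this structure $R_1$ is a diffeomorphism by construction. The real content is that $R_2$ is also a diffeomorphism for this structure. Equivalently, the homeomorphism $R_2^{-1}\circ R_1:\overline{M_1^T(x,3\rho)}\to\overline{M_2^T(\bPhi(x),3\rho)}$ should be smooth with a smooth inverse.

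To prove this we use the explicit formula supplied by Lemma \ref{lm:R_identity}. Take $y_1=\gamma_1(t;x,\xi)$ with $0\le t<\sigma_1^T(x)-3\rho$. Then $r_0=R_1(y_1)$ satisfies $r_0(x)=d_{g_1}(y_1,x)=t$, because the geodesic minimizes up to $\sigma_1(x)$. Lemma \ref{lm:R_identity} then gives
\begin{equation}
R_2^{-1}\circ R_1\big(\exp^{(1)}_x(t\xi)\big)=\exp^{(2)}_{\bPhi(x)}\big(t\,\bPhi_\ast\xi\big),
\end{equation}
so that
\begin{equation}
R_2^{-1}\circ R_1=\exp^{(2)}_{\bPhi(x)}\circ d\bPhi_x\circ\big(\exp^{(1)}_x\big)^{-1}.
\end{equation}
Here $d\bPhi_x:T_x\M_1\to T_{\bPhi(x)}\M_2$ is a linear isometry. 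The maps $\exp^{(k)}$ restricted to $\{t\xi: |\xi|=1,\ 0\le t<\sigma_k^T-3\rho\}$ are diffeomorphisms onto their images. Indeed, $\sigma_k(x)$ does not exceed the cut distance in every direction, so this set lies strictly inside the injectivity domain. By Lemma \ref{lm:cut_func}, the two radii coincide, $\sigma_1^T(x)=\sigma_2^T(\bPhi(x))$, so $d\bPhi_x$ maps the first domain exactly onto the second. Hence $R_2^{-1}\circ R_1$ is a composition of diffeomorphisms on the open sets $M_k^T$. It follows that $R_2=R_1\circ(R_2^{-1}\circ R_1)^{-1}$ is a diffeomorphism for the transported structure.

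The main obstacle is the closures and boundary points. At radius $\sigma^T-3\rho$ we are still below $\sigma_k(x)$, which is at most the cut distance, so the exponential map stays a diffeomorphism on a slightly larger open ball of radius $\sigma^T-3\rho+\epsilon$. This gives smoothness up to the boundary of the closed sets, as manifolds with boundary or simply as restrictions of smooth maps. One must also check that the region stays away from $\p\M_k$: since $\sigma_k\le\tau_k$, the geodesics do not reach the boundary before time $\sigma_k$. We also need the identity at $y_1=x$ itself, where $\xi$ is undefined; this follows by continuity of both sides.

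Finally, we show $R_2^{-1}\circ R_1|_{B_1(x,\rho)}=\bPhi$. A point $z\in B_1(x,\rho)$ equals $\gamma_1(t;x,\xi)$ with $t<\rho$. Since $\bPhi$ is an isometry, it maps geodesics to geodesics, so $\bPhi(z)=\gamma_2(t;\bPhi(x),\bPhi_\ast\xi)$, which coincides with the formula above. One can also argue directly: $R_2[\bPhi(z)](w)=d_{g_2}(\bPhi(z),\bPhi(w))=d_{g_1}(z,w)=R_1[z](w)$, using Proposition \ref{prop:dist_omega}. Injectivity of $R_2$ then gives the claim.
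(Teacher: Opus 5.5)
Your proposal is correct and is essentially the paper's argument. The paper puts the smooth structure on $\Sigma_{x,\rho}$ through the chart $\Psi_x(r)=r(x)\,\grad_{g_1} r|_x$, which satisfies $\Psi_x\circ R_1=\exp_x^{-1}$ up to a sign, so it is the same structure you transport by $R_1$; it then uses Lemma \ref{lm:R_identity} in the form $R_1\circ\exp_x=R_2\circ\exp_{\bPhi(x)}\circ\bPhi_\ast$ and the fact that $\bPhi$ maps radial geodesics to radial geodesics, exactly as you do. Your extra care with the closures, the point $x$ and the boundary fills in details the paper leaves implicit. Rely on the geodesic argument for the last claim, though: your alternative ``direct'' argument invokes Proposition \ref{prop:dist_omega}, whose $\Lambda$-hypothesis is not part of this statement.
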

  \begin{proof}
    For any $p\in \overline{M_1^T(x,3\rho)}$, we denote the function $R_1(p)\in C(\overline{B_1(x,\rho)})$ by $r_p$.
    Since there is no cut locus of $x$ in $\overline{M_1^T(x,3\rho)}$, $r_p$ is smooth at $x$. We define the map
    \begin{equation}
      \Psi_x(r_p):= r_p(x)\grad_{g_1} r_p|_x,\ p\in \overline{M_1^T(x,3\rho)}.
    \end{equation}
    We claim that 
    \begin{equation}
      \label{eq:diff_claim}
      \Psi_x\circ R_1 = \exp_x^{-1},
    \end{equation}
    where $\exp_x$ is the exponential map at $x$. Indeed, let $p\in \overline{M_1^T(x,3\rho)}$ and $\xi=-\grad_{g_1}d_{g_1}(p,\cdot)|_x$, then
    \begin{equation}
      \gamma_1(d_{g_1}(p,x);x,\xi)=\exp_x(d_{g_1}(p,x)\xi)=p.
    \end{equation}
    Define a smooth structure on $R_1(\overline{M_1^T(x,3\rho)})$ by requiring that $\Psi_x$ is smooth. In light of \eqref{eq:diff_claim}, $R_1$ is a diffeomorphism.
    
    According to Lemma \ref{lm:R_identity}, there holds
    \begin{equation}
      \label{eq:R_k_relation}
      \Psi_x^{-1}=R_1\circ \exp_x = R_2\circ \exp_{\bPhi(x)}\circ \bPhi_\ast.
    \end{equation}
    Hence, $R_2$ is a diffeomorphism with the smooth structure on $\Sigma_{x,\rho}$ such that $\Psi_x$ is smooth. Since
    \begin{equation}
      \overline{M_2^T(\bPhi(x),3\rho)}=R_2^{-1}\circ R_1 (\overline{M_1^T(x,3\rho)}),
    \end{equation}
    $\overline{M_1^T(x,3\rho)}$ and $\overline{M_2^T(\bPhi(x),3\rho)}$ are diffeomorphic. Moreover, since $\bPhi$ is an isometry associated with $B_1(x,\rho)$, for any $0\leq s<\rho$ and $\xi\in S_x\M_1$, there holds 
    \begin{equation}
        \bPhi(\gamma_1(s;x,\xi))=\gamma_2(s;\bPhi(x),\bPhi_\ast \xi).
    \end{equation}
    Then in light of \eqref{eq:R_k_relation}, we have
    \begin{align}
        R_1(\gamma_1(s;x,\xi))&=R_1\circ \exp_x (s\xi)=R_2\circ\exp_{\bPhi(x)}(s \bPhi_\ast \xi)=\\
        &=R_2(\gamma_2(s;\bPhi(x),\bPhi_\ast \xi))=R_2(\bPhi(\gamma_1(s;x,\xi))).
    \end{align}
    Therefore, we have $R_2^{-1}\circ R_1|_{B_1(x,\rho)}=\bPhi$.
  \end{proof}

  \begin{proof}[Proof of Proposition \ref{prop:isometry}]
    Define the metric tensor $\widetilde{g_k}:=(R^{-1}_k)^* g_k$ on $\Sigma_{x,\rho}$ for $k=1,2$. Then 
    \begin{equation}
        R_1:(\overline{M_1^T(x,3\rho)},g_1)\to (\Sigma_{x,\rho},\widetilde{g_1})
    \end{equation}
    and 
    \begin{equation}
        R_2:(\overline{M_2^T(\bPhi(x),3\rho)},g_2)\to (\Sigma_{x,\rho},\widetilde{g_2})
    \end{equation}
    are isometries.
    
    Let $r_0\in \Sigma_{x,\rho}^\inter$, $y_1=R_1^{-1}(r_0)$ and $y_2=R_2^{-1}(r_0)$. Since $x$ is not in the cut locus of $y_1$ and the cut locus of $y_1$ is a closed set, we can find a small enough $\eta>0$ such that there is no cut locus of $y_1$ in $B(x,\eta)$ and thus $r_0$ is smooth in $B(x,\eta)$.\par
    Let $\xi\in S_x\M_1$ such that $y_1=\gamma_1(r_0(x);x,\xi)$. Then we can find a small enough $\delta>0$ such that $B_1(\gamma_1(-\delta;x,\xi),\delta)\subset B_1(x,\eta)$. Observe that 
    \begin{equation}
      d_{g_1}(x,y_1)=\min_{z\in \p B_1(\gamma_1(-\delta;x,\xi),\delta)}d_{g_1}(z,y_1).
    \end{equation}
    Define
    \begin{equation}
      \mathcal{W}:=\{\grad_{g_1}d_{g_1}(z,\cdot)|_{y_1}\mid z\in B_1(x,\eta)\}.
    \end{equation}
    According to \cite[Lemma 2.15]{KKL}, there is an open subset $U$ on $S_{y_1}\M_1$ such that
    \begin{equation}
      U\subset \{\grad_{g_1}d_{g_1}(z,\cdot)|_{y_1}\mid z\in \p B_1(x,\delta)\}\subset \mathcal{W}\subset S_{y_1}\M_1.
    \end{equation}
    For any $z\in B_1(x,\rho)$, we define functions on $\Sigma_{x,\rho}$
    \begin{equation}
      E_{1,z}(R_1(y))=R_1[y](z)=d_{g_1}(z,y),\ y\in \overline{M_1^T(x,3\rho)},
    \end{equation}
    and 
    \begin{equation}
      E_{2,z}(R_2(y))=R_2[y](\bPhi(z))=d_{g_2}(\bPhi(z),y),\ y\in \overline{M_2^T(\bPhi(x),3\rho)}.
    \end{equation}
    Let $p_1\in \overline{M_1^T(x,3\rho)}$, we denote $R_1(p_1)$ by $\tau$ and $R^{-1}_2(\tau)$ by $p_2$. Let $\zeta\in S_x\M$ such that $p_1=\gamma_1(\tau(x);x,\zeta)$. By Lemma \ref{lm:R_identity}, we have $p_2=\gamma_2(\tau(x);\bPhi(x),\bPhi_\ast\zeta)$. According to Lemma \ref{lm:dist_influence}, for any $z\in B_1(x,\rho)$, there holds
    \begin{align}
      &E_{1,z}(\tau)=E_{1,z}(R_1(p_1))=d_{g_1}(z,p_1)=d_{g_1}(z,\gamma_1(\tau(x);x,\zeta))\\
      &=d_{g_2}(\bPhi(z),\gamma_2(\tau(x);\bPhi(x),\bPhi_\ast\zeta))=d_{g_2}(\bPhi(z),p_2)=E_{2,z}(R_2(p_2))=E_{2,z}(\tau).
    \end{align}
    Hence $E_{1,z}=E_{2,z}$ as a function on $\Sigma_{x,\rho}$. Notice that for any $z\in B_1(x,\eta)$, the function $d_{g_1}(z,\cdot)$ is smooth in a neighborhood of $y_1$. Equip $\Sigma_{x,\rho}$ with the smooth structure in Proposition \ref{prop:diffeomorphism}. Then $E_{1,z}$ is smooth in a neighborhood of $R_1(y_1)=r_0$. Moreover, $E_{2,z}$ is smooth at $r_0$ and we have the co-vector $dE_{1,z}|_{r_0}=dE_{2,z}|_{r_0}$.
    We write
    \begin{equation}
      \mathcal{W}^*:=\{d_1 E_{1,z}|_{r_0}\mid z\in B_1(x,\eta)\}.
    \end{equation}
    
    Notice that for any $z\in B_1(x,\eta)$, there holds
    \begin{equation}
        \abs{dE_{1,z}|_{r_0}}_{\widetilde{g_1}} = \abs{d\, d_{g_1}(z,y)|_{y_1}}_{g_1}=1,
    \end{equation}
    and 
    \begin{equation}
        \abs{dE_{2,z}|_{r_0}}_{\widetilde{g_2}} = \abs{d\, d_{g_2}(\bPhi(z),y)|_{y_2}}_{g_2}=1.
    \end{equation}
    Since $\mathcal{W}^\ast\subset S^\ast_{r_0}\Sigma_{x,\rho}$ is open, the set 
    \begin{equation}
      \mathbb{R}_+ \mathcal{W}^* : = \{c\ dE_{1,z}|_{r_0}\mid c\in \mathbb{R}_+,\ z\in B_1(x,\eta)\}.
    \end{equation}
    contains an open cone in $T_{r_0}^*\Sigma_{x,\rho}$. For any $u\in \mathbb{R}_+ \mathcal{W}^*$, there exist $c_u\in \mathbb{R}_+$ and $z_u\in B_1(x,\eta)$ such that 
    \begin{equation}
        u = c_u d E_{1,z_u} = c_u d E_{2,z_u}.
    \end{equation}
    Moreover, there holds
    \begin{equation}
        (u,u)_{\widetilde{g_1}}=\abs{u}_{\widetilde{g_1}}^2 = c_u^2 = \abs{u}_{\widetilde{g_2}}^2=(u,u)_{\widetilde{g_2}}.
    \end{equation}
    Let $v\in \mathbb{R}_+ \mathcal{W}^*$. For any $\xi,\zeta\in T_{r_0}^*\Sigma_{x,\rho}$ and small enough $\varepsilon,\delta>0$, we have $v+\varepsilon\xi+\delta\zeta\in \mathbb{R}_+\mathcal{W}^\ast$. Therefore,
    \begin{align}
        (\xi,\zeta)_{\widetilde{g_1}} & = \lim_{\varepsilon\to 0}\lim_{\delta\to 0}\frac{1}{2\varepsilon\cdot \delta}\left(\abs{v+\varepsilon \xi +\delta \zeta}_{\widetilde{g_1}}^2-\abs{v+\varepsilon \xi}_{\widetilde{g_1}}^2-\abs{v +\delta \zeta}_{\widetilde{g_1}}^2+ \abs{v}_{\widetilde{g_1}}^2 \right)\\
        &= \lim_{\varepsilon\to 0}\lim_{\delta\to 0}\frac{1}{2\varepsilon\cdot \delta}\left(\abs{v+\varepsilon \xi +\delta \zeta}_{\widetilde{g_2}}^2-\abs{v+\varepsilon \xi}_{\widetilde{g_2}}^2-\abs{v +\delta \zeta}_{\widetilde{g_2}}^2+ \abs{v}_{\widetilde{g_2}}^2 \right)\\
        &=(\xi,\zeta)_{\widetilde{g_2}}
    \end{align}
    Since $\xi,\zeta\in T_{r_0}^*\Sigma_{x,\rho}$ are arbitrary, we have $\widetilde{g_1}(r_0)=\widetilde{g_2}(r_0)$.

  \end{proof}
  \subsection{Reconstruction of metric on the entire manifold}
  In this subsection, we will complete the proof of Theorem \ref{thm:main}. To this end, we prepare several auxiliary lemmas.
  
    For $k=1,2$ and two open sets $U,V\subset \M_k$, we define the map 
  \begin{equation}
    L_{(k),U,V} \bf = \bu_k^{\bf}|_{(0,T)\times V},\ \bf\in C_0^\infty((0,\infty);C_0^\infty\Omega^1(U)),
  \end{equation}
  where $\bu_k^{\bf}$ is the solution of \eqref{eq:problem_hypo} on $(\M_k,g_k)$.
   \begin{lemma}
    \label{lm:extend}
    Let $U_1,U_2\in \M_1^\inter$ be connected open sets with isometries 
    \begin{equation}
        \bPhi:U_1\to\bPhi(U_1),\ \bPsi: U_2\to \bPsi(U_2).
    \end{equation}
    Suppose $\Lambda_{1,U_1}\bPhi^\ast=\bPhi^\ast \Lambda_{2,\bPhi(U_1)}$, and $U_1\cap U_2\neq \emptyset$, $\bPhi|_{U_1\cap U_2}=\bPsi|_{U_1\cap U_2}$. Then there is an isometry $\tbPhi$ on $U_1\cup U_2$ such that 
    \begin{equation}
      \Lambda_{1,U_1\cup U_2}\tbPhi^\ast=\tbPhi^\ast \Lambda_{2,\tbPhi(U_1\cup U_2)},
    \end{equation}
    and 
    \begin{equation}
        \tbPhi|_{U_1}=\bPhi,\ \tbPhi|_{U_2}=\bPsi.
    \end{equation}
  \end{lemma}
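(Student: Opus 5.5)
The plan is to transfer the hypothesis to the level of local spectral data, glue there, and then return to $\Lambda$ through the expansion \eqref{eq:expansion_wave}. I read the statement as also assuming the companion identity $\Lambda_{1,U_2}\bPsi^\ast=\bPsi^\ast\Lambda_{2,\bPsi(U_2)}$; without it nothing is known about $\M_2$ near $\bPsi(U_2)$. First I define $\tbPhi$ on $U_1\cup U_2$ by $\tbPhi=\bPhi$ on $U_1$ and $\tbPhi=\bPsi$ on $U_2$. This is well defined because $\bPhi|_{U_1\cap U_2}=\bPsi|_{U_1\cap U_2}$. It is smooth and is a local isometry, because it is one on each open piece. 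Injectivity is addressed at the end.

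The key step is an eigenspace correspondence. Let $E_{(i),j}$ denote the $j$-th eigenspace on $\M_i$. By the argument of Proposition \ref{prop:spectral_equivalence}, applied to the hyperbolic data through \eqref{eq:expansion_wave} (time-analyticity in $t$ and separation of the frequencies $\sqrt{\lambda_j}$), the identity on $U_1$ gives $\lambda_{(1),j}=\lambda_{(2),j}$ and $P_{(1),j}\bPhi^\ast=\bPhi^\ast P_{(2),j}$ on $U_1$. The same holds for $\bPsi$ on $U_2$.

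Consider the restriction maps $E_{(1),j}\to L^2\Omega^1(U_1)$ and $E_{(2),j}\to L^2\Omega^1(\bPhi(U_1))$. By Lemma \ref{lm:1_1} both are injective. Since $\bPhi$ is an isometry, equality of the two projections means the pulled-back restricted Gram structures agree. Hence there is a unique linear isometry $A_j:E_{(1),j}\to E_{(2),j}$ with $\bPhi^\ast\big((A_j\bphi)|_{\bPhi(U_1)}\big)=\bphi|_{U_1}$. Concretely, $P_{(1),j}$ restricted to $U_1$ has range $E_{(1),j}|_{U_1}$, and $\bPhi^\ast$ maps $E_{(2),j}|_{\bPhi(U_1)}$ onto it. Likewise $\bPsi$ gives an isometry $B_j$ with $\bPsi^\ast\big((B_j\bphi)|_{\bPsi(U_2)}\big)=\bphi|_{U_2}$.

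Next I show $A_j=B_j$. On $U_1\cap U_2$ we have $\bPhi=\bPsi$, so $A_j\bphi-B_j\bphi\in E_{(2),j}$ vanishes on the nonempty open set $\bPhi(U_1\cap U_2)$. Lemma \ref{lm:1_1} then forces $A_j\bphi=B_j\bphi$. Consequently $\tbPhi^\ast\big((A_j\bphi)|_{\tbPhi(U_1\cup U_2)}\big)=\bphi|_{U_1\cup U_2}$. Taking orthonormal bases $\{\bphi_{(1),jk}\}$ and $\{A_j\bphi_{(1),jk}\}$, the local projections on $U_1\cup U_2$ satisfy $P_{(1),j}\tbPhi^\ast=\tbPhi^\ast P_{(2),j}$. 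Inserting this into \eqref{eq:expansion_wave}, exactly as in the proof of Proposition \ref{p1}, yields $\Lambda_{1,U_1\cup U_2}\tbPhi^\ast=\tbPhi^\ast\Lambda_{2,\tbPhi(U_1\cup U_2)}$.

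I expect the main obstacle to be injectivity of $\tbPhi$, i.e.\ excluding $\bPhi(a)=\bPsi(b)$ with $a\in U_1\setminus U_2$ and $b\in U_2\setminus U_1$. The approach I would try first uses the intertwining just obtained. If $\bPhi(a)=\bPsi(b)$, then for every $\bphi\in E_{(1),j}$ and every $j$, the value $\bphi(a)$ equals $(d\bPhi_a)^{T}(A_j\bphi)(\bPhi(a))$. Similarly, $\bphi(b)$ equals $(d\bPsi_b)^{T}(A_j\bphi)(\bPhi(a))$. So a fixed linear map $(d\bPsi_b)^T(d\bPhi_a)^{-T}$ would carry $\bphi(a)$ to $\bphi(b)$ for every $\bphi$ in the span of all eigenfunctions. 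That span is dense in $H$, so the same relation holds for every divergence-free $\bv\in V$. Choosing $\bv=d^\ast\bm{\chi}$ with $\bm{\chi}\in C_0^\infty\Omega^2$ supported in a small ball around $a$ that avoids $b$ (see the remark after the definition of $H$) makes $\bv(a)\neq0$ while $\bv(b)=0$, a contradiction. The delicate part is justifying the passage from the $L^2$-dense span to pointwise values. I would handle it by working with the smooth solutions $\bu^\bf(T,\cdot)$ of \eqref{eq:problem_hypo}: their local values are determined through the data, and their span approximates $\bv$ in a stronger norm by Theorem \ref{thm:controllability} combined with interior elliptic/hyperbolic regularity.
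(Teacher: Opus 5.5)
You do not prove the lemma as stated. You add the hypothesis $\Lambda_{1,U_2}\bPsi^\ast=\bPsi^\ast\Lambda_{2,\bPsi(U_2)}$, and your argument genuinely depends on it, because it is the only source of $B_j$. Your reason for adding it, that otherwise nothing is known about $\M_2$ near $\bPsi(U_2)$, is mistaken. That $\bPsi$ is an isometry is exactly the local information needed. Pull-back by a local isometry commutes with $d$, $d^\ast$ and $\Delta_H$, so pulled-back solutions of the Stokes equations on $\bPsi(U_2)$ solve the same equations on $U_2$. What is known on $U_1$ can then be carried across the nonempty overlap by unique continuation.

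This is the paper's proof. The difference $\bu_1^{\tbPhi^\ast\bf}-\tbPhi^\ast\bu_2^{\bf}$ solves the homogeneous hyperbolic Stokes system on $U_1\cup U_2$ and vanishes on $\RR\times U_1$, so it vanishes on $U_1\cup U_2$ by Theorem \ref{thm:UC}. Taking adjoints and reversing time moves the source set from $U_1$ to $U_1\cup U_2$, and a second unique continuation step finishes. No data on $U_2$ is ever used.

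The distinction matters. In Lemma \ref{lm:operator_continuation} the lemma is applied with $U_1=B_1(x,\rho)$, $U_2=M_1^T(x,3\rho)$ and $\bPsi=R_2^{-1}\circ R_1$, which is only known to be an isometry (Proposition \ref{prop:isometry}). The identity for $\Lambda$ on $M_1^T(x,3\rho)$ is what is being proved there, so your version would be circular at that point.

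Your framework repairs easily. Drop $B_j$, and for $\bphi\in E_{(1),j}$ compare $\bPsi^\ast\big((A_j\bphi)|_{\bPsi(U_2)}\big)$ with $\bphi|_{U_2}$. Both solve $-\Delta_H\bv+dq=\lambda_j\bv$, $d^\ast\bv=0$ on $U_2$ (with pulled-back pressure), and they agree on $U_1\cap U_2$. Localize the argument of Lemma \ref{lm:1_1} to the connected set $U_2$. The pressure difference is harmonic with vanishing differential on the overlap, hence constant on $U_2$. Unique continuation for $\Delta_H$ then shows the two forms agree on all of $U_2$.

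With this change you have a genuinely different proof from the paper's. It needs only elliptic unique continuation for eigenfunctions, rather than hyperbolic unique continuation plus the adjoint/time-reversal trick. It also gives you a handle on injectivity of $\tbPhi$, which the paper takes for granted.

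Two smaller points still need fixing.
\begin{enumerate}
\item Recovering $\lambda_j$ and $P_j$ on $U_1$ from $\Lambda_{1,U_1}$ is not the argument of Proposition \ref{prop:spectral_equivalence}. The kernel $\sin(\sqrt{\lambda_j}s)/\sqrt{\lambda_j}$ oscillates rather than decays, so the continuation to $\Re\tau\to-\infty$ used there has no analogue. Instead, Laplace-transform the data, which are available for all $t>0$, to obtain $\sum_j(\lambda_j+z^2)^{-1}P_j$, and read off its poles and residues.
\item For the pointwise step in your injectivity argument, note that $\bv=d^\ast\bm{\chi}$ satisfies $-\Delta_H\bv=d^\ast dd^\ast\bm{\chi}$, which is again compactly supported and of the same form. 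So $\bv$ lies in the domain of every power of the Stokes operator, and its eigenfunction expansion converges in every $H^{2k}$, hence uniformly. No controllability argument is needed.
\end{enumerate}
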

  \begin{proof}
    We define 
    \begin{equation}
      \tbPhi(x)=\begin{cases}
        \bPhi(x),\text{ if }x\in U_1\\
        \bPsi(x),\text{ if }x\in U_2
      \end{cases}.
    \end{equation}
    Since $\bPhi|_{U_1\cap U_2}=\bPsi|_{U_1\cap U_2}$, $\tbPhi$ is well-defined. And it is clear that 
    \begin{equation}
      \tbPhi: U_1\cup U_2\to \bPhi(U_1)\cup \bPsi(U_2)
    \end{equation}
    is an isometry.

    Notice that $\tbPhi|_{U_1}=\bPhi$. In particular, for any $\bf\in C_0^\infty((0,\infty);C_0^\infty\Omega^1(\tbPhi(U_1)))$, we have
    $\Lambda_{1,U_1}\tbPhi^\ast\bf=\tbPhi^\ast \Lambda_{2,\tbPhi(U_1)}\bf$. Write 
    \begin{equation}
      \widetilde{\bu}^\bf=\bu_1^{\tbPhi^\ast\bf}-\tbPhi^\ast\bu_2^\bf,
    \end{equation}
    where $(\bu^\bf_k,p_k^\bf)$ are the solutions of the system \eqref{eq:problem_hypo} in $(\M_k,g_k)$ for $k=1,2$, respectively. Then $\Lambda_{1,U_1}\tbPhi^\ast=\tbPhi^\ast\Lambda_{2,\tbPhi(U_1)}$ reads that $\widetilde{\bu}^\bf=0$ in $\RR\times U_1$. Furthermore, since $\tbPhi$ is an isometry, there holds
    \begin{equation}
      \begin{cases}
      (\p_t^2-\Delta_H)\widetilde{\bu}^\bf + d (p_1^{\tbPhi^\ast\bf}-\tbPhi^\ast p_2^\bf)=0,\ \text{in } \RR\times (U_1\cup U_2),\\
      d^* \widetilde{\bu}^\bf=0,\ \text{in } [0,T]\times U_1\cup U_2.
      \end{cases}
    \end{equation}
    Since $U_1\cap U_2\neq \emptyset$ and $U_2$ is connected, $U_1\cup U_2\subset M(U_1,r)$ large $r$. According to the unique continuation (Theorem \ref{thm:UC}), $\widetilde{\bu}^\bf=0$ in $U_1\cup U_2$. Thus we have
    \begin{equation}
      L_{(1),U_1,U_1\cup U_2}\tbPhi^\ast=\tbPhi^\ast L_{(2),\tbPhi(U_1),\tbPhi(U_1\cup U_2)}.
    \end{equation}
    Taking the adjoint of $L_{(1),U_1,U_1\cup U_2}$ and $L_{(2),\tbPhi(U_1),\tbPhi(U_1\cup U_2)}$ and conjugating them with the operator reversing the time, we have
    \begin{equation}
      \label{eq:reverse_operator}
      L_{(1),U_1\cup U_2,U_1}\tbPhi^\ast=\tbPhi^\ast L_{(2),\tbPhi(U_1\cup U_2),\tbPhi(U_1)}.
    \end{equation}
    For any $\bh\in C_0^\infty((0,\infty);C_0^\infty\Omega^1(\tbPhi(U_1\cup U_2)))$, again we write $\widetilde{\bu}^\bh=\bu_1^{\tbPhi^\ast\bh}-\tbPhi^\ast\bu_2^\bh$. Then \eqref{eq:reverse_operator} gives that $\widetilde{\bu}^\bh=0$ in $U_1\times \RR$, and there holds 
    \begin{equation}
      \begin{cases}
      (\p_t^2-\Delta_H)\widetilde{\bu}^\bh + d (p_1^{\tbPhi^\ast\bh}-\tbPhi^\ast p_2^\bh)=0,\ \text{in } \RR\times (U_1\cup U_2),\\
      d^* \widetilde{\bu}^\bh=0,\ \text{in } \RR\times U_1\cup U_2.
      \end{cases}
    \end{equation}
    Using the unique continuation again, we can conclude that $\widetilde{\bu}^\bh=0$ in $\RR\times U_1\cup U_2$. Hence $ \Lambda_{1,U_1\cup U_2}\tbPhi^\ast=\tbPhi^\ast\Lambda_{2,\tbPhi(U_1\cup U_2)}$.
  \end{proof}
    Lemma \ref{lm:extend} and Proposition \ref{prop:isometry} imply the following result.
  \begin{lemma}
    \label{lm:operator_continuation}
    Let $x\in \M_1^\inter$, $T>0$, and $\rho>0$ such that there is an isometry
    \begin{equation}
      \bPhi:B_1(x,\rho)\to B_2(\bPhi(x),\rho)
    \end{equation}
    and $\Lambda_{1,B_1(x,\rho)}\bPhi^\ast=\bPhi^\ast\Lambda_{2,B_2(\bPhi(x),\rho)}$. Then there exists an extended isometry of $\bPhi$, denoted by $\tbPhi$, such that
    \begin{equation}
      \tbPhi:\overline{M_1^T(x,3\rho)}\to \overline{M_2^T(\bPhi(x),3\rho)},
    \end{equation}
    and 
    \begin{equation}
      \Lambda_{1,M_1^T(x,3\rho)}\tbPhi^\ast=\tbPhi^\ast\Lambda_{2,M_2^T(\bPhi(x),3\rho)}.
    \end{equation}
  \end{lemma}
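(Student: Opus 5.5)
The plan is to chain the earlier results in three moves. First, pass from operator equality on $B_1(x,\rho)$ to equality of the ball relations. Then use Proposition \ref{prop:isometry} to build the extended isometry. Finally, propagate the operator identity to the larger set with Lemma \ref{lm:extend}.

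\emph{Step 1 (balls).} Since $\bPhi:B_1(x,\rho)\to B_2(\bPhi(x),\rho)$ is an isometry and $\Lambda_{1,B_1(x,\rho)}\bPhi^\ast=\bPhi^\ast\Lambda_{2,B_2(\bPhi(x),\rho)}$, Proposition \ref{lm:6} applies with $\omega_1=B_1(x,\rho)$ and $\omega_2=B_2(\bPhi(x),\rho)$. Its proof uses only Lemma \ref{lm:norm_equal}, Theorem \ref{thm:fsop} and Theorem \ref{thm:controllability}. This gives
\[
\bPhi_\ast \mathfrak{B}_{B_1(x,\rho),\bPhi}=\mathfrak{B}_{B_2(\bPhi(x),\rho),\bPhi}.
\]
If needed, I would shrink $\rho$ so that the connectedness hypotheses on the complements of small balls hold; the remark on shrinking the observation set makes this harmless.

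\emph{Step 2 (the isometry).} Now Proposition \ref{prop:isometry} gives the isometry
\[
\tbPhi:=R_2^{-1}\circ R_1:\overline{M_1^T(x,3\rho)}\to \overline{M_2^T(\bPhi(x),3\rho)}.
\]
Proposition \ref{prop:diffeomorphism} gives $\tbPhi|_{B_1(x,\rho)}=\bPhi$, so $\tbPhi$ genuinely extends $\bPhi$.

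\emph{Step 3 (operator identity).} I would apply Lemma \ref{lm:extend} with $U_1=B_1(x,\rho)$, $U_2=M_1^T(x,3\rho)$, $\bPhi$ as given and $\bPsi=\tbPhi|_{U_2}$. The overlap $U_1\cap U_2$ is nonempty, since it contains the points $\gamma_1(t;x,\xi)$ for small $t$. The two maps agree there by Step 2, and the operator identity is assumed on $U_1$. Both sets are connected: $M_1^T(x,3\rho)$ is the image of a star-shaped set under $\exp_x$. The lemma then yields
\[
\Lambda_{1,U_1\cup U_2}\tbPhi^\ast=\tbPhi^\ast\Lambda_{2,\tbPhi(U_1\cup U_2)}.
\]
Restricting sources and observations to $M_1^T(x,3\rho)$ and its image gives the claimed identity, because the restriction of $\Lambda$ to a subset of the source/observation region is determined by the larger one.

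\emph{Main obstacle.} The delicate point is the hypothesis check in Step 3, namely that Lemma \ref{lm:extend} really applies. It requires the isometry on the larger set to intertwine the equations, and this needs smoothness of $\tbPhi$ on the open set $M_1^T(x,3\rho)$ rather than only on its closure. Smoothness comes from the smooth structure in Proposition \ref{prop:diffeomorphism}, and isometries are smooth by \cite[Theorem 5.6.15]{RG_Petersen}. One must also check that the unique continuation step inside Lemma \ref{lm:extend} reaches all of $U_2$. This holds because $U_2$ is connected and the operator identity allows arbitrarily long time windows.
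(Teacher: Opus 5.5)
Your proposal is correct and follows the paper's route. The paper only says that Lemma \ref{lm:extend} and Proposition \ref{prop:isometry} imply the result; your Steps 1--3 spell out the implicit uses of Proposition \ref{lm:6} (to get the ball-relation hypothesis) and of Proposition \ref{prop:diffeomorphism} (to get $\tbPhi=\bPhi$ on the overlap). One minor imprecision: $M_1^T(x,3\rho)$ is the image under $\exp_x$ of a punctured ball, not a star-shaped set, but it is still connected since $n\geq 2$.
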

  For $k=1,2$ and any $x,y\in \overline{\M_k}$, there holds
  \begin{equation}
      T_k(\{y\})-d_{g_k}(x,y) \leq T_k(\{x\})\leq T_k(\{y\})+d_{g_k}(x,y).
  \end{equation}
  Thus $T_1$ and $T_2$ are continuous positive functions on $\overline{\M_1}$ and $\overline{\M_2}$, respectively. And we write
  \begin{equation}
      \cT:=\min_{x\in \overline{\M_1},y\in \overline{\M_2}}\{T_1(\{x\}),T_2(\{y\})\}>0.
  \end{equation}
  Let $\cV\subset U\subset \M_1$, we define the $k$-th $\cT$-extension of $\cV$ in $U$ as
  \begin{equation}
      E^{(k)}_\cT(\cV,U):=M_1(E^{(k-1)}(\cV),\cT)\cap U,
  \end{equation}
  where 
  \begin{equation}
       E^{(0)}_\cT(\cV,U):=\cV.
  \end{equation}
  And we say that $U$ is $\cT$-exhaustive by $\cV$ if there exists $N>0$, such that $E^{(N)}_\cT(\cV,U)=U$. We note that if $U\subset \M_1$ is connected, then $U$ is $\cT$-exhaustive by any nonempty $\cV\subset U$.
  
  Next we show the local reconstruction can be glued together.\par
  \begin{lemma}
    \label{lm:glue}
    Let $U_1,U_2\subset \M_1^\inter$ be open sets with isometries 
    \begin{equation}
        \bPhi:U_1\to\bPhi(U_1),\ \bPsi: U_2\to \bPsi(U_2).
    \end{equation}
    Suppose $\Lambda_{1,U_1}\bPhi^\ast=\bPhi^\ast \Lambda_{2,\bPhi(U_1)}$, $\Lambda_{1,U_2}\bPsi^\ast=\bPsi^\ast \Lambda_{2,\bPsi(U_2)}$ and there exists an open set $\cV\subset U_1\cap U_2$ such that $\bPhi|_\cV=\bPsi|_\cV$ and $U_1\cap U_2$ is $\cT$-exhaustive by $\cV$. Then there holds
    \begin{equation}
    \label{eq:lm_6.14}
      \bPhi|_{U_1\cap U_2}=\bPsi|_{U_1\cap U_2}.
    \end{equation}
  \end{lemma}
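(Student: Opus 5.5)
We argue by induction on the extension index: we show that $\bPhi$ and $\bPsi$ agree on $E^{(k)}_\cT(\cV,U_1\cap U_2)$ for every $k$. Since $U_1\cap U_2$ is $\cT$-exhaustive by $\cV$, the case $k=N$ gives \eqref{eq:lm_6.14}. The base case $k=0$ is the hypothesis $\bPhi|_\cV=\bPsi|_\cV$.

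For the inductive step, write $\cV_k:=E^{(k)}_\cT(\cV,U_1\cap U_2)$ and assume $\bPhi|_{\cV_k}=\bPsi|_{\cV_k}$. Fix $y\in\cV_{k+1}$, so $y\in U_1\cap U_2$ and $d_{g_1}(y,\cV_k)<\cT$.

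\textbf{Step 1: distances to $\cV_k$ are preserved.} We want
\[
d_{g_1}(y,z)=d_{g_2}(\bPhi(y),\bPhi(z))=d_{g_2}(\bPsi(y),\bPhi(z))\quad\text{for all } z\in\cV_k \text{ with } d_{g_1}(y,z)<\cT .
\]
For this we rerun the travel-time argument of Proposition \ref{prop:dist_omega}, now with sources in small balls $B_1(z,\varepsilon)\subset\cV_k$ and observations in $B_1(y,\varepsilon)\subset U_1$. The first arrival time $t_{1,\varepsilon}$ is then read off from $\Lambda_{1,U_1}$, using finite speed of propagation (Theorem \ref{thm:fsop}) and approximate controllability (Theorem \ref{thm:controllability}). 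Both apply because $d_{g_1}(y,z)<\cT\le T_{\bPhi}$. The identity $\Lambda_{1,U_1}\bPhi^\ast=\bPhi^\ast\Lambda_{2,\bPhi(U_1)}$ identifies $t_{1,\varepsilon}$ with the corresponding arrival time in $\M_2$ from $\bPhi(B_1(z,\varepsilon))$ to $\bPhi(B_1(y,\varepsilon))$. Letting $\varepsilon\to0$ gives the first equality. The same argument with $U_2$ and $\bPsi$ gives $d_{g_1}(y,z)=d_{g_2}(\bPsi(y),\bPsi(z))$, and $\bPsi(z)=\bPhi(z)$ by the inductive hypothesis. Hence $\bPhi(y)$ and $\bPsi(y)$ have the same distance to every point of the open set $\bPhi(\cV_k)\cap\{d_{g_2}(\cdot,\bPhi(y))<\cT\}$, which is nonempty.

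\textbf{Step 2: the distance functions determine the point.} Choose $z_0\in\cV_k$ with $d_{g_1}(y,z_0)<\cT$, and choose $z$ on a minimizing geodesic from $y$ toward $z_0$ close to $z_0$, so that $y$ is not in the cut locus of $z$. In that case $\bPhi(y)$ is not in the cut locus of $\bPhi(z)$ either. We then compare the gradients at $\bPhi(z)$ of the two functions $d_{g_2}(\bPhi(y),\cdot)$ and $d_{g_2}(\bPsi(y),\cdot)$. These functions coincide on a neighborhood of $\bPhi(z)$, so their gradients agree. Both points therefore lie on the same geodesic from $\bPhi(z)$, at the same distance, and so $\bPhi(y)=\exp_{\bPhi(z)}(\ldots)=\bPsi(y)$. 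This is the argument of Lemma \ref{lm:R_identity} and Proposition \ref{prop:diffeomorphism}. Since $y\in\cV_{k+1}$ was arbitrary, this closes the induction.

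\textbf{Main obstacle.} The hardest step is Step 1. There we must localize the travel-time argument of Proposition \ref{prop:dist_omega} to sources and receivers in different subsets of $U_1$. This requires checking that the connectedness hypotheses on complements of small balls, needed for Theorem \ref{thm:fsop}, still hold. It also requires that the threshold $\cT$, defined as the minimum over both manifolds, lets the controllability time constraint $r<T(\cdot)$ hold uniformly. A further subtlety in Step 2 is ensuring smoothness of $d_{g_2}(\bPsi(y),\cdot)$ near $\bPhi(z)$, since a priori we only know that $\bPsi(y)$ is at distance less than $\cT$ from $\bPhi(z)$. I would handle this by using smoothness on the $\bPhi$ side together with the equality of the two functions on an open set. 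That equality forces the other function to be smooth there with gradient of unit norm, and the sphere argument of Lemma \ref{lm:R_identity} then applies.
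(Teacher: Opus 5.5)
\textbf{Step~1 is sound; Step~2 has a genuine gap.} Step~1 is the travel-time argument of Proposition~\ref{prop:dist_omega}, with sources near $z\in\cV_k$ and receivers near $y$, all inside $U_1$ (resp.\ $U_2$). The bound $d_{g_1}(y,z)<\cT$ keeps all times below the thresholds of Theorems~\ref{thm:fsop} and~\ref{thm:controllability}. It also supplies something the paper uses without comment, namely $d_{g_2}(\bPhi(y),\bPhi(z))=d_{g_1}(y,z)$, which does not follow merely from $\bPhi$ being a Riemannian isometry of $U_1$ onto its image. So, contrary to your own assessment, Step~1 is the unproblematic part.

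\textbf{Why Step~2 fails.} It rests on the principle that $d_{g_2}(p,\cdot)=d_{g_2}(q,\cdot)$ on a nonempty open set forces $p=q$. This is false on manifolds with boundary, because minimizing curves can glide along $\p\M_2$. Here is a counterexample within the allowed distance range.
\begin{itemize}
\item Take $\M_1=\M_2=\{x\in\RR^2:1\le|x|\le 100\}$, so $\cT\ge 100$. Let $W$ be a small neighbourhood of $(-3,0.3)$ and $p$ a point near $(3,0.3)$.
\item Every shortest path from $w\in W$ to $p$ is a tangent segment, an arc of the unit circle, and a tangent segment. Hence $d(w,p)=F(w)+c(p)$ with $c(p)=\sqrt{|p|^2-1}-\beta(p)$, where $\beta(p)$ is the angle of the upper tangent point from $p$.
\item Every $q\neq p$ near $p$ on the level curve $\{c=c(p)\}$ (an involute of the circle) has the same distance function on $W$. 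These distances are about $6<\cT$, so the constraint $d_{g_1}(y,\cV_k)<\cT$ does not exclude this situation.
\end{itemize}

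Your proposed safeguards do not handle this.
\begin{itemize}
\item The minimizing curve from $y$ to $z_0$ may itself touch $\p\M_1$, so a $z$ with the cut-locus property you need may not exist.
\item Even if it exists, $\bPhi$ is defined only on $U_1$ and cannot transport the geodesic from $y$ to $z$. So the claim ``$\bPhi(y)$ is not in the cut locus of $\bPhi(z)$ either'' is unproved.
\item Equal gradients at $\bPhi(z)$ only show that the shortest paths to $\bPhi(y)$ and $\bPsi(y)$ leave in the same direction. In the example they do exactly that, and separate after reaching the boundary.
\end{itemize}
Your argument closes only if the minimizing curve from $\bPhi(z)$ to $\bPhi(y)$ is an interior geodesic, and nothing in the hypotheses guarantees this. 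Lemma~\ref{lm:R_identity} is no substitute: there the endpoint is known a priori to be $\gamma_2(t;\bPhi(x),\zeta)$ with $t<\sigma_2^T(\bPhi(x))$.

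\textbf{How the paper avoids this.} It never passes to distance data. Assume $\bPhi(y)\neq\bPsi(y)$.
\begin{itemize}
\item Take a source $\mathbf f$ on $(T-\varepsilon,T)\times B_2(\bPhi(y),\varepsilon)$ with $\bu_2^{\mathbf f}(T)\neq 0$. Take $\mathbf h$ on $(T-\varepsilon,T)\times B_2(\bPsi(y),\varepsilon)$ with $\bPsi^\ast\mathbf h=\bPhi^\ast\mathbf f$.
\item By finite speed of propagation, $\bu_2^{\mathbf f}(T)$ and $\bu_2^{\mathbf h}(T)$ have disjoint supports.
\item By approximate controllability from $B_2(\bPhi(z),\delta)\subset\bPhi(\cV)$, some $\bu_2^{\bpsi}(T)$ is close to $\bu_2^{\mathbf f}(T)$. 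This is where $d_{g_2}(\bPhi(y),\bPhi(z))<\cT$ is needed. Hence $\inner{\bu_2^{\mathbf f-\mathbf h}(T),\bu_2^{\bpsi}(T)}>0$.
\item On the other hand $\bPhi^\ast\bpsi=\bPsi^\ast\bpsi$. Applying Lemma~\ref{lm:norm_equal} once with $\bPhi$ and once with $\bPsi$ turns the same inner product into $\inner{\bu_1^{\bPhi^\ast\mathbf f-\bPsi^\ast\mathbf h}(T),\bu_1^{\bPhi^\ast\bpsi}(T)}=0$, a contradiction.
\end{itemize}
Inner products of waves separate the two candidate points regardless of how minimizing curves behave. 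To repair your proof, keep Step~1, which supplies the distance bound just mentioned, and replace Step~2 by this argument.
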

  \begin{proof}
  Let us show that 
  \begin{equation}
  \label{eq:lm_6.14_1}
      \bPhi|_{E^{(1)}_{\cT}(\cV,U_1\cap U_2)}=\bPsi|_{E^{(1)}_{\cT}(\cV,U_1\cap U_2)}.
  \end{equation}
  Let $y\in U_1\cap U_2\cap M(\cV,\cT)$ be arbitrary. To get a contradiction, we assume that $\bPhi(y)\neq \bPsi(y)$. Let $z\in \cV$ be such that $d_{g_1}(y,z)<\cT$. For simplicity, we denote $d_{g_1}(y,z)$ by $s$. Since $\bPhi$ is an isometry, we have $d_{g_2}(\bPhi(y),\bPhi(z))=s<\cT$. Then we can choose a small enough $0<\varepsilon<\frac{1}{2}(\cT-s)$ such that $B_2(\bPhi(y),2\varepsilon)\subset \bPhi(U_1)$, $B_2(\bPsi(y),2\varepsilon)\subset \bPsi(U_2)$, $B_2(\bPhi(y),2\varepsilon)\cap B_2(\bPsi(y),2\varepsilon)=\emptyset$, and $\M_2\setminus B_2(\bPhi(y),\varepsilon)$ and $\M_2\setminus  B_2(\bPsi(y),\varepsilon)$ are connected.
    By approximate controllability, we may take $\bf\in C_0^\infty((T-\varepsilon,T);C_0^\infty\Omega^1(B_2(\bPhi(y),\varepsilon)))$ such that $\bu_2^{\bf}(T,\cdot)\neq 0$. For any 
    \begin{align}
      \bh\in C_0^\infty((T-\varepsilon,T);C_0^\infty\Omega^1(B_2(\bPsi(y),\varepsilon))),
    \end{align}
    due to the finite speed of propagation, we have
    \begin{equation}
      \supp(\bu_2^\bf(T,\cdot))\cap \supp(\bu_2^\bh(T,\cdot))=\emptyset.
    \end{equation}
    Since
    \begin{align}
      \bPhi^\ast: C_0^\infty((T-\varepsilon,T);C_0^\infty\Omega^1(B_2(\bPhi(y),\varepsilon)))\to C_0^\infty((T-\varepsilon,T);C_0^\infty\Omega^1(B_1(y,\varepsilon))),\\
      \bPsi^\ast: C_0^\infty((T-\varepsilon,T);C_0^\infty\Omega^1(B_2(\bPsi(y),\varepsilon)))\to C_0^\infty((T-\varepsilon,T);C_0^\infty\Omega^1(B_1(y,\varepsilon)))
    \end{align}
    are vector space isomorphism, we can choose $\bPsi^\ast \bh = \bPhi^\ast \bf$. Let $\delta>0$ be such that $B_1(z,\delta)\subset \cV$. Then we have $B_2(\bPhi(y),2\varepsilon)\subset M_2(B_2(\bPhi(z),\delta),s+2\varepsilon-\delta)$. Since 
    $$T_2(B_2(\bPhi(z),\delta))\geq T_{\bPhi}(B_1(z,\delta))\geq \cT-\delta>s+2\varepsilon-\delta,$$ 
    by the approximate controllability, there exists a sequence
    $$\{\bphi_n\}_{n=1}^\infty\subset C_0^\infty((T-(s+2\varepsilon-\delta),T);C_0^\infty\Omega^1(B_2(\bPhi(z),\delta))$$ 
    such that
    $\bu_2^{\bphi_n}(T,\cdot)\to \bu_2^{\bf}(T,\cdot)$ as $n\to \infty$. Since $$B_2(\bPhi(y),2\varepsilon)\cap B_2(\bPsi(y),2\varepsilon)=\emptyset,$$
    there holds
    \begin{equation}
        \lim_{n\to \infty}\inner{\bu_2^{\bh}(T,\cdot),\bu_2^{\bphi_n}(T,\cdot)}=0,
    \end{equation}
    and 
    \begin{equation}
        \lim_{n\to \infty}\inner{\bu_2^{\bf}(T,\cdot),\bu_2^{\bphi_n}(T,\cdot)}=\norm{\bu_2^{\bf}(T,\cdot)}_{L^2\Omega^1(\M_2)}>0.
    \end{equation}
    Then for large enough $N$ we write $\bpsi=\bphi_N$, and we have
    \begin{equation}
      \label{eq:glue_contradiction}
      \inner{\bu_2^{\bf-\bh}(T,\cdot),\bu_2^{\bpsi}(T,\cdot)}>0.
    \end{equation}
     Notice that $\bPhi^\ast \bpsi=\bPsi^\ast \bpsi$ as $\bPhi|_\cV=\bPsi|_\cV$. According to Lemma \ref{lm:norm_equal} and the fact that $\bPsi^\ast \bh = \bPhi^\ast \bf$, we have
    \begin{align}
      \inner{\bu_2^{\bf-\bh}(T,\cdot),\bu_2^{\bpsi}(T,\cdot)}&=\inner{\bu_2^{\bf}(T,\cdot),\bu_2^{\bpsi}(T,\cdot)}-\inner{\bu_2^{\bh}(T,\cdot),\bu_2^{\bpsi}(T,\cdot)}\\
      &=\inner{\bu_1^{\bPhi^\ast\bf}(T,\cdot),\bu_1^{\bPhi^\ast\bpsi}(T,\cdot)}-\inner{\bu_1^{\bPsi^\ast\bh}(T,\cdot),\bu_1^{\bPsi^\ast\bpsi}(T,\cdot)}\\
      &=\inner{\bu_1^{\bPhi^\ast\bf-\bPsi^\ast\bh}(T,\cdot),\bu_1^{\bPhi^\ast\bpsi}(T,\cdot)}=0.
    \end{align}
    This is in contradiction with \eqref{eq:glue_contradiction} and we have proved that \eqref{eq:lm_6.14_1} holds as $y\in E^{(1)}_{\cT}(\cV,U_1\cap U_2)$ is arbitrary. Since $U_1\cap U_2$ is $\cT$-exhaustive by $\cV$, there exists $N>0$ such that $U_1\cap U_2=E^{(N)}_{\cT}(\cV,U_1\cap U_2)$. Replacing $E^{(k)}_{\cT}(\cV,U_1\cap U_2)$ by $E^{(k+1)}_{\cT}(\cV,U_1\cap U_2)$ iteratively in above argument for $k=0,1,\cdots,N-1$, we can obtain \eqref{eq:lm_6.14}.
  \end{proof}
 \begin{proof}[Proof of Theorem \ref{thm:main}]
  Provided $\bPhi:\omega_1\to \omega_2$ is a diffeomorphism and $\Lambda_{1,\omega_1}\bPhi^\ast=\bPhi^\ast \Lambda_{2,\omega_2}$. we have $\bPhi$ is an isometry by Proposition \ref{prop:dist_omega}.

  We define the set
  \begin{equation}
    \cN:=\left\{U\mid \omega_1\subset U\subset \M_1^\inter,
    \begin{matrix}
      \ U\text{ is open, connected and there is an isometry}\\
      \tbPhi:U\to\tbPhi(U),\text{ s.t. }\tbPhi|_{\omega_1}=\bPhi,\  \Lambda_{1,U}\tbPhi^\ast=\tbPhi^\ast\Lambda_{2,\bPhi(U)}.
    \end{matrix}
    \right\}.
  \end{equation}
  We note that $\subset$ is a partial order relation on $\cN$, and it is clear that $\cN$ is not empty as $\omega_1\in \cN$. We claim that $\cN$ is inductive. That is, every totally ordered subset in $\cN$ has an upper bound. Indeed, let $\mathcal{Q}\subset \cN$ be a totally ordered subset. We write $\mathcal{Q}=\{U_i\}_{i\in I}$. There is an isometry $\bPhi_i$ on $U_i$ such that $\bPhi_i|_{\omega_1}=\bPhi$, $\Lambda_{1,U_i}\tbPhi_i^\ast=\tbPhi_i^\ast\Lambda_{2,\bPhi(U_i)}$ for all $i\in I$ by definition. Denote $\bigcup_{i\in I}U_i$ by $U$, we can define an isometry $\bPsi$ on $U$ that 
  \begin{equation}
    \bPsi(x)=\bPhi_i(x),\text{ if }x\in U_i\text{ for some }i. 
  \end{equation}
  Firstly, we check that $\bPsi$ is well-defined. Suppose $x\in U_i$ and $x\in U_j$. Since $\mathcal{Q}$ is totally ordered, without loss of generality, we assume that $U_i\subset U_j$. According to Lemma \ref{lm:glue} and $\bPhi_i|_{\omega_1}=\bPhi_j|_{\omega_1}$, there holds $\bPhi_j|_{U_i}=\bPhi_i$. Hence, $\bPhi$ is well defined. Furthermore, we can conclude that $\bPsi$ is an isometry in a neighborhood of $x$, and consequently, $\bPsi$ is an isometry in $U$ since $x$ is arbitrarily chosen.
  Next we verify that $\Lambda_{1,U}\bPsi^\ast=\bPsi^\ast\Lambda_{2,\bPsi(U)}$. For any $\bf\in C_0^\infty((0,\infty);C_0^\infty\Omega^1(U))$, $\bigcup_{i\in I}(0,\infty)\times U_i$ is an open cover of $\supp(\bf)$. Since $\bf$ is compactly supported and $\{U_i\}_{i\in I}$ is a totally ordered subset, there exists some $J\in I$ such that $\bf\in C_0^\infty((0,\infty);C_0^\infty\Omega^1(U_J))$. For any $x\in U$, there exists some $K\in I$ such that $U_J\subset U_K$ and $x\in U_K$. Since $\bPsi|_{U_K}=\bPhi_K$ and $\Lambda_{1,U_K}\bPhi_K^\ast=\bPhi_K^\ast\Lambda_{1,\bPhi_K(U_K)}$, we have
  \begin{equation}
    \bu_1^{\bPsi^\ast \bf}(x,t)=\bPsi^\ast\bu_2^{\bf}(x,t),\ t>0.
  \end{equation}
  Therefore, we obtain that $\Lambda_{1,U}\bPsi^\ast=\bPsi^\ast\Lambda_{2,\bPsi(U)}$, and consequently, $U\in \cN$. This indicates that $\cN$ is inductive. Thanks to Zorn's Lemma (see e.g. \cite[Theorem 2.1]{Dugundji66}, \cite[Lemma 1.1]{Brezis}), there is a maximal element of $\cN$ and we denote it by $\cU$. We write $\bPsi$ for the isometry associated with $\cU$.
  
  It is clear that $\cU$ is open in $\M_1^\inter$, next we show that $\cU$ is also closed in $\M_1^\inter$. To get a contradiction, we assume that $\cU$ is not closed in $\M_1^\inter$, then there exists $y\in \M_1^\inter$ and $y\not\in \cU$ such that there is a sequence $\{x_j\}_{j=1}^\infty\subset \cU$ converges to $y$. 
  And we can find small $0<\eta<\cT$ such that $B_1(y,\eta)$ is geodesically convex and $B_1(y,\eta)\subset \M_1^\inter$. Then there exists a large enough $N$ such that $x_N\in B_1(y,\frac{\eta}{3})$. Since $B_1(x_N,\frac{2\eta}{3})\subset B_1(y,\eta)$ which is a geodesically convex set and $T_{\bPhi}(x_N)>\cT>\eta$, we have $\sigma_1^T(x_N)\geq\frac{2\eta}{3}$. Let $0<\varepsilon<\frac{\eta}{12}$ be such that $B_1(x_N,\varepsilon)\subset \cU$, then we have $\Lambda_{1,B_1(x_N,\varepsilon)}\bPsi^\ast=\bPsi^\ast\Lambda_{2,B_2(\bPsi(x_N),\varepsilon)}$.
  
  According to Lemma \ref{lm:operator_continuation}, there is an isometry 
  \begin{equation}
    \Upsilon: M_1^T(x_N,3\varepsilon)\to M_2^T(\bPsi(x_N),3\varepsilon)
  \end{equation}
  such that $\Upsilon|_{B_1(x_N,\varepsilon)}=\bPsi|_{B_1(x_N,\varepsilon)}$, and
  \begin{equation}
    \Lambda_{1,M_1^T(x_N,3\varepsilon)}\Upsilon^\ast = \Upsilon^\ast \Lambda_{2,M_2^T(\bPsi(x_N),3\varepsilon)}.
  \end{equation}
  Since 
  $$\sigma_1^T(x_N)-3\varepsilon>\frac{2\eta}{3}-\frac{\eta}{4}=\frac{5\eta}{12},$$
  we have
  $$B_1(x_N,\frac{5\eta}{12})\subset B_1(x_N,\sigma_1^T(x_N)-3\varepsilon)=M_1^T(x_N,3\varepsilon).$$
  Then we consider the restriction of $\Upsilon$
  \begin{equation}
      \overline{\Upsilon}: B_1(x_N,\frac{5\eta}{12})\to B_2(\bPsi(x_N),\frac{5\eta}{12}).
  \end{equation}
  And we notice that 
  $\cU\cap B_1(x_N,\frac{5\eta}{12})$ is trivially $\cT$-exhaustive by $B_1(x_N,\varepsilon)$ since 
  \begin{align}
      E^{(1)}_\cT(B_1(x_N,\varepsilon),\cU\cap B_1(x_N,\frac{5\eta}{12}))&=B_1(x_N,\varepsilon+\cT)\cap \cU\cap  B_1(x_N,\frac{5\eta}{12})\\
      &=\cU\cap  B_1(x_N,\frac{5\eta}{12}).
  \end{align}
  According to Lemma \ref{lm:glue}, there holds
  \begin{equation}
    \overline{\Upsilon}|_{\cU\cap B_1(x_N,\frac{5\eta}{12})}=\bPsi|_{\cU\cap B_1(x_N,\frac{5\eta}{12})}.
  \end{equation}
  By Lemma \ref{lm:extend}, there is an isometry $\widetilde{\Upsilon}$ on $\cU\cup B_1(x_N,\frac{5\eta}{12})$ such that 
  \begin{equation}
    \Lambda_{1,\cU\cup B_1(x_N,\frac{5\eta}{12})}\widetilde{\Upsilon}^\ast = \widetilde{\Upsilon}^\ast\Lambda_{2,\widetilde{\Upsilon}(\cU\cup B_1(x_N,\frac{5\eta}{12}))}.
  \end{equation}
  Since $d_{g_1}(x_N,y)<\frac{\eta}{3}<\frac{5\eta}{12}$, there holds $y\in B_1(x_N,\frac{5\eta}{12})$.
  Hence, $\cU\cup B_1(x_N,\frac{5\eta}{12})\in \cN$ but $\cU\cup B_1(x_N,\frac{5\eta}{12})\not\subset \cU$ as $y\in B_1(x_N,\frac{5\eta}{12})$ and $y\not\in \cU$. This is a contradiction to $\cU$ being a maximal element of $\cN$. Therefore, $\cU=\M_1^\inter$ and there is an isometry $\tbPhi: \M_1^\inter\to \tbPhi(\M_1^\inter)$ such that $\tbPhi|_{\omega_1}=\bPhi$ and $\Lambda_{1,\M_1^\inter}\tbPhi^\ast=\tbPhi^\ast\Lambda_{2,\tbPhi(\M_1^\inter)}$.
  Interchanging the roles of $\M_1$ and $\M_2$ and repeating the above argument, we can obtain an isometry $\tbPsi:\M_2^\inter \to \tbPsi(\M_2^\inter)$ such that $\tbPsi|_{\omega_2}=\bPhi^{-1}$ and $\Lambda_{2,\M_2^\inter}\tbPsi^\ast = \tbPsi^\ast \Lambda_{1,\tbPsi(\M_2^\inter)}$. Since $(\tbPsi^\ast)^{-1}=(\tbPsi^{-1})^\ast$, there holds
  \begin{equation}
      \Lambda_{1,\tbPsi(\M_2^\inter)} (\tbPsi^{-1})^\ast = (\tbPsi^{-1})^\ast \Lambda_{2,\M_2^\inter}.
  \end{equation}
  Hence $\tbPsi(\M_2^\inter)\in \cN$, and consequently $\tbPsi(\M_2^\inter)\subset \cU = \M_1^\inter$. Moreover, by $\tbPsi^{-1}|_{\omega_1}=\tbPhi_{\omega_1}$ and Lemma \ref{lm:glue}, we have 
  \begin{equation}
      \M_2^\inter = \tbPsi^{-1}(\tbPsi(\M_2^\inter))=\tbPhi(\tbPsi(\M_2^\inter))\subset \tbPhi(\M_1^\inter)\subset \M_2^\inter.
  \end{equation}
  It follows that $\tbPhi(\M_1^\inter)= \M_2^\inter$.
  By the smoothness of the Riemannian structure near the boundary (see e.g. \cite[Section 6.5.3]{Isozaki2010}), we have $M_1$ and $M_2$ are isometric.
  \end{proof}
  \subsection*{Acknowledgements}
 Y. Kian would like to thanks Raphael Danchin for the fruitful discussion about the forward problem. The work of Y. Kian is supported by the French National Research Agency ANR and Hong Kong RGC Joint Research Scheme for the project IdiAnoDiff (grant ANR-24-CE40-7039).
 L.Oksanen was supported by the European Research Council of the European Union, grant 101086697 (LoCal), and the Research Council of Finland, grants 359182, 347715 and 353096.
Z.Zhao was supported by the Finnish Ministry of Education and Culture’s Pilot for Doctoral Programmes (Pilot project Mathematics of Sensing, Imaging and Modelling).
Views and opinions expressed are those of the authors only and do not necessarily reflect those of the European Union or the other funding organizations.
\subsection*{Conflict of interest}
The authors have no conflicts of interest to declare that are relevant to this article.
\subsection*{Ethical statement}
This study was conducted in accordance with all relevant ethical guidelines and regulations.
\subsection*{Informed Consent}
All participants provided informed consent prior to taking part in the study.
\subsection*{Data availability statement}
Data sharing is not applicable to this article as no datasets were generated or analysed during the current study.
\bibliography{reference}

@book{GT,
    AUTHOR = {Gilbarg, David and Trudinger, Neil S.},
     TITLE = {{Elliptic Partial Differential Equations of Second Order}},
 PUBLISHER = {Springer-Verlag, Berlin-New York},
      YEAR = {1977},
     PAGES = {x+401},
      ISBN = {3-540-08007-4},
   MRCLASS = {35-02 (35J25 35J65)},
  MRNUMBER = {473443},
MRREVIEWER = {O.\ John},
}

@book{Th,
 author = {Aubin, Thierry},
 title = {Nonlinear analysis on manifolds. {Monge}-{Amp{\`e}re} equations},
 fseries = {Grundlehren der Mathematischen Wissenschaften},
 series = {Grundlehren Math. Wiss.},
 issn = {0072-7830},
 volume = {252},
 year = {1982},
 publisher = {Springer, Cham},
 language = {English},
 zbMATH = {3808476},
 Zbl = {0512.53044}
}

@article{Sol1,
	year = 2005,
	month = {nov}, 
	volume = {25},
	number = {12},
	pages = {1-40},
	author =  {Solonnikov, Vsevolod A.},
	title = {Schauder estimates for evolution generalized Stokes problem},
	journal = {PDMI-Preprint}
}

@incollection{Sol2,
 author = {Solonnikov, Vsevolod A.},
 title = {On {Schauder} estimates for the evolution generalized {Stokes} problem},
 booktitle = {Hyperbolic problems and regularity questions},
 isbn = {978-3-7643-7450-1; 978-3-7643-7451-8},
 pages = {197--205},
 year = {2007},
 publisher = {Basel: Birkh{\"a}user},
 language = {English},
 zbMATH = {5168415},
 Zbl = {1123.35005}
}

@Book{Taylor_PDE_3,
 Author = {Taylor, Michael E.},
 Title = {Partial differential equations {III}. {Nonlinear} equations},
 Edition = {3rd corrected and expanded edition},
 FSeries = {Applied Mathematical Sciences},
 Series = {Appl. Math. Sci.},
 ISSN = {0066-5452},
 Volume = {117},
 ISBN = {978-3-031-33927-1; 978-3-031-33930-1; 978-3-031-33928-8},
 Year = {2023},
 Publisher = {Cham: Springer},
 Language = {English},
 DOI = {10.1007/978-3-031-33928-8},
 zbMATH = {7756365},
 Zbl = {1527.35004}
}

@Book{Brezis,
 Author = {Brezis, Haim},
 Title = {Functional analysis, {Sobolev} spaces and partial differential equations},
 FSeries = {Universitext},
 Series = {Universitext},
 ISSN = {0172-5939},
 ISBN = {978-0-387-70913-0},
 Year = {2011},
 Publisher = {New York, NY: Springer},
 Language = {English},
 zbMATH = {5633610},
 Zbl = {1220.46002}
}

@article{BeKu,
 author = {Belishev, Michael I. and Kurylev, Yaroslav V.},
 title = {To the reconstruction of a {Riemannian} manifold via its spectral data ({BC}- method)},
 fjournal = {Communications in Partial Differential Equations},
 journal = {Commun. Partial Differ. Equations},
 issn = {0360-5302},
 volume = {17},
 number = {5-6},
 pages = {767--804},
 year = {1992},
 language = {English},
 doi = {10.1080/03605309208820863},
 zbMATH = {147418},
 Zbl = {0812.58094}
}

@article{Beli,
 author = {Belishev, M. I.},
 title = {On an approach to multidimensional inverse problems for the wave equation},
 fjournal = {Soviet Mathematics. Doklady},
 journal = {Sov. Math., Dokl.},
 issn = {0197-6788},
 volume = {36},
 number = {3},
 pages = {481--484},
 year = {1988},
 language = {English},
 zbMATH = {4080062},
 Zbl = {0661.35084}
}

@article{BCZ,
 author = {Beretta, E. and Cavaterra, C. and Ortega, J. H. and Zamorano, S.},
 title = {Size estimates of an obstacle in a stationary {Stokes} fluid},
 fjournal = {Inverse Problems},
 journal = {Inverse Probl.},
 issn = {0266-5611},
 volume = {33},
 number = {2},
 pages = {29},
 note = {Id/No 025008},
 year = {2017},
 language = {English},
 doi = {10.1088/1361-6420/33/2/025008},
 zbMATH = {6691659},
 Zbl = {1515.35202}
}

@article{ILY,
 author = {Imanuvilov, Oleg Y. and Lorenzi, Luca and Yamamoto, Masahiro},
 title = {Carleman estimate for the {Navier}-{Stokes} equations and applications},
 fjournal = {Inverse Problems},
 journal = {Inverse Probl.},
 issn = {0266-5611},
 volume = {38},
 number = {8},
 pages = {30},
 note = {Id/No 085006},
 year = {2022},
 language = {English},
 doi = {10.1088/1361-6420/ac4c33},
 zbMATH = {7559611},
 Zbl = {1492.35183}
}

@article{CIPY,
 author = {Choulli, Mourad and Imanuvilov, Oleg Yu. and Puel, Jean-Pierre and Yamamoto, Masahiro},
 title = {Inverse source problem for linearized {Navier}-{Stokes} equations with data in arbitrary sub-domain},
 fjournal = {Applicable Analysis},
 journal = {Appl. Anal.},
 issn = {0003-6811},
 volume = {92},
 number = {10},
 pages = {2127--2143},
 year = {2013},
 language = {English},
 doi = {10.1080/00036811.2012.718334},
 zbMATH = {6204296},
 Zbl = {1302.35432}
}

@misc{BGKN,
 author = {Blouza, Adel and Glangetas, L{\'e}o and Kian, Yavar and Ngo, Van-Sang},
 title = {Determination of time dependent source terms for {Stokes} systems in unbounded domains},
 year = {2024},
 howpublished = {Preprint, {arXiv}:2407.21589 [math.{AP}] (2024)},
 url = {https://arxiv.org/abs/2407.21589},
 arXiv = {arXiv:2407.21589}
}

@article{FDJN,
 author = {Fan, Jishan and di Cristo, Michele and Jiang, Yu and Nakamura, Gen},
 title = {Inverse viscosity problem for the {Navier}-{Stokes} equation},
 fjournal = {Journal of Mathematical Analysis and Applications},
 journal = {J. Math. Anal. Appl.},
 issn = {0022-247X},
 volume = {365},
 number = {2},
 pages = {750--757},
 year = {2010},
 language = {English},
 doi = {10.1016/j.jmaa.2009.12.012},
 zbMATH = {5676147},
 Zbl = {1186.35240}
}

@article{Sh,
 author = {Shkoller, Steve},
 title = {Analysis on groups of diffeomorphisms of manifolds with boundary and the averaged motion of a fluid.},
 fjournal = {Journal of Differential Geometry},
 journal = {J. Differ. Geom.},
 issn = {0022-040X},
 volume = {55},
 number = {1},
 pages = {145--191},
 year = {2000},
 language = {English},
 doi = {10.4310/jdg/1090340568},
 zbMATH = {1782651},
 Zbl = {1044.35061}
}

@article{MaRaSh,
 author = {Marsden, J. E. and Ratiu, T. S. and Shkoller, S.},
 title = {The geometry and analysis of the averaged {Euler} equations and a new diffeomorphism group},
 fjournal = {Geometric and Functional Analysis. GAFA},
 journal = {Geom. Funct. Anal.},
 issn = {1016-443X},
 volume = {10},
 number = {3},
 pages = {582--599},
 year = {2000},
 language = {English},
 doi = {10.1007/PL00001631},
 zbMATH = {1525496},
 Zbl = {0979.58004}
}

@article{CaCo1,
 author = {Alvarez, C. and Conca, C. and Friz, L. and Kavian, O. and Ortega, J. H.},
 title = {Identification of immersed obstacles via boundary measurements},
 fjournal = {Inverse Problems},
 journal = {Inverse Probl.},
 issn = {0266-5611},
 volume = {21},
 number = {5},
 pages = {1531--1552},
 year = {2005},
 language = {English},
 doi = {10.1088/0266-5611/21/5/003},
 url = {semanticscholar.org/paper/2fd72b0a314322e4c3bcc81193785e0cca1f09d0},
 zbMATH = {2218075},
 Zbl = {1088.35080}
}

@article{CaCo2,
 author = {Conca, Carlos and Malik, Muslim and Munnier, Alexandre},
 title = {Detection of a moving rigid solid in a perfect fluid},
 fjournal = {Inverse Problems},
 journal = {Inverse Probl.},
 issn = {0266-5611},
 volume = {26},
 number = {9},
 pages = {18},
 note = {Id/No 095010},
 year = {2010},
 language = {English},
 doi = {10.1088/0266-5611/26/9/095010},
 zbMATH = {5797220},
 Zbl = {1200.35320}
}

@article{Liu,
 author = {Liu, Genqian},
 title = {Determining the viscosity function from the boundary measurements for the {Stokes} and the {Navier}-{Stokes} equations},
 fjournal = {Inverse Problems},
 journal = {Inverse Probl.},
 issn = {0266-5611},
 volume = {40},
 number = {12},
 pages = {48},
 note = {Id/No 125011},
 year = {2024},
 language = {English},
 doi = {10.1088/1361-6420/ad8479},
 zbMATH = {7950667},
 Zbl = {1553.35242}
}

@article{ImYa,
 author = {Imanuvilov, O. Yu. and Yamamoto, M.},
 title = {Global uniqueness in inverse boundary value problems for the {Navier}-{Stokes} equations and {Lam{\'e}} system in two dimensions},
 fjournal = {Inverse Problems},
 journal = {Inverse Probl.},
 issn = {0266-5611},
 volume = {31},
 number = {3},
 pages = {46},
 note = {Id/No 035004},
 year = {2015},
 language = {English},
 doi = {10.1088/0266-5611/31/3/035004},
 zbMATH = {6422193},
 Zbl = {1331.35261}
}

@article{LaUWa,
 author = {Lai, Ru-Yu and Uhlmann, Gunther and Wang, Jenn-Nan},
 title = {Inverse boundary value problem for the {Stokes} and the {Navier}-{Stokes} equations in the plane},
 fjournal = {Archive for Rational Mechanics and Analysis},
 journal = {Arch. Ration. Mech. Anal.},
 issn = {0003-9527},
 volume = {215},
 number = {3},
 pages = {811--829},
 year = {2015},
 language = {English},
 doi = {10.1007/s00205-014-0794-1},
 zbMATH = {6410249},
 Zbl = {1309.35085}
}

@article{HLiWa,
 author = {Heck, Horst and Li, Xiaosheng and Wang, Jenn-Nan},
 title = {Identification of viscosity in an incompressible fluid},
 fjournal = {Indiana University Mathematics Journal},
 journal = {Indiana Univ. Math. J.},
 issn = {0022-2518},
 volume = {56},
 number = {5},
 pages = {2489--2510},
 year = {2007},
 language = {English},
 doi = {10.1512/iumj.2007.56.3037},
 url = {ntur.lib.ntu.edu.tw/bitstream/246246/216125/1/Identification of viscosity in an incompressible fluid.pdf},
 zbMATH = {5215061},
 Zbl = {1142.35103}
}

@article{LiWa,
 author = {Li, Xiaosheng and Wang, Jenn-Nan},
 title = {Determination of viscosity in the stationary {Navier}-{Stokes} equations},
 fjournal = {Journal of Differential Equations},
 journal = {J. Differ. Equations},
 issn = {0022-0396},
 volume = {242},
 number = {1},
 pages = {24--39},
 year = {2007},
 language = {English},
 doi = {10.1016/j.jde.2007.07.008},
 zbMATH = {5210062},
 Zbl = {1143.35083}
}

@book{Val,
 author = {Vallis, Geoffrey K.},
 title = {Atmospheric and oceanic fluid dynamics. {Fundamentals} and large-scale circulation},
 edition = {2nd edition},
 isbn = {978-1-107-06550-5; 978-1-107-58841-7},
 year = {2017},
 publisher = {Cambridge: Cambridge University Press},
 language = {English},
 doi = {10.1017/9781107588417},
 zbMATH = {6807261},
 Zbl = {1374.86002}
}

@book{VlKh,
 author = {Arnold, Vladimir I. and Khesin, Boris A.},
 title = {Topological methods in hydrodynamics},
 edition = {2nd edition},
 fseries = {Applied Mathematical Sciences},
 series = {Appl. Math. Sci.},
 issn = {0066-5452},
 volume = {125},
 isbn = {978-3-030-74277-5; 978-3-030-74280-5; 978-3-030-74278-2},
 year = {2021},
 publisher = {Cham: Springer},
 language = {English},
 doi = {10.1007/978-3-030-74278-2},
 zbMATH = {7349486},
 Zbl = {1475.76003}
}

@article{CK,
 author = {Canuto, B. and Kavian, O.},
 title = {Determining coefficients in a class of heat equations via boundary measurements},
 fjournal = {SIAM Journal on Mathematical Analysis},
 journal = {SIAM J. Math. Anal.},
 issn = {0036-1410},
 volume = {32},
 number = {5},
 pages = {963--986},
 year = {2001},
 language = {English},
 doi = {10.1137/S003614109936525X},
 zbMATH = {1578850},
 Zbl = {0981.35096}
}

@article{EMa,
 author = {Ebin, D. G. and Marsden, J.},
 title = {Groups of diffeomorphisms and the motion of an incompressible fluid},
 fjournal = {Annals of Mathematics. Second Series},
 journal = {Ann. Math. (2)},
 issn = {0003-486X},
 volume = {92},
 pages = {102--163},
 year = {1970},
 language = {English},
 doi = {10.2307/1970699},
 zbMATH = {3335835},
 Zbl = {0211.57401}
}

@misc{Gio,
 author = {Troianiello, Giovanni Maria},
 title = {Elliptic differential equations and obstacle problems},
 isbn = {0-306-42448-7},
 year = {1987},
 language = {English},
 howpublished = {The {University} {Series} in {Mathematics}. {New} {York}-{London}: {Plenum} {Press}. {XVI}, 370 p.; {\$} 49.50 ({US} \& {Canada}); {\$} 59.40 (outside {US} \& {Canada}) (1987).},
 zbMATH = {4069392},
 Zbl = {0655.35002}
}

@article{STONE, title={Hydrodynamics of particles embedded in aflat surfactant layer overlying a subphase of finite depth}, volume={369}, DOI={10.1017/S0022112098001980}, journal={Journal of Fluid Mechanics}, author={Stone, Howard A. and Ajdari, Armand}, year={1998}, pages={151–173}}

@article{SaT,
title = {Navier–Stokes equations on Riemannian manifolds},
journal = {Journal of Geometry and Physics},
volume = {148},
pages = {103543},
year = {2020},
issn = {0393-0440},
doi = {https://doi.org/10.1016/j.geomphys.2019.103543},
url = {https://www.sciencedirect.com/science/article/pii/S0393044019302244},
author = {Maryam Samavaki and Jukka Tuomela}
}

@article{FMR,
title = {Generalized Navier–Stokes equations and soft hairy horizons in fluid/gravity correspondence},
journal = {Nuclear Physics B},
volume = {973},
pages = {115603},
year = {2021},
issn = {0550-3213},
doi = {https://doi.org/10.1016/j.nuclphysb.2021.115603},
url = {https://www.sciencedirect.com/science/article/pii/S055032132100300X},
author = {A.J. Ferreira–Martins and R. {da Rocha}}
}

@book{Gr,
 author = {Grisvard, P.},
 title = {Elliptic problems in nonsmooth domains},
 fseries = {Monographs and Studies in Mathematics},
 series = {Monogr. Stud. Math.},
 volume = {24},
 isbn = {0-273-08647-2},
 year = {1985},
 publisher = {Pitman, Boston, MA},
 language = {English},
 zbMATH = {4138299},
 Zbl = {0695.35060}
}

@Book{temam,
 Author = {Temam, Roger},
 Title = {Navier-{Stokes} equations. {Theory} and numerical analysis. {Rev}. ed},
 FSeries = {Studies in Mathematics and its Applications},
 Series = {Stud. Math. Appl.},
 ISSN = {0168-2024},
 Volume = {2},
 Year = {1979},
 Publisher = {Elsevier, Amsterdam},
 Language = {English},
 zbMATH = {3663609},
 Zbl = {0426.35003}
}

@article{FGKU,
 author = {Feizmohammadi, Ali and Ghosh, Tuhin and Krupchyk, Katya and Uhlmann, Gunther},
 title = {Fractional anisotropic {Calder{\'o}n} problem on closed {Riemannian} manifolds},
 fjournal = {Journal of Differential Geometry},
 journal = {J. Differ. Geom.},
 issn = {0022-040X},
 volume = {131},
 number = {2},
 pages = {401--414},
 year = {2025},
 language = {English},
 doi = {10.4310/jdg/1757353909},
 url = {projecteuclid.org/journals/journal-of-differential-geometry/volume-131/issue-2/Fractional-anisotropic-Calder%c3%b3n-problem-on-closed-Riemannian-manifolds/10.4310/jdg/1757353909.full},
 zbMATH = {8097552}
}

@article{HLYZ,
 author = {Helin, Tapio and Lassas, Matti and Ylinen, Lauri and Zhang, Zhidong},
 title = {Inverse problems for heat equation and space-time fractional diffusion equation with one measurement},
 fjournal = {Journal of Differential Equations},
 journal = {J. Differ. Equations},
 issn = {0022-0396},
 volume = {269},
 number = {9},
 pages = {7498--7528},
 year = {2020},
 language = {English},
 doi = {10.1016/j.jde.2020.05.022},
 url = {hdl.handle.net/10138/344428},
 zbMATH = {7212520},
 Zbl = {1443.35168}
}

@article{KOSY,
 author = {Kian, Y. and Oksanen, L. and Soccorsi, E. and Yamamoto, M.},
 title = {Global uniqueness in an inverse problem for time fractional diffusion equations},
 fjournal = {Journal of Differential Equations},
 journal = {J. Differ. Equations},
 issn = {0022-0396},
 volume = {264},
 number = {2},
 pages = {1146--1170},
 year = {2018},
 language = {English},
 doi = {10.1016/j.jde.2017.09.032},
 zbMATH = {6807494},
 Zbl = {1376.35099}
}

@article{KaKuLaMa,
 author = {Katchalov, A. and Kurylev, Y. and Lassas, M. and Mandache, N.},
 title = {Equivalence of time-domain inverse problems and boundary spectral problems.},
 fjournal = {Inverse Problems},
 journal = {Inverse Probl.},
 issn = {0266-5611},
 volume = {20},
 number = {2},
 pages = {419--436},
 year = {2004},
 language = {English},
 doi = {10.1088/0266-5611/20/2/007},
 zbMATH = {2094176},
 Zbl = {1073.35209}
}

@article{KuLaSo,
 author = {Kurylev, Yaroslav and Lassas, Matti and Somersalo, Erkki},
 title = {Maxwell's equations with a polarization independent wave velocity: direct and inverse problems},
 fjournal = {Journal de Math{\'e}matiques Pures et Appliqu{\'e}es. Neuvi{\`e}me S{\'e}rie},
 journal = {J. Math. Pures Appl. (9)},
 issn = {0021-7824},
 volume = {86},
 number = {3},
 pages = {237--270},
 year = {2006},
 language = {English},
 doi = {10.1016/j.matpur.2006.01.008},
 zbMATH = {5183145},
 Zbl = {1134.35107}
}

@article{BeIs,
 author = {Belishev, M. I. and Isakov, V. M.},
 title = {On the uniqueness of the recovery of parameters of the {Maxwell} system from dynamical boundary data},
 fjournal = {Journal of Mathematical Sciences (New York)},
 journal = {J. Math. Sci., New York},
 issn = {1072-3374},
 volume = {122},
 number = {5},
 pages = {3459--3469},
 year = {2002},
 language = {English},
 doi = {10.1023/B:JOTH.0000034024.38243.02},
 zbMATH = {2183294},
 Zbl = {1082.35161}
}

@Misc{rudin_1,
 Author = {Rudin, Walter},
 Title = {Principles of mathematical analysis. 3rd ed},
 Year = {1976},
 Language = {English},
 HowPublished = {International {Series} in {Pure} and {Applied} {Mathematics}. {D{\"u}sseldorf} etc.: {McGraw}-{Hill} {Book} {Company}. {X}, 342 p. {DM} 47.80 (1976).},
 zbMATH = {3539473},
 Zbl = {0346.26002}
}

@article{KLLY,
 author = {Kian, Yavar and Li, Zhiyuan and Liu, Yikan and Yamamoto, Masahiro},
 title = {The uniqueness of inverse problems for a fractional equation with a single measurement},
 fjournal = {Mathematische Annalen},
 journal = {Math. Ann.},
 issn = {0025-5831},
 volume = {380},
 number = {3-4},
 pages = {1465--1495},
 year = {2021},
 language = {English},
 doi = {10.1007/s00208-020-02027-z},
 url = {hdl.handle.net/2115/82210},
 zbMATH = {7387655},
 Zbl = {1472.35455}
}

@Book{boyer,
 Author = {Boyer, Franck and Fabrie, Pierre},
 Title = {Mathematical tools for the study of the incompressible {Navier}-{Stokes} equations and related models},
 FSeries = {Applied Mathematical Sciences},
 Series = {Appl. Math. Sci.},
 ISSN = {0066-5452},
 Volume = {183},
 ISBN = {978-1-4614-5974-3; 978-1-4614-5975-0},
 Year = {2013},
 Publisher = {New York, NY: Springer},
 Language = {English},
 DOI = {10.1007/978-1-4614-5975-0},
 URL = {www.gbv.de/dms/goettingen/730899411.pdf},
 zbMATH = {6106780},
 Zbl = {1286.76005}
}

@Book{KKL,
 Author = {Katchalov, Alexander and Kurylev, Yaroslav and Lassas, Matti},
 Title = {Inverse boundary spectral problems},
 FSeries = {Chapman \& Hall/CRC Monographs and Surveys in Pure and Applied Mathematics},
 Series = {Chapman Hall/CRC Monogr. Surv. Pure Appl. Math.},
 Volume = {123},
 ISBN = {1-58488-005-8},
 Year = {2001},
 Publisher = {Boca Raton, FL: CRC Press},
 Language = {English},
 zbMATH = {1674059},
 Zbl = {1037.35098}
}

@Book{jost,
 Author = {Jost, J{\"u}rgen},
 Title = {Riemannian geometry and geometric analysis},
 Edition = {7th edition},
 FSeries = {Universitext},
 Series = {Universitext},
 ISSN = {0172-5939},
 ISBN = {978-3-319-61859-3; 978-3-319-61860-9},
 Year = {2017},
 Publisher = {Cham: Springer},
 Language = {English},
 DOI = {10.1007/978-3-319-61860-9},
 zbMATH = {6755669},
 Zbl = {1380.53001}
}

@Book{Lions1,
 Author = {Lions, J. L. and Magenes, E.},
 Title = {Non-homogeneous boundary value problems and applications. {Vol}. {I}. {Translated} from the {French} by {P}. {Kenneth}},
 FSeries = {Grundlehren der Mathematischen Wissenschaften},
 Series = {Grundlehren Math. Wiss.},
 ISSN = {0072-7830},
 Volume = {181},
 Year = {1972},
 Publisher = {Springer, Cham},
 Language = {English},
 zbMATH = {3353865},
 Zbl = {0223.35039}
}

@Book{deRham,
 Author = {de Rham, Georges},
 Title = {Differentiable manifolds. {Forms}, currents, harmonic forms. {Transl}. from the {French} by {F}. {R}. {Smith}. {Introduction} to the {English} ed. by {S}. {S}. {Chern}},
 FSeries = {Grundlehren der Mathematischen Wissenschaften},
 Series = {Grundlehren Math. Wiss.},
 ISSN = {0072-7830},
 Volume = {266},
 Year = {1984},
 Publisher = {Springer, Cham},
 Language = {English},
 zbMATH = {3848254},
 Zbl = {0534.58003}
}

@Book{frankel,
 Author = {Frankel, Theodore},
 Title = {The geometry of physics. {An} introduction},
 Edition = {3rd ed.},
 ISBN = {978-1-107-60260-1; 978-1-139-15414-7},
 Year = {2011},
 Publisher = {Cambridge: Cambridge University Press},
 Language = {English},
 zbMATH = {6007815},
 Zbl = {1250.58001}
}

@Article{eller,
 Author = {Eller, Matthias and Toundykov, Daniel},
 Title = {A global {Holmgren} theorem for multidimensional hyperbolic partial differential equations},
 FJournal = {Applicable Analysis},
 Journal = {Appl. Anal.},
 ISSN = {0003-6811},
 Volume = {91},
 Number = {1-2},
 Pages = {69--90},
 Year = {2012},
 Language = {English},
 DOI = {10.1080/00036811.2010.538685},
 zbMATH = {6017982},
 Zbl = {1235.35058}
}

@Book{Schwarz,
 Author = {Schwarz, G{\"u}nter},
 Title = {Hodge decomposition. {A} method for solving boundary value problems},
 FSeries = {Lecture Notes in Mathematics},
 Series = {Lect. Notes Math.},
 ISSN = {0075-8434},
 Volume = {1607},
 ISBN = {3-540-60016-7},
 Year = {1995},
 Publisher = {Berlin: Springer Verlag},
 Language = {English},
 DOI = {10.1007/BFb0095978},
 zbMATH = {783743},
 Zbl = {0828.58002}
}

@Book{Bochner,
 Author = {Mikusinski, Jan},
 Title = {The {Bochner} integral},
 FSeries = {Lehrb{\"u}cher und Monographien aus dem Gebiete der exakten Wissenschaften. Mathematische Reihe},
 Series = {Lehrb. Monogr. Geb. Exakten Wiss., Math. Reihe},
 Volume = {55},
 Year = {1978},
 Publisher = {Birkh{\"a}user Verlag, Basel},
 Language = {English},
 zbMATH = {3575883},
 Zbl = {0369.28010}
}

@Book{Hislop,
 Author = {Hislop, P. D. and Sigal, I. M.},
 Title = {Introduction to spectral theory. {With} applications to {Schr{\"o}dinger} operators},
 FSeries = {Applied Mathematical Sciences},
 Series = {Appl. Math. Sci.},
 ISSN = {0066-5452},
 Volume = {113},
 ISBN = {0-387-94501-6},
 Year = {1996},
 Publisher = {New York, NY: Springer-Verlag},
 Language = {English},
 zbMATH = {837307},
 Zbl = {0855.47002}
}

@article{Saksala2018,
 author = {Helin, Tapio and Lassas, Matti and Oksanen, L. and Saksala, Teemu},
 title = {Correlation based passive imaging with a white noise source},
 fjournal = {Journal de Math{\'e}matiques Pures et Appliqu{\'e}es. Neuvi{\`e}me S{\'e}rie},
 journal = {J. Math. Pures Appl. (9)},
 issn = {0021-7824},
 volume = {116},
 pages = {132--160},
 year = {2018},
 language = {English},
 doi = {10.1016/j.matpur.2018.05.001},
 zbMATH = {6904601},
 Zbl = {1401.35343}
}

@Book{doCarmo,
 Author = {do Carmo, Manfredo Perdig{\~a}o},
 Title = {Riemannian geometry. {Translated} from the {Portuguese} by {Francis} {Flaherty}},
 ISBN = {0-8176-3490-8},
 Year = {1992},
 Publisher = {Boston, MA etc.: Birkh{\"a}user},
 Language = {English},
 zbMATH = {52737},
 Zbl = {0752.53001}
}

@article{Oksanen2014,
 author = {Lassas, Matti and Oksanen, Lauri},
 title = {Inverse problem for the {Riemannian} wave equation with {Dirichlet} data and {Neumann} data on disjoint sets},
 fjournal = {Duke Mathematical Journal},
 journal = {Duke Math. J.},
 issn = {0012-7094},
 volume = {163},
 number = {6},
 pages = {1071--1103},
 year = {2014},
 language = {English},
 doi = {10.1215/00127094-2649534},
 zbMATH = {6298159},
 Zbl = {1375.35634}
}

@article{Isozaki2010,
 author = {Isozaki, Hiroshi and Kurylev, Yaroslav and Lassas, Matti},
 title = {Forward and inverse scattering on manifolds with asymptotically cylindrical ends},
 fjournal = {Journal of Functional Analysis},
 journal = {J. Funct. Anal.},
 issn = {0022-1236},
 volume = {258},
 number = {6},
 pages = {2060--2118},
 year = {2010},
 language = {English},
 doi = {10.1016/j.jfa.2009.11.009},
 url = {hdl.handle.net/2241/104845},
 zbMATH = {5676161},
 Zbl = {1204.58023}
}

@misc{Dugundji66,
 author = {Dugundji, James},
 title = {Topology},
 year = {1966},
 language = {English},
 howpublished = {Series in {Advanced} {Mathematics}. {Boston}: {Allyn} and {Bacon}, {Inc}. {XVI}, 447 p. (1966).},
 zbMATH = {3232606},
 Zbl = {0144.21501}
}

@book{Leoni17,
 author = {Leoni, Giovanni},
 title = {A first course in {Sobolev} spaces},
 edition = {2nd edition},
 fseries = {Graduate Studies in Mathematics},
 series = {Grad. Stud. Math.},
 issn = {1065-7339},
 volume = {181},
 isbn = {978-1-4704-2921-8; 978-1-4704-4226-2},
 year = {2017},
 publisher = {Providence, RI: American Mathematical Society (AMS)},
 language = {English},
 doi = {10.1090/gsm/181},
 zbMATH = {6821285},
 Zbl = {1382.46001}
}

@book{Lee18,
 author = {Lee, John M.},
 title = {Introduction to {Riemannian} manifolds},
 edition = {2nd edition},
 fseries = {Graduate Texts in Mathematics},
 series = {Grad. Texts Math.},
 issn = {0072-5285},
 volume = {176},
 isbn = {978-3-319-91754-2; 978-3-319-91755-9},
 year = {2018},
 publisher = {Cham: Springer},
 language = {English},
 doi = {10.1007/978-3-319-91755-9},
 zbMATH = {6897812},
 Zbl = {1409.53001}
}

@book{Isakov,
 author = {Isakov, Victor},
 title = {Inverse problems for partial differential equations},
 edition = {3rd edition},
 fseries = {Applied Mathematical Sciences},
 series = {Appl. Math. Sci.},
 issn = {0066-5452},
 volume = {127},
 isbn = {978-3-319-51657-8; 978-3-319-51658-5},
 year = {2017},
 publisher = {Cham: Springer},
 language = {English},
 doi = {10.1007/978-3-319-51658-5},
 zbMATH = {6684667},
 Zbl = {1366.65087}
}

@incollection{EINT02,
 author = {Eller, M. and Isakov, V. and Nakamura, G. and Tataru, D.},
 title = {Uniqueness and stability in the {Cauchy} problem for {Maxwell} and elasticity systems.},
 booktitle = {Nonlinear partial differential equations and their applications. Coll\`ege de France seminar. Vol. XIV. Lectures held at the J. L. Lions seminar on applied mathematics, Paris, France, 1997--1998},
 isbn = {0-444-51103-2},
 pages = {329--349},
 year = {2002},
 publisher = {Amsterdam: Elsevier},
 language = {English},
 zbMATH = {1824036},
 Zbl = {1038.35159}
}

@book{RG_Petersen,
 author = {Petersen, Peter},
 title = {Riemannian geometry},
 edition = {3rd edition},
 fseries = {Graduate Texts in Mathematics},
 series = {Grad. Texts Math.},
 issn = {0072-5285},
 volume = {171},
 isbn = {978-3-319-26652-7; 978-3-319-26654-1},
 year = {2016},
 publisher = {Cham: Springer},
 language = {English},
 doi = {10.1007/978-3-319-26654-1},
 zbMATH = {6520113},
 Zbl = {1417.53001}
}

@article{AKKLT04,
 author = {Anderson, Michael and Katsuda, Atsushi and Kurylev, Yaroslav and Lassas, Matti and Taylor, Michael},
 title = {Boundary regularity for the {Ricci} equation, geometric convergence, and {Gel}'fand's inverse boundary problem},
 fjournal = {Inventiones Mathematicae},
 journal = {Invent. Math.},
 issn = {0020-9910},
 volume = {158},
 number = {2},
 pages = {261--321},
 year = {2004},
 language = {English},
 doi = {10.1007/s00222-004-0371-6},
 zbMATH = {2132012},
 Zbl = {1177.35245}
}

@article{KK98,
 author = {Katchalov, A. and Kurylev, Ya.},
 title = {Multidimensional inverse problem with incomplete boundary spectral data},
 fjournal = {Communications in Partial Differential Equations},
 journal = {Commun. Partial Differ. Equations},
 issn = {0360-5302},
 volume = {23},
 number = {1-2},
 pages = {55--95},
 year = {1998},
 language = {English},
 doi = {10.1080/03605309808821337},
 zbMATH = {1143851},
 Zbl = {0904.65114}
}

@article{LNOY24,
 author = {Lassas, Matti and Nursultanov, Medet and Oksanen, Lauri and Ylinen, Lauri},
 title = {Disjoint data inverse problem on manifolds with quantum chaos bounds},
 fjournal = {SIAM Journal on Mathematical Analysis},
 journal = {SIAM J. Math. Anal.},
 issn = {0036-1410},
 volume = {56},
 number = {6},
 pages = {7748--7779},
 year = {2024},
 language = {English},
 doi = {10.1137/23M1606897},
 zbMATH = {7957213},
 Zbl = {1554.35406}
}

@misc{SS25,
 author = {Saksala, Teemu and Shedlock, Andrew},
 title = {An {Inverse} {Problem} for symmetric hyperbolic {Partial} {Differential} {Operators} on {Complete} {Riemannian} {Manifolds}},
 year = {2025},
 howpublished = {Preprint, {arXiv}:2503.14676 [math.{AP}] (2025)},
 url = {https://arxiv.org/abs/2503.14676},
 arXiv = {arXiv:2503.14676}
}

@article{I93,
 author = {Isakov, V.},
 title = {On uniqueness in inverse problems for semilinear parabolic equations},
 fjournal = {Archive for Rational Mechanics and Analysis},
 journal = {Arch. Ration. Mech. Anal.},
 issn = {0003-9527},
 volume = {124},
 number = {1},
 pages = {1--12},
 year = {1993},
 language = {English},
 doi = {10.1007/BF00392201},
 zbMATH = {474498},
 Zbl = {0804.35150}
}

@article{IS94,
 author = {Isakov, Victor and Sylvester, John},
 title = {Global uniqueness for a semilinear elliptic inverse problem},
 fjournal = {Communications on Pure and Applied Mathematics},
 journal = {Commun. Pure Appl. Math.},
 issn = {0010-3640},
 volume = {47},
 number = {10},
 pages = {1403--1410},
 year = {1994},
 language = {English},
 doi = {10.1002/cpa.3160471005},
 zbMATH = {714611},
 Zbl = {0817.35126}
}

@misc{L25,
 author = {Lassas, Matti},
 title = {Introduction to inverse problems for non-linear partial differential equations},
 year = {2025},
 howpublished = {Preprint, {arXiv}:2503.12448 [math.{AP}] (2025)},
 url = {https://arxiv.org/abs/2503.12448},
 arXiv = {arXiv:2503.12448}
}

@article{B90,
 author = {Belishev, M. I.},
 title = {Wave bases in multidimensional inverse problems},
 fjournal = {Mathematics of the USSR, Sbornik},
 journal = {Math. USSR, Sb.},
 issn = {0025-5734},
 volume = {67},
 number = {1},
 pages = {23--42},
 year = {1990},
 language = {English},
 doi = {10.1070/SM1990v067n01ABEH001185},
 zbMATH = {4144611},
 Zbl = {0698.35164}
}

@article{ABI92,
 author = {Avdonin, S. A. and Belishev, M. I. and Ivanov, S. A.},
 title = {Boundary control and a matrix inverse problem for the equation {{\(u_{tt}- u_{xx}+V(x)u=0\)}}},
 fjournal = {Mathematics of the USSR, Sbornik},
 journal = {Math. USSR, Sb.},
 issn = {0025-5734},
 volume = {72},
 number = {2},
 pages = {287--310},
 year = {1992},
 language = {English},
 doi = {10.1070/SM1992v072n02ABEH002141},
 zbMATH = {11808},
 Zbl = {0782.93054}
}

@article{BK89,
 author = {Belishev, M. I. and Kachalov, A. P.},
 title = {The methods of boundary control theory in the inverse spectral problem for an inhomogeneous string},
 fjournal = {Journal of Soviet Mathematics},
 journal = {J. Sov. Math.},
 issn = {0090-4104},
 volume = {57},
 number = {3},
 pages = {3072--3077},
 year = {1989},
 language = {English},
 doi = {10.1007/BF01098970},
 zbMATH = {54369},
 Zbl = {0741.73031}
}

@article{BK87,
 author = {Belishev, M. I. and Kurylev, Ya. V.},
 title = {Nonstationary inverse problem for the multidimensional wave equation ``in the large''},
 fjournal = {Zapiski Nauchnykh Seminarov Leningradskogo Otdeleniya Matematicheskogo Instituta Imeni V. A. Steklova},
 journal = {Zap. Nauchn. Semin. Leningr. Otd. Mat. Inst. Steklova},
 issn = {0373-2703},
 volume = {165},
 pages = {21--30},
 year = {1987},
 language = {Russian},
 url = {https://eudml.org/doc/188393},
 zbMATH = {4142675},
 Zbl = {0697.35163}
}

@article{BKu89,
 author = {Belishev, M. I. and Kurylev, Ya. V.},
 title = {The inverse spectral problem of the scattering of plane waves in a half- space with local inhomogeneity},
 fjournal = {U.S.S.R. Computational Mathematics and Mathematical Physics},
 journal = {U.S.S.R. Comput. Math. Math. Phys.},
 issn = {0041-5553},
 volume = {29},
 number = {4},
 pages = {56--64},
 year = {1989},
 language = {English},
 doi = {10.1016/0041-5553(89)90116-X},
 zbMATH = {4171554},
 Zbl = {0712.35106}
}

@article{B87,
 author = {Belishev, M. I.},
 title = {Equations of {Gel}'fand-{Levitan} type in multidimensional inverse problem for the wave equation},
 fjournal = {Zapiski Nauchnykh Seminarov Leningradskogo Otdeleniya Matematicheskogo Instituta Imeni V. A. Steklova},
 journal = {Zap. Nauchn. Semin. Leningr. Otd. Mat. Inst. Steklova},
 issn = {0373-2703},
 volume = {165},
 pages = {15--20},
 year = {1987},
 language = {Russian},
 url = {https://eudml.org/doc/188392},
 zbMATH = {4138350},
 Zbl = {0695.35111}
}

@article{BK91,
 author = {Belishev, Michael I. and Kurylev, Yaroslav V.},
 title = {Boundary control, wave field continuation and inverse problems for the wave equation},
 fjournal = {Computers \& Mathematics with Applications},
 journal = {Comput. Math. Appl.},
 issn = {0898-1221},
 volume = {22},
 number = {4-5},
 pages = {27--52},
 year = {1991},
 language = {English},
 doi = {10.1016/0898-1221(91)90130-V},
 zbMATH = {21803},
 Zbl = {0768.35077}
}

@book{Theodore11,
 author = {Frankel, Theodore},
 title = {The geometry of physics. {An} introduction},
 edition = {3rd ed.},
 isbn = {978-1-107-60260-1; 978-1-139-15414-7},
 year = {2011},
 publisher = {Cambridge: Cambridge University Press},
 language = {English},
 zbMATH = {6007815},
 Zbl = {1250.58001}
}

@article{LK09,
 author = {Kurylev, Yaroslav and Lassas, Matti},
 title = {Inverse problems and index formulae for {Dirac} operators},
 fjournal = {Advances in Mathematics},
 journal = {Adv. Math.},
 issn = {0001-8708},
 volume = {221},
 number = {1},
 pages = {170--216},
 year = {2009},
 language = {English},
 doi = {10.1016/j.aim.2008.12.001},
 zbMATH = {5550660},
 Zbl = {1163.35041}
}

@article{KOP18,
 author = {Kurylev, Yaroslav and Oksanen, Lauri and Paternain, Gabriel P.},
 title = {Inverse problems for the connection {Laplacian}},
 fjournal = {Journal of Differential Geometry},
 journal = {J. Differ. Geom.},
 issn = {0022-040X},
 volume = {110},
 number = {3},
 pages = {457--494},
 year = {2018},
 language = {English},
 doi = {10.4310/jdg/1542423627},
 url = {www.repository.cam.ac.uk/handle/1810/274613},
 zbMATH = {6982217},
 Zbl = {1415.53024}
}

@article{CRT99,
 author = {Cao, Chongsheng and Rammaha, Mohammad A. and Titi, Edriss S.},
 title = {The {Navier}-{Stokes} equations on the rotating 2-{D} sphere: {Gevrey} regularity and asymptotic degrees of freedom},
 fjournal = {ZAMP. Zeitschrift f{\"u}r angewandte Mathematik und Physik},
 journal = {Z. Angew. Math. Phys.},
 issn = {0044-2275},
 volume = {50},
 number = {3},
 pages = {341--360},
 year = {1999},
 language = {English},
 doi = {10.1007/PL00001493},
 zbMATH = {1320485},
 Zbl = {0928.35120}
}

@article{CP92,
 author = {Carverhill, Andrew and Pedit, Franz J.},
 title = {Global solutions of the {Navier}-{Stokes} equation with strong viscosity},
 fjournal = {Annals of Global Analysis and Geometry},
 journal = {Ann. Global Anal. Geom.},
 issn = {0232-704X},
 volume = {10},
 number = {3},
 pages = {255--261},
 year = {1992},
 language = {English},
 doi = {10.1007/BF00136868},
 zbMATH = {109070},
 Zbl = {0766.58011}
}

@article{TW93,
 author = {Temam, Roger and Wang, Shouhong},
 title = {Inertial forms of {Navier}-{Stokes} equations on the sphere},
 fjournal = {Journal of Functional Analysis},
 journal = {J. Funct. Anal.},
 issn = {0022-1236},
 volume = {117},
 number = {1},
 pages = {215--242},
 year = {1993},
 language = {English},
 doi = {10.1006/jfan.1993.1126},
 zbMATH = {515228},
 Zbl = {0801.35109}
}

@article{AA91,
 author = {Il'in, A. A.},
 title = {The {Navier}-{Stokes} and {Euler} equations on two-dimensional closed manifolds},
 fjournal = {Mathematics of the USSR, Sbornik},
 journal = {Math. USSR, Sb.},
 issn = {0025-5734},
 volume = {69},
 number = {2},
 pages = {559--579},
 year = {1991},
 language = {English},
 doi = {10.1070/SM1991v069n02ABEH002116},
 zbMATH = {4194328},
 Zbl = {0724.35088}
}

@book{Ruelle89,
 author = {Ruelle, David},
 title = {Elements of differentiable dynamics and bifurcation theory},
 isbn = {0-12-601710-7},
 year = {1989},
 publisher = {Boston, MA etc.: Academic Press, Inc.},
 language = {English},
 zbMATH = {42418},
 Zbl = {0684.58001}
}
\bibliographystyle{abbrv}

\end{document}